\documentclass[11pt, letter, reqno]{amsart}
\usepackage{graphicx,psfrag}
\usepackage[psamsfonts]{amssymb}
\usepackage{pdfsync}
\usepackage{amscd}
\usepackage{amsmath}
\usepackage{mathrsfs}
\usepackage{stmaryrd}
\usepackage[small,nohug,heads=littlevee]{diagrams}
\usepackage{comment}
\usepackage{xfrac}
\usepackage[margin=1in]{geometry}
\usepackage{comment}
\usepackage{enumitem}
\usepackage{stmaryrd}
\usepackage[lowtilde]{url}
\usepackage{hyperref}





\allowdisplaybreaks

\theoremstyle{plain}
    \newtheorem{theorem}{Theorem}[section]
    \newtheorem{lemma}[theorem]{Lemma}
    \newtheorem{corollary}[theorem]{Corollary}
    \newtheorem{proposition}[theorem]{Proposition}
    \newtheorem{conjecture}[theorem]{Conjecture}
    
\theoremstyle{definition}
    \newtheorem{definition}[theorem]{Definition}

    \newtheorem*{thank}{Acknowledgements}
\theoremstyle{remark}
    \newtheorem{remark}[theorem]{Remark}

\numberwithin{equation}{section}

\newcommand{\wt}[1]{\widetilde{#1}}
\newcommand{\EE}{\mathcal{E}}

\newcommand{\ZZ}{\mathbb{Z}}

\newcommand{\R}{\mathbb{R}}
\newcommand{\C}{\mathbb{C}}

\newcommand{\OO}{\mathcal{O}}
\newcommand{\II}{\mathscr{I}}
\newcommand{\FF}{\mathscr{F}}
\newcommand{\EEE}{\mathscr{E}}
\newcommand{\LL}{\mathcal{L}}
\newcommand{\JJ}{\mathcal{J}}
\newcommand{\KK}{\mathcal{K}}

\newcommand{\F}{\mathcal{F}}
\newcommand{\M}{\mathcal{M}}

\newcommand{\lhom}{\mathcal{H}om}

\newcommand{\Id}{\operatorname{Id}}

\newcommand{\ind}{\operatorname{ind}}

\newcommand{\Lie}{\operatorname{Lie}}
\newcommand{\Lieg}{\mathfrak{g}}
\newcommand{\Lier}{\mathfrak{r}_t}
\newcommand{\Liek}{\mathfrak{k}}
\newcommand{\Lieh}{\mathfrak{h}}
\newcommand{\Lien}{\mathfrak{n}}
\newcommand{\Lieb}{\mathfrak{b}}
\newcommand{\Liep}{\mathfrak{p}}

\newcommand{\Liec}{\mathfrak{c}}
\newcommand{\Lies}{\mathfrak{s}}
\newcommand{\Lieu}{\mathfrak{u}}
\newcommand{\Liel}{\mathfrak{l}}
\newcommand{\Liet}{\mathfrak{t}}
\newcommand{\Liea}{\mathfrak{a}}
\newcommand{\Liem}{\mathfrak{m}}

\newcommand{\LLieg}{\mathfrak{g}^\circ}
\newcommand{\LLier}{\mathfrak{r}^\circ_t}
\newcommand{\LLieb}{\mathfrak{b}^\circ}
\newcommand{\LLien}{\mathfrak{n}^\circ}
\newcommand{\LLiek}{\mathfrak{k}^\circ}

\newcommand{\LLieh}{\mathfrak{h}^\circ}
\newcommand{\LLiet}{\mathfrak{t}^\circ}
\newcommand{\LLiea}{\mathfrak{a}^\circ}

\newcommand{\ld}{\lambda}
\newcommand{\lt}{{\lambda_t}}
\newcommand{\lc}{{\lambda_+}}
\newcommand{\lnc}{{\lambda_-}}
\newcommand{\lncp}{{\lambda'_-}}
\newcommand{\tlnc}{{\widetilde{\lambda}_-}}

\newcommand{\tg}{\wt{\Lieg}}

\newcommand{\gk}{(\Lieg,K)}

\newcommand{\gkt}{(\Lieg_t, \KK)}
\newcommand{\rkt}{(\Lier,\KK)}

\newcommand{\UU}{\mathcal{U} \Lieg^\circ}
\newcommand{\tD}{\widetilde{\D}}

\newcommand{\Dh}{\mathcal{D}_\mathfrak{h}}

\newcommand{\DQ}{\D^{\lambda}_{X \gets Q}}
\newcommand{\DY}{\D^{\lambda}_{X \gets Y}}

\newcommand{\DQt}{\D_{X \gets Q}^{\lt}}

\newcommand{\Dl}{\D_\lambda}

\newcommand{\Dr}{\D_{\lt}(\Lier)}
\newcommand{\DrQ}{\D^{\lt}_{X \gets Q}(\Lier)}
\newcommand{\JQ}{\JJ^Q}

\newcommand{\Sl}{\mathcal{S}_\lnc}
\newcommand{\Spl}{\mathcal{S}^{K}_\lnc}

\newcommand{\pb}{^{-1}}
\newcommand{\Mod}{\mathscr{M}}
\newcommand{\Ug}{\mathcal{U}\Lieg}
\newcommand{\Uh}{\mathcal{U}\Lieh}
\newcommand{\Ugh}{\mathcal{U}_{\Lieh}(\Lieg)}
\newcommand{\Urh}{\mathcal{U}_{\Lieh}(\Lier)}
\newcommand{\Urlt}{\mathcal{U}_{\lt}(\Lier)}
\newcommand{\Zg}{{\mathcal{Z}\Lieg}}
\newcommand{\U}{\mathcal{U}}

\newcommand{\D}{\mathcal{D}}

\newcommand{\I}{\mathcal{I}}

\newcommand{\Ind}{\operatorname{Ind}}

\newcommand{\supp}{\operatorname{Supp}}

\newcommand{\iqt}{\I(\lt,Q,\phi)}
\newcommand{\tiqt}{\widetilde{\I}(\lt,Q,\phi)}
\newcommand{\MQ}{\M(Q,\phi)}
\newcommand{\tMQ}{\widetilde{\M}(Q,\phi)}
\newcommand{\ilq}{\I(\lambda,Q,\phi)}
\newcommand{\llq}{\LL(\lambda,Q,\phi)}
\newcommand{\TX}{T^*X_{\lnc}}
\newcommand{\TXK}{{T^*_K X_{\lnc}}}
\newcommand{\TXQ}{{T^*_Q X_{\lnc}}}
\newcommand{\TXQcl}{T^*_{\overline{Q}} X_{\lnc}}
\newcommand{\Ad}{\operatorname{Ad}}
\newcommand{\Ql}{Q_{\lnc}}
\newcommand{\Qcl}{\overline{Q}_{\lnc}}

\newcommand{\W}{\mathcal{W}}
\newcommand{\V}{\mathscr{V}}
\newcommand{\VQ}{\V (\lambda,Q,\phi)}
\newcommand{\coh}{\M_{coh}}
\newcommand{\cohK}{\M_{coh}^K}
\newcommand{\ti}{\tilde{i}}
\newcommand{\tl}{\tilde{l}}
\newcommand{\iqzero}{\I_0(\lambda,Q,\phi)}
\newcommand{\cl}{\overline}
\newcommand{\pl}{p}
\newcommand{\bpl}{\bar{p}}

\newcommand{\SLR}{SL(2, \R)}

\newcommand{\PP}{\mathbb{P}^1}

\newcommand{\GR}{G_\R}
\newcommand{\Gmot}{G_{\R,0}}
\newcommand{\KR}{K_\R}

\newcommand{\KC}{K}

\newcommand{\Llr}{L^\lnc_\R}

\newcommand{\Image}{\operatorname{Im}}

\begin{document}

\title{Mackey analogy as deformation of $\D$-modules}
\author{Shilin Yu}
\address{Room 712, Academic Building No.1,
The Chinese University of Hong Kong,
Shatin, N.T., Hong Kong
}
\email{slyu@math.cuhk.edu.hk}

\keywords{Harish-Chandra modules, $\D$-modules, Mackey-Higson-Afgoustidis bijection, Connes-Kasparov isomorphism, tempered representations}

\subjclass[2010]{22E46, 14F10}



\setlength{\abovedisplayskip}{7pt}
\setlength{\belowdisplayskip}{7pt}

\begin{abstract}
 Given a real reductive group Lie group $G_\mathbb{R}$, the Mackey analogy is a bijection between the set of irreducible tempered representations of $G_\mathbb{R}$ and the set of irreducible unitary representations of its Cartan motion group. We show that this bijection arises naturally from families of twisted $\mathcal{D}$-modules over the flag variety of $G_\mathbb{R}$. 
\end{abstract}

\maketitle

\tableofcontents

\section{Introduction}\label{sec:intro}

The aim of this paper is to give a natural conceptual explanation of an analogy between tempered representations of reductive Lie groups and that of their degenerations, which was first conjectured by Mackey (\cite{Mackey}). We show that this analogy can be understood from the perspective of families of representations. We obtain such families by considering deformation of $\D$-modules, which play a fundamental role in the connection between representation theory with algebraic geometry (\cite{BeilinsonBernstein}). The case of $\SLR$ has been examined in the paper \cite{YuMackeySL} of the same author with others. In this paper we will deal with arbitrary reductive Lie groups and verify the claimed results and conjecture in \emph{loc. cit}.

The idea of families of representations have long been studied in mathmatical physics under the name of \emph{contractions} of groups and their representations (\cite{Segal}, \cite{InonuWigner}). We will consider only one important case in this paper. Let $\GR$ be a noncompact reductive Lie group with a maximal compact subgroup $K_\R$. The  \emph{Cartan motion group} of $G_\R$ is defined to be the group
\[ G_{\R,0} :=K_\R \ltimes \Lies_\R, \]
where $\Lieg_\R = \Lie(G_\R)$ and $\Liek_\R = \Lie(K_\R)$ are the corresponding Lie algebras and $\Lies_\R \cong \Lieg_\R / \Liek_\R$ is regarded as an abelian group with the usual addition of vectors. In 1970s, Mackey suggested (\cite{Mackey}) that there might be a bijection between the irreducible tempered representations of a noncompact semisimple group $G_\R$ and the irreducible tempered representations of $G_{0,\R}$. It is counterintuitive at first sight since the algebraic structures of the groups $G_\R$ and $G_{\R,0}$ are quite different. The classification of tempered representations of $\GR$ is a nontrivial matter, which was due to the work of Knapp and Zuckerman (\cite{KnappZuckerman_I}, \cite{KnappZuckerman_II}). On the other hand, Mackey himself developed a full theory of representations of semidirect product groups like $G_{\R,0}$ (\cite{MackeyInd}), so the unitary dual of $G_{\R,0}$ is much easier to describe.


One of the evidences of the Mackey analogy comes from the Connes-Kasparov isomorphism in K-theory of $C^*$-algebras (\cite{BCH}), which implies that the tempered duals of $\GR$ and $G_{\R,0}$ are equivalent $K$-theoretically (see also \cite{HigsonMackeyKtheory}). Higson conjectured that there actually exists a set theoretical bijection. In other words, Mackey analogy, if holds, can be regarded as a refined version of the Connes-Kasparov isomorphism. In his paper \cite{HigsonMackeyKtheory}, Higson examined the case where $G_\R$ is a connected complex semisimple group (regarded as a real group) and showed that there is a natural bijection between the reduced duals of $G_\R$ and $G_{\R,0}$. Later in \cite{HigsonMackey}, he strengthened this result by showing that there is even a natural bijection between the admissible duals of $G_\R$ and $G_{\R,0}$ when $G_\R$ is a complex group. Other work on examining special cases of the conjecture are \cite{George}, \cite{Skukalek}.

The conjectured bijection of Mackey and Higson has been recently established for general real reductive Lie group by Afgoustidis in \cite{Afgoustidis}. It will be summarized in Section \ref{subsec:Afg}. We will call it the \emph{Mackey-Higson-Afgoustidis (MHA) bijection} throughout the paper. Afgoustidis used the MHA bijection to give a new proof of the Connes-Kasparov isomorphism for real reductive Lie groups (\cite{Afgoustidis_CK}). Moreover, he also studied the Mackey analogy at the level of representation spaces and got partial results. However, a conceptual understanding of the MHA bijection was missing.

As proposed in our former work \cite{YuMackeySL}, Mackey analogy can be understood by constructing families of standard Harish-Chandra $\D$-modules. The theory of $\D$-modules invented by Beilinson and Bernstein (\cite{BeilinsonBernstein}) has built a bridge between algebraic geometry and representation theory. It was shown that standard Harish-Chandra $\D$-modules satisfying certain conditions (called tempered Harish-Chandra sheaves) give an alternative classification of irreducible tempered representations (\cite{Chang},\cite{Mirkovic}), parallel to Knapp-Zuckerman. We will review these results in Section 3. In a nutshell, from a given Cartan subalgebra $\Lieh_\R$ of $\Lieg_\R$ together with a decomposition $\Lieh_\R = \Liet_\R \oplus \Liea_\R$ determined by a Cartan involution one can construct a family of $\D$-modules depending on a discrete weight parameter in $\Liet^*_\R$ and a continuous parameter in $i\Liea^*_\R$ such that the global sections form a family of tempered representations of $\GR$. Here is how we are going to construct the one-parameter family related to Mackey analogy: given one of the $\D$-modules in the family, we rescale the parameter in $i\Liea^*_\R$ and let it go to infinity, so that the resulting one-parameter family of $\GR$-representations  `converges' to a representation of $G_{\R,0}$. The $G_{\R,0}$-representation might not be irreducible, but it has a unique maximal quotient, which is the one that appears in the MHA bijection. This explains the phenomena that underlying vector spaces of representations related by the MHA bijection can be of different `sizes'.

Special care needs to be taken, however, when the parameter in $i\Liea^*_\R$ is not generic. For instance, as shown in \cite{YuMackeySL}, in the case of the even principal series representation of $\SLR$ with zero infinitesimal character, the naive construction gives the sum of all even weights of $\KR=SO(2)$ regarded as $(\Lieg_0,K)$-module with $\Lies = \Lies_\R \otimes_\R \C \subset \Lieg_0 = \Lieg_{\R,0} \otimes_\R \C$ acting trivially, which is therefore not finitely-generated. In \cite{YuMackeySL}, this issue was fixed by taking the subfamily generated by the minimal $K$-types, which is a fundamental notion in the study of representations due to Vogan (\cite{VoganThesis}) and also plays crucial role in the MHA bijection. We will prove here that this construction works in general case. For this we need detailed properties of $K$-orbits on the flag variety and minimal $K$-types in terms of $\D$-modules, which we will review in \S~\ref{subsec:Korbit} and \S~\ref{subsec:minKtypes} respectively. The main result is Theorem \ref{thm:main}.

It is natural to wonder about the meaning of the families of $\D$-modules constructed here. One feature of our construction is that the `limit' of a family gives a $K$-equivariant coherent sheaf over some twisted cotangent bundle of the flag variety. Its support is the analogue of the characteristic variety of a $\D$-module, which always lie in the usual cotangent bundle. Thus we call it the \emph{twisted characteristic variety}. It preserves more information than the usual one by remembering part of the infinitesimal character of the $\D$-module we start with. Just as the usual characteristic variety, it can be shown that the smooth locus of the twisted characteristic variety is Lagrangian inside the twisted cotangent bundle equipped with its natural algebraic symplectic structure. We expect that the existence of such natural families can be interpreted as deformation quantization of Lagrangian subvarieties inside twisted cotangent bundles over the flag variety, in the sense of \cite{BGKDJ} (also see\cite{NestTsygan}, \cite{AgnoloSchapira}). This viewpoint will be pursued elsewhere in the future.

Recently a general theory of families of Harish-Chandra modules has been developed in \cite{BHS1}. In this framework, the authors of \emph{loc. cit.} studied contractions of representations of $\SLR$ in \cite{BHS2} and the MHA bijection for admissible dual of $\SLR$ in \cite{Subag} using the techniques of Jantzen filtration. We expect that there is a close connection between their approach and ours. 

\begin{thank}
  The author would like to thank Jonathan Block, Justin Hilburn, Nigel Higson and Tony Pantev for numerous discussions. The author appreciates Alexandre Afgoustidis for sharing the draft of his paper at early stage and for the detailed explanations of his work. The author also wants to express gratitude to Dragan Mili{\v{c}}i{\'c}, Wilfried Schmid and David Vogan for their great patience with the author's elementary questions about representation theory. The hospitality of Jeffrey Adams and David Vogan during the author's visit to University of Maryland and Massachusetts Institute of Technology respectively is gratefully acknowledged.
  
  Special thanks go to Junyan Cao, who offered the floor of his hotel room to the author during the `Algebraic Geometry 2015' conference at University of Utah, where the main idea of this paper came up.   
  
  The author was supported by the Direct Grants and Research Fellowship Scheme from The Chinese University of Hong Kong. 
\end{thank}

{\bf Notations.} For a real Lie group we always put the subscript $\R$ following the capital letter, e.g., $\GR$. Same for their Lie algebras and related subspaces, e.g., $\Lieg_\R$. For the complexified Lie groups and their Lie algebras, we drop the subscripts.



\section{The Mackey-Higson-Afgoustidis bijection}\label{sec:Mackey}

\subsection{Afgoustidis's Mackey-Higson bijection}\label{subsec:Afg}

We briefly describe Afgoustidis's Mackey-Higson bijection for tempered representations at the level of parameter spaces (\cite{Afgoustidis}). Suppose $G_\R$ is a connected real semisimple Lie group with finite center. Fix a maximal compact subgroup $K_\R$ of $G_\R$. Let $\theta_\R$ and $\theta=\theta_\C$ be the corresponding Cartan involutions of $\Lieg_\R$ and $\Lieg$ respectively, which have $\Liek_\R$ and $\Liek$ as fixed points respectively. Let $\Lies_\R$ and $\Lies=\Lies_\C$ be the $-1$-eigenspaces of the involutions $\theta_\R$ and $\theta$ respectively, so we have $\Lieg_\R=\Liek_\R \oplus \Lies_\R$ and $\Lieg=\Liek \oplus \Lies$. The motion group $G_{\R,0} = K_\R \ltimes \Lies_\R$ has Lie algebra $\Lieg_{\R,0} = \Liek_\R \ltimes \Lies_\R$, whose complexification is $\Lieg_0 = \Liek \ltimes \Lies$.

Suppose $\Liea_\R$ is a maximal abelian subalgebra of $\Lies_\R$. Let $W_\Liea$ be the Weyl group of the pair $(\Lieg_\R,\Liea_\R)$. We consider $K_\R$-orbits in $\Lies^*_\R$. Any  $K_\R$-orbit in $\Lies_\R^*$ intersects with $\Liea_\R^*$ at a unique $W_\Liea$-orbit, where we identify $\Liea_\R^*$ as a subspace of $\Lies^*_\R$ using the Killing form. Hence choosing a $K_\R$-orbit of $\Lies^*_\R$ is equivalent to choosing a character $\chi \in \Liea^*_\R$ up to a $W_\Liea$-symmetry. Denote the stabilizer of $\chi$ in $K_\R$ by $K_\R^\chi$. According to Afgoustidis, a \emph{Mackey datum} is a pair $(\chi, \sigma)$ in which $\sigma$ is an irreducible unitary representation of $K^\chi_\R$. Given such a datum, we can produce a unitary representation $M_0(\delta)$ of $G_{\R,0}$ by parabolic induction,
\begin{equation}\label{eq:M0}
 M_0(\delta) := \Ind^{\Gmot}_{K_\R^\chi \ltimes \Lies_\R} \left[ \sigma \otimes e^{i \chi} \right]. 
\end{equation}
Mackey showed that all such $M_0(\delta)$ give a complete list of irreducible unitary representations of $G_{\R,0}$ (see Section 7 of \cite{Mackey49} or Chapter 3 of \cite{Mackey76}). They are all tempered. Moreover, two Mackey data $\delta_1=(\chi_1,\sigma_1)$ and $\delta_2=(\chi_2,\sigma_2)$ give rise to unitarily equivalent representations if and only if there is an element of the Weyl group $W_\Liea$ which sends $\chi_1$ to $\chi_2$ and $\sigma_1$ to an irreducible $K_{\chi_2}$-representation which is unitarily equivalent with $\sigma_2$. We say that the Mackey data $\delta_1$ and $\delta_2$ are equivalent.

We now recall the definition of \emph{minimal/lowest $K$-type} due to Vogan (\cite{VoganThesis}). 

\begin{definition}\label{defn:minimal}
 Suppose $\sigma$ is an irreducible representation of $\KR$. By identifying $\sigma$ with its highest weight, we can define the norm
  \[ \lvert \sigma  \rvert :=  \langle \sigma + 2\rho_c, \sigma + 2\rho_c  \rangle,  \]
where $\rho_c$ is the half sum of positive roots of $\KR$. For a $(\Lieg,K)$-module $\pi$, $\sigma$ is called a \emph{$K$-type} of $\pi$ if $\sigma$ occurs in $\pi$ restricted to $\KR$, and is called  a \emph{minimal/lowest $K$-type} if furthermore $\lvert \sigma \rvert$ is minimal among all $K$-types.
\end{definition}

Afgoustidis' construction relies on the following crucial result by Vogan (\cite{Vogan}).

\begin{theorem}[Vogan, \cite{Vogan}]\label{thm:Vogan}
  Any irreducible tempered representation of a real reductive Lie group $G_\R$ with real infinitesimal character has a unique $K_\R$-type. This defines a bijection between the equivalence classes of irreducible tempered representations of $G_\R$ with real infinitesimal characters and the equivalence classes of irreducible representations of $K_\R$. 
\end{theorem}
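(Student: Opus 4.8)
The plan is to follow Vogan's original argument (\cite{Vogan}, building on \cite{VoganThesis}), combining the Knapp--Zuckerman classification with an induction analysis of minimal $K$-types. First I would reduce the left-hand side of the asserted bijection to explicit parameters: by Knapp--Zuckerman (\cite{KnappZuckerman_I}, \cite{KnappZuckerman_II}), an irreducible tempered $(\Lieg,K)$-module $\pi$ is a constituent of $\Ind^{\GR}_{P_\R}(\delta \otimes e^{i\nu})$ for a cuspidal parabolic $P_\R = M_\R A_\R N_\R$, a limit of discrete series $\delta$ of $M_\R$, and a parameter $\nu \in \Liea^*_\R$; since a limit of discrete series automatically has real infinitesimal character, the reality of the infinitesimal character of $\pi$ forces $\nu = 0$. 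The constituents of $\Ind^{\GR}_{P_\R}(\delta \otimes \mathbf{1})$ are indexed by the irreducible characters of the associated $R$-group $R = R(\delta)$, so the classifying data on the left is a triple $(P_\R,\delta,\mu)$ with $\mu \in \widehat{R}$, taken up to $\GR$-conjugacy.

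Next I would track minimal $K$-types through this construction. The inputs are: (i) the Blattner formula (Hecht--Schmid): a nonzero limit of discrete series $\delta$ of $M_\R$ has a unique minimal $K_{M_\R}$-type, of multiplicity one, whose highest weight determines $\delta$ among all limits of discrete series of $M_\R$ with a fixed (real) infinitesimal character; and (ii) Vogan's bottom-layer argument together with the Parthasarathy Dirac-operator inequality. Concretely, if $\mu$ is the minimal $K_{M_\R}$-type of $\delta$, then among the $K$-types of $\Ind^{\GR}_{P_\R}(\delta \otimes \mathbf{1})$ — which by Frobenius reciprocity are those restricting to a $K_{M_\R}$-type of $\delta$ — the only candidates for minimal $K$-types of constituents are the $\gamma \in \widehat{K}$ that are minimal in Vogan's norm subject to $\gamma|_{K_{M_\R}}$ containing $\mu$. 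The Dirac inequality, which for tempered $\pi$ reads $\lVert \gamma + \rho_c \rVert \ge \lVert \Lambda_\pi \rVert$ with the minimal $K$-types saturating the associated sharp bound, rules out any strictly smaller $K$-type and simultaneously pins the infinitesimal character $\Lambda_\pi$ to the value $\lambda_a(\gamma)$ attached to $\gamma$. One then shows that $R$ permutes these bottom-layer $K$-types simply transitively, so each constituent $\pi(P_\R,\delta,\mu)$ carries exactly one minimal $K$-type, occurring with multiplicity one.

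For bijectivity I would argue in both directions. Injectivity: if tempered modules $\pi_1, \pi_2$ with real infinitesimal character share a minimal $K$-type $\gamma$, then $\gamma$ fixes the infinitesimal character via $\lambda_a(\gamma)$, hence fixes the $\GR$-conjugacy class of $P_\R$ and, by (i), the parameter of $\delta$; the character $\mu$ is then recovered from the $R$-action on the $\gamma$-isotypic line, so $\pi_1 \cong \pi_2$. Surjectivity: given $\gamma \in \widehat{K}$, form the weight $\lambda_a(\gamma)$, produce the cuspidal parabolic $P_\R$ and a limit of discrete series $\delta$ of $M_\R$ with infinitesimal character $\lambda_a(\gamma)$ and Blattner parameter compatible with $\gamma$, and verify that $\gamma$ occurs as a bottom-layer minimal $K$-type of some constituent of $\Ind^{\GR}_{P_\R}(\delta \otimes \mathbf{1})$; that constituent is the desired tempered representation.

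The hard part is step (ii): controlling minimal $K$-types under real parabolic induction and establishing the uniqueness and multiplicity-one assertions. This is not a formal consequence of the classification — it is essentially the content of the theorem — and it needs the full minimal $K$-type machinery: the analysis of the $R$-group action on the bottom layer, the intertwining-operator computations of Knapp--Zuckerman, and the rigidity supplied by the Dirac inequality. A more geometric route, closer to the methods of this paper, would try to read the minimal $K$-type of a tempered Harish-Chandra sheaf (\cite{Chang}, \cite{Mirkovic}) directly off the supporting $K$-orbit on the flag variety and the twisting parameter, but it too ultimately rests on Vogan's estimates. For the purposes of this paper the statement is simply cited from \cite{Vogan}.
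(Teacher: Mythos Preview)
The paper gives no proof of this theorem: it is stated with attribution to Vogan and used as a black box. You correctly note this in your final sentence. Your sketch of Vogan's argument is a reasonable outline of the original proof, but there is nothing in the paper to compare it against; the author simply invokes the result.
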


Now fix a maximal torus $T_\R$ of $K_\R$.  Given a Mackey datum $\delta=(\chi,\sigma)$, regard $\chi$ as an element in $\Lieg^*_\R$ and consider the centralizer $L^\chi_\R$  of $\chi$ for the coadjoint action of $\GR$ on $\Lieg^*$. Then we have the Langlands decomposition $L^\chi_\R=M^\chi_\R A^\chi_\R$. Choose a system $R^+$ of positive roots for the pair $(\Lieg, \Lieh)$ and define the nilpotent subalgebra $\Lien^\chi_\R$ of $\Lieg_\R$ as the real part of the sum of root spaces in $\Lieg$ for those positive roots which do not vanish on $\Liea^\chi_\R$. Set $N^\chi_\R:=\exp_{\GR}(\Lien^\chi_\R)$ and $P^\chi_\R:=M^\chi_\R A^\chi_\R N^\chi_\R$. Then $P^\chi_\R$ is a cuspidal parabolic subgroup.

The $K^\chi_\R$-representation $\sigma$ in the Mackey datum $\delta$ determines a tempered representation $V_{M^\chi_\R}(\sigma)$ of $M^\chi_\R$ with real infinitesimal character by Theorem \ref{thm:Vogan}. The character $\chi$ determines a one-dimensional unitary representation $e^{i\chi}$ of $A^\chi_\R$. The work of Harish-Chandra, Knapp-Zuckerman (\cite{KnappZuckerman_I}, \cite{KnappZuckerman_II}) showed that the induced unitary representation
  \[ M(\delta) := \Ind_{P^\chi_\R}^{G_\R} \left[ V_{M_\R^\chi}(\sigma) \otimes e^{i \chi} \otimes 1  \right]  \] 
of $\GR$ is irreducible and tempered. Moreover, $M(\delta_1)$ and $M(\delta_2)$ are unitarily equivalent if and only if $\delta_1$ and $\delta_2$ are equivalent as Mackey data. Afgoustidis has proven that

\begin{theorem}[\cite{Afgoustidis}]\label{thm:afg}
The correspondence
  \[
    M(\delta)  \longleftrightarrow M_0(\delta)
  \]
is a bijection between the tempered dual of $G_\R$ and the unitary (tempered) dual of $G_{\R,0}$.
\end{theorem}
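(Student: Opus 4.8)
The plan is to deduce Theorem~\ref{thm:afg} from the two classification results already at our disposal --- Mackey's description of the unitary dual of $\Gmot$ and the Knapp--Zuckerman classification of the tempered dual of $\GR$ --- with Vogan's Theorem~\ref{thm:Vogan} as the bridge. Recall what the quoted results already give: by Mackey, $\delta\mapsto M_0(\delta)$ is a bijection from equivalence classes of Mackey data onto $\widehat{\Gmot}$; and by the work of Harish--Chandra and Knapp--Zuckerman cited above, $\delta\mapsto M(\delta)$ is a well-defined \emph{injection} from equivalence classes of Mackey data into the tempered dual of $\GR$ (each $M(\delta)$ is irreducible and tempered, and $M(\delta_1)\cong M(\delta_2)$ exactly when $\delta_1\sim\delta_2$). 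Composing, $M_0(\delta)\leftrightarrow M(\delta)$ is already an injection of $\widehat{\Gmot}$ into the tempered dual of $\GR$, so the entire remaining content of the theorem is \emph{surjectivity}: every irreducible tempered representation of $\GR$ is isomorphic to $M(\delta)$ for some Mackey datum $\delta$.

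For surjectivity I would use the Knapp--Zuckerman classification in a reorganized, ``Vogan-type'' form. Let $\pi$ be irreducible and tempered. Realize its infinitesimal character $\Lambda$ on a $\theta$-stable Cartan subalgebra $\Lieh_\R=\Liet_\R\oplus\Liea'_\R$ and split off the continuous (imaginary) part $\chi$ of $\Lambda$ along the split directions; let $M^\chi_\R A^\chi_\R$ be the centralizer of $\chi$ in $\GR$, so that $\chi$ is \emph{regular} for the restricted roots of $\Liea^\chi_\R$ in $\Lieg$. The standard descent through parabolic induction then realizes $\pi\cong\Ind_{P^\chi_\R}^{\GR}\bigl[\pi_M\otimes e^{i\chi}\otimes 1\bigr]$ with $P^\chi_\R=M^\chi_\R A^\chi_\R N^\chi_\R$ a cuspidal parabolic and $\pi_M$ an irreducible tempered representation of $M^\chi_\R$ with \emph{real} infinitesimal character; and because $\chi$ is regular for $\Liea^\chi_\R$, its stabilizer in the relevant Weyl group $W(\GR,A^\chi_\R)$ --- and hence the Knapp--Stein $R$-group of this induced representation --- is trivial, so the induced representation is already irreducible and equal to $\pi$. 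After conjugating by $\KR$ we may assume $\Liea^\chi_\R\subseteq\Liea_\R$, so that $P^\chi_\R$, $M^\chi_\R$, $A^\chi_\R$, $N^\chi_\R$, $K^\chi_\R$ are precisely the subgroups appearing in the construction of $M(\delta)$. Finally, Vogan's Theorem~\ref{thm:Vogan}, applied to the reductive group $M^\chi_\R$ with maximal compact subgroup $K^\chi_\R=\KR\cap M^\chi_\R$, gives $\pi_M=V_{M^\chi_\R}(\sigma)$ for a unique $\sigma\in\widehat{K^\chi_\R}$. Then $\delta:=(\chi,\sigma)$ is a Mackey datum and $M(\delta)\cong\pi$, which is surjectivity.

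It remains to confirm that the bijection so obtained is indeed $M_0(\delta)\leftrightarrow M(\delta)$ --- equivalently, that the Mackey equivalence relation on data matches $\GR$-conjugacy of the Langlands data on the tempered side. Here one uses that an element $w\in W_\Liea$ sending $\chi_1$ to $\chi_2$ lifts to $\Ad(k)$ for some $k\in\KR$, carrying the triple $\bigl(P^{\chi_1}_\R,V_{M^{\chi_1}_\R}(\sigma_1),e^{i\chi_1}\bigr)$ to $\bigl(P^{\chi_2}_\R,V_{M^{\chi_2}_\R}(w\cdot\sigma_1),e^{i\chi_2}\bigr)$; since Vogan's bijection is canonical, it is conjugation-equivariant, so $M(\chi_1,\sigma_1)\cong M(\chi_2,\sigma_2)$ iff $w\cdot\sigma_1\cong\sigma_2$, which is exactly the Mackey equivalence. (Alternatively, the same bookkeeping can be carried out through minimal $K$-types in the sense of Vogan, matching the minimal $K$-type of $\pi$ with that of the corresponding $M_0(\delta)$.) The residual point of passing from a connected semisimple $\GR$ with finite center to a general real reductive group is routine.

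I expect the main obstacle to be the reorganized Knapp--Zuckerman step in the second paragraph: proving that \emph{every} irreducible tempered representation of $\GR$ descends to an \emph{irreducible} parabolically induced module from a cuspidal parabolic whose continuous parameter $\chi$ is $A^\chi_\R$-regular and whose inducing representation on the Levi has real infinitesimal character. This requires (a) the correct extraction of the imaginary part of the infinitesimal character and the identification of the attached parabolic, (b) the $R$-group computation showing that $A^\chi_\R$-regularity of $\chi$ forces the Knapp--Stein $R$-group to vanish --- so that no genuine reducibility or multiplicity intervenes and the induced module is $\pi$ on the nose --- and (c) Vogan's Theorem~\ref{thm:Vogan} itself, which supplies the real-infinitesimal-character building blocks on each Levi. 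Thus the argument ultimately rests on the deep inputs of Knapp--Zuckerman and of Vogan, with the rest amounting to a careful identification of parameter spaces.
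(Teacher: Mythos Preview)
The paper does not give its own proof of this theorem: it is stated as a citation of Afgoustidis's result, with the surrounding paragraphs of \S\ref{subsec:Afg} merely laying out the ingredients (Mackey's classification of $\widehat{\Gmot}$, Vogan's Theorem~\ref{thm:Vogan}, and the Harish-Chandra/Knapp--Zuckerman irreducibility and equivalence statements for $M(\delta)$). There is therefore no proof in the paper to compare against.

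That said, your proposal is a faithful synthesis of exactly those ingredients, and it correctly identifies the only nontrivial residual step as surjectivity of $\delta\mapsto M(\delta)$ onto the tempered dual. Your sketch of that step --- extracting the imaginary part $\chi$ of the infinitesimal character, descending through the cuspidal parabolic $P^\chi_\R$, invoking $R$-group triviality from regularity of $\chi$, and then applying Theorem~\ref{thm:Vogan} on the Levi --- is the standard route and is essentially what Afgoustidis does. The one point I would flag is that your claim ``$\chi$ is regular for $\Liea^\chi_\R$, hence the $R$-group vanishes'' deserves more care: what you really need is that after passing to the centralizer $L^\chi_\R$ the continuous parameter becomes zero on the new split part, so the remaining inducing data on $M^\chi_\R$ has real infinitesimal character; irreducibility of the full induced representation then comes from the Knapp--Zuckerman analysis rather than from a bare regularity statement. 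This is exactly the ``deep input'' you acknowledge at the end, so your assessment of where the real work lies is accurate.
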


\begin{definition}
  The bijection in Theorem \ref{thm:afg} is called the \emph{Mackey-Higson-Afgoustidis (MHA) bijection}. 
\end{definition}

\section{Review of Harish-Chandra sheaves}\label{sec:Dmod}

\subsection{Basics of $\D$-modules}\label{subsec:Dmod}

Throughout the rest of the paper we fix a connected reductive algebraic group $G$ defined over $\R$ and fix a real Lie group $\GR$ which has finite index in the set of real points of $G$. This assumption assures that the component group of any Cartan subgroup of $\GR$ is finite and abelian. We fix a maximal compact subgroup $\KR$ of $\GR$ and write $K \subset G$ for its complexification. Any finite-dimensional $\KR$-module extends uniquely to an algebraic $K$-module, so we will not distinguish them. This is the setting used in \cite{Chang}. However, we remark that all the results in this paper, with slight modification in the statements, hold for more general $\GR$, at least in the setting of \cite{Mirkovic}. 


We recall the construction of twisted $\D$-modules on the flag variety following \cite{Milicic}. Let $X$ be the flag variety of $G$, which is the variety of all Borel subalgebras $\Lieb$ in $\Lieg$. Let $\LLieg=\OO_X \otimes_\C \Lieg$ be the sheaf of local sections of the trivial bundle $X \times \Lieg$. Let $\LLieb$ be the vector bundle on $X$ whose fiber $\Lieb_x$ at any point $x$ of $X$ is the Borel subalgebra $\Lieb \subset \Lieg$ corresponding to $x$. Similarly, let $\LLien$ be the vector bundle whose fiber $\Lien_x$ is the nilpotent ideal $\Lien_x = [\Lieb_x, \Lieb_x]$ of the corresponding Borel subalgebra $\Lieb_x$. $\LLieb$ and $\LLien$ can be considered as subsheaves of $\LLieg$. $\LLieg$ has a natural structure of Lie algebroid: the differential of the action of $G$ on $X$ defines a natural map from $\Lieg$ to sections of tangent bundle $TX$ of $X$ and hence induces an anchor map $\tau: \LLieg \to TX$. The Lie structure on $\LLieg$ is given by
  \[  [f \otimes \xi, g \otimes \eta] = f \tau(\xi) g \otimes \eta - g \tau(\eta)f \otimes \xi + fg \otimes [\xi, \eta] \]
for any local functions $f, g \in \OO_X$ and $\xi, \eta \in \Lieg$. The kernel of $\tau$ is exactly $\LLieb$, so $\LLieb$ and $\LLien$ are sheaves of Lie ideals in $\LLieg$.

We then form the universal enveloping algebra of the Lie algebroid $\LLieg$, which is the sheaf $\UU=\OO_X \otimes_\C \Ug$ of associative algebras with the multiplication defined by
  \[  (f \otimes \xi) (g \otimes \eta) = f \tau(\xi) g \otimes \eta + fg \otimes \xi \eta \]
for $f,g \in \OO_X$ and $\xi \in \Lieg$, $\eta \in \Ug$. The sheaf of left ideals $\UU \LLien $ generated by the sheaf of Lie ideals $\LLien$ in $\UU$ is a sheaf of two-sided ideals in $\UU$, hence the quotient $\Dh = \UU /  \UU \LLien$ is a sheaf of associative algebras on $X$.

The natural morphism from $\LLieg$ to $\Dh$ induces an inclusion of $\LLieh=\LLieb/\LLien$ into $\Dh$. The vector bundle $\LLieh$ turns out to be a trivial vector bundle and its global sections over $X$ is the \emph{abstract Cartan algebra} $\Lieh$ of $\Lieg$, which is independent of the choice of Borel subalgebra. Moreover, we also have abstract root system $\Sigma$ and positive root system $\Sigma^+$ in $\Lieh^*$ which consists of the set of roots of $\Lieh$ in $\Lieg/\Lieb_x$, as well as the abstract Weyl group $W$. For details, see Section 3.1, \cite{Ginzburg-book}. The natural action of $G$ on $\LLieh$ is trivial and embedding $\LLieh \hookrightarrow \Dh$ identifies the universal enveloping algebra $\Uh = S \Lieh$ of the abelian Lie algebra $\Lieh$ with the $G$-invariant part of $\Gamma(X,\Dh)$. Here $S$ denotes the symmetric algebra. On the other hand, the center $\Zg$ of $\Ug$ is also naturally contained in $\Gamma(X,\Dh)^G$ and the induced map $\gamma: \Zg \to S\Lieh$ is the well-known Harish-Chandra homomorphism, which identifies $\Zg$ with the $W$-invariant of $S\Lieh$, where the action of $W$ on $\Lieh^*$ is the usual one twisted by the half sum $\rho$ of positive roots:
  \[ w . \lambda = w (\lambda - \rho) + \rho. \]
In other words, we have the Harish-Chandra isomorphism
  \[ \gamma_{HC}: \Zg \xrightarrow{\sim} (S\Lieh)^{W, .}  \]
Define the algebra 
  \[ \U_\Lieh(\Lieg) := \U\Lieg \otimes_{\Zg} S \Lieh. \] 
Note that natural homomorphism $\U\Lieg \to \Ugh$
is injective since $\U\Lieg$ is a free module over $\Zg$ (\cite{Kostant}). The natural homomorphism $\U\Lieh \to \Ugh$ is also injective since $S\Lieh$ is free over $\Zg \simeq (S\Lieh)^{W,.}$. So we can regard  $\U\Lieg$ and $S\Lieh$ as subalgebras of $\U_\Lieh(\Lieg)$. Moreover, the center of $\U_\Lieh(\Lieg)$ is exactly $S\Lieh$.

\begin{proposition}[Lemma 3.1., \cite{Milicic}]\label{prop:Dh}
The natural morphism 
  \[ \Ugh \to \Gamma(X,\Dh)  \]
is an isomorphism of algebras. Moreover, $H^i(X, \Dh) = 0$ for $i > 0$.
\end{proposition}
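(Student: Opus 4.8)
The plan is to pass to the associated graded of $\Dh$ for its natural order filtration, reduce the statement to a classical cohomology computation on a family of cotangent bundles of $X$, and then lift the conclusion back to $\Dh$. For Step~1, the Poincar\'e--Birkhoff--Witt filtration $\U_{\le n}\Lieg$ of $\Ug$ induces the $\OO_X$-module filtration $F_n\UU := \OO_X\otimes_\C\U_{\le n}\Lieg$ of $\UU$, which descends to a filtration $F_\bullet\Dh$ of $\Dh=\UU/\UU\LLien$ by locally free $\OO_X$-modules. I would first check, by the standard PBW argument for Lie algebroids (choosing locally on $X$ a basis of $\LLieg$ adapted to $\LLien\subset\LLieb\subset\LLieg$), that the symbol ideal of $\UU\LLien$ inside $\graded_F\UU=\OO_X\otimes_\C S\Lieg=\Sym_{\OO_X}\LLieg$ is exactly the ideal generated by the subbundle $\LLien$; here one uses that $\LLien$ is a sheaf of Lie ideals, so commuting its elements past one another produces nothing of lower order. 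This yields
\[
 \graded_F\Dh\;\cong\;\Sym_{\OO_X}(\LLieg/\LLien),
\]
and $\LLieg/\LLien$ sits in a short exact sequence $0\to\LLieh\to\LLieg/\LLien\to TX\to 0$, with $\LLieh=\LLieb/\LLien=\OO_X\otimes_\C\Lieh$ the trivial bundle and the surjection onto $TX$ induced by the anchor $\tau$. Thus $\graded_F\Dh$ is the sheaf of fibrewise-polynomial functions on the Grothendieck--Springer space $\tg^*:=\{(x,\xi)\in X\times\Lieg^*:\xi|_{\Lien_x}=0\}$, which is affine over $X$ and maps to both $\Lieg^*$ and $\Lieh^*$.

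For Step~2, affineness of $\tg^*\to X$ gives $H^i(X,\graded_F\Dh)=H^i(\tg^*,\OO)$. Filtering $\Sym^n(\LLieg/\LLien)$ via the sequence above, its graded pieces are the $\OO_X$-modules $S^a\Lieh\otimes_\C\Sym^b_{\OO_X}TX$ with $a+b=n$, and I would invoke the classical vanishing $H^{>0}(X,\Sym^b_{\OO_X}TX)=H^{>0}(T^*X,\OO)=0$ together with $H^0(T^*X,\OO)=\C[\N]$, the coordinate ring of the nilpotent cone: these hold because the Springer map $T^*X\to\N$ is a resolution and $\N$ is a normal affine variety with rational singularities, so $R\Gamma(T^*X,\OO)=R\Gamma(\N,R\mu_*\OO)=\C[\N]$, concentrated in degree $0$. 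The long exact sequences then give $H^{>0}(X,\Sym^n(\LLieg/\LLien))=0$ for every $n$ and present $\Gamma(X,\graded_F\Dh)=\C[\tg^*]$ as a graded algebra with associated graded $S\Lieh\otimes_\C\C[\N]$. On the other side, $\graded\Ugh=S\Lieg\otimes_{(S\Lieg)^G}S\Lieh=\C[\Lieg^*]\otimes_{\C[\Lieh^*]^W}\C[\Lieh^*]$, which by Kostant's theorem (freeness of $\C[\Lieg^*]$ over its $G$-invariants, with the harmonics $\mathcal{H}\cong\C[\N]$ a homogeneous basis) has the same Hilbert series. The natural graded algebra map $\graded\Ugh\to\Gamma(X,\graded_F\Dh)$, given by pullback along $\tg^*\to\Lieg^*$ and $\tg^*\to\Lieh^*$ (which agree on $\C[\Lieh^*]^W$), becomes an isomorphism over the regular semisimple locus of $\Lieh^*$, where both sides are the functions on the $W$-cover of $\Lieg^*_{\mathrm{rs}}$; since the source is $\C[\Lieh^*]$-torsion-free, the map is injective, hence an isomorphism by the Hilbert series comparison.

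For Step~3 (lifting to $\Dh$), the filtration $F_\bullet\Dh$ is exhaustive with $F_{-1}\Dh=0$, and $X$ has finite cohomological dimension, so $H^\bullet(X,\Dh)=\varinjlim_n H^\bullet(X,F_n\Dh)$. From $0\to F_{n-1}\Dh\to F_n\Dh\to\graded_n\Dh\to 0$ and the vanishing of Step~2, induction on $n$ gives $H^{>0}(X,F_n\Dh)=0$ for all $n$, hence $H^{>0}(X,\Dh)=0$; moreover the sequences $0\to\Gamma(X,F_{n-1}\Dh)\to\Gamma(X,F_n\Dh)\to\Gamma(X,\graded_n\Dh)\to 0$ are exact, so $\Gamma(X,\Dh)$ is a filtered algebra with $\graded\Gamma(X,\Dh)=\Gamma(X,\graded_F\Dh)$. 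The canonical filtered map $\Ugh\to\Gamma(X,\Dh)$ induces on associated gradeds exactly the isomorphism of Step~2, and both filtrations are exhaustive and separated, so it is an isomorphism.

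I expect the only genuine content to be in Step~2: the classical geometric input $H^{>0}(T^*X,\OO)=0$ (rational singularities of the nilpotent cone, equivalently a Bott-type vanishing for $\Sym^b TX$) and Kostant's freeness theorem used for the Hilbert series comparison. Steps~1 and~3 are formal, the only delicate point being the PBW identification of $\graded_F\Dh$ in Step~1.
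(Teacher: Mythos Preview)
The paper does not give a proof of this proposition; it simply cites it as Lemma~3.1 of \cite{Milicic}. So there is no proof in the paper to compare against.

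Your argument is correct and is essentially the standard proof one finds in Mili\v{c}i\'c's notes (or in the treatments by Borho--Brylinski and Hotta--Takeuchi--Tanisaki). A few small comments. In Step~1 the cleanest way to phrase the symbol computation is to note that since $\LLien$ is a sheaf of Lie ideals in $\LLieg$ with zero anchor, the quotient $\LLieg/\LLien$ is again a Lie algebroid and $\Dh$ is precisely its enveloping algebra; PBW for Lie algebroids then gives $\graded_F\Dh\cong\Sym_{\OO_X}(\LLieg/\LLien)$ directly, without having to analyze the symbol ideal of $\UU\LLien$ by hand. In Step~2 your appeal to rational singularities of $\mathcal{N}$ is fine; note that the vanishing $R^{>0}\mu_*\OO_{T^*X}=0$ follows most cleanly from Grauert--Riemenschneider together with $\omega_{T^*X}\cong\OO_{T^*X}$ (symplectic triviality of the canonical bundle). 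Your injectivity-plus-Hilbert-series argument for the graded map is correct: Kostant's freeness gives both that $\graded\Ugh\cong S\Lieh\otimes_\C\mathcal{H}$ is free (hence torsion-free) over $S\Lieh$, and that its Hilbert series matches that of $\C[\tg]$ via $\mathcal{H}\cong\C[\mathcal{N}]$; the Grothendieck simultaneous resolution being \'etale over the regular semisimple locus then gives the generic isomorphism you need. Step~3 is routine.
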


On the other hand, for any Cartan subalgebra $\Liec$ of $\Lieg$ and any Borel subalgebra $\Lieb_x$ containing $\Liec$, the composition 
  \[ \Liec \to \Lieb_x \to \Lieb_x / \Lien_x \simeq \Gamma(X,\LLieh) = \Lieh \] 
is an isomorphism, which does depend on the choice of $x \in X$. Both this isomorphism and the dual isomorphism $\Lieh^* \to \Liec^*$ are called \emph{specialization} at $x$. For a fixed $\Liec$ and two different points in $X$, the resulting specialization isomorphisms differ by composition with an element of Weyl group.

Any $\lambda \in \Lieh^*$ determines a homomorphism from $\Uh$ to $\C$. Let $I_\lambda$ be the kernel of the homomorphism $\Uh \to \C$ determined by $\lambda - \rho$. Then $\gamma\pb(I_\lambda)$ is a maximal ideal in $\Zg$ and $\gamma\pb(I_\lambda) = \gamma\pb(I_\sigma)$ if and only if $w \cdot \lambda = \sigma$ for some $w \in W$ (here we use the usual $W$-action on $\Lieh$). Thus we can denote the kernel by $J_\chi = \gamma\pb(I_\lambda)$ where $\chi = W \cdot \lambda$ is the $W$-orbit of $\lambda$ in $\Lieh^*$. We denote the corresponding infinitesimal character by $\chi_\lambda: \Zg \to \C$. The sheaf $I_\lambda \Dh$ is a subsheaf of two-sided ideals in $\Dh$, therefore $\Dl = \Dh / I_\lambda \Dh$ is a sheaf of associative algebras. The elements of $J_\chi$ map into the zero section of $\Dl$. Therefore we have a canonical morphism of $\mathcal{U}_\chi := \Ug / J_\chi \Ug = \Ug \otimes_\Zg \C_{\lambda - \rho}$ into $\Gamma(X,\Dl)$.

\begin{proposition}[Theorem 3.2., \cite{Milicic}]\label{prop:Uchi}
The canonical morphism
  \[ \mathcal{U}_\chi \to \Gamma(X,\D_\lambda) \]
is an isomorphism of algebra. Moreover, $H^i(X, \Dl) = 0$ for $i > 0$. 
\end{proposition}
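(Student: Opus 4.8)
The plan is to deduce Proposition~\ref{prop:Uchi} from Proposition~\ref{prop:Dh} by a base-change and vanishing argument. The key observation is that $\Dl = \Dh/I_\lambda\Dh = \C_{\lambda-\rho}\otimes_{S\Lieh}\Dh$, where $S\Lieh$ acts on $\Dh$ via the central embedding $\LLieh\hookrightarrow\Dh$ of Proposition~\ref{prop:Dh}. So the strategy is to take the Koszul resolution of the one-dimensional $S\Lieh$-module $\C_{\lambda-\rho}$ by free $S\Lieh$-modules (since $\Lieh$ is abelian, $S\Lieh$ is a polynomial ring and $I_\lambda$ is generated by a regular sequence of $\dim\Lieh$ elements), tensor it over $S\Lieh$ with $\Dh$, and take hypercohomology on $X$.

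First I would set up the Koszul complex $K_\bullet(\lambda)$ resolving $\C_{\lambda-\rho}$ over $S\Lieh$, and tensor with $\Dh$ to get a complex $K_\bullet(\lambda)\otimes_{S\Lieh}\Dh$ of sheaves on $X$ whose only nonzero cohomology sheaf is $\Dl$ in degree $0$ — this uses flatness of $\Dh$ over its central subalgebra $S\Lieh$, which follows because $\Dh$ is, locally on $X$, a free $S\Lieh$-module (the PBW-type filtration on $\Dh$ identifies $\graded\Dh$ with functions on a vector bundle that is free over $\graded S\Lieh = S\Lieh$). Each term $K_i(\lambda)\otimes_{S\Lieh}\Dh$ is a finite direct sum of copies of $\Dh$, so by Proposition~\ref{prop:Dh} it has vanishing higher cohomology and $H^0$ equal to the corresponding direct sum of copies of $\Ugh$. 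Running the hypercohomology spectral sequence, the $E_1$ page is concentrated in a single row, so it degenerates, giving $H^i(X,\Dl)=H^i$ of the complex of global sections; the latter is $K_\bullet(\lambda)\otimes_{S\Lieh}\Gamma(X,\Dh)=K_\bullet(\lambda)\otimes_{S\Lieh}\Ugh$. Since $\Ugh=\Ug\otimes_{\Zg}S\Lieh$ is free (hence flat) over $S\Lieh$ — this is where one uses that $S\Lieh$ is free over $\Zg\cong(S\Lieh)^{W,\cdot}$, as recalled just before Proposition~\ref{prop:Dh} — this Koszul complex has cohomology only in degree $0$, equal to $\C_{\lambda-\rho}\otimes_{S\Lieh}\Ugh = \Ug\otimes_{\Zg}\C_{\lambda-\rho}=\U_\chi$. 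This simultaneously yields the isomorphism $\U_\chi\xrightarrow{\sim}\Gamma(X,\Dl)$ (compatibly with the canonical map, which one checks is the natural one induced by $\Ug\to\Gamma(X,\Dh)$) and the vanishing $H^i(X,\Dl)=0$ for $i>0$.

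The main obstacle I expect is the flatness bookkeeping: verifying that $\Dh$ is flat over the central $S\Lieh$ — best done via the associated graded, identifying $\graded\Dh$ with the pushforward of $\OO$ on a twisted (co)tangent-type bundle and checking it is a free $S\Lieh$-module locally — and, on the algebra side, that $\Ugh$ is flat (in fact free) over $S\Lieh$, which is exactly Kostant's freeness of $\Ug$ over $\Zg$ transported along the Harish-Chandra isomorphism. A secondary point requiring care is the identification of the degree-zero part of the hypercohomology computation with the \emph{canonical} morphism $\U_\chi\to\Gamma(X,\Dl)$ described before the proposition, rather than just some abstract isomorphism; this is a diagram chase using functoriality of the Koszul resolution and the fact that the map $\Ug\to\Gamma(X,\Dh)$ of Proposition~\ref{prop:Dh} is $S\Lieh$-linear and reduces mod $I_\lambda$ to the canonical map. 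Once these are in place, the degeneration of the spectral sequence and the vanishing are formal.
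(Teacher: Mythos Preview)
The paper does not supply its own proof of this proposition: it is quoted verbatim as Theorem~3.2 of \cite{Milicic} and simply cited without argument, exactly as the preceding Proposition~\ref{prop:Dh} is cited from the same source. So there is no proof in the paper to compare your proposal against.

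That said, your proposed argument is correct and is essentially the standard proof one finds in Mili{\v{c}}i{\'c}'s notes: reduce to Proposition~\ref{prop:Dh} via the Koszul resolution of $\C_{\lambda-\rho}$ over $S\Lieh$, use flatness of $\Dh$ over the central copy of $S\Lieh$ to see that tensoring gives a resolution of $\Dl$, and then invoke the vanishing $H^i(X,\Dh)=0$ and Kostant's freeness of $\Ug$ over $\Zg$ to compute global sections. Your identification of the two flatness inputs (sheaf-level for $\Dh$, algebra-level for $\Ugh$) is exactly the right bookkeeping. The only place worth tightening is the passage from ``$\graded\Dh$ is free over $S\Lieh$'' to ``$\Dh$ is flat over $S\Lieh$'': this is the standard filtered-to-graded lifting of flatness (or, equivalently, the observation that the generators of $I_\lambda$ remain a regular sequence on $\Dh$ because they are so on the associated graded), and it goes through here without difficulty since the PBW filtration is exhaustive with $\graded\Dh$ locally polynomial.
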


Let $\Mod(\mathcal{U}_\chi)$ be the abelian category of $\mathcal{U}_\chi$-modules. It is the same as the category of $\Ug$-modules with infinitesimal characters determined by $\chi$. We also have the abelian category $\Mod(\Dl)$ of quasi-coherent left $\Dl$-modules over $X$. The Beilinson-Bernstein localization theorem (\cite{BeilinsonBernstein}) is stated as follows. 

\begin{theorem}
The global section functor $\Gamma: \Mod(\Dl) \to \Mod(\mathcal{U}_\chi)$ is an equivalence of abelian categories if $\lambda$ is dominant and regular. The inverse functor is $\Delta_\lambda: \Mod(\mathcal{U}_\chi) \to \Mod(\Dl)$, given by $\Delta_\lambda(V) = \Dl \otimes_{\mathcal{U}_\chi} V$, for any $V \in \Mod(\mathcal{U}_\chi)$.
\end{theorem}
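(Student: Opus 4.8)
The plan is to prove the theorem in the two classical steps --- exactness of $\Gamma$ when $\lambda$ is dominant, and faithfulness of $\Gamma$ when $\lambda$ is moreover regular --- and to deduce the equivalence formally from the adjunction. I would first set up the latter: via the algebra isomorphism $\mathcal{U}_\chi \xrightarrow{\sim} \Gamma(X,\Dl)$ of Proposition~\ref{prop:Uchi}, the sheaf $\Dl$ is a $(\Dl,\mathcal{U}_\chi)$-bimodule, and tensor--hom adjunction gives, for $V \in \Mod(\mathcal{U}_\chi)$ and $\M \in \Mod(\Dl)$,
\[
  \Hom_{\Dl}(\Delta_\lambda V, \M) = \Hom_{\Dl}(\Dl \otimes_{\mathcal{U}_\chi} V, \M) \cong \Hom_{\mathcal{U}_\chi}(V, \Gamma(X,\M)),
\]
so $\Delta_\lambda$ is left adjoint to $\Gamma$, with unit $\eta\colon \operatorname{id} \to \Gamma\Delta_\lambda$ and counit $\epsilon\colon \Delta_\lambda\Gamma \to \operatorname{id}$. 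Granting that $\Gamma$ is exact, the composite $\Gamma\Delta_\lambda$ is right exact, commutes with direct sums, and sends $\mathcal{U}_\chi$ to $\Gamma(X,\Dl) = \mathcal{U}_\chi$; applying it to a free presentation of $V$ shows $\eta$ is an isomorphism. For the counit, fix $\M$ and put $\mathcal{K} = \ker\epsilon_\M$, $\mathcal{Q} = \operatorname{coker}\epsilon_\M$; the triangle identity $\Gamma(\epsilon_\M) \circ \eta_{\Gamma(\M)} = \operatorname{id}$, with $\eta$ an isomorphism, forces $\Gamma(\epsilon_\M)$ to be an isomorphism, so exactness of $\Gamma$ gives $\Gamma(\mathcal{K}) = \Gamma(\mathcal{Q}) = 0$ and faithfulness of $\Gamma$ gives $\mathcal{K} = \mathcal{Q} = 0$. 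Hence $\epsilon$ is an isomorphism and $(\Delta_\lambda, \Gamma)$ is an inverse equivalence, and the whole statement is reduced to exactness and faithfulness of $\Gamma$.

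For \textbf{exactness} when $\lambda$ is dominant --- equivalently, the vanishing $H^i(X,\M) = 0$ for $i > 0$ and all quasi-coherent $\Dl$-modules $\M$, i.e.\ the $\Dl$-affineness of $X$ --- I would argue as follows. Since cohomology commutes with filtered colimits on the noetherian scheme $X$ and every quasi-coherent $\Dl$-module is the filtered union of its coherent $\Dl$-submodules, one reduces to $\M$ coherent; such $\M$ is generated over $\Dl$ by a coherent $\OO_X$-submodule, and a good filtration compatible with the order filtration of $\Dl$ then has graded pieces that are coherent quotients of sheaves $\Sym^p TX \otimes \mathcal{E}$, with $\mathcal{E}$ a fixed coherent $\OO_X$-module. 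The vanishing now rests on two geometric inputs, both of which use dominance of $\lambda$ essentially: the $\Gamma$-acyclicity of the universal sheaf $\Dh$ from Proposition~\ref{prop:Dh}, which allows $\M$ to be placed in a flat family over $\Lieh^*$ and $\lambda$ to be pushed deep into the dominant chamber; and Serre-type vanishing for a deeply dominant parameter, applied through the moment map $\mu\colon T^*X \to \Lieg^*$, which is projective onto the affine nilpotent cone and along which the twisting bundle attached to a sufficiently dominant weight becomes relatively ample. Running the exact sequences of the good filtration and passing to the colimit then yields $H^{>0}(X,\M) = 0$. I expect this $\Dl$-affineness to be the main obstacle and the technical heart of the proof.

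For \textbf{faithfulness} when $\lambda$ is dominant and regular, it suffices to show that any nonzero coherent $\Dl$-module $\mathcal{L}$ has $\Gamma(X,\mathcal{L}) \neq 0$, since every nonzero quasi-coherent $\Dl$-module contains a nonzero coherent submodule. Exactness of $\Gamma$ gives $\Gamma(\M') \hookrightarrow \Gamma(\M) \twoheadrightarrow \Gamma(\M'')$ for each short exact sequence $0 \to \M' \to \M \to \M'' \to 0$; using this together with the $\PP$-fibrations of $X$ over the partial flag varieties attached to simple roots and the intertwining functors relating $\Mod(\D_\lambda)$ and $\Mod(\D_{s_\alpha \cdot \lambda})$, one shrinks the support of $\mathcal{L}$ until the problem becomes a rank-one computation on $\PP$. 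There the relevant $\D$-module has nonzero global sections precisely when $\langle\lambda,\alpha^\vee\rangle \neq 0$, so the regularity of $\lambda$ is exactly what is needed --- and when $\langle\lambda,\alpha^\vee\rangle = 0$ there do exist nonzero $\Dl$-modules killed by $\Gamma$, as already $\OO_{\PP}(-1)$ shows. Combining exactness and faithfulness with the adjunction argument above completes the proof.
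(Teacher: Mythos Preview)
The paper does not prove this theorem at all: it is the Beilinson--Bernstein localization theorem, stated with attribution to \cite{BeilinsonBernstein} and then used as a black box, with a pointer to \cite{Milicic} for further discussion. So there is no ``paper's own proof'' to compare against; your proposal is a sketch of the classical proof, and should be assessed on its own merits.

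Your formal reduction via the adjunction is clean and correct: once $\Gamma$ is exact and faithful, the unit and counit arguments you give do force the equivalence. The weaknesses are in the two substantive steps. For \emph{exactness}, the decisive tool in Beilinson--Bernstein is not a flat-family or semicontinuity argument over $\Lieh^*$, but tensoring with finite-dimensional $G$-modules: given a coherent $\Dl$-module $\M$, one tensors with an irreducible $G$-module of suitably large highest weight $\nu$ to obtain a $\D_{\lambda+\nu}$-module filtered by translates, and Serre/Kempf vanishing at the very dominant parameter $\lambda+\nu$ then feeds back, via the filtration and a direct-summand argument, to vanishing at $\lambda$. Your sketch gestures at ampleness along the Springer map but does not supply this translation mechanism, without which there is no passage from ``sufficiently dominant'' to ``all dominant.'' For \emph{faithfulness}, the same tensoring trick is what is actually used: one produces, for nonzero $\M$, a nonzero map $\OO(\mu) \to \M$ and then translates to extract a nonzero global section. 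Your proposed route via intertwining functors and ``shrinking the support to a rank-one computation on $\PP$'' is not how the argument goes; intertwining functors are derived equivalences that change the parameter $\lambda$, not devices for reducing support, and the reduction you describe does not obviously terminate in the claimed $\PP^1$ calculation. The $\OO_{\PP^1}(-1)$ counterexample at a singular wall is correct in spirit, but the mechanism you propose to reach it is not.

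In short: right architecture, but both hard steps are missing the key lemma (tensoring with finite-dimensional $G$-modules), and the substitutes you offer are either too vague or do not work as stated.
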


When $\lambda$ is regular but not dominant, the Beilinson-Bernstein localization theorem still holds if one replaces the abelian categories by their derived categories and the functors by their derived versions. When $\lambda$ is singular, the situation is more complicated since there can be nontrivial $\D$-modules with trivial sheaf cohomology groups. See \cite{Milicic} for details for example.


The localization of Harish-Chandra modules of the pair $(\Lieg,K)$ are \emph{Harish-Chandra sheaves}, which are coherent $\Dl$-modules with $K$-equivariant structures which is compatible with the $\Dl$-module structure in the same as Harish-Chandra modules are defined. For a precise definition, see \cite{BernsteinLunts}. The Harish-Chandra sheaves form an abelian category $\Mod_{coh}(\Dl,K)$. Denote by $\Mod(\mathcal{U}_\chi,K)$ the category of Harish-Chandra modules whose $\Ug$-module structure factors through $\mathcal{U}_\chi$. Then the equivariant version of the Beilinson-Bernstein localization theorem reads as

\begin{theorem}
  Let $\lambda \in \Lieh^*$ be dominant and regular. Then the localization functor $\Gamma:  \Mod_{coh}(\Dl,K) \to \Mod(\mathcal{U}_\chi,K)$ is an equivalence of categories. Its inverse is $\Delta_\lambda$.
\end{theorem}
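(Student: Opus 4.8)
The plan is to deduce everything from the non-equivariant localization theorem stated just above, by carrying the $K$-equivariant structures through the functors $\Gamma$ and $\Delta_\lambda$. The forgetful functors $\Mod_{coh}(\Dl,K)\to\Mod(\Dl)$ and $\Mod(\mathcal{U}_\chi,K)\to\Mod(\mathcal{U}_\chi)$ are exact, faithful, and reflect isomorphisms (the inverse of a bijective $K$-equivariant map is again $K$-equivariant), so it suffices to verify: (i) $\Delta_\lambda$ sends $\Mod(\mathcal{U}_\chi,K)$ into $\Mod_{coh}(\Dl,K)$; (ii) $\Gamma$ sends $\Mod_{coh}(\Dl,K)$ into $\Mod(\mathcal{U}_\chi,K)$; and (iii) the unit $V\xrightarrow{\sim}\Gamma(X,\Delta_\lambda(V))$ and the counit $\Delta_\lambda(\Gamma(X,\M))\xrightarrow{\sim}\M$ of the non-equivariant equivalence are morphisms in the respective equivariant categories. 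Granting (i) and (ii), point (iii) is a routine direct check: the unit sends $v\mapsto 1\otimes v$, which is $K$-equivariant for the diagonal $K$-action on $\Dl\otimes_{\mathcal{U}_\chi}V$ since $1\in\OO_X\subset\Dl$ is $K$-invariant, while the counit $d\otimes m\mapsto d\cdot m$ is $K$-equivariant because the $\Dl$-action on $\M$ commutes with the $K$-action; being isomorphisms of the underlying objects, they are isomorphisms in the equivariant categories by the previous remark.

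For (i): the sheaf of rings $\Dl$ on $X$ is $G$-equivariant, being a $G$-stable quotient of the $G$-equivariant sheaf $\Dh=\UU/\UU\LLien$, and the algebra homomorphism $\mathcal{U}_\chi\to\Gamma(X,\Dl)$ is equivariant for the adjoint action of $K\subset G$. Hence for $V\in\Mod(\mathcal{U}_\chi,K)$ the localization $\Delta_\lambda(V)=\Dl\otimes_{\mathcal{U}_\chi}V$ inherits a diagonal $K$-action compatible with its $\Dl$-module structure; a short computation, using that $\Liek$ acts on the image of $\Lieg$ in $\Dl$ by the adjoint action while $V$ is a $\Dl$-module via $\mathcal{U}_\chi$, shows that the differential of this $K$-action agrees with the action of $\Liek\subset\Lieg$, so $\Delta_\lambda(V)$ is a genuine $K$-equivariant $\Dl$-module. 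Coherence is automatic, since a Harish-Chandra module is finitely generated over $\Ug$, hence over $\mathcal{U}_\chi$, so $\Delta_\lambda(V)$ is a quotient of a finite free $\Dl$-module.

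Step (ii) is the only point that is not purely formal. Given $\M\in\Mod_{coh}(\Dl,K)$, the non-equivariant theorem together with the map $\mathcal{U}_\chi\to\Gamma(X,\Dl)$ makes $\Gamma(X,\M)$ a $\mathcal{U}_\chi$-module, and one must promote the weak $K$-equivariant structure of $\M$ to a locally finite rational $K$-action on global sections. Choose a $K$-stable coherent $\OO_X$-submodule $\F_0\subseteq\M$ that generates $\M$ over $\Dl$ (any coherent subsheaf of a $K$-equivariant coherent sheaf lies in a $K$-stable one, by local finiteness of rational representations), and put $\M_i=\Dl^{\leq i}\cdot\F_0$ using the order filtration of $\Dl$; this gives an exhaustive filtration $\M=\bigcup_i\M_i$ by $K$-stable coherent $\OO_X$-submodules. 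Since $X$ is projective, each $\Gamma(X,\M_i)$ is a finite-dimensional rational $K$-module, and because $\Gamma(X,-)$ commutes with filtered colimits of sheaves on the Noetherian scheme $X$, we get $\Gamma(X,\M)=\varinjlim_i\Gamma(X,\M_i)$ as a locally finite rational $K$-representation. Its differential agrees with the $\Liek$-action coming from $\mathcal{U}_\chi$ by the compatibility built into the equivariant structure of $\M$, and $\Gamma(X,\M)$ is finitely generated over $\Ug$ because, under the non-equivariant equivalence, coherent $\Dl$-modules correspond to finitely generated $\mathcal{U}_\chi$-modules. Hence $\Gamma(X,\M)\in\Mod(\mathcal{U}_\chi,K)$.

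Combining (i)--(iii) with the non-equivariant localization theorem shows that $\Gamma$ and $\Delta_\lambda$ restrict to mutually inverse equivalences between $\Mod_{coh}(\Dl,K)$ and $\Mod(\mathcal{U}_\chi,K)$. The only step requiring genuine argument beyond the non-equivariant statement is the construction of the algebraic $K$-action on $\Gamma(X,\M)$ in (ii) via the good-filtration/colimit device; all the remaining verifications amount to routine bookkeeping of equivariant structures.
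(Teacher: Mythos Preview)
The paper does not prove this theorem at all: it appears in \S\ref{sec:Dmod} (``Review of Harish-Chandra sheaves'') as a statement of a known result, with no argument given. So there is no ``paper's own proof'' to compare against.

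Your proof strategy is correct and is the standard way one deduces the equivariant statement from the non-equivariant Beilinson--Bernstein theorem stated just above it. The three steps (i)--(iii) are the right decomposition, and the only non-formal point is exactly the one you identify: producing a locally finite rational $K$-action on $\Gamma(X,\M)$ via a $K$-stable good filtration and the fact that $X$ is projective. One small remark: when you invoke the non-equivariant equivalence to conclude that $\Gamma(X,\M)$ is finitely generated over $\mathcal{U}_\chi$, you are implicitly using that under the equivalence $\Mod(\Dl)\simeq\Mod(\mathcal{U}_\chi)$ coherent objects correspond to finitely generated ones; this is true (since $\Dl$ is locally Noetherian and $\Gamma$, $\Delta_\lambda$ are mutually inverse exact functors), but it is worth saying explicitly, as the non-equivariant theorem in the paper is stated only for the full quasi-coherent categories.
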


\subsection{Standard Harish-Chandra sheaves}\label{subsec:HC}

We recall the construction of standard Harish-Chandra sheaves. Let $Y$ be any smooth subvariety of $X$. Denote the embedding by $i: Y \hookrightarrow X$ and the ideal sheaf of $Y$ by $\II_Y$. 

\begin{definition}\label{defn:transfer_undeformed}
  \begin{enumerate}
    \item
      The \emph{sheaf of twisted differential operators over $Y$ induced by $i$ and $\lambda$} is
      \[ \D^i_\lambda := \{ A \in \Dl ~|~ A \cdot \II_Y \subset \II_Y \cdot \Dl \} / \II_Y \cdot \Dl.  \]
     \item
       The \emph{transfer bimodule for $\Dl$-modules} is 
         \[ \DY := i\pb\D_\lambda \otimes_{i\pb \OO_{X}} \omega_{Y/X},  \]
        where $\omega_{Y/X} = \omega_X\pb \otimes_{i\pb \OO_X} \omega_Y$ is the relative canonical bundle of $Y$ in $X$.
  \end{enumerate}
\end{definition}

\begin{lemma}[Claim 4.11., \cite{Chang}]
  The transfer bimodule $\DY$ is an $i\pb \Dl$-$\Dl^i$ bimodule.
\end{lemma}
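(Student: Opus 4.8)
The statement has three parts: a left $i^{-1}\Dl$-module structure on $\DY$, a right $\Dl^i$-module structure, and the commutativity of the two. The left one is immediate and carries no content; the substance is the right $\Dl^i$-action.

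For the left structure I would first record the identification
\[
  \DY \;=\; i^{-1}\Dl\otimes_{i^{-1}\OO_X}\omega_{Y/X} \;\cong\; i^{-1}\!\bigl(\Dl/\Dl\,\II_Y\bigr)\otimes_{\OO_Y}\omega_{Y/X},
\]
obtained by pushing the right $i^{-1}\OO_X$-module structure on $i^{-1}\Dl$ through the surjection $i^{-1}\OO_X\twoheadrightarrow\OO_Y$ with kernel $i^{-1}\II_Y$ and using that $\omega_{Y/X}$ is an $\OO_Y$-module; here $\Dl\,\II_Y$ is the \emph{left} ideal generated by $\II_Y\subset\OO_X\subset\Dl$. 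The left $i^{-1}\Dl$-action is then left multiplication on the factor $\Dl/\Dl\,\II_Y$; it is well defined because $\Dl\,\II_Y$ is a left ideal, and it commutes with the $i^{-1}\OO_X$-balancing by associativity in $\Dl$.

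For the right $\Dl^i$-action the naive recipe ``right-multiply by a representative $A\in\Dl$ of a class in $\Dl^i$'' does not work directly: it fails to respect the tensor relation $Pf\otimes\mu = P\otimes f\mu$ over $i^{-1}\OO_X$, and it does not obviously annihilate the right ideal $\II_Y\Dl$ that one quotients by in $\Dl^i$. In both cases the obstruction is a single first-order commutator $[A,f]$ with $f\in\II_Y$, and it is precisely this term that must be absorbed into a compensating first-order (Lie-derivative-type) action on the line bundle $\omega_{Y/X}$ --- which is exactly why the relative canonical bundle appears in the definition of $\DY$. Conceptually, this is the standard construction of the transfer bimodule: $\DY$ is the side-change of the ``induced module'' $\D^\lambda_{Y\to X}:=i^{-1}(\Dl/\II_Y\Dl)$, the latter being manifestly a bimodule with a \emph{left} $\Dl^i$-action (the subalgebra $N:=\{A\in\Dl\mid A\,\II_Y\subseteq\II_Y\Dl\}$ is by construction the idealizer of the right ideal $\II_Y\Dl$, so left multiplication by $N$ preserves $\II_Y\Dl$ and descends to $\Dl^i=N/\II_Y\Dl$) and a right $i^{-1}\Dl$-action (automatic); the twist by $\omega_{Y/X}$ converts the left $\Dl^i$-action into a right one, and that the outcome is a module over $\Dl^i$ itself (not some twisted variant) is exactly what the definitions of $\Dl^i$ and $\DY$ are calibrated to achieve.

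I expect the genuine work --- and the main obstacle --- to be the verification that the right action so described is well defined and commutes with the left one: that it respects the balancing relation, annihilates $\II_Y\Dl$, and satisfies associativity with the $i^{-1}\Dl$-action. I would carry this out by a local computation. Every twisted sheaf of differential operators on a smooth variety is locally isomorphic to $\D_X$, and the sheaves $\DY$ and $\Dl^i$ together with the structure maps above are defined intrinsically in terms of $(\Dl,\OO_X,\II_Y,\omega_X,\omega_Y)$; hence the module axioms may be checked on an open cover. In adapted coordinates $(x_1,\dots,x_d,y_1,\dots,y_c)$ with $Y=\{y_1=\cdots=y_c=0\}$ and a trivialization $\Dl\cong\D_X$, a direct calculation identifies $\Dl^i$ with $\D_Y$, trivializes $\omega_{Y/X}$ by the section dual to $dy_1\wedge\cdots\wedge dy_c$, and presents $\DY$ as the classical transfer bimodule $\DYX\cong\D_Y\otimes_\C\C[\partial_{y_1},\dots,\partial_{y_c}]$, on which the $(i^{-1}\D_X,\D_Y)$-bimodule structure --- left multiplication on the one side, right multiplication by $\D_Y$ on the other --- is the familiar one; the $\lambda$-twisted statement is then the verbatim parameter-dependent version, as carried out in \cite{Chang}.
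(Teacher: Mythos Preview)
The paper does not prove this lemma: it is stated with attribution to Claim~4.11 of \cite{Chang} and is immediately followed by the definition of $i_+$, with no argument given. So there is no ``paper's own proof'' to compare against.

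Your proposal is the standard construction and is correct in outline. The identification $\DY\cong i^{-1}(\Dl/\Dl\,\II_Y)\otimes_{\OO_Y}\omega_{Y/X}$ is right (the left ideal $\Dl\,\II_Y$ arises because tensoring a right $i^{-1}\OO_X$-module against an $\OO_Y$-module kills the right action of $i^{-1}\II_Y$), and your explanation of why the $\omega_{Y/X}$-twist is what converts the obvious left $\Dl^i$-action on $\Dl/\II_Y\Dl$ into a right action on $\DY$ is the correct conceptual point. The reduction to a local computation where $\Dl$ is trivialized to $\D_X$ is exactly how this is verified in practice, and is presumably what Chang does. One small caution: in your sentence ``the subalgebra $N:=\{A\in\Dl\mid A\,\II_Y\subseteq\II_Y\Dl\}$ is by construction the idealizer of the right ideal $\II_Y\Dl$,'' note that $N$ is not literally the idealizer of $\II_Y\Dl$ (which would require $A\cdot\II_Y\Dl\subseteq\II_Y\Dl$ and $\II_Y\Dl\cdot A\subseteq\II_Y\Dl$); rather $N$ is exactly the set of $A$ whose \emph{left} multiplication preserves $\II_Y\Dl$, which is all you need and all that the definition of $\Dl^i$ gives you. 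This is a phrasing issue, not a mathematical gap.
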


Denote by $\Mod(\Dl^i)$ the category of sheaves of quasi-coherent left $\Dl^i$-modules. Define the direct image functor $i_+: \Mod(\Dl^i) \to \Mod(\Dl)$ by
  \[ i_+ \FF = i_*(\DQ \otimes_{\Dl^i} \FF).  \]

Recall the following result due to Matsuki \cite{Matsuki1}. 

\begin{lemma}\label{lemma:KCartan}
  Let $\Lieb$ be a Borel subalgebra of $\Lieg$ and $N$ the unipotent radical of the Borel subgroup $B$ of $G=\mathrm{Int}(\Lieg)$ corresponding to $\Lieb$. Then $\Lieb$ contains a $\theta$-stable Cartan subalgebra $\Liec$. Morevoer, all $\theta$-stable Cartan subalgebras of $\Lieb$ are conjugate by $K \cap N$.
\end{lemma}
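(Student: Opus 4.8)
The plan is to reduce the statement to the structure theory of the $\theta$-stable connected solvable subgroup $R := B \cap \theta(B)$ of $G$ (here $\theta$ also denotes the induced involution of $G = \mathrm{Int}(\Lieg)$, and $\theta(B)$ the Borel subgroup with Lie algebra $\theta(\Lieb)$). The one external input I would invoke is the classical fact that any two Borel subgroups of $G$ share a common maximal torus --- this follows from the Bruhat decomposition, since the diagonal $G$-orbit of $([\Lieb],[\theta(\Lieb)])$ on $X \times X$ meets the locus of pairs of Borel subalgebras containing a fixed common maximal torus. Applied to $B$ and $\theta(B)$ it shows that $R$ (connected solvable, as an intersection of two Borel subgroups) contains a maximal torus $T_{0}$ of $G$. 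Its unipotent radical $N_{R} = R \cap N$ is $\theta$-stable, and the conjugation action of $N_{R}$ on the set $\mathcal{T}_{R}$ of maximal tori of $R$ is \emph{simply transitive}: transitivity is the standard conjugacy of maximal tori of a connected solvable group under its unipotent radical, while freeness holds because any element of $N_{R}$ centralizing $T_{0}$ lies in $C_{G}(T_{0}) = T_{0}$, hence in $T_{0} \cap N_{R} = \{1\}$.

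For the existence of a $\theta$-stable Cartan subalgebra in $\Lieb$, observe that $\theta$ permutes $\mathcal{T}_{R}$ compatibly with the $N_{R}$-action, making $\mathcal{T}_{R}$ an $N_{R}$-torsor with an involution lifting $\theta$; the existence of a $\theta$-fixed point is governed by a class in the nonabelian cohomology set $H^{1}(\langle\theta\rangle, N_{R})$. Concretely, writing $\theta(T_{0}) = \Ad(n_{0})T_{0}$, applying $\theta$ again and using freeness gives $\theta(n_{0})n_{0} = 1$ (the cocycle condition), and what is needed is some $m \in N_{R}$ with $n_{0} = \theta(m)^{-1}m$; then $\Ad(m)T_{0}$ is $\theta$-stable and $\Liec := \Lie\bigl(\Ad(m)T_{0}\bigr) \subseteq R \subseteq \Lieb$ is the required Cartan subalgebra of $\Lieg$. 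Here characteristic zero is essential: $H^{1}(\langle\theta\rangle, N_{R})$ vanishes because $N_{R}$ is a connected unipotent group over $\C$, which one sees by an induction along the descending central series reducing to $\mathbb{G}_{a}$, where the cohomology of the finite group $\langle\theta\rangle$ with uniquely divisible coefficients is zero. (One could instead cite the general fact that a semisimple automorphism of a connected linear algebraic group stabilizes some maximal torus.)

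The conjugacy assertion is then formal. Given $\theta$-stable Cartan subalgebras $\Liec_{1}, \Liec_{2} \subseteq \Lieb$ of $\Lieg$, we have $\Liec_{i} = \theta(\Liec_{i}) \subseteq \theta(\Lieb)$, so $\Liec_{1}, \Liec_{2} \subseteq \Lieb \cap \theta(\Lieb) = \Lie(R)$, i.e.\ both are Lie algebras of maximal tori of $R$. By simple transitivity there is a \emph{unique} $n \in N_{R}$ with $\Ad(n)\Liec_{1} = \Liec_{2}$; applying $\theta$ and using that $\Liec_{1}$, $\Liec_{2}$ and $N_{R}$ are $\theta$-stable gives $\Ad(\theta(n))\Liec_{1} = \Liec_{2}$ as well, so uniqueness forces $\theta(n) = n$, i.e.\ $n \in N_{R}^{\theta} \subseteq N \cap K$. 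Hence $\Liec_{1}$ and $\Liec_{2}$ are conjugate under $K \cap N$, as claimed. The only genuinely delicate point in this scheme is the existence half: one needs both the common-maximal-torus fact for $B \cap \theta(B)$ and the vanishing of $H^{1}(\langle\theta\rangle, N_{R})$ (equivalently the fixed-torus statement for semisimple automorphisms), the latter really using that the ground field is $\C$; once these are available and one is inside the $N_{R}$-torsor picture, both halves of the lemma follow immediately.
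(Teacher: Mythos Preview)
The paper does not supply its own proof of this lemma; it is quoted as a result of Matsuki, so there is no in-paper argument to compare against. Your proof is correct and is essentially the standard one via the $\theta$-stable connected solvable group $R = B \cap \theta(B)$.

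One small slip in the freeness argument: you write ``any element of $N_R$ centralizing $T_0$'' where the relevant hypothesis is \emph{normalizing} $T_0$ (i.e.\ $nT_0n^{-1} = T_0$); as written the implication ``$n$ centralizes $T_0 \Rightarrow n \in C_G(T_0)$'' is a tautology and does not establish freeness. The missing step is immediate: if $n \in N_R$ normalizes $T_0$ then $n \in N_G(T_0)$, so some power $n^k$ lies in $T_0$ since $N_G(T_0)/T_0$ is finite; then $n^k$ is both unipotent and semisimple, hence $n^k = 1$, and a unipotent element of finite order in characteristic zero is trivial. With that adjustment the torsor picture and the $H^1(\langle\theta\rangle, N_R)$ vanishing give existence exactly as you say. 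The conjugacy half is clean as written: the unique $n \in N_R$ carrying $\Liec_1$ to $\Liec_2$ is forced to be $\theta$-fixed, and $N_R^{\theta}$ is a connected unipotent group, hence contained in $(G^{\theta})^{\circ} \cap N \subseteq K \cap N$.
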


Now let $Y$ be a $K$-orbit $Q$ in $X$. Then Lemma \ref{lemma:KCartan} implies that $Q$ determines a canonical Cartan involution $\theta_Q$ on the abstract Cartan $\Lieh$ using specialization . Denote by $\Liea_Q$ the $(-1)$-eigenspace of $\theta_Q$ so that $\Lieh = \Liet_Q \oplus \Liea_Q$. Let $\phi$ be an irreducible $K$-homogeneous connection on $Q$. For any given $\lambda \in \Lieh^*$ we write 
  \[ \lc = \lambda|_{\Liet_Q} = \frac{1}{2}(\ld + \theta_Q \ld), \quad \lnc = \lambda|_{\Liea_Q} = \frac{1}{2}(\ld - \theta_Q \ld). \] 
Let $x \in Q$ and $\tau = T_x(\phi)$ be the geometric fiber of $\phi$ at x. Then $\tau$ is an irreducible finite dimensional representation of the stabilizer $K_x$ of $x$ in $K$. The connection $\phi$ is completely determined by this representation of $K_x$ on $T_x(\phi)$ since $\phi$ is $K$-homogeneous. Let $\Liec$ be a $\theta$-stable Cartan subalgebra in the Borel subalgebra $\Lieb_x$. The Lie algebra $\Liek_x = \Liek \cap \Lieb_x$ of $K_x$ is the semidirect product of the \emph{toroidal part} $\Liek \cap \Liec \cong \Liet_Q$ with the nilpotent radical $\Lieu_x = \Liek \cap \Lien_x$ of $\Liek_x$. Let $U_x$ be the unipotent subgroup of $K$ corresponding to $\Lieu_x$. It is the unipotent radical of $K_x$. Let $T$ be the Levi factor of $K_x$ with Lie algebra $\Liet_Q$, then $K_x$ is the semidirect product of $T$ with $U_x$. The representation of $K_x$ in $T_x(\phi)$ is trivial on $U_x$, so it can be viewed as a representation of $T$. Note that the assumption on $\GR$ at the beginning of Section \ref{subsec:Dmod} implies $\phi$ is always a line bundle, but with more general $\GR$ the component groups of Cartan subgroups could be nonabelian and so $\phi$ could be of higher rank.

\begin{definition}
We say that the $K$-connection $\phi$ is \emph{compatible} with $\lambda-\rho$ if the differential $d \tau$ of the $K_x$-representation $\tau$ decomposes into a direct sum of a finite number of copies of the one dimensional representation determined by the restriction of $\lambda-\rho$ (specialized to $\Liec$) to $\Liet_Q$. 
\end{definition}

For $i: Y=Q \hookrightarrow X$, an alternative description of $\Dl^i$ similar to that of $\Dl$ is as follows. Use the same notations $\LLiek$, $\LLieb$, $\LLien$, etc., for their restrictions to $Q$ as $\OO_Q$-modules. Then $\LLiek$, $\LLieb$ and $\LLien$ are still Lie algebroids on $Q$ (the achor maps for $\LLieb$ and $\LLien$ are zero). The sheaf $\LLiek \cap \LLien$ is a subsheaf of Lie ideals of $\LLiek$ over $Q$. Define the sheaf of algebras $\Dh^i = \U (\LLiek / \LLiek \cap \LLien)$ over $Q$. Then $\LLiet_Q = \LLieh \cap (\LLiek / \LLiek \cap \LLien)$ and hence the Lie algebra $\Liet_Q$ lies in the center of (the sections of) the sheaf of algebras $\Dh^i$. Let $\lambda_Q = (\lambda-\rho)|_{\Liet_Q} = \lc - (\rho|_{\Liet_Q}) \in \Liet^*_Q$. Then
      \[  \Dl^i = \Dh^i \otimes_{\U \Liet_Q} \C_{\lambda_Q}. \]
 In particular, $\Dl^i$ only depends on $\lc$. It is clear from this description that a $K$-connection $\phi$ compatible with $\lambda-\rho$ is naturally a (left) module over $\Dl^i$. Regard $\DQ$ as a right $\U\LLiek$-module where the action of $\U\LLiek$ factors through $\Dl^i$. Then we have the identification
         \[ i_+ \phi = i_*(\DQ \otimes_{\U\LLiek} \phi). \]

\begin{definition}
 The \emph{standard Harish-Chandra sheaf} associated to the triple $(\lambda, Q,\phi)$ is the $(\Dl,K)$-module
   \[  \ilq := i_+ \phi. \]
\end{definition}

Under certain conditions, $\ilq$ is irreducible as $\Dl$-module and its sections produces an irreducible $(\Lieg,K)$-module. In general it contains a unique irreducible Harish-Chandra subsheaf (Lemma 6.6, \cite{Milicic}), denoted by $\llq$. With certain assumptions on $\lambda$ and $Q$, the cohomologies of such $\llq$ give a geometric classification of the admissible representations of $G_\R$. See Section 9, \cite{Localization}. 

\subsection{Geometric classification of irreducible Harish-Chandra modules} We are state the results on the classification of irreducible tempered representations in terms of Harish-Chandra modules.

\begin{definition}
 A triple $(\lambda,Q,\phi)$ is called \emph{regular} if $\lambda$ is dominant and $\Gamma(X,\llq) \neq 0$.
\end{definition}

\begin{definition}
  The standard module $\Gamma(X,\ilq)$ associated to the data $(\lambda,Q,\phi)$ is called \emph{basic} if
  \begin{enumerate}
    \item
      $\lambda$ is dominant and $\Gamma(X,\ilq)$ is nontrivial, and
    \item
      $\lnc= \frac{1}{2}(\lambda-\theta_Q \lambda) \in \Lieh^*$ is purely imaginary. 
  \end{enumerate}
  
  A basic standard module $\Gamma(X,\ilq)$ is called \emph{final} if the data $(\lambda, Q,\phi)$ is regular.
\end{definition}

\begin{theorem}[\cite{Chang}, \cite{Localization}]\label{thm:Dtempered}
Every final basic standard module is an irreducible tempered Harish-Chandra module. Conversely, any irreducible tempered Harish-Chandra module is isomorphic to a final basic standard module $\Gamma(X,\ilq)$.
\end{theorem}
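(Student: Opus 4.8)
The plan is to deduce the theorem from two inputs that are by now standard: the geometric classification of irreducible Harish-Chandra modules by regular triples $(\lambda,Q,\phi)$ via $V\mapsto \Gamma(X,\llq)$ (\cite{Localization}), and the representation-theoretic description of the irreducible tempered dual of $\GR$ by Harish-Chandra and Knapp--Zuckerman (\cite{KnappZuckerman_I}, \cite{KnappZuckerman_II}). The bridge is a comparison, valid when $\lambda$ is dominant, of the global sections of a standard Harish-Chandra sheaf supported on a single $K$-orbit with a module parabolically induced from the cuspidal parabolic attached to that orbit. Concretely, by Lemma~\ref{lemma:KCartan} a $K$-orbit $Q$ carries a canonical $\theta$-stable Cartan $\Liec\subset\Lieb_x$, hence a Cartan involution $\theta_Q$ on the abstract Cartan, a splitting $\Lieh=\Liet_Q\oplus\Liea_Q$, and a cuspidal parabolic $P_\R=M_\R A_\R N_\R$ whose split part specializes to $\Liea_Q$. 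Under this dictionary a connection $\phi$ on $Q$ compatible with $\lambda-\rho$ records a one-dimensional character of $\Liet_Q$ — equivalently, via Theorem~\ref{thm:Vogan}, a (limit of) discrete series $V_{M_\R}$ of $M_\R$ with real infinitesimal character determined by $\lc-\rho$, together with a choice among its finitely many extensions (the ``$R$-group'' ambiguity) — plus the character $e^{\lnc}$ of $A_\R$. The key identity, which I would cite from \cite{Chang}, \cite{Localization} or sketch from the description $i_+\phi=i_*(\DQ\otimes_{\U\LLiek}\phi)$ recalled above, is that for dominant $\lambda$
\[
  \Gamma(X,\ilq)\;\cong\;\Ind_{P_\R}^{\GR}\!\left[V_{M_\R}\otimes e^{\lnc}\otimes 1\right],
\]
with $\Gamma(X,\llq)$ corresponding to the Langlands submodule on the right.

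For the forward implication, suppose $(\lambda,Q,\phi)$ is final and basic. Dominance of $\lambda$ together with $\Gamma(X,\ilq)\neq 0$ puts us in the range where $\Gamma(X,-)$ is well-behaved; the regularity built into ``final'' then forces $\Gamma(X,\ilq)$ to be irreducible, hence equal to $\Gamma(X,\llq)$. The basic condition that $\lnc=\tfrac12(\lambda-\theta_Q\lambda)$ is purely imaginary makes $e^{\lnc}$ a unitary character of $A_\R$, so the right-hand side of the identity is $\Ind_{P_\R}^{\GR}$ of a tempered representation of $M_\R A_\R$; by the Harish-Chandra/Knapp--Zuckerman criterion (unitary $A_\R$-parameter) the induced module is tempered, and an irreducible summand of a tempered module is tempered. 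When $\lc$ is singular one additionally uses $\Gamma(X,\ilq)\neq 0$ to ensure $V_{M_\R}$ is a genuine nonzero limit of discrete series.

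For the converse, let $V$ be irreducible tempered. By Casselman's subrepresentation theorem together with the Knapp--Zuckerman classification, $V$ is a direct summand of some $\Ind_{P_\R}^{\GR}[\sigma\otimes e^{i\chi}\otimes 1]$ with $P_\R=M_\R A_\R N_\R$ cuspidal, $\sigma$ a (limit of) discrete series of $M_\R$ with real infinitesimal character, and $\chi$ purely imaginary on $\Liea_\R$. Running the dictionary backwards, $P_\R$ determines a $K$-orbit $Q$; $\sigma$ together with the choice of summand determines a compatible connection $\phi$ on $Q$ for a suitable $\lambda-\rho$ whose $\Liet_Q$-part corresponds to $\sigma$ and whose $\Liea_Q$-part is $\chi$; replacing $\lambda$ by a $W$-conjugate we may take it dominant. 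Then $V\cong\Gamma(X,\llq)$ by the geometric classification; $\lnc=\chi$ is purely imaginary, so the triple is basic; and $\Gamma(X,\llq)\cong V\neq 0$, so the triple is regular, hence final. Therefore $V$ is a final basic standard module.

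I expect the genuinely delicate point to be the precise matching of the geometric notion ``regular/final'' with the irreducibility and $R$-group combinatorics on the Knapp--Zuckerman side: that $(\lambda,Q,\phi)$ being regular is equivalent to $\Ind_{P_\R}^{\GR}[V_{M_\R}\otimes e^{\lnc}]$ being irreducible for the chosen $\phi$, and that each irreducible summand of a reducible such induced module is again $\Gamma(X,\ilq)$ for another admissible connection on the same orbit $Q$. In a clean write-up I would isolate this as the single statement that, for a basic triple, $\Gamma(X,\ilq)=\Gamma(X,\llq)$ if and only if $(\lambda,Q,\phi)$ is regular, and attribute it to \cite{Chang}, \cite{Localization}; by comparison the temperedness estimate itself — unitary $A_\R$-character implies tempered — is the soft ingredient, resting on the leading-exponent criterion for temperedness.
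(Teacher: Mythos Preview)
The paper does not supply its own proof of this theorem; it is stated with attribution to \cite{Chang} and \cite{Localization} and used as input for the rest of the paper. So there is no in-paper argument to compare your proposal against directly.

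Your outline is a reasonable reconstruction of the strategy in those references: identify the global sections of a standard Harish-Chandra sheaf on a $K$-orbit with a parabolically induced module, then match the geometric conditions (basic, final) with the representation-theoretic ones (unitary continuous parameter, irreducibility). The displayed isomorphism $\Gamma(X,\ilq)\cong\Ind_{P_\R}^{\GR}[V_{M_\R}\otimes e^{\lnc}\otimes 1]$ is indeed the core of the argument, and is essentially what the present paper later invokes in Proposition~\ref{prop:ind} and Corollary~\ref{cor:repn_M}.

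One step in the forward direction is phrased in a way that risks circularity. You write that ``the regularity built into `final' then forces $\Gamma(X,\ilq)$ to be irreducible.'' But regularity, as defined just above the theorem, only says that $\lambda$ is dominant and $\Gamma(X,\llq)\neq 0$; it does not by itself imply that the full standard module $\Gamma(X,\ilq)$ is irreducible. That implication --- equivalently, that for a basic triple the inclusion $\llq\hookrightarrow\ilq$ induces an isomorphism on global sections exactly when the triple is regular --- is the substantive content of the theorem and requires the $R$-group/irreducibility analysis of Knapp--Zuckerman translated through the induction comparison. You do flag this correctly at the end as the ``genuinely delicate point,'' and you attribute it to \cite{Chang}, \cite{Localization}, which is appropriate; just be aware that the earlier sentence makes it sound as though irreducibility is a formal consequence of the definition of ``regular,'' which it is not.
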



Mirkovic also proved a similar result in Lemma 4.1.4, \cite{Mirkovic}. He showed that an irreducible tempered representation can be realized as global sections of standard Harish-Chandra sheaves on different $K$-orbits and they are related by intertwining functors.

\subsection{$\KC$-orbits on the flag variety}\label{subsec:Korbit}

The geometry of $\KC$-orbits on the flag variety $X$ is quite involved and plays an important role in representation theory and the study of Kazhdan-Lusztig-Vogan polynomials (\cite{LusztigVogan}, \cite{VoganKLV}). We only state properties necessary for the discussions on minimal $\KC$-types in \S \ref{subsec:minKtypes}. The results needed here are from \S 3 and \S 6 of \cite{Chang}. Some of the original literature are \cite{Matsuki1}, \cite{Matsuki2} and \cite{Springer}. It should be mentioned that the only possibly novel perspectives are contained in Definition \ref{defn:special_revised} and Proposition \ref{prop:secM}, which might be well-known to the experts but we can not find in the literature. 

Give a $K$-orbit $Q$ of $X$, choose a point $x \in Q$ and denote $\Lieb_x$ its corresponding Borel subalgebra of $\Lieg$. By Lemma \ref{lemma:KCartan}, there exists a $\theta$-stable Cartan subalgebra $\Liec$ of $\Lieb_x$. Let $R=R(\Lieg,\Liec)$ be the root system of $(\Lieg,\Liec)$ in $\Liec^*$. The root system $(\Liec, R)$ is canonically identified with the abstract root system $(\Lieh, \Sigma)$ via specialization. Then the Borel subalgebra $\Lieb_x$ together with $\Liec$ determines a positive root system $R^+_x$ in $R$. Now suppose $w$ is some element in the abstract Weyl group $W$. Using the specialization at $x$ we identify $W$ with the Weyl group $W(\Lieg,\Liec)$ associated to $(\Lieg,\Liec)$. Then $w R^+_x$ determines a new Borel subalgebra $\Lieb_y$ corresponding to a point $y \in X$. Let $w Q = K \cdot y \subset X$ be the $K$-orbit of $y$. By Lemma \ref{lemma:KCartan} we see that $w Q$ does not depend on the choice of $x \in Q$ and $\Liec$, even though $y$ does. Recall the following definition.
\begin{definition}
 For a given $\theta$-stable Cartan subaglebra $\Liec$, a root $\alpha$ in $R(\Lieg,\Liec)$ is called \emph{real} if $\theta \alpha = -\alpha$; \emph{complex} if $\theta \alpha \neq \pm \alpha$; \emph{compact} (resp. \emph{noncompact}) \emph{imaginary} if $\theta \alpha = \alpha$ and its corresponding root space lies in $\Liek$ (resp. $\Lies$).
\end{definition}

 Consider the fibration $\pi_\alpha : X \to X_\alpha$ associated to a simple root $\alpha \in R^+_x$. Each fiber of $\pi_\alpha$ is isomorphic to $\PP$. For a simple root $\alpha$ and a subset $Y$ of $X$, we denote by $P_\alpha * Y$ the set $\pi\pb_\alpha(\pi_\alpha(Y))$. In particular, for any point $x \in X$, denote by $X_x = P_\alpha * \{ x \} = \pi\pb_\alpha(\pi_\alpha(x))$ the fiber of $\pi_\alpha$ containing $x$. We denote $Q_\alpha := \pi_\alpha(Q)$, which is a single $K$-orbit in $X_\alpha$.

\begin{lemma}[Lemma 3.6, \cite{Chang}]\label{lemma:flagfiber}
 Consider the $K$-orbit $Q= K \cdot x$.
  \begin{enumerate}
    \item
      If $\alpha$ is a complex simple root and $\theta_Q \alpha \in \Sigma^+$, then $X_x \cap Q = \{ x \}$.
    \item
      If $\alpha$ is a complex simple root and $\theta_Q \alpha \notin \Sigma^+$, then $X_x \cap (K \cdot x) = X_x - \{ y \}$, where $y$ is a unique point in $X_x$.
  \end{enumerate}
\end{lemma}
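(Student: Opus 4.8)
The plan is to localise the whole question to a single $\PP$-fibre of $\pi_\alpha$. Let $P_\alpha\subset G$ be the minimal parabolic subgroup with Lie algebra $\Liep_\alpha=\Lieb_x+\Lieg_{-\alpha}$, so that $X_x=P_\alpha\cdot x$ and $\pi_\alpha(x)$ is the $P_\alpha$-fixed point of $X_\alpha$. Since $\pi_\alpha|_Q\colon Q\to Q_\alpha$ is a surjective $K$-equivariant morphism of homogeneous spaces, its fibre over $\pi_\alpha(x)$ — which is exactly $Q\cap X_x$ — equals the single orbit $M\cdot x$, where $M:=K\cap P_\alpha=\operatorname{Stab}_K(\pi_\alpha(x))$. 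So the lemma reduces to describing the one $M$-orbit $M\cdot x$ inside $X_x\cong\PP$.

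Then I would separate the two cases by a root computation. Fix a $\theta$-stable Cartan $\Liec\subset\Lieb_x$ (Lemma~\ref{lemma:KCartan}) and identify $R(\Lieg,\Liec)$, $R^+_x$, $\theta|_\Liec$ with $\Sigma$, $\Sigma^+$, $\theta_Q$ by specialisation at $x$; fix $0\neq e_{-\alpha}\in\Lieg_{-\alpha}$, which spans $T_xX_x$ inside $T_xX=\Lieg/\Lieb_x$. Comparing root-space components of $e_{-\alpha}-\theta e_{-\alpha}$ in a hypothetical decomposition $e_{-\alpha}=k+b$ with $k\in\Liek=\Lieg^\theta$ and $b\in\Lieb_x$, and using that $\alpha$ is complex, one obtains
\[
  T_xX_x\cap T_xQ=0\iff\theta_Q\alpha\in\Sigma^+,\qquad T_xX_x\subset T_xQ\iff\theta_Q\alpha\notin\Sigma^+,
\]
the two cases corresponding to whether $\theta e_{-\alpha}$, which spans $\Lieg_{-\theta\alpha}$, lies in $\Lieb_x$ or not.

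When $\theta_Q\alpha\in\Sigma^+$: then $Q\cap X_x$ has vanishing Zariski tangent space at $x$, so the orbit $M\cdot x$ is $0$-dimensional; that it is the single point $\{x\}$ rather than a larger finite set is part of the classical rank-one analysis of $K$-orbits under $\pi_\alpha$ (Matsuki, Springer; see \S3 of \cite{Chang}), which also controls the component group of $M$, and is immediate when $K$ is connected. When $\theta_Q\alpha\notin\Sigma^+$: then $\theta e_{-\alpha}\in\Lieg_{-\theta\alpha}$ with $-\theta\alpha\in\Sigma^+$ and $-\theta\alpha\neq\alpha$, so $\theta e_{-\alpha}$ lies in the nilradical $\Lien_y$ of the Borel $\Lieb_y$ at $y:=s_\alpha x$; hence $\xi:=e_{-\alpha}+\theta e_{-\alpha}$ is $\theta$-fixed, lies in $\Liem:=\Liek\cap\Liep_\alpha=\operatorname{Lie}(M)$, and is nilpotent (it sits in $\Lien_y$). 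Thus $t\mapsto\expmap(t\xi)$ is a one-parameter unipotent subgroup of $M$ acting on $X_x\cong\PP$; its induced vector field is nonzero at $x$ (since $e_{-\alpha}\notin\Lieb_x$, while $\theta e_{-\alpha}$ lies in $\Lieb_z$ for every $z\in X_x$), so $\expmap(\C\xi)$ acts simply transitively on $X_x\setminus\{y\}$, with $y$ its unique fixed point — the unique point of $X_x$ whose Borel contains $e_{-\alpha}$. Since $\expmap(\C\xi)\subset K$ this already gives $X_x\setminus\{y\}\subset K\cdot x=Q$. Finally $y\notin Q$: as $\Lieb_y\supset\Liec$ has positive system $s_\alpha R^+_x$, specialisation at $y$ differs from that at $x$ by the reflection $s_\alpha$, whence $\theta_{K\cdot y}=s_\alpha\,\theta_Q\,s_\alpha$; if $y$ lay in $Q$ this would force $\theta_Q\alpha=\pm\alpha$, contradicting complexity. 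So $Q\cap X_x=X_x\setminus\{y\}$ with $y$ unique.

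The geometric heart — the fibre reduction, the root dichotomy, and the tangent-space (resp.\ unipotent-flow) arguments — is routine once the specialisation dictionary is fixed, and the unipotent-flow argument in particular disposes of the second case cleanly even when $K$ is disconnected. I expect the only step needing real input to be the \emph{exact} point count in the first case for disconnected $K$ — that $M\cdot x$ is a single point, not just a finite one — which draws on the rank-one structure theory of $K$-orbits recalled in \S3 of \cite{Chang} (Matsuki, Springer); keeping the $W$-twist in the specialisation isomorphisms straight is the other thing to watch.
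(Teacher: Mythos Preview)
The paper does not give its own proof of this lemma: it is stated with the attribution ``Lemma~3.6, \cite{Chang}'' and used as a black box, so there is nothing in the paper to compare your argument against line by line.

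That said, your proposal is a correct and essentially self-contained proof of the result. The fibre reduction $Q\cap X_x=(K\cap P_\alpha)\cdot x$, the tangent-space dichotomy via the root computation, the unipotent-flow argument in case~(2) using $\xi=e_{-\alpha}+\theta e_{-\alpha}\in\Liek\cap\Lien_y$, and the exclusion of $y$ from $Q$ by comparing $\theta_Q$ with $s_\alpha\theta_Q s_\alpha$ are all sound and constitute exactly the kind of rank-one analysis that underlies Matsuki's and Springer's treatments. Case~(2) is handled completely.

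One small caution on case~(1): your remark that the singleton conclusion is ``immediate when $K$ is connected'' is slightly optimistic, since $M=K\cap P_\alpha$ can be disconnected even for connected $K$; the tangent-space argument alone only gives $\dim(M\cdot x)=0$. You are right that the sharp conclusion $M\cdot x=\{x\}$ comes from the finer orbit analysis in \cite{Chang}, \cite{Matsuki1}, \cite{Springer} --- concretely, from knowing that $\pi_\alpha^{-1}(Q_\alpha)=Q\sqcup s_\alpha Q$ with the open piece $s_\alpha Q$ meeting $X_x$ in the complement of a single point --- and you correctly flag this as the one place where outside input is needed.
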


\begin{corollary}
  If $\alpha$ is a complex simple root and $Q$ is a $K$-orbit of X, then $P_\alpha * Q$ is a disjoint union of the two $K$-orbits $Q$ and $s_\alpha Q$. 
  \begin{enumerate}
    \item
      If $\theta_Q \alpha \in \Sigma^+$, then $\dim Q = \dim (P_\alpha * Q) -1$ and $\dim(s_\alpha Q) = \dim (P_\alpha * Q)$. In this case, the restriction $\pi_\alpha|_{s_\alpha Q}: s_\alpha Q \to Q_\alpha$ is an isomorphism. 
    \item
      If $\theta_Q \alpha \notin \Sigma^+$, then $\dim Q = \dim (P_\alpha * Q)$ and $\dim(s_\alpha Q) = \dim (P_\alpha * Q) - 1$. In this case, the restriction $\pi_\alpha|_{Q}: Q \to Q_\alpha$ is an isomorphism.
   \end{enumerate}
\end{corollary}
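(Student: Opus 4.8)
The plan is to read off everything from Lemma~\ref{lemma:flagfiber} by analysing the $K$-orbit structure of $P_\alpha * Q$ through the $\PP$-fibration $\pi_\alpha$. Since $\pi_\alpha$ is $K$-equivariant, $P_\alpha * Q = \pi_\alpha^{-1}(Q_\alpha)$ is $K$-stable; being the preimage of the single $K$-orbit $Q_\alpha$, it is a $\PP$-bundle over $Q_\alpha$, so $\dim(P_\alpha * Q) = \dim Q_\alpha + 1$, and $\pi_\alpha$ carries any $K$-orbit inside $P_\alpha * Q$ onto $Q_\alpha$. Fix $x_0 \in Q$, put $y_0 = \pi_\alpha(x_0) \in Q_\alpha$, and write $K_{y_0} \supseteq K_{x_0}$ for the stabilizer of $y_0$ in $K$; since $K$ acts transitively on $Q_\alpha$, the $K$-orbits contained in $P_\alpha * Q$ correspond bijectively to the $K_{y_0}$-orbits on the fiber $X_{x_0} \cong \PP$, with $K_{y_0} \cdot x_0 = Q \cap X_{x_0}$.

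First I would locate $s_\alpha Q$ in this picture. Using Lemma~\ref{lemma:KCartan}, choose a $\theta$-stable Cartan $\Liec \subset \Lieb_{x_0}$; the Borel $\Lieb_y$ obtained by applying the simple reflection $s_\alpha$ to the positive system determined by $(\Lieb_{x_0},\Liec)$ again contains $\Liec$ and differs from $\Lieb_{x_0}$ only along $\alpha$, so $y \in X_{x_0}$ and $y \neq x_0$, whence $s_\alpha Q = K \cdot y \subseteq P_\alpha * Q$. Moreover $s_\alpha Q \neq Q$: the canonical Cartan involution attached to $s_\alpha Q$ is $s_\alpha \theta_Q s_\alpha$, which differs from $\theta_Q$ since $\alpha$ is complex, i.e.\ $\theta_Q\alpha \neq \pm\alpha$, so $s_\alpha$ and $\theta_Q$ do not commute. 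It then remains to check that $Q$ and $s_\alpha Q$ exhaust $P_\alpha * Q$, equivalently that $K_{y_0}$ has exactly two orbits on $X_{x_0}$. If $\theta_Q\alpha \notin \Sigma^+$, Lemma~\ref{lemma:flagfiber}(2) gives $K_{y_0} \cdot x_0 = Q \cap X_{x_0} = X_{x_0} \setminus \{y\}$ (the omitted point being the one above, as it is the unique point of $X_{x_0}$ outside $Q$), so the $K_{y_0}$-orbits on the fiber are $X_{x_0}\setminus\{y\}$ and $\{y\}$. If $\theta_Q\alpha \in \Sigma^+$, Lemma~\ref{lemma:flagfiber}(1) gives $K_{y_0} \cdot x_0 = \{x_0\}$, so $x_0$ is $K_{y_0}$-fixed and $K_{y_0} = K_{x_0}$; one then checks that $K_{x_0}$ acts transitively on $X_{x_0}\setminus\{x_0\} \cong \mathbb{A}^1$, because its image in $\Aut(X_{x_0}) = \mathrm{PGL}_2$, which lies in the Borel fixing $x_0$, contains a nontrivial unipotent: for $0 \neq v \in \Lieg_\alpha$ the element $v + \theta v \in \Liek$ lies in $\Lieb_{x_0}$ as soon as $\theta_Q\alpha \in \Sigma^+$, so $\Liek \cap \Lieb_{x_0}$ has nonzero $\Lieg_\alpha$-component. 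Either way $P_\alpha * Q = Q \sqcup s_\alpha Q$. (Alternatively, this decomposition for a complex simple root is part of the standard Matsuki--Springer picture recalled in \S 3 of \cite{Chang} and may be invoked directly.)

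The dimension assertions then follow: the $K$-orbit meeting each fiber of $\pi_\alpha$ in a single point maps bijectively onto $Q_\alpha$, hence has dimension $\dim Q_\alpha = \dim(P_\alpha*Q) - 1$, while the one meeting each fiber in an affine line has dimension $\dim Q_\alpha + 1 = \dim(P_\alpha*Q)$; by the two cases above the former orbit is $Q$ when $\theta_Q\alpha \in \Sigma^+$ and is $s_\alpha Q$ when $\theta_Q\alpha \notin \Sigma^+$. That same orbit — the one of dimension $\dim(P_\alpha*Q) - 1$, i.e.\ a section of $\pi_\alpha|_{P_\alpha*Q}$ — maps to $Q_\alpha$ by a bijective $K$-equivariant morphism, and a bijective $K$-equivariant morphism of $K$-homogeneous varieties is an isomorphism (it forces the isotropy subgroups to coincide), so this restriction is an isomorphism of varieties.

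The step I expect to be the real obstacle is ruling out a third $K$-orbit when $\theta_Q\alpha \in \Sigma^+$ — that is, describing the $K_{y_0}$-action on the $\PP$-fiber precisely enough to see it is transitive on $X_{x_0}\setminus\{x_0\}$; this is the only ingredient going beyond dimension counting and the elementary fact about equivariant maps of homogeneous spaces, and it rests on producing a nontrivial unipotent in the image of $K_{x_0}$ in $\mathrm{PGL}_2$ from the interaction of $\theta$ with $\alpha$. If one prefers, this can be bypassed by importing the two-orbit decomposition from \cite{Chang}, \cite{Matsuki1}, \cite{Springer}, leaving only the routine parts.
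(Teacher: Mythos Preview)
The paper gives no proof for this corollary; it is stated right after Lemma~\ref{lemma:flagfiber} as an immediate consequence. Your argument supplies exactly the details one would expect and is correct, including the one nontrivial point you flag: in the case $\theta_Q\alpha\in\Sigma^+$, the element $v+\theta v$ with $0\neq v\in\Lieg_\alpha$ lies in $\Liek\cap\Lien_{x_0}$, and since $\theta\alpha$ is a positive root different from $\alpha$ the component $\theta v\in\Lieg_{\theta\alpha}$ lies in the nilradical of $\Liep_\alpha$ and hence acts trivially on the fiber $X_{x_0}$; thus the image of $v+\theta v$ in $\mathfrak{pgl}_2$ is the nonzero nilpotent coming from $v$, which spans the one-dimensional unipotent radical of the Borel fixing $x_0$, giving transitivity on $X_{x_0}\setminus\{x_0\}\cong\mathbb{A}^1$.

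One caution: the isomorphism clauses in the printed statement have $Q$ and $s_\alpha Q$ interchanged. In case~(1) one has $\dim Q=\dim(P_\alpha*Q)-1=\dim Q_\alpha$, so it is $\pi_\alpha|_Q:Q\to Q_\alpha$ that is the isomorphism, while $\pi_\alpha|_{s_\alpha Q}$ has one-dimensional fibers; dually in case~(2). Your proof establishes this corrected version (the orbit meeting each fiber in a single point is the section), which is also what is actually used later when the paper assembles the fibration $\pi:Q\to Q_r$ in Lemma~\ref{lemma:fiber_pi}.
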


\begin{definition}
   A positive root $\alpha$ is said to be \emph{$\theta$-stable} (resp. \emph{$(-\theta)$-stable}) if $\theta(\alpha) $ (resp. $-\theta(\alpha)$) is positive. A positive root system is said to be \emph{$\theta$-stable} (resp. \emph{$(-\theta)$-stable}) (\emph{outside real roots})  if $\alpha$ is $\theta$-stable (resp. $(-\theta)$-stable) for any complex positive root $\alpha$. A $K$-orbit $Q$ is said to be \emph{$\theta$-stable} (resp. \emph{$(-\theta)$-stable}) (\emph{outside real roots}) if $R^+_x$ is $\theta$-stable (resp. $(-\theta)$-stable) (outside real roots) for some $x \in Q$ (Surely this definition is independent of the choice of $x$).
\end{definition}

\begin{definition}\label{defn:distinguished}
  A $K$-orbit $Q_b$ is a \emph{distinguished} ($\theta$-stable) orbit associated to a $K$-orbit $Q=K \cdot x$ if $Q_b = K \cdot y$ for some $y \in X$ such that
  \begin{enumerate}
    \item
      There is a $\theta$-stable Cartan subalgebra $\Liec$ in $\Lieb_x \cap \Lieb_y$. Let $R^+_x$ and $R^+_y$ be the positive root systems of $\Liec$ determined by $\Lieb_x$ and $\Lieb_y$ respectively as above;
    \item
      $R^+_y$ is $\theta$-stable outside real roots;
    \item
      All the real roots and $\theta$-stable roots in $R^+_x$ is contained in $R^+_y$.
  \end{enumerate}
\end{definition}

 The following theorem shows that distinguished orbits always exist.

\begin{theorem}\label{thm:distinguished}
  Let $Q = K \cdot x$ be a $K$-orbit.
  \begin{enumerate}
    \item
      There exits a sequence of roots $\alpha_1, \ldots, \alpha_n$ in $R^+_x$ such that, for $i=1, \ldots, n$, $\alpha_i$ is a complex $(-\theta)$-stable simple root in $s_{\alpha_{i-1}} \cdots s_{\alpha_1} R^+_x$. Moreover, the set of non-real roots in $s_{\alpha_n} \cdots s_{\alpha_1} R^+_x$ is $\theta$-stable.
    \item
      Let $\alpha_1, \ldots, \alpha_n$ be as in (1) and denote $w_b = s_{\alpha_n} \cdots s_{\alpha_1}$. Then $Q_b=w_b Q$ is a distinguished orbit associated to $Q$. Moreover, we have
      \[ \cl{Q} = P_{\alpha_1} * \cdots * P_{\alpha_n} * \cl{Q}_b.  \]
  \end{enumerate}
\end{theorem}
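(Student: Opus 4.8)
The plan is to prove both parts by a single induction, using the fibrations $\pi_\alpha$ and the corollary to Lemma \ref{lemma:flagfiber} as the inductive mechanism. For part (1), fix $x \in Q$ and a $\theta$-stable Cartan $\Liec \subset \Lieb_x$. If the set of non-real roots in $R^+_x$ is already $\theta$-stable, there is nothing to do (take $n=0$). Otherwise there exists a complex root $\alpha \in R^+_x$ with $\theta_Q\alpha \notin \Sigma^+$; I would argue that such an $\alpha$ can be chosen \emph{simple}. This is the first technical point: among the complex positive roots that are not $\theta$-stable, pick one that is minimal in the dominance order, and check that minimality forces it to be simple (if $\alpha = \beta + \gamma$ with $\beta,\gamma$ positive, then since $\theta_Q\alpha = \theta_Q\beta + \theta_Q\gamma$ is not positive, at least one of $\theta_Q\beta$, $\theta_Q\gamma$ is not positive, and that summand is a smaller complex root that is not $\theta$-stable — after ruling out the cases where a summand is real or imaginary, which one handles by the standard root-string/$\mathfrak{sl}_2$-triple analysis of how $\theta$ acts on roots attached to a $\theta$-stable Cartan). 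Such an $\alpha$ is then automatically $(-\theta)$-stable in the sense of the definition preceding Theorem \ref{thm:distinguished}. Apply $s_\alpha$: by the corollary, $\dim(s_\alpha Q) = \dim(P_\alpha * Q) - 1 < \dim Q$ when $\theta_Q\alpha \notin \Sigma^+$. So passing from $Q$ to $s_\alpha Q$ strictly decreases the orbit dimension, which gives a termination argument for the induction: after finitely many steps $\alpha_1, \dots, \alpha_n$ we reach an orbit $Q_n = s_{\alpha_n}\cdots s_{\alpha_1} Q$ for which no such complex simple root remains, i.e. $s_{\alpha_n}\cdots s_{\alpha_1} R^+_x$ is $\theta$-stable outside real roots. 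One then still needs that the non-real roots are genuinely $\theta$-stable (not merely the complex ones), but imaginary roots are always $\theta$-fixed, so "$\theta$-stable outside real roots" plus the imaginary case gives "the set of non-real roots is $\theta$-stable".

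For part (2), set $w_b = s_{\alpha_n}\cdots s_{\alpha_1}$ and $Q_b = w_b Q$. I must verify the three conditions of Definition \ref{defn:distinguished}. Condition (2) — that $R^+_y$ is $\theta$-stable outside real roots — is exactly what part (1) delivers, once one checks the compatibility: $w_b R^+_x$, read off from a $\theta$-stable Cartan $\Liec' \subset \Lieb_x \cap \Lieb_y$ via specialization, is the positive system produced by the algorithm, so its complex roots are all $\theta$-stable. Condition (1), the existence of a common $\theta$-stable Cartan in $\Lieb_x \cap \Lieb_y$, I would get by tracking the construction: each step $s_{\alpha_i}$ moves the base point within the $\PP$-fiber $X_{x_{i-1}}$ of $\pi_{\alpha_i}$, and by Lemma \ref{lemma:KCartan} one can keep the same $\theta$-stable Cartan through each fiber move, so $\Liec$ can be chosen to sit in every $\Lieb_{x_i}$, in particular in $\Lieb_x \cap \Lieb_y$. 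Condition (3), that real roots and $\theta$-stable roots of $R^+_x$ survive in $R^+_y$: since each $\alpha_i$ chosen is complex and $(-\theta)$-stable, the reflection $s_{\alpha_i}$ sends $\alpha_i$ to $-\alpha_i$ but I claim it cannot flip a real root or a $\theta$-stable root out of the positive system — this needs a small lemma that reflecting in a complex $(-\theta)$-stable root preserves the set of positive real roots and the set of positive $\theta$-stable roots, which follows because $s_{\alpha_i}$ only changes signs of roots $\beta$ with $\langle \beta, \alpha_i^\vee\rangle \ne 0$, and for such $\beta$ real or $\theta$-stable one checks using $\theta\alpha_i \in \Sigma^+$ that the sign is not flipped (again a case check on $\langle\theta\beta, \theta\alpha_i^\vee\rangle$).

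Finally, the closure formula $\cl{Q} = P_{\alpha_1} * \cdots * P_{\alpha_n} * \cl{Q}_b$: by the corollary, when $\theta_{Q_{i-1}}\alpha_i \notin \Sigma^+$ the orbit $Q_{i-1}$ has the same dimension as $P_{\alpha_i} * Q_{i-1}$ and $\pi_{\alpha_i}$ restricts to an isomorphism $Q_{i-1} \xrightarrow{\sim} (Q_{i-1})_{\alpha_i}$, which gives $P_{\alpha_i} * \cl{Q_{i-1}} = \cl{P_{\alpha_i} * Q_{i-1}} = \cl{Q_{i-1}} \sqcup \cl{s_{\alpha_i} Q_{i-1}} \supseteq \cl{Q_{i-1}}$, and more usefully $P_{\alpha_i} * \cl{s_{\alpha_i}Q_{i-1}} = P_{\alpha_i} * \cl{Q_{i-1}} \supseteq \cl{Q_{i-1}}$; iterating from $i=n$ down to $i=1$ and using $Q_i = s_{\alpha_i} Q_{i-1}$, $Q_n = Q_b$, $Q_0 = Q$ collapses the right-hand side to $\cl{Q}$. (One direction of the equality is the easy inclusion $\cl Q \subseteq P_{\alpha_1}*\cdots*P_{\alpha_n}*\cl{Q_b}$ since applying $\pi_{\alpha_i}^{-1}\pi_{\alpha_i}$ only enlarges; the reverse inclusion is the dimension count showing no new orbit of larger dimension is picked up.)

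The main obstacle I expect is the first technical point in part (1): showing that a complex positive root violating $\theta$-stability can be chosen simple, i.e. the minimality-forces-simplicity argument, which requires a careful case analysis of how $\theta$ interacts with root addition relative to a $\theta$-stable Cartan (in particular ruling out that a "descent" summand is real or noncompact imaginary, where the Cayley-transform picture enters). Everything downstream — the dimension-drop termination, the three conditions, and the closure formula — is then a fairly mechanical unwinding of the corollary to Lemma \ref{lemma:flagfiber} and Lemma \ref{lemma:KCartan}.
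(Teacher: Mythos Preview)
The paper does not actually prove this theorem: its proof consists entirely of citations to Chang's paper (Lemma~6.4 case~(1) for part~(1), and Theorem~6.7 for the closure formula in part~(2), with the first half of~(2) read off from Definition~\ref{defn:distinguished}). So there is no ``paper's own argument'' to compare against; you are reconstructing what is presumably Chang's proof, and your outline is essentially correct.

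A few comments on the details. Your identification of the crux --- that a complex $(-\theta)$-stable \emph{simple} root exists whenever $R^+$ is not $\theta$-stable outside real roots --- is right, and the minimal-height descent you sketch does work, but the case analysis is slightly more delicate than you indicate: when $\beta=\gamma+\delta$ with $\gamma$ simple imaginary (or $\gamma$ simple complex $\theta$-stable) and $\delta$ real, neither summand is a smaller witness, and you must instead observe that $-\theta\beta$ is itself a complex positive root with $\theta(-\theta\beta)<0$ of strictly smaller height, contradicting minimality. For condition~(3) of Definition~\ref{defn:distinguished}, your one-step claim that $s_{\alpha_i}$ preserves positive $\theta$-stable roots is not quite self-contained: you need the joint induction that if $\beta,\theta\beta\in R^+_{i-1}$ then $\beta,\theta\beta\in R^+_i$ (which follows since $\alpha_i$ is $(-\theta)$-stable, so neither $\beta$ nor $\theta\beta$ can equal $\alpha_i$). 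Finally, in the closure formula your line $\cl{P_{\alpha_i}*Q_{i-1}}=\cl{Q_{i-1}}\sqcup\cl{s_{\alpha_i}Q_{i-1}}$ is not literally correct (the closures are nested, not disjoint); the clean statement is $P_{\alpha_i}*\cl{Q_i}=\cl{Q_{i-1}}$, which follows from properness of $\pi_{\alpha_i}$ and $\pi_{\alpha_i}(Q_i)=\pi_{\alpha_i}(Q_{i-1})$, and then the formula drops out by iteration.
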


\begin{proof}
  (1) is case (1) of Lemma 6.4, \cite{Chang}. Then the first part of (2) follows from Definition \ref{defn:distinguished}. The second part of (2) is Theorem 6.7, \cite{Chang}.
\end{proof}

Let $Q_b = K \cdot y$ be the distinguished orbit associated to $Q$ as in Theorem \ref{thm:distinguished}. Then $R^+_y$ is $\theta$-stable outside real roots. Let $S$ be the set of all simple real roots in $R^+_y$, we hence have a fibration $\pi_S: X \to X_S$. Denote $Q_r = \pi_S(Q_b)$. Denote by $P$ the parabolic subgroup corresponding to $z = \pi_S(y)$, by $L$ the Levi factor of $P$ and by $L_s = [L, L]$ the semisimple part of $L$. Then $P$ is $\theta$-stable and $L_s$ is split. Therefore $Q_r$ is a closed $K$-orbit in $X_S$ and $\cl{Q}_b = \pi\pb_S(Q_r)$ ((6.6) of \cite{Chang}). Together with (1) and (2) of Lemma \ref{lemma:flagfiber}, this implies that there exists a natural $K$-equvariant fibration $\pi: Q \to Q_r$.  Let $T$ be the reductive part of the isotropy group $K_x$. then $T_1 = T \cap L_s$ is finite and $K \cap L / T \cong K \cap L_s / T_1$ is an open orbit in 
  \[  \pi\pb (z) \cong L/T \cong L_s / T_1. \]
Let $U$ be the exponential of the span of $E_{\alpha_i} + \theta E_{\alpha_i}$ for $\alpha_i$ in (1) of Theorem \ref{thm:distinguished}  ($E_{\alpha_i}$ is an $\alpha_i$-root vector), then 

\begin{lemma}[(6.8)', \cite{Chang}]\label{lemma:fiber_pi}
  The fiber over $z$ of the fibration $\pi:Q \to Q_r$ is canonically isomorphic to $(K \cap L_s/T_1) \times U$.
\end{lemma}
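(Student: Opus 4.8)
The plan is to reduce the statement to a computation with isotropy subgroups of $K$ and then to build up the fiber along the chain $\alpha_1,\dots,\alpha_n$ of Theorem~\ref{thm:distinguished}. Since $\pi$ is $K$-equivariant and $Q_r=K\cdot z$, for any $x_0\in\pi\pb(z)$ we have $\pi\pb(z)=K_z\cdot x_0\cong K_z/K_{x_0}$, where $K_z$ is the stabilizer of $z$ in $K$ and $K_{x_0}$ is $K$-conjugate to $K_x$ (so its reductive part is a torus isomorphic to $T$ and its unipotent radical is isomorphic to $U_x$). Since $P$ is $\theta$-stable, so is its unipotent radical $N_P$, and $K_z=(K\cap L)\ltimes(K\cap N_P)$. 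It therefore suffices to analyze $K_z/K_{x_0}$, and I expect it to split into an ``$L$-part'' coming from $K\cap L$ and a ``$U$-part'' coming from $K\cap N_P$.

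For the $L$-part I would choose the $\theta$-stable Cartan subalgebra $\Liec$ of $\Lieb_{x_0}$ inside $\Lieb_{x_0}\cap\Lieb_y\subset\Liep_z$, which is possible because the reflections $s_{\alpha_i}$ of Theorem~\ref{thm:distinguished} fix $\Liec$. Then $\Liec\subset\Liel$, and $\Liec\cap\Liel_s$ is a \emph{split} Cartan subalgebra of $\Liel_s$, since by the construction of $Q_b$ all roots of $\Liel_s$ are real. Consequently the image of $K_{x_0}$ under $K_z\to K\cap L$ is the stabilizer $T$ of the Borel subalgebra $\Lieb_{x_0}\cap\Liel$ of $\Liel$, it meets $L_s$ in a finite group $T_1$ (the $2$-torsion subgroup of a maximally split torus), and modulo the center $K\cap L/T\cong K\cap L_s/T_1$ is the open $(K\cap L)$-orbit on the flag variety of $L$, which is the fiber of $\pi_S|_{Q_b}:Q_b\to Q_r$ over $z$. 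This is the case $n=0$.

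For general $n$ I would induct on the length of the chain. The orbit $Q':=s_{\alpha_1}Q$ has the same distinguished orbit $Q_b$, via the shorter chain $\alpha_2,\dots,\alpha_n$, so by induction its fibration $\pi':Q'\to Q_r$ has fiber $(\pi')\pb(z)\cong U'\times(K\cap L_s/T_1)$ with $U'=\exp\bigl(\spn\{E_{\alpha_i}+\theta E_{\alpha_i}:2\le i\le n\}\bigr)$. Because $\alpha_1$ is complex and $(-\theta)$-stable for $Q$, the corollary to Lemma~\ref{lemma:flagfiber} realizes $P_{\alpha_1}*Q$ as a $\PP$-bundle over $\pi_{\alpha_1}(Q)$ having $Q'$ as a section and $Q$ as the complementary $\mathbb{A}^1$-bundle, and the natural fibration of $Q$ factors as $Q\xrightarrow{\ \pi_{\alpha_1}\ }\pi_{\alpha_1}(Q)\xrightarrow{\ \sim\ }Q'\xrightarrow{\ \pi'\ }Q_r$; hence $\pi\pb(z)\to(\pi')\pb(z)$ is an affine-line bundle. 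Since $\theta$ is an involution, $E_{\alpha_1}+\theta E_{\alpha_1}\in\Liek$; following the chain one checks that $\spn\{E_{\alpha_i}+\theta E_{\alpha_i}:1\le i\le n\}$ is a nilpotent subalgebra with exponential $U$, and matching $K_{x_0}$ against the resulting factorization of $K\cap N_P$ upgrades this affine-line bundle to a trivial $U$-direction over the base $K\cap L_s/T_1$, yielding a canonical isomorphism $\pi\pb(z)\cong(K\cap L_s/T_1)\times U$.

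The step I expect to be the real obstacle is precisely this last trivialization together with the identification of the added direction with $U$. One cannot simply trivialize the affine-line bundle using the one-parameter group $\exp\!\bigl(t(E_{\alpha_1}+\theta E_{\alpha_1})\bigr)$: this group is not contained in $K_z$ and so does not preserve the fiber $\pi\pb(z)$. Instead one has to follow the iterated fibration $\cl{Q}=P_{\alpha_1}*\cdots*P_{\alpha_n}*\pi_S\pb(Q_r)$ explicitly and verify, one complex $(-\theta)$-stable root at a time, that each $\alpha_i$ contributes exactly one affine coordinate, keeping the $\theta$-eigenspace bookkeeping consistent by using that the chain reflections fix $\Liec$ and that $R^+_y$ is $\theta$-stable outside real roots. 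This computation is carried out in \S6 of \cite{Chang}.
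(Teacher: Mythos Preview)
The paper does not give its own proof of this lemma: it is stated with the citation ``(6.8)', \cite{Chang}'' and no proof environment follows. In other words, the author simply imports the result from \S6 of Chang's paper, which is exactly the reference you yourself invoke at the end of your proposal. So there is no in-paper argument to compare your approach against.

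That said, your sketch is a reasonable reconstruction of how such a proof would go, and it is broadly consistent with the structure of \S6 of \cite{Chang}: the identification $\pi\pb(z)\cong K_z/K_{x_0}$, the splitting $K_z=(K\cap L)\ltimes(K\cap N_P)$ coming from $\theta$-stability of $P$, the base case $n=0$ giving the open $(K\cap L)$-orbit on the flag variety of $L$, and the induction along the chain $\alpha_1,\dots,\alpha_n$ with each complex $(-\theta)$-stable simple reflection contributing an affine direction. You are also right to flag the trivialization of the iterated affine-line bundle as the delicate point; this is precisely the content of the computation in Chang that you cite. Since the paper under review defers entirely to that source, your proposal is in effect more detailed than what the paper provides.
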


Now consider a set of data $(\ld, Q=K \cdot x, \phi)$ with $\ld$ dominant. Recall we have the decomposition $\ld = \lc + \lnc$, where $\lc=\frac{1}{2}(\ld + \theta_Q \ld)$ and $\lnc=\frac{1}{2}(\ld - \theta_Q \ld)$.

\begin{definition}[Defn. 6.9, \cite{Chang}]
  An orbit $Q_b = K \cdot y$ is a \emph{$\lc$-distinguished} ($\theta$-stable) orbit associated to $(\ld, Q,\phi)$ if $Q_b$ is distinguished associated to $Q$ as in Definition \ref{defn:distinguished}, and $\lc$ (considered as an element in $\Lieh^*_x=\Lieh^*_y$) is $R^+_y$ dominant.
\end{definition}

\begin{proposition}[Prop. 6.10, \cite{Chang}]\label{prop:ld_disting}
  For any set of data $(\ld, Q, \phi)$ with $\ld$ dominant, there exists at least one associated $\ld_+$-distinguished orbit.
\end{proposition}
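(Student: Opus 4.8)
The plan is to construct the $\lambda_+$-distinguished orbit directly: fix a $\theta$-stable Cartan subalgebra inside $\Lieb_x$, cut out a suitable positive system with a generic linear functional close to $\lambda_+$, and take the orbit of the resulting Borel. First I would fix $x\in Q$ and, by Lemma~\ref{lemma:KCartan}, a $\theta$-stable Cartan subalgebra $\Liec\subset\Lieb_x$; specialization at $x$ identifies $(\Lieh,\Sigma,\Sigma^+)$ with $(\Liec,R(\Lieg,\Liec),R^+_x)$ and $\theta_Q$ with $\theta|_\Liec$, so that $\lambda$ becomes an $R^+_x$-dominant element of $\Liec^*$ and $\lambda_+=\tfrac12(\lambda+\theta\lambda)$ a $\theta$-fixed element. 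Let $\Phi\subset R^+_x$ be the set of positive roots that are real or $\theta$-stable, i.e.\ the union of the real, the imaginary, and the $\theta$-stable complex positive roots of $R^+_x$. It suffices to produce a positive system $R^+$ of $R(\Lieg,\Liec)$ with $\Phi\subseteq R^+$, with $R^+$ $\theta$-stable outside real roots, and with $\lambda_+$ dominant for $R^+$: then the unique Borel subalgebra $\Lieb_y\supset\Liec$ with positive system $R^+_y:=R^+$ gives a point $y\in X$, and $\Liec$ is a $\theta$-stable Cartan subalgebra of $\Lieb_x\cap\Lieb_y$ with $R^+_y\supseteq\Phi$, so by Definition~\ref{defn:distinguished} the orbit $Q_b:=K\cdot y$ is distinguished associated to $Q$, and dominance of $\lambda_+$ for $R^+_y$ makes $Q_b$ $\lambda_+$-distinguished. (In particular this gives another proof that a distinguished orbit associated to $Q$ exists, cf.\ Theorem~\ref{thm:distinguished}.)

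The construction uses three remarks on how $\lambda_+$ pairs with roots: for $\alpha$ real, $\langle\lambda_+,\alpha^\vee\rangle=0$; for $\alpha$ imaginary, $\langle\lambda_+,\alpha^\vee\rangle=\langle\lambda,\alpha^\vee\rangle$; and for $\alpha$ a $\theta$-stable complex root of $R^+_x$, $\langle\lambda_+,\alpha^\vee\rangle=\tfrac12\bigl(\langle\lambda,\alpha^\vee\rangle+\langle\lambda,(\theta\alpha)^\vee\rangle\bigr)$ with $\theta\alpha\in R^+_x$ as well. Since $\lambda$ is $R^+_x$-dominant, this gives $\operatorname{Re}\langle\lambda_+,\alpha^\vee\rangle\ge0$ for every $\alpha\in\Phi$. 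I would then take $R^+:=\{\alpha:\langle f,\alpha^\vee\rangle>0\}$ for $f=\operatorname{Re}(\lambda_+)+\epsilon_1\xi_0+\epsilon_2\eta$ with $0<\epsilon_2\ll\epsilon_1\ll1$, where $\eta\in\Liea^*_\R$ is generic in the chamber of the real positive system $R^+_x\cap R_{\mathrm{real}}$, and $\xi_0\in\Liet^*_\R$ is generic (in particular regular on all non-real roots) subject to $\langle\xi_0,\alpha^\vee\rangle>0$ for every root $\alpha$ in the finite set $\Psi$ of non-real roots of $\Phi$ with $\operatorname{Re}\langle\lambda_+,\alpha^\vee\rangle=0$. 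Because real roots lie in $\Liea^*$ and imaginary roots in $\Liet^*$, one reads off: the real, resp.\ imaginary, positive roots of $R^+$ coincide with those of $R^+_x$ (ties on imaginary roots with $\operatorname{Re}\langle\lambda,\cdot\rangle=0$ being broken into $R^+_x$ by $\xi_0$), so in particular $\operatorname{Re}\langle\lambda_+,\cdot\rangle\ge0$ on them; for complex $\alpha$ the sign of $\langle f,\alpha^\vee\rangle$ agrees with that of $\langle\operatorname{Re}(\lambda_+)+\epsilon_1\xi_0,\alpha^\vee\rangle$, which is $\theta$-invariant because $\operatorname{Re}(\lambda_+),\xi_0\in\Liet^*$ — hence $R^+$ is $\theta$-stable outside real roots — and is positive only when $\operatorname{Re}\langle\lambda_+,\alpha^\vee\rangle\ge0$; and the $\theta$-stable complex roots of $\Phi$, for which $\operatorname{Re}\langle\lambda_+,\alpha^\vee\rangle\ge0$, are positive for $f$ (strictly if this is nonzero, via $\xi_0$ if zero). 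Altogether $\Phi\subseteq R^+$ and $\operatorname{Re}\langle\lambda_+,\alpha^\vee\rangle\ge0$ for all $\alpha\in R^+$, so $\lambda_+$ is dominant for $R^+$.

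The only genuinely nontrivial point — and where I expect the main difficulty to be — is the existence of the auxiliary functional $\xi_0$, i.e.\ the simultaneous solvability of the positivity conditions over $\Psi$. I would argue as follows. First, $\Psi$ is $\theta$-stable: imaginary roots are $\theta$-fixed, and if $\alpha$ is a $\theta$-stable complex root with $\operatorname{Re}\langle\lambda_+,\alpha^\vee\rangle=0$ then so is $\theta\alpha$. If there were no $\xi_0\in\Liet^*_\R$ positive on all of $\Psi$, then by a separation argument there would be a relation $\sum_{\alpha\in\Psi}c_\alpha\,\pr_\Liet(\alpha^\vee)=0$ with $c_\alpha\ge0$ and $\sum_\alpha c_\alpha=1$, where $\pr_\Liet(\alpha^\vee)=\tfrac12\bigl(\alpha^\vee+(\theta\alpha)^\vee\bigr)$. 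Averaging over $\theta$ (which permutes $\Psi$) turns this into $\sum_{\alpha\in\Psi}c'_\alpha\alpha^\vee=0$ with $c'_\alpha=\tfrac12(c_\alpha+c_{\theta\alpha})\ge0$ and $\sum_\alpha c'_\alpha=1$; pairing with the half-sum $\rho_x$ of $R^+_x$ and using $\langle\rho_x,\alpha^\vee\rangle>0$ for each $\alpha\in\Psi\subset R^+_x$ forces all $c'_\alpha=0$ — a contradiction. Hence $\xi_0$ exists, and the verifications above then complete the construction of the $\lambda_+$-distinguished orbit.
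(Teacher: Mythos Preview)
The paper does not give its own proof of this proposition; it is quoted verbatim from Chang (Prop.~6.10) and the text moves directly to the next subsection. So there is nothing to compare against in the paper itself, only the surrounding context.

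Your argument is correct. It differs in spirit from the machinery the paper imports from Chang for the closely related Theorem~\ref{thm:distinguished}: there a distinguished orbit is produced \emph{iteratively}, by successively reflecting in $(-\theta)$-stable complex simple roots until none remain, and Chang's proof of Prop.~6.10 refines this induction to track $\lambda_+$-dominance. You instead construct $R^+_y$ in one shot as the positive system cut out by a generic perturbation $f=\operatorname{Re}(\lambda_+)+\epsilon_1\xi_0+\epsilon_2\eta$, which is more direct and avoids the bookkeeping of the reflection sequence. The cost is the separation argument for the existence of $\xi_0$; your treatment of this is fine --- the essential point is that $\Psi\subset R^+_x$, so a nontrivial nonnegative combination of the coroots $\alpha^\vee$ ($\alpha\in\Psi$) cannot vanish, and pairing with the strictly dominant $\rho_x$ makes that explicit. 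One small remark: your claim that $R^+=\{\alpha:\langle f,\alpha^\vee\rangle>0\}$ is a positive system uses implicitly that $\langle f,\alpha^\vee\rangle$ and $\langle f,\alpha\rangle$ (via the Killing form) have the same sign, so that the set is closed under root addition; this is standard but worth a word. What your approach buys is independence from the step-by-step structure of Theorem~\ref{thm:distinguished}; what Chang's iterative approach buys is the finer information $\overline{Q}=P_{\alpha_1}*\cdots*P_{\alpha_n}*\overline{Q}_b$, which your construction does not see directly.
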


\subsection{Minimal $\KC$-types}\label{subsec:minKtypes}

We recall the definition and properties of \emph{special $\KC$-types} of standard modules from \S 8 of \cite{Chang}. It was shown there that special $\KC$-types coincide with Vogan's definition of minimal $K$-type (Definition \ref{defn:minimal}).

The notion of special $K$-types is a generalization of that of \emph{fine $\KC$-types} introduced by Bernstein-Gelfand-Gelfand in \cite{BGG1} and \cite{BGG2}, which we now recall. Let $\GR$ be a split group and $(\ld, Q,\phi)$ be a set of data with $Q=K \cdot x$ dense in $X$. Then $K_x$ is finite and $\Gamma(\ilq)=\Gamma(Q,\phi)$. Choose a Cartan subgroup $H_x \cong B_x / N_x$ with Lie algebra $\Lieh_x$ and by $R^+_x$ the associated positive root system on $\Lieh^*_x$. In this case all roots are real. For any root $\alpha \in R^+_x$, choose $\alpha$-root vector $E_\alpha$ and $(-\alpha)$-root vector $E_{-\alpha}$ in $\Lieg_\R$ such that 
  \[ \theta E_\alpha = E_{-\alpha}, [ E_\alpha, E_{-\alpha} ] = -H_\alpha, [H_\alpha, E_\alpha] = 2 E_\alpha, \quad \text{and}~ [H_\alpha, E_{-\alpha}] = -2 E_{-\alpha}. \]
Let 
  \[ Z_\alpha = -i(E_\alpha+E_{-\alpha}), \]
then $Z_\alpha \in \Liek$. Moreover, $\{ Z_\alpha | \alpha \in R^+_x \}$ forms a basis of $\Liek$.

\begin{definition}\label{defn:fineK1}
  A $K$-type $\sigma$ is said to be \emph{fine} if the eigenvalue of $\sigma(Z_\alpha)$ lies in the range $[-1,1]$ for each simple root $\alpha$.
\end{definition}

Geometrically, consider the fibration $\pi_\alpha : X \to X_\alpha$ associated to a simple root $\alpha$ and recall that $X_y = \pi_\alpha \pb (\pi_\alpha(y))$ for each $y \in X$. We use $z$ as coordinate on $X_y$ ($y \in Q$) such that $X_y \cap Q = \{ z|z \neq 0, \infty  \}$ (Lemma \ref{lemma:flagfiber}, (3)). Then Definition \ref{defn:fineK1} can be reformulated as

\begin{definition}[Defn. 8.1', \cite{Chang}]\label{defn:fineK2}
  Suppose $G$ is a split group and $Q$ is the open $K$-orbit of $X$. A section $s \in \Gamma(\ilq)=\Gamma(Q,\phi)$ is called \emph{fine} on $Q$ if for each $y \in Q$ and simple (real) root $\alpha$, the restriction of $s$ on $X_y$ is contained in the linear span of $1$, $z^{1/2}$ and $z^{-1/2}$.
\end{definition}

\begin{proposition}[Prop. 8.2., \cite{Chang}]
  Suppose $G$ is a split group and $Q$ is the open $K$-orbit of $X$. A fine $K$-type $\sigma$ occurs as a subspace $V_\sigma$ in $\Gamma(Q,\ilq)$ if and only if every section $f \in V_\sigma$ is fine.
\end{proposition}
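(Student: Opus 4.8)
The plan is to localize everything along the $\PP$-bundle $\pi_\alpha\colon X\to X_\alpha$ attached to a simple (necessarily real) root $\alpha$ and to reduce the statement to an $\mathfrak{sl}(2)$-computation on a single fibre. Write $Q=K\cdot x$, let $X_x=\pi_\alpha\pb(\pi_\alpha(x))\cong\PP$, and use the coordinate $z$ of Definition~\ref{defn:fineK2}, so that $X_x\cap Q=\{z\neq 0,\infty\}$. Because $\pi_\alpha$ and $\phi$ are $K$-equivariant, every fibre $X_y$ with $y\in Q$ is a $K$-translate of $X_x$, and hence for the $K$-stable subspace $V_\sigma$ the condition ``every $f\in V_\sigma$ is fine on $Q$'' is equivalent, for each fixed $\alpha$, to ``the image of $V_\sigma$ in $\Gamma(X_x\cap Q,\phi|_{X_x})$ lies in the span of $1$, $z^{1/2}$, $z^{-1/2}$.'' The chosen root vectors $E_{\pm\alpha}$ lie in the parabolic stabilizing $X_x$ and are tangent to $X_x$ everywhere on it, so $Z_\alpha=-i(E_\alpha+E_{-\alpha})\in\Liek$ acts on sections over $X_x$ as the lift to $\phi|_{X_x}$ of the vector field generating the rotation of $\PP$ that fixes $\{0,\infty\}$.

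The core computation is then carried out on $X_x$. After a suitable adjustment of the trivialization of $\phi|_{X_x\cap Q}$ (whose monodromy is a root of unity, since $K_x$ is finite and $G$ is split), the space $\Gamma(X_x\cap Q,\phi|_{X_x})$ acquires a monomial basis $\{z^m\}$ with $m$ running over a fixed coset of $\ZZ$ in $\tfrac12\ZZ$, on which $Z_\alpha$ acts by the Euler operator $2z\partial_z$; thus $z^m$ is a $Z_\alpha$-eigenvector of eigenvalue $2m$, and the span of $1,z^{1/2},z^{-1/2}$ is precisely the sum of the $Z_\alpha$-eigenspaces for eigenvalues in $\{-1,0,1\}$. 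I expect the delicate point here to be pinning down the normalizations --- the precise coset for $m$ and the factor $2$ --- so that the $\rho$-shift built into the compatibility of $\phi$ with $\lambda-\rho$ produces eigenvalue $\pm 1$ on $z^{\pm 1/2}$ and $0$ on constants; this is where the hypotheses ($G$ split, $Q$ open, $\phi$ compatible with $\lambda-\rho$) really enter.

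Granting this local dictionary, the forward direction is immediate: if $\sigma$ is fine and occurs as $V_\sigma\subset\Gamma(Q,\ilq)=\Gamma(Q,\phi)$, then $V_\sigma$ is stable under $\exp(\R Z_\alpha)$ with $Z_\alpha$-eigenvalues those of $\sigma(Z_\alpha)$, which lie in $[-1,1]$ and hence, being integral, in $\{-1,0,1\}$; so the image of any $f\in V_\sigma$ in $\Gamma(X_x\cap Q,\phi|_{X_x})$ lies in the span of $1,z^{1/2},z^{-1/2}$, and the same holds on every fibre after applying $K$, so $f$ is fine. For the converse, if every section of a copy $V_\sigma\cong\sigma$ is fine, then decomposing $V_\sigma$ under $Z_\alpha$ and restricting to $X_x$ forces, via the local dictionary, every $Z_\alpha$-eigenvalue occurring on $V_\sigma$ --- i.e.\ every eigenvalue of $\sigma(Z_\alpha)$ --- into $\{-1,0,1\}$; since $\alpha$ is arbitrary, $\sigma$ is fine. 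The one subtlety that has to be handled in this converse is that a given $Z_\alpha$-weight of $V_\sigma$ must genuinely be ``seen'' on some fibre $X_y$, $y\in Q$ (a priori the restriction to a single $X_x$ could annihilate an isotypic piece); here one uses that the restriction $\Gamma(Q,\phi)\to\Gamma(X_y\cap Q,\phi|_{X_y})$ is surjective, together with the identification $\Gamma(Q,\phi)\cong\Ind_{K_x}^K\tau$ that holds because $G$ is split and $Q$ is open, and I expect this to be the main obstacle. The remaining bookkeeping --- that $\Gamma(Q,\phi)$ is indeed the relevant global space since $\ilq=i_+\phi$ and $Q$ is open dense, and that the bounded monomials span exactly the fine sections --- is routine given Lemma~\ref{lemma:flagfiber} and Definition~\ref{defn:fineK2}.
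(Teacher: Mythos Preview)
The paper does not supply its own proof of this proposition: it is quoted verbatim as Prop.~8.2 of \cite{Chang} and used as input for the subsequent definition of special $K$-types, so there is nothing in the present paper to compare against.

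That said, your sketch is the standard and correct argument. The identification of $Z_\alpha$ with (a multiple of) the Euler vector field $z\partial_z$ on the punctured fibre $X_x\cap Q\cong\C^\times$ is exactly how one links Definition~\ref{defn:fineK1} to Definition~\ref{defn:fineK2}, and your handling of the two directions is right. The ``subtlety'' you flag in the converse is not a genuine obstacle: since $Q$ is a single $K$-orbit and $\phi$ is $K$-homogeneous, evaluation at any point $y\in Q$ is already injective on the finite-dimensional $K$-isotypic piece $V_\sigma$ (a nonzero $K$-equivariant section cannot vanish identically on an orbit), so no $Z_\alpha$-weight is lost upon restriction to a fibre. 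The normalization issue you mention is real but routine: the half-integer exponents arise precisely because the compatibility of $\phi$ with $\lambda-\rho$ introduces the $\rho$-shift, and in the split open-orbit case $\rho$ restricted to the real torus contributes the half-integral twist that puts the monomial exponents into $\tfrac12\ZZ$.
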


To prepare for the general definition of special $\KC$-types, we need two ingredients. First of all, for a given standard module $\I=\ilq$, there is a natural filtration defined by the degree of derivatives transversal to $Q$,
  \[ \I^0 \subset \I^{1} \subset \cdots \subset \bigcup_{k=0}^\infty \I^k = \I,  \]
where $\I^k$ is the subsheaf of local sections of $\I$ with degree of transversal derivatives less or equal than $k$ ($k \in \ZZ$, $k \geq 0$). In particular, 
  \[ \I^0 = \phi \otimes_{\OO_Q} \omega_{Q/X}.  \]

The second ingredient is related to the distinguished orbit in Definition \ref{defn:distinguished}. Let $Q_b$ be a distinguished $\theta$-stable orbit associated to $Q$. Recall from \S \ref{subsec:Korbit} there is a $K$-equivariant fibration $\pi: Q \to Q_r$ whose fiber over $z \in Q_r$ is canonically identified with $(K \cap L_s)/T_1 \times U$ (Lemma \ref{lemma:fiber_pi}) and $(K \cap L_s)/T_1$ is an open orbit in $\pi\pb_S(z)$. By Theorem \ref{thm:distinguished}, we have
  \begin{equation}
    \omega_{Q/X} = \pi^* \omega_{Q_r/X_S}.
  \end{equation}
Thus we have
  \begin{equation}
     \pi_* (\phi \otimes \omega_{Q/X})  |_z \cong \Ind_{T_1}^{K \cap L_s} \left( \tau \otimes [\omega_{Q_r/X_S}]|_{T_1}) \right) \otimes_\C \C[U],
  \end{equation}
where $[\omega_{Q_r/X_S}]$ denotes the $K \cap L$-module given by the action of $K \cap L \subset K_z$ on $\omega_{Q_r/X_S}|_z$. The same convention will be adopted in similar contexts.

\begin{definition}[Defn. 8.4, \cite{Chang}]\label{defn:special_qb}
  A section $s \in \Gamma(X,\iqzero)=\Gamma(Q, \phi \otimes \omega_{Q/X})$ is called \emph{$Q_b$-special} if for each $z \in Q_r$ the restriction of $s$ on $\pi\pb(z) = (K \cap L_s/T_1) \times U$ is constant along $U$ and is fine on $K \cap L_s / T_1$. A $K$-type $\sigma$ occurring as a subspace $V_\sigma$ in $\Gamma(X, \iqzero)$ is called \emph{$Q_b$-special} if each section $s \in V_\sigma$ is $Q_b$-special.
\end{definition}

\begin{definition}[Defn. 8.5, \cite{Chang}]
  A $K$-type $\sigma$ which occurs as a subspace $V_\sigma$ in $\Gamma(X, \ilq)$ is called \emph{special} if $V_\sigma \subset \Gamma(X,\iqzero)$ and $V_\sigma$ is $Q_b$-special for every $\lc$-distinguished orbit associated to the data $(\ld,Q,\phi)$.
\end{definition}

We restate this definition in an equivalent way which is convenient for later application.

\begin{definition}\label{defn:special_revised}
 A $K$-type $\sigma$ which occurs as a subspace $V_\sigma$ in $\Gamma(X, \ilq)$ is called \emph{special} if $V_\sigma \subset \Gamma(X,\iqzero)$ and any section $s \in V_\sigma$ restricted to each fiber $X_y$ of the fibration $\pi_\alpha: X \to X_\alpha$ for any simple root $\alpha$ satisfies the following:
 \begin{enumerate}
   \item
     If $\alpha$ is a real root, then $s|_{X_y}$ is contained in the linear span of $1$, $z^{1/2}$ and $z^{-1/2}$ (cf. Defn. \ref{defn:fineK2});
   \item
     If $\alpha$ is a complex root and $-\theta \alpha \in R^+_x$, then $s|_{X_y}$ is constant along $X_y$ (cf. Defn. \ref{defn:special_qb}). 
 \end{enumerate}
\end{definition}

\begin{proposition}[Prop. 8.6, \cite{Chang}]\label{prop:qb_disting}
  Suppose $Q_b$ is a $\lc$-distinguished orbit associated to the data $(\ld, Q=K\cdot x ,\phi)$ and assume that $\langle \ld, \alpha \rangle \neq 0$ for all compact simple roots $\alpha$, then $Q_b$-special $K$-types exist in $\Gamma(X,\ilq)$.
\end{proposition}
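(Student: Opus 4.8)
The plan is to reduce the existence of $Q_b$-special $K$-types to the Bernstein--Gelfand--Gelfand theory of fine $K$-types for the split semisimple group $L_s$ attached to $Q_b$, by descending along the fibration $\pi\colon Q\to Q_r$ of Lemma~\ref{lemma:fiber_pi}, and then to propagate the conclusion over the closed orbit $Q_r$ by a Borel--Weil--Bott argument. First, since $\iqzero=\I^0$ is a subsheaf of $\ilq$ and $\Gamma(X,-)$ is left exact, we have an inclusion $\Gamma(X,\iqzero)\hookrightarrow\Gamma(X,\ilq)$. Hence it suffices to produce a nonzero $K$-stable subspace of $\Gamma(X,\iqzero)=\Gamma(Q,\phi\otimes\omega_{Q/X})$ all of whose sections are $Q_b$-special in the sense of Definition~\ref{defn:special_qb}; any irreducible $K$-constituent of such a subspace is then the desired special $K$-type.

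Next I would rewrite this space of sections using $\pi$. Since $\omega_{Q/X}=\pi^*\omega_{Q_r/X_S}$, one has $\Gamma(Q,\phi\otimes\omega_{Q/X})=\Gamma(Q_r,\pi_*(\phi\otimes\omega_{Q/X}))$, and the fiber of the pushforward is
\[
  \pi_*(\phi\otimes\omega_{Q/X})|_z \;\cong\; \Ind_{T_1}^{K\cap L_s}\bigl(\tau\otimes[\omega_{Q_r/X_S}]|_{T_1}\bigr)\otimes_\C\C[U].
\]
Let $\FF_0\subset\pi_*(\phi\otimes\omega_{Q/X})$ be the coherent $K$-equivariant subsheaf whose local sections are constant along the $U$-factor of each fiber and are fine on the remaining factor $(K\cap L_s)/T_1$, the open $(K\cap L_s)$-orbit in the flag variety of $L_s$, in the sense of Definition~\ref{defn:fineK2} applied to $L_s$. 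By Definition~\ref{defn:special_qb}, $\Gamma(Q_r,\FF_0)$ is exactly the space of $Q_b$-special sections; it is $K$-stable because $K$ permutes the fibers of $\pi$ through its action on $Q_r$ while the defining condition is invariant under $K\cap L_s$ on each fiber. The problem thus reduces to showing $\Gamma(Q_r,\FF_0)\neq 0$.

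To see that $\FF_0$ is at least nonzero, I would argue fiberwise. Discarding the $\C[U]$-factor, $\FF_0|_z$ is the space of fine $(K\cap L_s)$-vectors in $\Ind_{T_1}^{K\cap L_s}(\tau\otimes[\omega_{Q_r/X_S}]|_{T_1})$, which is the bottom layer of a principal-series standard Harish-Chandra sheaf for the \emph{split} group $L_s$, with the continuous parameter coming from $\lc$ (which is $R^+_y$-dominant, since $Q_b$ is $\lc$-distinguished) and the character coming from $\tau\otimes[\omega_{Q_r/X_S}]$. By the Bernstein--Gelfand--Gelfand analysis of the minimal principal series of split groups (\cite{BGG1}, \cite{BGG2}; cf.\ Proposition~8.2 of \cite{Chang} and Definition~\ref{defn:fineK2}), such a principal series carries fine $(K\cap L_s)$-types, and every section of such a $K$-type is itself fine; hence $\FF_0|_z\neq 0$ and $\FF_0$ is a nonzero $K$-equivariant sheaf of finite rank on $Q_r$.

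Finally, $Q_r$ is a \emph{closed} $K$-orbit in $X_S$, hence a partial flag variety of $K$, so $\Gamma(Q_r,\FF_0)=H^0(Q_r,\FF_0)$ is governed by Borel--Weil--Bott applied to the isotropy representation $\FF_0|_z$. This is where the hypothesis enters: the assumption $\langle\ld,\alpha\rangle\neq 0$ for every compact simple root $\alpha$ is precisely what --- together with the equivariant structure of $\phi$ (which records $\lc$) and the geometric $\rho$-shift carried by $\omega_{Q_r/X_S}$ --- keeps the highest weight of $\FF_0|_z$ off the singular walls along the compact directions transverse to the isotropy, so that $H^0(Q_r,\FF_0)\neq 0$. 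Granting this, $\Gamma(Q_r,\FF_0)$ is a nonzero $K$-submodule of $\Gamma(X,\iqzero)\subset\Gamma(X,\ilq)$ consisting of $Q_b$-special sections, and any irreducible constituent is a $Q_b$-special $K$-type, which proves the proposition. I expect the main obstacle to be exactly this last step: correctly transporting the condition on compact simple roots through $\pi$ and the twist by $\omega_{Q_r/X_S}$ to obtain the precise regularity of $\FF_0|_z$ needed for non-vanishing over $Q_r$, together with the attendant $\rho$-shift bookkeeping and the clean identification of $\FF_0|_z$ with an honest $L_s$-standard module.
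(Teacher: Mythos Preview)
The paper does not supply its own proof of this proposition: it is stated as Proposition~8.6 of \cite{Chang} and simply cited, with no argument given here. So there is nothing in the present paper to compare your attempt against.

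That said, your outline is broadly aligned with how the result is established in \cite{Chang}: one pushes down along $\pi\colon Q\to Q_r$, invokes the BGG existence of fine $K$-types for the split factor $L_s$ on each fiber, and then uses a Borel--Weil--Bott type argument over the closed $K$-orbit $Q_r\subset X_S$ to produce nonzero global sections. The place where your sketch is genuinely incomplete is exactly the one you flag: you assert that the hypothesis $\langle\lambda,\alpha\rangle\neq 0$ for compact simple $\alpha$ ``keeps the highest weight off the singular walls,'' but you do not actually compute the isotropy representation $\FF_0|_z$ as a $K_z$-module and verify the relevant dominance. In Chang's argument this is the substantive step (his Lemma~8.7--8.9): one must identify the highest weights of the fine $(K\cap L_s)$-types explicitly, track the $\rho$-shifts coming from $\omega_{Q_r/X_S}$, and check that the resulting $K_z$-weights are $K$-dominant, which is where the compact-root regularity hypothesis is actually consumed. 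Your formulation also glosses over why the fiberwise ``fine and constant along $U$'' condition assembles into a \emph{coherent} $K$-equivariant subsheaf $\FF_0$ rather than merely a $K$-stable subspace of sections; this requires knowing that the fine subspace is cut out by an intrinsic $(K\cap L)$-invariant condition, which again comes from the explicit description of fine types. Without these computations the last paragraph is a plausibility argument, not a proof.
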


\begin{proposition}[Prop. 8.12, Corollary 8.13, \cite{Chang}]\label{prop:special_indep}
  If $Q_b$ and $Q'_b$ are two $\lc$-distinguished orbits associated to the data $(\ld,Q,\phi)$, then the set of $Q_b$-special $K$-types coincides with that of $Q'_b$-special $K$-types. In particular, a $K$-type $V_\sigma \subset \Gamma(X,\ilq)$ is special if and only if it is $Q_b$-special for one $\lc$-distinguished orbit $Q_b$ associated to $(\ld, Q, \phi)$.
\end{proposition}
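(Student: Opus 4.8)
The plan is to deduce Proposition~\ref{prop:special_indep} from the equivalence of Definition~\ref{defn:special_qb} and Definition~\ref{defn:special_revised}, proved for one \emph{fixed} $\lc$-distinguished orbit $Q_b$ at a time. Granting that equivalence, the proposition is formal: conditions~(1)--(2) of Definition~\ref{defn:special_revised} involve only the data $(\ld,Q,\phi)$, the positive system $R^+_x$, and the involution $\theta$, with no reference to $Q_b$; hence the set of sections $s\in\Gamma(X,\iqzero)$ satisfying them, and therefore the set of $Q_b$-special $K$-types, does not depend on $Q_b$. Since by definition a $K$-type is \emph{special} exactly when it is $Q_b$-special for \emph{every} $\lc$-distinguished $Q_b$ (Definition~8.5 of \cite{Chang}), both assertions of the proposition --- including the ``in particular'' clause --- follow at once.

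It therefore remains to prove, for a fixed $Q_b=K\cdot y$ with associated chain $\alpha_1,\dots,\alpha_n$ of complex $(-\theta)$-stable simple roots and $w_b=s_{\alpha_n}\cdots s_{\alpha_1}$ as in Theorem~\ref{thm:distinguished}, that a section $s\in\Gamma(X,\iqzero)$ is $Q_b$-special if and only if it satisfies conditions~(1)--(2) of Definition~\ref{defn:special_revised}. By Lemma~\ref{lemma:fiber_pi} and Definition~\ref{defn:special_qb}, $Q_b$-specialness of $s$ decomposes into two requirements on each fiber of $\pi:Q\to Q_r$: that $s$ be fine on the factor $(K\cap L_s)/T_1$, the open $K\cap L_s$-orbit attached to the split semisimple group $L_s$, and that $s$ be constant along the unipotent factor $U=\exp\bigl(\spn\{E_{\alpha_i}+\theta E_{\alpha_i}\}\bigr)$. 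For the first requirement I would apply the fine-$K$-type criterion for split groups --- Definition~\ref{defn:fineK2} together with Proposition~8.2 of \cite{Chang} --- to $L_s$, whose simple roots are precisely the simple real roots of $R^+_y$, and transport the resulting fiberwise conditions to $X$ via $\omega_{Q/X}=\pi^*\omega_{Q_r/X_S}$; this produces condition~(1). For the second requirement I would work along the tower $\cl{Q}=P_{\alpha_1}*\cdots*P_{\alpha_n}*\cl{Q}_b$: by Lemma~\ref{lemma:flagfiber}(2) each operation $P_{\alpha_i}*$ adjoins a one-parameter family of directions which is precisely an affine chart $X_y\cap Q$ of a $\PP$-fiber of $\pi_{\alpha_i}$, so constancy of $s$ along $U$ is equivalent to constancy of $s|_{X_y}$ along each of these $\PP$-fibers; this is the ``complex'' case of condition~(2).

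The main obstacle is to show that these two conditions, phrased through the particular choices entering $Q_b$ (the chain $\alpha_1,\dots,\alpha_n$ and the set $S$ of simple real roots of $R^+_y$), in fact coincide with the symmetric root-by-root conditions of Definition~\ref{defn:special_revised}, which range over \emph{all} simple roots $\alpha$ of $R^+_x$. This requires keeping careful track of how the fibrations $\pi_{\alpha_i}$ and $\pi_S$ are nested as one traverses the chain and of how the operators $P_{\alpha_i}*$ act on sections of $\iqzero$ --- in essence the orbit-geometric content behind Proposition~8.12 and Corollary~8.13 of \cite{Chang}. Once this translation is established, everything downstream, in particular the independence of $Q_b$ and hence the proposition, is purely formal.
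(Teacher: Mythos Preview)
The paper does not prove this proposition: it is quoted from Chang (Prop.~8.12 and Cor.~8.13 of \cite{Chang}), and the surrounding Definition~\ref{defn:special_revised} is likewise asserted to be an equivalent reformulation without argument. So there is no ``paper's own proof'' to compare against; what you are attempting is a reconstruction of Chang's result.

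Your reduction is logically sound: if for each fixed $\lc$-distinguished $Q_b$ the notion ``$Q_b$-special'' coincides with the $Q_b$-free conditions~(1)--(2) of Definition~\ref{defn:special_revised}, then Proposition~\ref{prop:special_indep} follows trivially. The problem is that the step you label ``the main obstacle'' is not a matter of careful bookkeeping --- it is the entire content of the proposition, and your sketch does not close it. The $Q_b$-special conditions are phrased through simple roots of $R^+_y$ (for the fine part on $(K\cap L_s)/T_1$) and through the particular chain $\alpha_1,\dots,\alpha_n$ (for the $U$-constancy part), where each $\alpha_i$ is simple only in the intermediate system $s_{\alpha_{i-1}}\cdots s_{\alpha_1}R^+_x$, not in $R^+_x$. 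By contrast, conditions~(1)--(2) of Definition~\ref{defn:special_revised} range over simple roots of $R^+_x$. You do not explain why constancy along the specific directions $E_{\alpha_i}+\theta E_{\alpha_i}$ spanning $U$ forces constancy along the $\pi_\alpha$-fiber for \emph{every} complex $(-\theta)$-stable simple root $\alpha$ of $R^+_x$ (or conversely), nor why the real simple roots of $R^+_x$ and of $R^+_y$ impose the same fineness constraints. You also never invoke the $\lc$-dominance built into the $\lc$-distinguished hypothesis, and you should check whether Chang's argument uses it. As it stands, your proposal is an honest outline that correctly locates the crux but defers it, rather than a proof.
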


By Proposition \ref{prop:ld_disting}, \ref{prop:qb_disting} and \ref{prop:special_indep}, we have
\begin{corollary}[Cor. 8.14, \cite{Chang}]
  If $\ld$ is dominant and $\Gamma(X, \ilq)$ is not trivial, then special $K$-types exist in $\Gamma(X,\ilq)$.
\end{corollary}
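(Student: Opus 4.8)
The plan is to produce special $K$-types by concatenating Propositions \ref{prop:ld_disting}, \ref{prop:qb_disting} and \ref{prop:special_indep}; the only input beyond formal bookkeeping will be a vanishing argument showing that the auxiliary hypothesis of Proposition \ref{prop:qb_disting} holds automatically under our assumptions. First, since $\ld$ is dominant, Proposition \ref{prop:ld_disting} applies to $(\ld, Q, \phi)$ and yields at least one $\lc$-distinguished $\theta$-stable orbit $Q_b = K \cdot y$ associated to $(\ld, Q, \phi)$; fix one such $Q_b$.

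Next I want to apply Proposition \ref{prop:qb_disting} to this $Q_b$, which will give $Q_b$-special $K$-types inside $\Gamma(X, \ilq)$. That proposition carries the extra hypothesis $\langle \ld, \alpha \rangle \neq 0$ for every compact (imaginary) simple root $\alpha$, and I claim this is automatic once $\Gamma(X, \ilq) \neq 0$. Indeed, suppose $\alpha$ is a compact imaginary simple root of $R^+_x$ with $\langle \ld, \alpha^\vee \rangle = 0$. Restrict $\ilq$ along the fibration $\pi_\alpha : X \to X_\alpha$ to a fibre $X_x \cong \PP$; using the $K$-orbit geometry of $Q$ along $X_x$ for a compact imaginary root (\cite{Matsuki1}, \cite{Chang}), the computation of $\Gamma(\PP, \ilq|_{X_x})$ becomes an $\mathfrak{sl}_2$-calculation which, precisely at the value $\langle \ld, \alpha^\vee \rangle = 0$ with $\alpha$ compact, returns $0$; since the fibres of $\pi_\alpha$ are projective lines, base change then forces $\Gamma(X, \ilq) = 0$, contradicting our hypothesis. (This is the $\D$-module incarnation of the Knapp--Zuckerman criterion that a limit of discrete series, and more generally a standard module, is nonzero only when each singular integral wall is noncompact; alternatively one may quote the corresponding vanishing theorem for standard Harish-Chandra sheaves from \cite{Milicic} or \cite{Localization}.) Hence the hypothesis of Proposition \ref{prop:qb_disting} is satisfied and $Q_b$-special $K$-types exist in $\Gamma(X, \ilq)$.

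Finally, Proposition \ref{prop:special_indep} says a $K$-type $V_\sigma \subset \Gamma(X, \ilq)$ is special precisely when it is $Q_b$-special for the single $\lc$-distinguished orbit $Q_b$ fixed above. Therefore the $Q_b$-special $K$-types obtained in the previous step are special, and special $K$-types exist in $\Gamma(X, \ilq)$, as desired.

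The real obstacle is the middle paragraph: removing the auxiliary hypothesis of Proposition \ref{prop:qb_disting} requires the genuine fact that $\ld$ cannot be orthogonal to a compact imaginary simple coroot as soon as $\Gamma(X, \ilq) \neq 0$, which is proved by the $\mathfrak{sl}_2$-reduction above (or imported from the literature). The remaining ingredients --- existence of a $\lc$-distinguished orbit, and independence of $Q_b$-specialness from the chosen $Q_b$ --- are Propositions \ref{prop:ld_disting} and \ref{prop:special_indep} used verbatim.
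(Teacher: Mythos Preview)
Your proof is correct and follows exactly the route the paper indicates: the paper's entire argument is the single line ``By Proposition \ref{prop:ld_disting}, \ref{prop:qb_disting} and \ref{prop:special_indep}'', and you have simply unpacked that chain. In fact you are more careful than the paper, since you explicitly isolate and discharge the auxiliary hypothesis $\langle \ld, \alpha \rangle \neq 0$ for compact simple $\alpha$ in Proposition \ref{prop:qb_disting} (via the standard $\mathfrak{sl}_2$/Borel--Weil vanishing along the compact $\PP$-fibre), which the paper leaves implicit; this is the correct way to bridge the nontriviality assumption to Proposition \ref{prop:qb_disting}, and is indeed how Chang's Corollary 8.14 is obtained.
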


\begin{theorem}
  If $(\ld, Q,\phi)$ is regular, then all the special $K$-types occur in $\Gamma(X,\llq)$.
\end{theorem}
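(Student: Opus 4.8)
The plan is to realize $\Gamma(X,\llq)$ inside $\Gamma(X,\ilq)$ as the subspace of sections vanishing along the boundary of $Q$, and then to check that this vanishing holds for the special $K$-types by reducing to an explicit computation on $\PP$-fibres. Put $\KK:=\ilq/\llq$, and recall $\llq$ is the unique irreducible subsheaf of $\ilq$ (Lemma 6.6 of \cite{Milicic}). Since $\ld$ is dominant the global sections functor is exact on $\Mod_{coh}(\Dl,K)$, so applying $\Gamma$ to
\[ 0 \longrightarrow \llq \longrightarrow \ilq \longrightarrow \KK \longrightarrow 0 \]
gives a short exact sequence, whence $\Gamma(X,\llq)=\ker(\Gamma(X,\ilq)\to\Gamma(X,\KK))$. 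Over the open set $U=X\setminus(\cl{Q}\setminus Q)$ the orbit $Q$ is closed, and $\ilq|_U$ is the $\D$-module direct image of the irreducible connection $\phi$ along the closed immersion $Q\hookrightarrow U$, hence is an irreducible $(\Dl|_U,K)$-module by Kashiwara's theorem; as $\ilq$ (being a direct image from $U$) has no nonzero subsheaf supported on $\cl{Q}\setminus Q$, the subsheaf $\llq|_U$ is nonzero and therefore equals $\ilq|_U$, so $\KK$ is supported on $\cl{Q}\setminus Q$. It thus suffices to show that every section of a special $K$-type $V_\sigma\subset\Gamma(X,\iqzero)\subset\Gamma(X,\ilq)$ maps to $0$ in $\Gamma(X,\KK)$. (Regularity forces $\Gamma(X,\llq)\neq0$, and by Corollary 8.14 of \cite{Chang} special $K$-types then exist; if there were none the statement would be vacuous.)

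I would then localize this vanishing to the $\PP$-fibres of the fibrations $\pi_\alpha:X\to X_\alpha$. Since $\llq$ is the intermediate extension of the irreducible sheaf $\ilq|_U$, the quotient $\KK$ is the largest quotient of $\ilq$ supported on $\cl{Q}\setminus Q$; and by Matsuki's description of $K$-orbit closures---Lemma \ref{lemma:flagfiber} together with its Corollary, and in particular Theorem \ref{thm:distinguished}, which presents $\cl{Q}=P_{\alpha_1}*\cdots*P_{\alpha_n}*\cl{Q}_b$ as an iterated $P_\alpha*(-)$-construction---the boundary and its neighbourhoods in $\cl{Q}$ are swept out by the fibres $X_y$ ($y\in Q$) of the $\pi_\alpha$ for $\alpha$ a real simple root, or a complex simple root with $-\theta\alpha\in R^+_x$. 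Restricting along such a $\pi_\alpha$, the pair $(\ilq,\llq)$ is modelled on the standard $\D$-module of a rank-one connection on $\PP$ with one or two punctures, together with its intermediate extension (the $\slr$- and $\slc$-picture underlying \S\ref{subsec:minKtypes}), in the coordinate $z$ on $X_y$ normalised so that $X_y\cap Q=\{z\neq0,\infty\}$ in the real case and $X_y$ minus one point in the complex case.

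Matching this local model with Definition \ref{defn:special_revised} then closes the argument. On $\PP$ one checks by an elementary computation that a germ of the standard $\D$-module at a boundary point of $X_y$ dies in the quotient by its intermediate extension precisely when its restriction to $X_y$ lies in the span of $1,\,z^{1/2},\,z^{-1/2}$ (when $\alpha$ is real) or is constant along $X_y$ (when $\alpha$ is complex with $-\theta\alpha\in R^+_x$)---exactly conditions (1) and (2) of Definition \ref{defn:special_revised}, which every section of $V_\sigma$ satisfies; moreover $V_\sigma\subset\Gamma(X,\iqzero)$ guarantees there are no transversal-derivative contributions. Running this over all simple roots $\alpha$ and points $y\in Q$, and climbing the tower of $P_{\alpha_i}*(-)$'s of Theorem \ref{thm:distinguished} by induction on its length, one concludes that every section of $V_\sigma$ maps to $0$ in $\Gamma(X,\KK)$, i.e.\ $V_\sigma\subset\Gamma(X,\llq)$, proving the theorem.

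The hard part is the combination of the last two steps: one must make precise that imposing the fineness conditions of Definition \ref{defn:special_revised} fibrewise along all the $\pi_\alpha$ genuinely forces a section of $\ilq$ into the intermediate extension $\llq$ over \emph{every} boundary stratum of $\cl{Q}$, not just the codimension-one ones. Theorem \ref{thm:distinguished}, which builds $\cl{Q}$ out of successive $P_\alpha*(-)$-operations, is what powers this induction, with the $\PP$ computation serving as the base case at each level; the delicate bookkeeping is the behaviour of the connection $\phi$ and of the relative canonical bundle $\omega_{Q/X}=\pi^{*}\omega_{Q_r/X_S}$ under these fibrations (cf.\ the computations preceding Definition \ref{defn:special_qb}). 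A secondary point is that $\ld$ is only assumed dominant, possibly singular, so one works throughout with the sheaves $\ilq$ and $\llq$ on $X$---using only exactness of $\Gamma$ for dominant $\ld$---rather than with their localizations.
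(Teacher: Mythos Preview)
The paper does not supply a proof of this theorem: it is stated without argument in the review section, implicitly as part of the package of results from \cite{Chang} (indeed the very next theorem, Theorem~\ref{thm:minimal-K}, attributed to Theorems~8.15, 8.18, 8.19 of \cite{Chang}, restates its conclusion). So there is no in-paper proof to compare against; your proposal is an attempt at an independent argument.

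Your opening reduction is correct: for dominant $\lambda$ the functor $\Gamma$ is exact, the quotient $\KK=\ilq/\llq$ is supported on $\partial Q=\cl{Q}\setminus Q$, and it suffices to show each section of a special $K$-type dies in $\Gamma(X,\KK)$. The problems begin with the fibrewise step, and you are right to flag it as the hard part---but the issues are more serious than bookkeeping.

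First, the sentence ``Restricting along such a $\pi_\alpha$, the pair $(\ilq,\llq)$ is modelled on the standard $\D$-module of a rank-one connection on $\PP$ \ldots\ together with its intermediate extension'' is not justified. Restriction of a $\Dl$-module to a closed fibre $X_y$ is a derived operation and does not in general commute with $i_+$, $i_{!*}$, or taking quotients; you have given no reason why $(\ilq|_{X_y},\llq|_{X_y})$ is the local standard/intermediate pair you describe. Chang's own arguments proceed via the \emph{pushforward} $(\pi_\alpha)_*$ or $(\pi_S)_*$ to the partial flag variety and the induced TDO there, which is much better behaved; compare the use of Theorems~4.14 and~5.4 of \cite{Chang} in the proof of Proposition~\ref{prop:ind} in this paper.

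Second, your $\PP$ claim that a germ ``dies in the quotient by its intermediate extension \emph{precisely when}'' the fineness condition holds cannot be correct as stated: whether $j_+=j_{!*}$ on the fibre depends on the monodromy of $\phi$ along $X_y$ (hence on $\lambda$), whereas the fineness condition of Definition~\ref{defn:special_revised} does not. At best you need only one implication, and even that requires the twist to be made explicit and the computation actually carried out. Relatedly, your argument never uses regularity except to guarantee special $K$-types exist; if it really worked for every dominant $\lambda$ with $\Gamma(X,\ilq)\neq0$, it would in particular force $\Gamma(X,\llq)\neq0$ in that range, which is itself a nontrivial statement you have not addressed.

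In sum, the strategy is reasonable in outline, but the fibrewise reduction is unjustified and the $\PP$ computation is both overstated and not performed. To make this route work you would need to replace naive restriction by pushforward along $\pi_\alpha$ and track the intermediate extension through that, or else consult Chang's original proof.
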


Eventually, we have the following characterization of minimal $K$-types of the standard module $\Gamma(X,\ilq)$.

\begin{theorem}[Theorem 8.15, 8.18, 8.19, \cite{Chang}]\label{thm:minimal-K}
 Let $\Gamma(X, \ilq)$ be a nontrivial standard module with $\lambda$ dominant. Then the special $K$-types of $\Gamma(X, \ilq)$ coincide with its special/minimal $K$-types. Moreover, they all occur in $\Gamma(Q, \omega_{Q/X} \otimes_{\OO_Q} \phi)$ and $\Gamma(X,\llq)$. Each of them occurs with multiplicity $1$.
\end{theorem}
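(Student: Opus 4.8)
We sketch the argument, following \cite{Chang}. The plan is to reduce the statement, by induction on $\dim Q$, to (i) an explicit weight computation along the fibrations $\pi_\alpha\colon X\to X_\alpha$, organised through the transversal filtration, and (ii) the Bernstein--Gelfand--Gelfand theory of fine $K$-types on the split group attached to a $\lc$-distinguished orbit. The base case is the one in which $i\colon Q\hookrightarrow X$ is a closed embedding (e.g.\ $Q$ a closed $K$-orbit): then $\ilq=i_+\phi=\llq$ is already irreducible, and only the two unconditional assertions below are needed.

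First, by Definition \ref{defn:special_revised} every special $K$-type of $\Gamma(X,\ilq)$ occurs inside $\Gamma(X,\iqzero)=\Gamma(Q,\omega_{Q/X}\otimes_{\OO_Q}\phi)$, i.e.\ inside the bottom piece $\I^0$ of the transversal filtration $\I^0\subset\I^1\subset\cdots$, and such $K$-types exist because $\ld$ is dominant and $\Gamma(X,\ilq)\neq0$ (Corollary 8.14 of \cite{Chang}, which rests on Propositions \ref{prop:ld_disting} and \ref{prop:qb_disting}); this already gives occurrence in $\Gamma(Q,\omega_{Q/X}\otimes_{\OO_Q}\phi)$. The core of the argument --- and what I expect to be the main obstacle --- is to show that, for $\ld$ dominant, the special $K$-types are \emph{exactly} the $K$-types of minimal Vogan norm $|\sigma|=\langle\sigma+2\rho_c,\sigma+2\rho_c\rangle$ occurring in $\Gamma(X,\ilq)$. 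I would do this fibrewise: each fibre $X_y\cong\PP$ of a $\pi_\alpha$ carries an $\mathfrak{sl}_2$-action, a transversal derivative along $X_y$ shifts a weight by $\pm\alpha$, and --- using the elements $Z_\alpha$ of Definition \ref{defn:fineK1} and the classification of $\alpha$ as real, complex, compact imaginary, or noncompact imaginary --- one checks that passing from $\I^{k-1}$ to $\I^k$ keeps $|\cdot|$ strictly above the values realised in $\I^0$, while within $\I^0$ the minimum of $|\cdot|$ is attained precisely by the sections meeting the span/constancy conditions of Definition \ref{defn:special_revised}; dominance of $\ld$ is what orients these inequalities. Faithfully reproducing this --- bookkeeping the four root types and the twist by $\omega_{Q/X}$ --- is the technical heart of \S 8 of \cite{Chang}.

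For multiplicity one, fix a $\lc$-distinguished orbit $Q_b$ (Proposition \ref{prop:ld_disting}) and use the $K$-equivariant fibration $\pi\colon Q\to Q_r$ of \S\ref{subsec:Korbit}, with fibre $(K\cap L_s)/T_1\times U$ (Lemma \ref{lemma:fiber_pi}) and the identification
\[
  \pi_*(\phi\otimes\omega_{Q/X})|_z\;\cong\;\Ind_{T_1}^{K\cap L_s}\bigl(\tau\otimes[\omega_{Q_r/X_S}]|_{T_1}\bigr)\otimes_\C\C[U].
\]
A $Q_b$-special section is constant along the affine space $U$, so only the constants of $\C[U]$ contribute, and it is fine along $(K\cap L_s)/T_1$, the open $(K\cap L_s)$-orbit in the flag variety of the split group $L_s$. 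Thus the special sections are the fine sections of a minimal principal series of $L_s$, which reduces the multiplicity-one statement to the corresponding fact for fine $K$-types of $L_s$ (\cite{BGG1},\cite{BGG2}; cf.\ Proposition \ref{prop:qb_disting}), transported back to $\Gamma(X,\ilq)$ by Frobenius reciprocity along $\pi$. (Combining the identification of special sections with fine sections with Vogan's description in \cite{VoganThesis} of minimal $K$-types of standard modules in terms of the fine $K$-types of their inducing data gives an alternative route to the equality of special and minimal $K$-types.)

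It remains to place the special $K$-types in $\Gamma(X,\llq)$. On $X\setminus\partial Q$ the embedding $Q\hookrightarrow X$ is closed, so $\ilq$ is irreducible there and $\ilq/\llq$ is supported on the boundary $\partial Q$; hence every composition factor of $\ilq/\llq$ is of the form $\LL(\ld,Q',\phi')$ with $\dim Q'<\dim Q$. By the inductive hypothesis the minimal $K$-types of $\Gamma(X,\LL(\ld,Q',\phi'))$ are its special $K$-types, and the norm computation on the smaller orbit $Q'$ (comparing the twists $\omega_{Q'/X}$ and $\omega_{Q/X}$) shows that every $K$-type of $\Gamma(X,\LL(\ld,Q',\phi'))$ --- hence of $\Gamma(X,\ilq/\llq)$ --- has Vogan norm strictly larger than the minimal norm realised in $\Gamma(X,\ilq)$. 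So the special-isotypic subspaces of $\Gamma(X,\ilq)$ are annihilated by the map $\Gamma(X,\ilq)\to\Gamma(X,\ilq/\llq)$, whence they lie in $\Gamma(X,\llq)$; with the previous paragraph this gives multiplicity one inside $\Gamma(X,\llq)$, completes the induction, and identifies the special $K$-types with the minimal $K$-types of $\Gamma(X,\ilq)$. When $(\ld,Q,\phi)$ is regular one may instead simply invoke the theorem stated just above.
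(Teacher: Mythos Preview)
The paper does not prove this theorem at all: it is stated as a citation of Theorems 8.15, 8.18, 8.19 of \cite{Chang}, with no proof or sketch given in the paper itself. So there is no ``paper's own proof'' to compare your proposal against; the paper simply imports the result.

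Your sketch is a plausible outline of how the argument in \cite{Chang} might go, and its broad architecture --- existence of special $K$-types from Propositions~\ref{prop:ld_disting} and~\ref{prop:qb_disting}, reduction via the fibration $\pi\colon Q\to Q_r$ and Lemma~\ref{lemma:fiber_pi} to fine $K$-types of the split factor $L_s$, and placement in $\llq$ by analysing the boundary composition factors --- is consistent with the structure of \S 8 of \cite{Chang} as summarised in the present paper. That said, since you are reconstructing a proof from a secondary source, a few points deserve caution. First, your inductive step for the inclusion in $\Gamma(X,\llq)$ assumes that minimal $K$-types of the boundary factors $\LL(\lambda,Q',\phi')$ have strictly larger Vogan norm; this is the delicate comparison (your ``norm computation on the smaller orbit''), and in \cite{Chang} it is not handled by a naive dimension induction but rather through the specific combinatorics of $\lc$-distinguished orbits and the intertwining-type identities relating standard sheaves on adjacent orbits. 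Second, the identification of special with minimal $K$-types (your ``core of the argument'') in \cite{Chang} goes through an explicit highest-weight formula for the $Q_b$-special $K$-types and a comparison with Vogan's formula, rather than a purely fibrewise $\mathfrak{sl}_2$ weight-shift argument; your sketch gestures at both routes but does not commit to either. If you intend this as more than a pointer to \cite{Chang}, those two steps need to be made precise.
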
 

We now state the main result of this section. 

\begin{definition}
Define $\MQ$ to be the coherent $K$-equivariant $\OO_X$-submodule of $i_* \phi$ generated by sections which are contained in minimal $K$-types of $\Gamma(X,\ilq)$.
\end{definition}

\begin{proposition}\label{prop:secM}
  Under the assumption of Theorem \ref{thm:minimal-K}, the space of global sections $\Gamma(X,\MQ)$ as a $K$-representation is the direct sum of minimal $K$-types of $\Gamma(X,\ilq)$.
\end{proposition}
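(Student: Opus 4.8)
The plan is to identify $\Gamma(X,\MQ)$ inside $\Gamma(X,\ilq)$ and show it is exactly the span of the minimal $K$-types, using the degree filtration $\I^0 \subset \I^1 \subset \cdots$ on $\I = \ilq$. First I would observe that $\MQ$ is by definition an $\OO_X$-submodule of $i_*\phi$, hence it is naturally a sub-$\OO_X$-module of $\I^0 = i_*(\phi \otimes_{\OO_Q} \omega_{Q/X})$ after the standard twist, and so there is a natural map $\Gamma(X,\MQ) \to \Gamma(X,\I^0) \hookrightarrow \Gamma(X,\ilq)$. By Theorem \ref{thm:minimal-K}, every minimal $K$-type of $\Gamma(X,\ilq)$ already occurs inside $\Gamma(X,\iqzero) = \Gamma(Q, \omega_{Q/X} \otimes_{\OO_Q} \phi)$ with multiplicity one; so the sections generating $\MQ$ genuinely live there, the generating process takes place within $i_*\phi$, and the images of these generators under $\Gamma(X,\I^0) \hookrightarrow \Gamma(X,\ilq)$ span precisely the minimal $K$-types. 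Thus $\Gamma(X,\MQ)$ surjects onto (at least) the direct sum of minimal $K$-types; the content is to show this map is injective and that no extra $K$-types appear.

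The key step is therefore to prove that $\Gamma(X,\MQ)$ contains nothing beyond the minimal $K$-types. For this I would argue that $\MQ$, being $K$-equivariantly and $\OO_X$-generated by sections lying in the special ($=$ minimal) $K$-types, is controlled fiberwise by Definition \ref{defn:special_revised}: a section $s$ in a minimal $K$-type, restricted to each $\PP$-fiber $X_y$ of $\pi_\alpha$, lies in the span of $1, z^{1/2}, z^{-1/2}$ for $\alpha$ real and is constant for $\alpha$ complex with $-\theta\alpha \in R^+_x$. Multiplying by functions in $\OO_X$ and acting by $K$ preserves the property of being supported (scheme-theoretically) on $\cl{Q}$ with no transversal derivatives, i.e. keeps us inside $i_*\phi \cap \I^0$; and the fiberwise bound combined with the fibration $\pi: Q \to Q_r$ of Lemma \ref{lemma:fiber_pi} shows that $\Gamma(X, \MQ)$, viewed via $\pi_*$, is an induced module built out of fine $K \cap L_s$-types on $(K\cap L_s)/T_1$ tensored with the constants $\C[U] \to \C$ along $U$. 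Invoking the split-group case (Proposition 8.2 of \cite{Chang}, as quoted), fine $K$-types account for \emph{all} sections with the fine fiberwise behavior, so the global sections of $\MQ$ cannot acquire any non-minimal $K$-type. Combined with the multiplicity-one statement of Theorem \ref{thm:minimal-K}, the map $\Gamma(X,\MQ) \to \bigoplus (\text{minimal } K\text{-types})$ is then both surjective and injective.

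An alternative and perhaps cleaner route for the injectivity is cohomological: since $\MQ \hookrightarrow i_*\phi$ with $i_*\phi$ supported on $\cl{Q}$, and $Q$ is affinely embedded (being a $K$-orbit, cf. the standard affineness of orbit closures' open parts), one expects $H^i(X, i_*\phi) = 0$ for $i>0$ and hence $\Gamma(X, -)$ exact on the relevant subquotients, so that $\Gamma(X,\MQ) \hookrightarrow \Gamma(X, i_*\phi)$, and the latter maps compatibly to $\Gamma(X,\ilq)$ via $\I^0 \hookrightarrow \I$; I would then only need that this composite is injective on $\Gamma(X,\MQ)$, which follows because a section of $\MQ$ that dies in $\Gamma(X,\ilq)$ would already die in $\Gamma(X,\I^0)$, contradicting that its $K$-type is a genuine minimal $K$-type of the standard module.

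\textbf{Main obstacle.} The delicate point is the second paragraph's claim that $\OO_X$- and $K$-generating from minimal-$K$-type sections does not enlarge the space of $K$-types appearing in \emph{global sections} — a priori, multiplying a fine section by a non-invariant regular function could produce global sections transforming in a larger $K$-type, and one must rule this out by showing that the submodule $\MQ$, despite being generated over $\OO_X$, still has its global sections confined to $\Gamma(Q, \omega_{Q/X}\otimes\phi)$ and obeys the fiberwise fineness conditions; this is where the precise geometry of the fibration $\pi: Q \to Q_r$ (Lemma \ref{lemma:fiber_pi}), the factorization $\omega_{Q/X} = \pi^*\omega_{Q_r/X_S}$, and the $\lc$-distinguished orbit machinery of \S\ref{subsec:minKtypes} must be used in an essential way, reducing everything to the split case handled by Bernstein-Gelfand-Gelfand.
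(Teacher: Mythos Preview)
Your overall strategy matches the paper's: one must show that every $K$-type $V_\sigma$ appearing in $\Gamma(X,\MQ)$ satisfies the conditions of Definition~\ref{defn:special_revised}, and hence is special ($=$ minimal). The inclusion of the minimal $K$-types into $\Gamma(X,\MQ)$ is immediate from the definition, so the content is indeed the reverse inclusion.

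However, you correctly flag the main obstacle and then do not actually overcome it. Your second paragraph asserts that multiplying special sections by $\OO_X$ and pushing forward along $\pi: Q \to Q_r$ yields an induced module built from fine types, but this is precisely what needs proof, and the argument there is only a sketch. The paper fills this gap by a direct local computation that avoids the global fibration $Q \to Q_r$ and the reduction to the split case entirely: for each simple root $\alpha$ and each $\PP$-fiber $X_y$ of $\pi_\alpha$, one computes $\MQ|_{X_y}$ as a coherent $\OO_{X_y}$-module. Since it is generated over $\OO_{X_y}$ by the restrictions of special sections --- which by Definition~\ref{defn:special_revised} are constant when $\alpha$ is complex with $-\theta\alpha \in R^+_x$, and lie in $\spn\{1, z^{1/2}, z^{-1/2}\}$ when $\alpha$ is real --- the sheaf $\MQ|_{X_y}$ is itself isomorphic to $\OO_{X_y}$ in the first case and to $\OO_{X_y}$ or $\OO_{X_y}(1)$ in the second. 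Hence \emph{every} global section of $\MQ$, restricted to $X_y$, already lies in the required span, and Definition~\ref{defn:special_revised} applies directly to conclude that $V_\sigma$ is special. This is the missing idea: work fiberwise over each $\pi_\alpha$ and identify $\MQ|_{X_y}$ as a specific line bundle on $\PP$, rather than attempting a global reduction via Lemma~\ref{lemma:fiber_pi}, the factorization $\omega_{Q/X} = \pi^*\omega_{Q_r/X_S}$, and Proposition~8.2 of \cite{Chang}.

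Your third paragraph's cohomological alternative is unnecessary: the injectivity $\Gamma(X,\MQ) \hookrightarrow \Gamma(X,\ilq)$ is automatic since $\MQ$ is by construction an $\OO_X$-subsheaf of $i_*\phi$, so its global sections already sit inside $\Gamma(Q,\phi) \subset \Gamma(X,\iqzero) \subset \Gamma(X,\ilq)$.
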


\begin{proof}
  Suppose $V_\sigma$ is a $K$-type contained in $\Gamma(X,\MQ) \subset \Gamma(X,\ilq)$. Then the restriction of any section $s \in V_\sigma$ to each fiber $X_y \cong \PP$ of the fibration $\pi_\alpha: X \to X_\alpha$ for any simple root $\alpha$ is a section of the restriction sheaf of $\MQ$ to $X_y$, which is in turn the $\OO_{X_y}$-module generated by restriction of (sections of) minimal or special $K$-types of $\Gamma(X,\ilq)$ to $X_y$. We only need to show that $s$ satisfies the conditions in Definition \ref{defn:special_revised}. Indeed, from that definition we deduce that:
  \begin{enumerate}
    \item
      If $\alpha$ is complex and $-\theta \alpha \in R^+_x$, $\MQ|_{X_y}$ is just $\OO_{X_y}$ and hence $s |_{X_y}$ must be constant;
    \item
      If $\alpha$ is real, then $\MQ|_{X_y}$ is either isomorphic to $\OO_{X_y}$ or $\OO_{X_y}(1)$. In either case, $s$ must be contained in the linear span of $1$, $z^{1/2}$ and $z^{-1/2}$.
  \end{enumerate}
  Hence again by Definition \ref{defn:special_revised}, $s$ is special (or $V_\sigma$ is special).

\end{proof}

\section{Deformation of Harish-Chandra modules}\label{sec:deform}

\subsection{Contraction of Harish-Chandra pair}

\begin{definition}
Given a Harish-Chandra pair $(\Lieg,K)$, its associated \emph{Cartan motion pair} is the Harish-Chandra pair $(\Lieg_0,K)$ where $\Lieg_0$ is the Lie algebra
  \[  \Lieg_0 := \Liek \ltimes \Lies, \]
where 
  $ \Lies := \Lieg / \Liek $
is considered as a $K$-representation.
\end{definition}

The Lie algebras $\Lieg_0$ and $\Lieg$ fit into an algebraic family of Lie algebras $\Lieg_t$, $t \in \C$, with the fiber at $t=0$ being the Lie algebra $\Lieg_0$ and other fibers $\Lieg_t$, $t \neq 0$, isomorphic to $\Lieg$. This is the well-known `deformation to the normal cone' construction. More precisely, we take the trivial vector bundle $\C \times \Lieg$ and regard it as a sheaf of $\OO_\C$-modules over the affine line $\C$. It is a sheaf of Lie algebras and its module of global sections is $\Lieg[t]=\Lieg \otimes_\C \C[t]$, where $t$ is the coordinate function of $\C$. Let $\Lieg_t$ be the subsheaf of Lie subalgebras in $\C \times \Lieg$ whose germs of sections consist of those taking values in $\Liek \subset \Lieg$ at $0 \in \C$. This definition of $\Lieg_t$ was communicated to the author by Nigel Higson. We take $\KK=K(\C[t])=K \times \C$ to the constant family of groups over $\C$ with fiber $K$.

\begin{definition}
The \emph{contraction of the Harish-Chandra pair $(\Lieg,K)$} is the pair $\gkt$.
\end{definition}

This is a special case of \emph{algebraic family of Harish-Chandra pairs} in the sense of \cite{BHS1} where the base space is $\C$. In that paper general families of groups are allowed, whereas we only consider the constant family $\KK$ with fiber $K$. Also in our case any quasi-coherent sheaf over the base space $\C$ is equivalently a $\C[t]$-module. In the setting of the paper, there is a Cartan involution $\theta$ on $\Lieg$ and we can identify $\Lies$ with the $(-\theta)$-fixed part of $\Lieg$ so that $\Lieg=\Liek \oplus \Lies$. Then we have $\Lieg_t = \Liek[t] \oplus t \Lies[t] \subset \Lieg[t]$. 

The notion of \emph{admissible (algebraic) family of Harish-Chandra modules} over an algebraic family of Harish-Chandra pairs are also defined in 2.4.1, \cite{BHS1}. We adopt their terminologies for the pair $\gkt$ yet with a slight modification. Recall that an \emph{action of $\KK$ on a $\C[t]$-module $\F$} is a morphism of $\C[t]$-modules
  \[  \F \to \OO_{\KK} \otimes_{\C[t]} \F \]
that is compatible with the multiplication and inverse operations on $\KK$ in the usual way. 

\begin{definition}\label{defn:KKaction}
  A \emph{family of representations} of $\KK$, or simply a \emph{representation} of $\KK$, is a $\C[t]$-module $\F$ that is equipped with an action of $\KK$. We say that $\F$ is flat if it is flat as a $\C[t]$-module and that $\F$ is \emph{admissible} if it is flat and
   \[ \lhom_{\KK} (\C[t] \otimes_\C V, \F) \cong [V^* \otimes_\C \F ]^\KK \]
is a free $\C[t]$-module of finite rank for every finite-dimensional representation $V$ of $K$. In this case there is a canonical isotypical decomposition
  \[  \F \cong \bigoplus_{\sigma \in \widehat{K}} \F_\sigma, \]
indexed by the equivalence classes of irreducible algebraic representations of $K$, where
  \[  \F_\sigma = V_{\sigma} \otimes_\C \lhom_{\KK} (\C[t] \otimes_\C V_\sigma, \F). \]
\end{definition}

\begin{definition}
  An \emph{algebraic family of Harish-Chandra modules} for $\gkt$, or simply a \emph{$\gkt$-module}, consists of a $\C[t]$-module $\F$, that is equipped with an action $\KK$ on $\F$ and an action of $\Lieg_t$ on $\F$, such that the action morphism
   \[ \Lieg_t \otimes_{\C[t]} \F \to \F  \]
is $\KK$-equivariant, and such that the differential of the $\KK$-action is equal to the restriction of the $\Lieg_t$-action to $\Liek[t]$. A $\gkt$-module $\F$ is said to be flat if it is flat as a $\C[t]$-module. 
\end{definition}

\begin{definition}\label{defn:gktmod_adm}
  A flat $\gkt$-module $\F$ is called \emph{admissible} if $\F$ is finitely generated as a $\Lieg_t$-module, and if the $\KK$-action on $\F$ is admissible in the sense of Definition \ref{defn:KKaction}. 
\end{definition}

\begin{remark}
  In \cite{BHS1}, all $\gkt$-modules are defined to be flat as $\C[t]$-modules. We do not assume $\gkt$ to be flat in general, however, since flat modules do not form an abelian category.
\end{remark}

\begin{remark}
  Since our base space is always $\C$, flatness is equivalent to torsion-freeness. Hence any $\C[t]$-submodule of a flat $\gkt$-module $\F$ which is invariant under the $\Lieg_t$-action and the $\KK$-action is also a flat $\gkt$-module. It is natural to call them \emph{algebraic subfamilies of Harish-Chandra modules}, or simply \emph{$\gkt$-submodules}, of the $\gkt$-module $\F$. Similarly $\gkt$-submodules of admissible $\gkt$-modules are always admissible. However the category of admissible $\gkt$-modules is not an abelian category.
\end{remark}

We will see later that MHA bijection is essentially about the study of $(\Lieg_t,\KK)$-modules. However, an alternative description of $(\Lieg_t,\KK)$ is more convenient for the approach adopted in this paper. Consider the Lie subalgebra 
  \[ \Lier := t \Lieg[t]  \subset \Lieg_t \subset \Lieg[t] \]
over $\C[t]$. Then $\Lier$ contains the Lie subalgebra $\Liek_t : = t\Liek[t] \subset \Liek[t]$ and carries the adjoint action of $\KK$.

\begin{definition}
  A \emph{$(\Lier,\KK)$-module} consists of a $\C[t]$-module $\F$ that is equipped with an action $\KK$ on $\F$ and an action of $\Lier$ on $\F$, such that the action morphism 
  \[ \Lier \otimes_{\C[t]} \F \to \F  \]
is $\KK$-equivariant, and such that the restriction of the differential of the $\KK$-action to $\Liek_t$ is equal to the restriction of the $\Lier$-action to $\Liek_t$.
\end{definition}

A $(\Lieg_t,\KK)$-module is naturally a $(\Lier,\KK)$-module by restriction of the $\Lieg_t$-action to $\Lier$. Conversely, given a $\Lier$-module $\F$, the differential of its $\KK$-action together with the $\Lier$-action give a $\Lieg_t$-action since $\Lieg_t = \Liek[t] + \Lier$, and hence $\F$ is a $\gkt$-module. Therefore the notion of a $\rkt$-module is equivalent to that of a $\gkt$-module.

\begin{definition}\label{defn:rktmod_adm}
  A $\rkt$-module is called \emph{flat} if it is flat as a $\C[t]$-module. A flat $\rkt$-module $\F$ is called \emph{admissible} if it is admissible as a $\gkt$-module.
\end{definition}

\begin{lemma}
  A $\rkt$-module $\F$ is admissible if and only if it is finitely generated as a $\rkt$-module, and if the $\KK$-action on $\F$ is admissible.
\end{lemma}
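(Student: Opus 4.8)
The plan is to prove the equivalence of two characterizations of admissibility for $\rkt$-modules by reducing it to the corresponding statement for $\gkt$-modules, which is essentially Definition \ref{defn:gktmod_adm}. The key observation is that, by the discussion immediately preceding Definition \ref{defn:rktmod_adm}, the categories of $\rkt$-modules and $\gkt$-modules coincide: the underlying $\C[t]$-module, the $\KK$-action, and (because $\Lieg_t = \Liek[t] + \Lier$ and the $\Liek[t]$-part of the $\Lieg_t$-action is forced to be the differential of the $\KK$-action) the Lie algebra action all determine each other. Under this identification, flatness as a $\C[t]$-module is literally the same condition on both sides, and admissibility of the $\KK$-action in the sense of Definition \ref{defn:KKaction} is also manifestly the same condition. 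So the only content of the lemma is the comparison between \emph{finite generation as a $\rkt$-module} and \emph{finite generation as a $\gkt$-module}.

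First I would make precise what ``finitely generated as a $\gkt$-module'' means: in Definition \ref{defn:gktmod_adm} this is phrased as being finitely generated over $\Lieg_t$, i.e. over the enveloping algebra $\U_{\C[t]}(\Lieg_t)$ of the $\C[t]$-Lie algebra $\Lieg_t$ (equivalently, as a module over $\U(\Lieg_t)$ together with the $\KK$-action, there is no difference since the $\KK$-action is built in). Likewise ``finitely generated as a $\rkt$-module'' means finitely generated over $\U_{\C[t]}(\Lier)$. The forward implication is immediate: since $\Lier \subset \Lieg_t$, a set of $\U(\Lier)$-generators is a fortiori a set of $\U(\Lieg_t)$-generators, so a finitely generated $\rkt$-module is finitely generated as a $\gkt$-module, and combined with the preceding remarks this gives one direction together with the fact that admissibility of $\F$ as a $\gkt$-module implies the stated conditions.

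The substance is the reverse implication: if $\F$ is admissible as a $\gkt$-module — in particular finitely generated over $\Lieg_t$ — then $\F$ is finitely generated over $\Lier$. Here is where the $\KK$-action does real work. Pick finitely many $\gkt$-generators $v_1,\dots,v_n \in \F$. Because $\F$ is admissible, each $v_i$ lies in a finite sum of $K$-isotypic components, hence in a finite-dimensional $K$-stable (equivalently $\KK$-stable, since $\KK = K \times \C$ is the constant family) $\C$-subspace; enlarging, we may assume the span $V$ of $v_1,\dots,v_n$ is $K$-stable, so $\U(\Lieg_t)\cdot V = \F$. Now use $\Lieg_t = \Liek[t] + \Lier$ together with a PBW-type argument over $\C[t]$: $\U(\Lieg_t)$ is spanned over $\C[t]$ by products of elements of $\Liek[t]$ and $\Lier$, and one can push all $\Liek[t]$-factors to the right past the $\Lier$-factors using the bracket relations $[\Liek[t],\Lier]\subseteq \Lier$ (indeed $\Lier = t\Lieg[t]$ is an ideal after tensoring, and $[\Liek[t], t\Lieg[t]] \subseteq t\Lieg[t]$). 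Hence $\U(\Lieg_t) = \U(\Lier)\cdot \U(\Liek[t])$ as $\C[t]$-modules, so $\F = \U(\Lier)\cdot \U(\Liek[t])\cdot V$. Since $V$ is $K$-stable and the differential of the $K$-action agrees with the action of $\Liek$, we have $\Liek \cdot V \subseteq V$; I must also check that $t^m\Liek \cdot V$ for all $m\geq 0$ stays controlled — but $t^m \Liek = t^{m-1}(t\Liek) \subseteq t^{m-1}\Lier$ for $m\geq 1$, so $t\Liek[t]\subseteq \Lier$, and therefore $\U(\Liek[t])\cdot V \subseteq \U(\Lier)\cdot\U(\Liek)\cdot V = \U(\Lier)\cdot V$ using $\Liek\cdot V\subseteq V$. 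Altogether $\F = \U(\Lier)\cdot V$, so $\F$ is finitely generated over $\Lier$.

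I expect the main (mild) obstacle to be the bookkeeping in the last paragraph: getting the PBW-style reordering to work cleanly over $\C[t]$ rather than over $\C$, and in particular keeping track of the powers of $t$ when commuting $\Liek[t]$-elements past $\Lier = t\Lieg[t]$ and when absorbing $t\Liek[t]$ into $\Lier$. None of this is deep — it is the standard manipulation underlying the equivalence of $\gkt$- and $\rkt$-modules already asserted in the text — but it should be spelled out carefully enough that the reader sees why the constant family $\KK$ and the containment $t\Liek[t]\subseteq \Lier$ are exactly what make finite generation transfer between the two descriptions. The converse direction and the identification of the $\KK$-admissibility conditions are formal and can be disposed of in a sentence each.
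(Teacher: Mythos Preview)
Your proposal is correct and follows essentially the same route as the paper. Both arguments reduce to showing that finite generation over $\Lieg_t$ implies finite generation over $\Lier$, and both use the PBW-type factorization $\U(\Lieg_t)=\U(\Lier)\cdot\U(\Liek[t])$ (valid because $\Lier=t\Lieg[t]$ is an ideal) together with an enlargement of the generating set to something stable under the compact piece. The only difference is cosmetic: the paper replaces the finite generating set $U$ by the $\KK$-subrepresentation $U'$ it generates, which is $\Liek[t]$-stable and, by admissibility, a $\C[t]$-module of finite rank; you instead take a finite-dimensional $K$-stable $\C$-subspace $V$ and then separately absorb the $t\Liek[t]$-part of $\U(\Liek[t])$ into $\U(\Lier)$. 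Your extra step with $t\Liek[t]\subseteq\Lier$ is unnecessary once you notice that $\U_{\C[t]}(\Liek[t])=(\U\Liek)[t]$, so $\U(\Liek[t])\cdot V=\C[t]\cdot V$ directly from $\Liek\cdot V\subseteq V$; but what you wrote is still correct.
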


\begin{proof}
  We only need to show that if $\F$ is finitely generated as $\Lieg_t$-module and is admissible as $\KK$-representation, then $\F$ is also finitely generated as $\Lier$-module. Assume $\F = \U\Lieg_t \cdot U$ where $U$ is a finite-dimensional complex vector subspace of $\F$. Take $U'$ to be the $\KK$-subrepresentation of $\F$ generated by $U$, then $\F = \U\Lier \cdot U'$. Since $\F$ is admissible as $\KK$-representation, $U'$ is a $\C[t]$-module of finite rank. Therefore $\F$ is finitely generated as $\Lier$-module.
\end{proof}

\subsection{Algebraic family of Harish-Chandra sheaves}\label{subsec:deform}

We extend the Beilinson-Bernstein localization of $\gk$-modules to the case of $\gkt$-modules in a similar manner as \S~\ref{sec:Dmod}. Form the trivial vector bundle $\LLieg[t] = \LLieg \otimes_\C \C[t]$ and and similarly its subsheaf $\LLieg_t$. They are both Lie algebroids over the sheaf $\OO_X[t]$ of commutative algebras over $X$ in the same way as $\LLieg$ being Lie algebroid over $\OO_X$.

We now assume $\Liek$ is the fixed Lie subalgebra of a Cartan involution $\theta$ of $\Lieg$. As in \S~\ref{subsec:HC}, a given $K$-orbit $Q \xrightarrow{i} X$ determines a Cartan involution $\theta_Q$ of $\Lieh$ so that the fixed subspace is $\Liet_Q$. Denote by $\Liea_Q$ the $(-1)$-eigenspace of $\theta_Q$ so that $\Lieh = \Liet_Q \oplus \Liea_Q$. For any given $\lambda \in \Lieh^*$ we write $\lc = \lambda|_{\Liet_Q}$ and $\lnc = \lambda|_{\Liea_Q}$. Define a character 
  \begin{equation}\label{eq:lambda_t}
     \lt : = \lc + (\lnc / t) - \rho 
   \end{equation}
of $t \Lieh[t]$ (as an element in $\Liet_Q^* \oplus t\pb  \Liea^*_Q$). 

Consider another Lie subalgebra $\Lier = t \Lieg[t]  \subset \Lieg_t \subset \Lieg[t]$ over $\C[t]$ and the corresponding Lie subalgebroid $\LLier=t\LLieg[t] \subset \LLieg_t \subset \LLieg[t]$. 
We set
  \[ \Dh(\Lier) := \U(\LLier / t\LLien[t]) = \U(t (\LLieg/\LLien)[t] ),  \]
then
  \[ \U(t \LLieh[t]) = \U(t (\LLieb /\LLien) [t]) \subset \Dh(\Lier). \]
  
\begin{definition}\label{defn:drt} 
Define the sheaf of associative algebras over $X$,
  \[  \Dr := \Dh(\Lier) \otimes_{\U(t \LLieh[t])} (\C[t])_{\lt}. \]
\end{definition}  
  
\begin{definition}\label{defn:DrK}
  A (quasi-coherent) $(\Dr,\KK)$-module $\W$ is a (quasi-coherent) $\Dr$-module equipped with an equivariant $\KK$-structure, such that the action of $\LLiek_t \subset \LLier$ obtained via the left $\Dr$-module structure satisfies
  \[  (tX) \cdot s = t \frac{d}{du}\bigg|_{u=0} \exp(uX) \cdot s , \]
where $X \in \Liek$, $s$ is a local section of $\W$ and $\exp(uX)$ acts on $s$ via the $\KK$-equivariant structure. In other words, the $\LLiek_t$-module structure is compatible with the $\KK$-equivariant structure. $\W$ is coherent if it is coherent as $\Dr$-module. The category of $(\Dr,\KK)$-modules form an abelian category denoted by $\Mod(\Dr,\KK)$. 
\end{definition}  

\begin{definition}\label{defn:Drt_adm}
  A $(\Dr,\KK)$-module $\W$ is said to be flat if it is flat with respect to the projection $X \times \C \to \C$. It is said to be \emph{admissible} if it is flat and is  coherent as a $\Dr$-module. 
\end{definition}

We now define the analogue of standard Harish-Chandra sheaves for $(\Dr,\KK)$-modules.
  
\begin{definition}
  The \emph{transfer bimodule associated to the $K$-orbit $Q$ and the character $\lt$} is the sheaf 
  \[  \DQt := i\pb(\Dr) \otimes_{i\pb \OO_{X}} \omega_{Q/X}. \]
\end{definition}

The sheaf $\DQt$ is naturally a left $i\pb \Dr$-module. Consider the Lie subalgebroid $ \LLiek_t := t \LLiek[t]$ of $\LLiek[t]$ on $X$. Then $\LLiek_t \subset \LLier$. Hence $\DrQ$ is a right $\U (\LLiek_t)$-module if we regard $\LLiek_t$ as a Lie algebroid over $Q$ by restriction. The adjoint action of $K$ on $\Lieg$ induces a $\KK$-equivariant structure on $\DQt$, which is denoted by $\Ad$. 

Now let $\phi$ be a homogeneous $K$-connection on $Q$ compatible with $\lambda-\rho$ as in \S~\ref{subsec:HC}.

\begin{definition} Define the sheaf of $\Dr$-algebras
  \[  i_\sharp \phi = \tiqt :=  i_* \left( \DQt \otimes_{\U \LLiek_t} \phi[t] \right) . \]
\end{definition}

The sheaf $i_\sharp \phi$ is a left $\Dr$-module. Moreover, it carries a $K$-equivariant structure defined by
  \[ k \cdot (P \otimes v) = (\Ad(k) P) \otimes v + P \otimes k \cdot v,  \quad \forall ~k \in K, ~ P \in \DQt, ~ v \in \phi.  \]
The $\KK$-action differentiates to a left $\LLiek[t]$-action on $i_\sharp \phi$, such that its restriction to $\LLiek_t$ coincides with the one obtained via the left $\U\LLier$-module structure of $i_\sharp \phi$ and the inclusion $\LLiek_t \subset \LLier$. Hence $i_\sharp \phi$ is a $(\Dr,\KK)$-module. Moreover, it is flat in the sense of Definition \ref{defn:Drt_adm}. 

\begin{remark}
  In \cite{YuMackeySL}, we gave a different yet equivalent construction of the family. There we localize $\LLieg_t$ to a Lie algebroid over $X$ and use the associated sheaf of universal enveloping algebras instead of $\Dr$. The advantage of the new approach is that the construction is uniform for all $K$-orbits, which makes proofs easier.
\end{remark}

Given $\phi$, there are in general different $\lambda$ which is compatible with $\phi$. To specify $\lambda$, we write $i^\lambda_+ \phi$ instead of $i_+\phi$. The following lemma follows immediately from definitions.
\begin{lemma}
  For any $s \neq 0$, denote by $\lambda_s$ the evalution of $\lt$ at $t=s$. Then 
  \begin{enumerate}
  \item
    there are canonical isomorphisms of sheaves of algebras
      \[ \Dr \big|_{t=s} \cong \D_{\lambda_s},  \quad \forall~ s \neq 0; \]
  \item 
    there are canonical $K$-equivariant isomorphisms of sheaves of $i\pb \D^{\lambda_s}$-$\U\LLiek$-bimodules
      \[ \DQt \big|_{t=s} \cong \D^{\lambda_s}_{X \gets Q}, \quad \forall~ s \neq 0;   \]
  \item
    there are canonical isomorphisms of $(\D_{\lambda_s},K)$-modules 
      \[ i_\sharp \phi \big|_{t=s} \cong i^\lambda_+ \phi, \quad \forall~ s \neq 0.  \]
\end{enumerate}
\end{lemma}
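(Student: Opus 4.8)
The plan is to prove the three isomorphisms by unwinding the definitions and using that specialization at $t=s$ (i.e. base change $-\otimes_{\C[t]} \C[t]/(t-s)$) turns the rescaled constructions into the classical ones. The key point throughout is that for $s \neq 0$ the Lie subalgebroid $\LLier = t\LLieg[t]$ becomes, after specialization, isomorphic to $\LLieg$ itself via the rescaling $tX \mapsto X$, and likewise $t\LLieh[t]$ specializes to $\LLieh$; under this identification the character $\lt = \lc + \lnc/t - \rho$ of $t\LLieh[t]$ evaluated at $t=s$ becomes the character $\lambda_s - \rho$ of $\LLieh$, which is exactly the character used to define $\D_{\lambda_s}$. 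I would first record this rescaling identification carefully as it underlies all three parts.

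For part (1): since $\Dr = \Dh(\Lier)\otimes_{\U(t\LLieh[t])}(\C[t])_{\lt}$ and $\Dh(\Lier) = \U(t\LLieg[t]/t\LLien[t])$, applying $-\otimes_{\C[t]}\C[t]/(t-s)$ commutes with the formation of $\U(-)$ of a Lie algebroid and with the quotient, so $\Dr|_{t=s} \cong \U\big((t\LLieg[t]/t\LLien[t])|_{t=s}\big)\otimes_{\U(t\LLieh[t])|_{t=s}} \C_{\lambda_s - \rho}$. The rescaling identification $t\LLieg[t]|_{t=s} \cong \LLieg$ (sending the class of $tX$ to $X$) carries $t\LLien[t]|_{t=s}$ to $\LLien$ and $t\LLieh[t]|_{t=s}$ to $\LLieh$, and it is compatible with the Lie algebroid structures because the anchor of $\LLieg_t$ restricted to $\LLier$ is $t$ times the anchor of $\LLieg[t]$, which after dividing by $s$ recovers the anchor of $\LLieg$. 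Hence $\Dr|_{t=s} \cong \U(\LLieg/\LLien)\otimes_{\U\LLieh}\C_{\lambda_s-\rho}$, which is exactly $\Dh/I_{\lambda_s}\Dh = \D_{\lambda_s}$ by the definitions in \S\ref{subsec:Dmod}. For part (2): $\DQt = i\pb(\Dr)\otimes_{i\pb\OO_X}\omega_{Q/X}$ and $\omega_{Q/X}$ carries no $t$, so specializing commutes with everything and part (1) gives $\DQt|_{t=s} \cong i\pb(\D_{\lambda_s})\otimes_{i\pb\OO_X}\omega_{Q/X} = \D^{\lambda_s}_{X\gets Q}$; one checks the $i\pb\D^{\lambda_s}$–$\U\LLiek$-bimodule structure matches, using that the right $\U\LLiek_t$-action with $\LLiek_t = t\LLiek[t]$ specializes, under the same rescaling, to the right $\U\LLiek$-action, and that the $\KK$-equivariant structure $\Ad$ is the constant family so it specializes to the $K$-equivariant structure on $\D^{\lambda_s}_{X\gets Q}$. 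For part (3): by definition $i_\sharp\phi = i_*(\DQt\otimes_{\U\LLiek_t}\phi[t])$ with $\phi[t] = \phi\otimes_\C\C[t]$; since $\phi[t]|_{t=s}\cong\phi$ and $i_*$ is exact on quasi-coherent sheaves along $Q\hookrightarrow X$, applying $-\otimes_{\C[t]}\C[t]/(t-s)$ gives $i_\sharp\phi|_{t=s}\cong i_*\big(\DQt|_{t=s}\otimes_{\U\LLiek}\phi\big)$, which by part (2) is $i_*(\D^{\lambda_s}_{X\gets Q}\otimes_{\U\LLiek}\phi) = i^\lambda_+\phi$ as described after Lemma~\ref{lemma:KCartan}; the $(\D_{\lambda_s},K)$-module structures match by combining the previous checks.

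The only genuinely substantive point — and the one I would spell out most carefully — is the compatibility of the rescaling identification $t\LLieg[t]|_{t=s}\cong\LLieg$ with the Lie algebroid (and hence enveloping algebra) structures, i.e. that the Lie bracket and anchor of $\LLier$ descend correctly after dividing by $s$, and that the specialization of the twist $(\C[t])_{\lt}$ at $t=s$ really is $\C_{\lambda_s-\rho}$ as a module over $\U(t\LLieh[t])|_{t=s}\cong\U\LLieh$ (this is where the factor $\lnc/t$ evaluated at $t=s$ becomes $\lnc/s$, reassembling into $\lambda_s = \lc + \lnc/s$). Everything else is a formal consequence of the fact that base change $-\otimes_{\C[t]}\C[t]/(t-s)$ commutes with $\U(-)$, with tensor products, with quotients by ideal subsheaves, and with $i_*$ along the locally closed embedding $i\colon Q\hookrightarrow X$. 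I expect no real obstacle beyond bookkeeping; the lemma is, as the text says, immediate from the definitions once the rescaling dictionary is set up.
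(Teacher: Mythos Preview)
Your proposal is correct and is precisely the unwinding that the paper intends: the paper gives no proof beyond the sentence ``the following lemma follows immediately from definitions,'' and your rescaling dictionary $t\LLieg[t]|_{t=s}\cong\LLieg$ together with the specialization of the twist $(\C[t])_{\lt}$ is exactly the content of that remark. The only quibble is bookkeeping with the $\rho$-shift in the phrase ``$\lambda_s$ is the evaluation of $\lt$ at $t=s$'': your reading $\lambda_s = \lc + \lnc/s$ (so that the character on $\Lieh$ is $\lambda_s-\rho$) is the one that makes the statement of the lemma correct, and is clearly what is meant.
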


If we regard $i_\sharp \phi$ as a left $\U\LLier$-module via the natural morphism $\U\LLier \to \Dr$, the left $\U\LLier$-module structure can be extended to a left $\U\LLieg_t$-module structure via the $K$-equivariant structure by $\Lier + \Liek = \Lieg_t \subset \Lieg[t]$. Therefore the sheaf $i_\sharp \phi$ is naturally a $(\LLieg_t, \KK)$-module. However, it is not a coherent $\U\LLieg_t$-module or $\Dr$-module in general. The coherence fails when $\lnc$ is not generic, as illustrated in the case of $\SLR$ in a previous paper \cite{YuMackeySL} of the author. To fix this, recall by Theorem \ref{thm:minimal-K} that the minimal $K$-types of the stand module $\Gamma(X,\ilq)$ are included in $\Gamma(Q,\phi)$ and hence is independent of $\lnc$. Recall $\MQ$ is the coherent $K$-equivariant $\OO_X$-submodule of $i_* \phi$ generated by those minimal $K$-types.

\begin{definition}
  Define $i_\flat \phi$ to be the subsheaf of coherent $\Dr$-submodules of $i_\sharp \phi$ generated by $\MQ$. Equivalently,
  \[  i_\flat \phi = \iqt := (i_* \DQt) \otimes_{\U \LLiek_t} \MQ[t]. \]
\end{definition}

It is clear that $i_\flat \phi$ is an admissible $\Dr$-module in the sense of Definition \ref{defn:Drt_adm} since $i_\sharp \phi$ is flat and $\C[t]$ is a PID. The specialization of $i_\flat \phi$ to $t = 1$ is a coherent $(\D_\lambda,K)$-module over $X$.  Therefore its global sections form a $(\Lieg, K)$-module with infinitesimal character $\chi_\lambda$.

We define a family version of the algebra $\Ugh$. Let $\Urh$ be the subalgebra of $\Ugh[t]$ generated by $\Lier \subset \Ug[t]$ and $t\Lieh \subset \Lieh[t]$. To analyze $\Urh$, note that there is an isomorphism of commutative algebras
  \[  \gamma_t: (\U\Lier)^{\Lieg} \xrightarrow{\sim} S(t\Lieh[t])^{W,.}, \] 
which is the restriction of 
  \[  \gamma_{HC} \otimes 1:  \U(\Lieg[t])^{\Lieg} = (\Ug)^{\Lieg} \otimes_\C \C[t] \xrightarrow{\sim} (S\Lieh)^{W, .} \otimes_\C \C[t] = S(\Lieh[t])^{W, .}. \]
Here $( - )^{\Lieg}$ stands for invariant part under the adjoint $\Lieg$-action and the twisted action of $W$ on $\Lieh$ extends $\C[t]$-linearly to $\Lieh[t]$ and $t\Lieh[t]$. When $0 \neq s \in \C$, $\gamma_t|_{t=s}$ is isomorphic to the Harish-Chandra isomorphism $\gamma_{HC}$, while $\gamma_t|_{t=0}$ is isomorphic to the Chevalley isomorphism
  \[ \gamma_{Cl}: (S\Lieg)^\Lieg \xrightarrow{\sim} (S\Lieh)^W, \]
where the $W$ action is the usual one. Note that the construction of $\gamma_t$ is nothing but the Rees algebra version of the Harish-Chandra isomorphism with respect to the natural filtrations on $(\Ug)^\Lieg$ and $(S\Lieh)^{W,.}$. Therefore we can regard $S(t\Lieh[t])$ as a $(\U\Lier)^{\Lieg}$-algebra and there is an isomorphism of algebras
  \[ \Urh \cong \U\Lier \otimes_{(\U\Lier)^\Lieg} S(t\Lieh[t]).  \]

Set $I_{\lt}$ to be the kernel of the homomorphism $S(t\Lieh[t]) \to \C[t]$ determined by $\lt: t\Lieh \to \C[t]$. Then $I_{\lt}$ lies in the center of $\Urh$ and hence $I_{\lt} \Urh$ is a two-sided ideal of $\Urh$.

\begin{definition}
  $\Urlt : = \Urh / I_{\lt} \Urh = \Urh \otimes_{S(t\Lieh[t])} (\C[t])_{\lt}$, where $(\C[t])_{\lt}$ is $\C[t]$ regarded as an $S(t\Lieh[t])$-module via $\lt$.
\end{definition}

We have the analogue of Proposition \ref{prop:Dh} and Proposition \ref{prop:Uchi}.

\begin{proposition}\label{prop:Dhr}
  The natural morphisms
    \[ \Urh \to \Gamma(X, \Dh(\Lier))  \]
  and
    \[ \Urlt \to \Gamma(X, \Dr) \]
  are isomorphisms of algebras. Moreover, we have $H^i(X,\Dh(\Lier)) = H^i(X,\Dr) = 0$ for $i > 0$.
\end{proposition}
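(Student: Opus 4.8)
The plan is to reduce the family statement (Proposition~\ref{prop:Dhr}) to the two classical results already recorded, namely Proposition~\ref{prop:Dh} ($\Ugh \xrightarrow{\sim} \Gamma(X,\Dh)$ with higher cohomology vanishing) and Proposition~\ref{prop:Uchi}, by exhibiting $\Dh(\Lier)$ and $\Dr$ as filtered (or ``Rees'') deformations whose associated graded sheaves are controlled. Concretely, I would put the natural increasing filtration on $\LLier = t\LLieg[t]$ by powers of $t$, equivalently view $\Dh(\Lier)$ as the Rees sheaf of $\Dh$ with respect to the order filtration coming from the anchor map $\tau$; then $\Dh(\Lier)$ is a flat sheaf of $\OO_X[t]$-algebras whose fiber at $t\neq 0$ is $\Dh$ (after rescaling) and whose fiber at $t=0$ is the symmetric/associated-graded object. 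Since $\C[t]$ is a PID and everything in sight is torsion-free over $\C[t]$, flatness over the base is automatic, and cohomology commutes with the (flat) base change to each fiber.

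**Key steps, in order.** First I would make precise the identification $\Dh(\Lier) = \U(\LLier/t\LLien[t])$ as a Rees construction and check it is $\OO_X[t]$-flat; here the only point to verify is that $\LLien[t]\cap \LLier = t\LLien[t]$ inside $\LLieg[t]$, which is immediate from the definition of $\LLier$. Second, using Proposition~\ref{prop:Dh} fiberwise together with the semicontinuity/base-change theorem (and the fact that a coherent sheaf on $X\times\C$ flat over $\C$ with $H^{i}$ vanishing on each closed fiber has $H^{i}=0$ and $R^{0}\pi_{*}$ commuting with base change), I would conclude $H^{i}(X,\Dh(\Lier))=0$ for $i>0$ and that $\Gamma(X,\Dh(\Lier))$ is the corresponding Rees algebra of $\Gamma(X,\Dh)=\Ugh$, which by the discussion preceding the proposition is exactly $\Urh$; the map $\Urh\to\Gamma(X,\Dh(\Lier))$ is then an isomorphism because it is an isomorphism after $\graded$ (by Proposition~\ref{prop:Dh}) and source and target are $t$-torsion-free and $t$-adically separated. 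Third, for the $\lt$-version I would apply $-\otimes_{S(t\Lieh[t])}(\C[t])_{\lt}$; since $S(t\Lieh[t])\to\C[t]$ is a surjection of $\C[t]$-algebras and $I_{\lt}$ is generated by a regular sequence (the images of a basis of $t\Lieh$ minus their $\lt$-values), passing to the quotient $\Dr=\Dh(\Lier)\otimes_{\U(t\Lieh[t])}(\C[t])_{\lt}$ preserves flatness over $\C[t]$ and the Koszul resolution of $(\C[t])_{\lt}$ over $S(t\Lieh[t])$ lets me compute $H^{i}(X,\Dr)$ from $H^{i}(X,\Dh(\Lier))=0$; the statement $\Gamma(X,\Dr)=\Urlt$ then follows from the first isomorphism by right-exactness of the tensor product and a dimension/filtration count, exactly as Propositions~\ref{prop:Dh} and~\ref{prop:Uchi} are related in \cite{Milicic}.

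**Main obstacle.** The delicate point is the $t=0$ fiber: one must be sure that $\Dh(\Lier)|_{t=0}$ (and likewise $\Dr|_{t=0}$) is still cohomologically trivial, since Propositions~\ref{prop:Dh} and~\ref{prop:Uchi} are statements about the honest twisted differential operator algebras on $X$, not their degenerations. This is where I expect to spend the real effort: I would argue that $\graded \Dh$ (with respect to the order filtration) is the pushforward to $X$ of $\OO$ on the appropriate twisted cotangent-bundle-like space $T^{*}X\times_{\Lieh^{*}/W}\Lieh^{*}$ (a vector-bundle-type space over $X$, affine over $X$), whose higher cohomology therefore vanishes, and that $\Gamma$ of it is the polynomial algebra matching $\graded\Ugh$; equivalently, invoke that $\Dh$ carries a filtration with affine-over-$X$ associated graded, so both $H^{i}(X,-)=0$ for $i>0$ and $\Gamma(X,-)=\graded\Ugh$ hold on the nose, and then lift. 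Once the $t=0$ fiber is handled, the family statement is a formal consequence of flatness, base change, and the graded-isomorphism criterion, with no further representation-theoretic input needed.
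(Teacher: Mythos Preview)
The paper does not actually supply a proof of this proposition: it is stated as the direct analogue of Proposition~\ref{prop:Dh} and Proposition~\ref{prop:Uchi} and left at that. Your proposal is therefore not being compared against an existing argument but is rather filling in a gap the author deemed routine.

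Your overall strategy is correct and is the natural one. Let me point out one place where you are making life harder than necessary, and one place where a little care is warranted.

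\textbf{A simplification.} You frame the argument in terms of flatness over $\C[t]$ together with semicontinuity/base-change to control the fibers. This is fine in spirit, but since $\Dh(\Lier)$ is literally the Rees sheaf of $\Dh$ with respect to the order filtration $F_\bullet$, you have the decomposition
\[
  \Dh(\Lier) \;=\; \bigoplus_{k\ge 0} F_k\Dh \cdot t^k
\]
as a sheaf of $\OO_X$-modules. Hence $H^i(X,\Dh(\Lier))=\bigoplus_k H^i(X,F_k\Dh)\,t^k$ and $\Gamma(X,\Dh(\Lier))=\bigoplus_k \Gamma(X,F_k\Dh)\,t^k$. So the whole family statement reduces to the vanishing of $H^i(X,F_k\Dh)$ for each $k$ and the identification $\Gamma(X,F_k\Dh)=F_k\Ugh$; no base-change theorem is needed, and you avoid the awkwardness that $\Dh(\Lier)$ is not coherent on $X\times\C$. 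This is exactly the ``$t=0$ fiber'' computation you already isolated as the crux: once you know $H^i(X,\graded^k\Dh)=0$ for $i>0$ (which, as you say, follows because $\graded\Dh\cong \pi_*\OO_{\tg}$ with $\pi:\tg\to X$ affine over $X$ and $H^i(\tg,\OO_{\tg})=0$), induction on $k$ gives both $H^i(X,F_k\Dh)=0$ and $\Gamma(X,F_k\Dh)=F_k\Ugh$. That $\Rees(\Ugh)=\Urh$ is then the statement that $\Urh$ is generated by $t\Lieg$ and $t\Lieh$ inside $\Ugh[t]$, which is immediate from its definition.

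\textbf{A point of care.} For the passage from $\Dh(\Lier)$ to $\Dr$, your Koszul argument is the right idea, but note that $I_{\lt}$ is generated by the elements $t h - \lt(t h)$ for $h$ ranging over a basis of $\Lieh$; since $\lt$ involves $\lnc/t$, these generators are of the form $t h_+ - (\lc-\rho)(h_+)t$ and $t h_- - \lnc(h_-)$ (for $h_\pm$ in $\Liet_Q$ and $\Liea_Q$). These do form a regular sequence on $\Dh(\Lier)$ because they already form one on the associated graded $\graded\Dh(\Lier)=S(\LLieg/\LLien)[t]$ (check this on fibers of $\pi:\tg\times\C\to X$, where it is a linear-algebra fact). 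With that in hand, the Koszul complex is a resolution and the cohomology vanishing for $\Dr$ follows from that for $\Dh(\Lier)$; the identification $\Gamma(X,\Dr)=\Urlt$ then drops out by right-exactness, exactly as you outline.
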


The natural homomorphism $\U\Lieg_t \to \U_{\lt}(\Lieg)$ is surjective, so $\U_{\lt}(\Lieg)$ can be regarded as a quotient algebra of $\Ug_t$. We denote $\Mod(\U_{\lt}(\Lieg),\KK)$ as the category of $\gkt$-modules whose $\U\Lieg_t$-module structure factors through $\U_{\lt}(\Lieg)$.

\begin{lemma}\
  Taking global sections defines a functor 
    \[ \Gamma(X, -): \Mod(\Dr,\KK) \to \Mod(\U_{\lt}(\Lieg),\KK). \] 
  Moreover, it sends flat $(\Dr,\KK)$-modules to flat $\gkt$-modules.
\end{lemma}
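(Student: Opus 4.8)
The plan is to read off the assertion from Proposition~\ref{prop:Dhr} together with the definition of a $(\Dr,\KK)$-module. Let $\W$ be a $(\Dr,\KK)$-module. Since $\W$ is a quasi-coherent left $\Dr$-module, $\Gamma(X,\W)$ is a left module over $\Gamma(X,\Dr)$, which Proposition~\ref{prop:Dhr} identifies with $\Urlt$; in particular $\Gamma(X,\W)$ carries an action of $\Lier$ (via $\Lier\to\Gamma(X,\Dr)\cong\Urlt$, itself induced by $\LLier\to\Dr$) for which $I_{\lt}$ acts by zero. On the other hand, the equivariant $\KK$-structure on $\W$ passes to $\Gamma(X,\W)$ by functoriality of $\Gamma$, and its differential is a $\Liek[t]$-action; the compatibility required in Definition~\ref{defn:DrK} says precisely that on $\Liek_t=\Liek[t]\cap\Lier$ this differentiated $\Liek[t]$-action coincides with the $\Lier$-action just described.

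Since $\Lieg_t=\Liek[t]+\Lier$ with $\Lier$ an ideal of $\Lieg_t$ (indeed of $\Lieg[t]$), these two partially defined actions assemble into a $\C[t]$-linear map $\Lieg_t\otimes_{\C[t]}\Gamma(X,\W)\to\Gamma(X,\W)$, and I would then check that it is a Lie-algebra action: the relations internal to $\Liek[t]$ hold because that action is the differential of a group action, the relations internal to $\Lier$ hold because $\Lier$ sits inside the associative algebra $\Urlt$, and the mixed relations $[\xi,\eta]$ with $\xi\in\Liek[t]$, $\eta\in\Lier$ are exactly the infinitesimal form of the $\KK$-equivariance of the action morphism $\Dr\otimes\W\to\W$, the $\Ad$-action of $\Liek$ on $\Lier$ realizing the bracket. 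Granting this, $\Gamma(X,\W)$ is a $\gkt$-module: the action morphism is $\KK$-equivariant and the differential of the $\KK$-action is the restriction of the $\Lieg_t$-action to $\Liek[t]$ by construction. Because $I_{\lt}$ acts by zero, the resulting $\U\Lieg_t$-action factors through the quotient $\U_{\lt}(\Lieg)$, so $\Gamma(X,\W)\in\Mod(\U_{\lt}(\Lieg),\KK)$. Functoriality is immediate, a morphism of $(\Dr,\KK)$-modules inducing on global sections a map that is simultaneously $\Urlt$-linear and $\KK$-equivariant.

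For the last assertion, suppose $\W$ is flat, i.e.\ flat over $\C[t]$ with respect to $X\times\C\to\C$; as $\C[t]$ is a PID this means $\W$ is torsion-free, which can be checked stalkwise. If $s\in\Gamma(X,\W)$ satisfies $t^{n}s=0$ for some $n$, then $t^{n}s$ vanishes in every stalk, and torsion-freeness forces $s=0$; hence $\Gamma(X,\W)$ is torsion-free over $\C[t]$, i.e.\ a flat $\gkt$-module.

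The one step that is not purely formal is the gluing in the second paragraph: verifying that the $\Lier$-action coming from $\Gamma(X,\Dr)$ and the $\Liek[t]$-action coming from the differentiated $\KK$-structure really combine into an honest representation of $\Lieg_t$. This is exactly where the matching of the $\LLiek_t$-action with the $\KK$-equivariant structure in Definition~\ref{defn:DrK}, together with the $\Ad$-equivariance built into the notion of an equivariant $\Dr$-module, is used; everything else reduces to Proposition~\ref{prop:Dhr} and standard properties of the global-sections functor.
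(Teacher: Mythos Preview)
Your proof is correct and follows the same route as the paper's: the first assertion is deduced from Proposition~\ref{prop:Dhr} together with the compatibility in Definition~\ref{defn:DrK} (you simply unwind what the paper compresses into one sentence), and the flatness statement is the observation that over the PID $\C[t]$ flatness is torsion-freeness, which passes to global sections. The only difference is level of detail---the paper's proof is two sentences, while you spell out the gluing of the $\Lier$- and $\Liek[t]$-actions explicitly.
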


\begin{proof}
  The first statement follows from Proposition \ref{prop:Dhr}. The second statement follows from the fact that $\C[t]$ is PID and that a module over a PID is flat if and only if it is torsion-free.
\end{proof}

\begin{corollary}
  The flat $\gkt$-module $\Gamma(X,i_\flat \phi)$ lies in $\Mod(\U_{\lt}(\Lieg),\KK)$.
\end{corollary}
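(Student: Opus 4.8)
The plan is to obtain the corollary as an immediate application of the preceding lemma, once we verify that $i_\flat\phi$ qualifies as a flat object of $\Mod(\Dr,\KK)$. First I would recall from the construction that $i_\flat\phi$ is by definition the $\Dr$-submodule of $i_\sharp\phi$ generated by the coherent $K$-equivariant $\OO_X$-submodule $\MQ \subset i_*\phi$; because $\MQ$ is $K$-stable and the $\KK$-equivariant structure on $i_\sharp\phi$ is the one induced from $\Ad$ on $\DQt$ together with the given action on $\phi$, this submodule is preserved by $\KK$, and the compatibility between the $\LLiek_t$-action and the $\KK$-structure required in Definition \ref{defn:DrK} is inherited from $i_\sharp\phi$. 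Hence $i_\flat\phi$ is a $(\Dr,\KK)$-submodule of $i_\sharp\phi$, and it is coherent as a $\Dr$-module by construction.

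Next I would check flatness. Since $i_\sharp\phi$ is flat in the sense of Definition \ref{defn:Drt_adm} and $\C[t]$ is a PID, $i_\sharp\phi$ is torsion-free over $\C[t]$; any $\OO_X[t]$-submodule of a $\C[t]$-torsion-free sheaf is again $\C[t]$-torsion-free, so $i_\flat\phi$ is flat over $\C[t]$, i.e. an admissible $(\Dr,\KK)$-module. This is exactly the observation already recorded just after the definition of $i_\flat\phi$.

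Finally I would apply the lemma to $\W = i_\flat\phi$: the functor $\Gamma(X,-)$ then sends it to a flat $\gkt$-module lying in $\Mod(\U_{\lt}(\Lieg),\KK)$, which is the assertion. The one substantive ingredient behind the lemma, and hence behind the corollary, is Proposition \ref{prop:Dhr}, which identifies $\Gamma(X,\Dr)$ with $\Urlt$; through this identification the $\U\Lieg_t$-module structure on $\Gamma(X,i_\flat\phi)$ factors through $\U\Lieg_t \to \Gamma(X,\Dr)$, and the two-sided ideal $I_{\lt}$ therefore acts by zero, which is precisely what it means for the action to descend to $\U_{\lt}(\Lieg)$. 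I do not anticipate any genuine obstacle here: once the lemma and the flatness of $i_\sharp\phi$ are in hand, the corollary is pure bookkeeping.
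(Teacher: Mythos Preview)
Your proposal is correct and follows exactly the paper's approach: the corollary is stated without proof because it is an immediate application of the preceding lemma to $i_\flat\phi$, whose admissibility (in particular flatness) as a $(\Dr,\KK)$-module was already recorded right after its definition using precisely the argument you give (submodule of the flat $i_\sharp\phi$ over the PID $\C[t]$). Your write-up simply makes explicit the bookkeeping the paper leaves implicit.
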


\begin{remark}
  We believe that the global section functor $\Gamma(X, -)$ sends admissible $(\Dr,\KK)$-modules to admissible $\gkt$-modules, and hence $\Gamma(X, i_\flat \phi)$ is an admissible $\gkt$-module. However we do not have a simple argument at the moment. 
\end{remark}

\subsection{Coherent sheaves on Grothendieck-Springer resolutions}

The \emph{Grothendieck-Springer simultaneous resolution} of $\Lieg^*$ (cf. \cite{ChrissGinzburg}) is the morphism,
  \[ \tilde{\mu}:  \tg \to \Lieg^*,  \]
where 
  \[  \tg = \{ (v, x) \in \Lieg^* \times X | v \in  \Lien^\bot_x, \Lien_x=[\Lieb_x,\Lieb_x] \}  \]
and the moment map $\tilde{\mu}$ is the projection to the first factor. Here $\Lien^\bot$ denotes the annihilator of $\Lien$ in $\Lieg^*$. The projection to the second factor is denoted by $\tilde{\pi}: \tg \to X$. If we fix a point $x \in X$, which corresponds to a Borel subgroup $B_x$ with Lie algebra $\Lieb_x$ and its nilpotent ideal $\Lien$, then there is an $G$-equivariant isomorphism
  \[  \tg \simeq G \times_{B_x} \Lien^{\bot}_x. \]
For any Borel subalgebra $\Lieb$ we have the short exact sequence
  \[ 0 \to \Lien \to \Lieb \to \Lieb / \Lien \to 0,  \]
and we can identify $\Lieb /\Lien$ with the abstract Cartan subalgebra $\Lieh$. Thus we have another short exact sequence
  \[  0 \to \Lieb^\bot \to \Lien^\bot \to (\Lieb/\Lien)^* = \Lieh^* \to 0. \]
The natural projections $\Lien^\bot \to \Lieh^*$ give a natural map
   \[  \nu: \tg \to \Lieh^*. \]
We regard $\lnc$ as an element of $\Lieh^*$ via the decomposition $\Lieh = \Liet_Q \oplus \Liea_Q$ determined by $\theta_Q$. Denote $\TX = \nu\pb(\lnc) \subset \tg$. It is a twisted cotangent bundle over $X$ via the  restriction $\pi: \TX \to X$ of $\tilde{\pi} : \tg \to X$ (\cite{ChrissGinzburg}). The fibers of $\tilde{\pi}$ are of the form $\Lieb^\bot_{\lnc}$, which is the preimage of $\lnc$ via the projection $\Lien^\bot \to \Lieh^*$. Then $\Lieb^\bot_{\lnc}$ is an affine subspace of $\Lien^\bot$, which is a translation of the linear subspace $\Lieb^\bot$. Via the Killing form we have identifications $\Lieg \cong \Lieg^*$, $\Lieb^\bot \simeq \Lien$, $\Lien^\bot \simeq \Lieb$, $\Lieh^* \simeq \Lieh$. Hence by regarding $\lnc$ as an element in $\Lieh$, we have the identification
\[ \tg =  \{ (v, x) \in \Lieg \times X | v \in \Lieb_x \} \]
and $\mu: \tg \to \Lieh$ is induced by the projections $\Lieb \to \Lieh$.
 
Set 
  \[  \TXK = \TX \cap \tilde{\mu}\pb (\Lies^*) \subset \tg, \] 
which is a $K$-invariant closed subvariety of $\TX$. Let
  \[  \TXQ = \pi\pb(Q) \cap \TXK \]
and let $\ti: \TXQ \hookrightarrow \TX$ be the embedding. When $\lnc = 0$, $T^*_Q X_{0}$ is just the conormal bundle of $Q$ in $X$. Similarly, define
    \[  \TXQcl = \pi\pb(\overline{Q}) \cap \TXK, \]
where $\cl{Q}$ is the closure of $Q$ in $X$. Let $\tl: \TXQcl \hookrightarrow \TX$ be the embedding.

Now consider the sheaf of commutative $\OO_X$-algebras 
  \[  \Sl := \Dr|_{t=0} = \Dr / t \Dr. \]
There is another sheaf of commutative $\OO_X$-algebras
  \[  \pi_* \OO_{\TX} \cong S (\LLieg / \LLien) \otimes_{S \Lieh} \C_{\lnc}. \]
From Definition \ref{defn:drt} of $\Dr$ we have

\begin{lemma}\label{lemma:TXSl}
There is a canonical isomorphism of sheaves of $\OO_X$-algebras between $\Sl$ and $\pi_* \OO_{\TX}$.
\end{lemma}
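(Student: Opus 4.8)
The plan is to unwind both sides to explicit descriptions as quotients of the symmetric algebra $S(\LLieg/\LLien)$ over $\OO_X$ and to check that the two quotients agree. First I would observe that $\Dr := \Dh(\Lier) \otimes_{\U(t\LLieh[t])} (\C[t])_{\lt}$, so setting $t=0$ amounts to computing $\Dh(\Lier)/t\Dh(\Lier)$ and then imposing the reduction of $\lt$ at $t=0$. Since $\Lier = t\Lieg[t]$ and $\Dh(\Lier) = \U(t(\LLieg/\LLien)[t])$, the associated graded (equivalently, the reduction mod $t$) of this Rees-type construction is the symmetric algebra $S(\LLieg/\LLien)$: passing from $\U$ to its associated graded turns the enveloping algebra of the Lie algebroid $\LLieg/\LLien$ into the symmetric algebra, and the extra $t$-scaling is exactly what makes the specialization at $t=0$ commutative. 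This is the standard ``Rees algebra degenerates to the symmetric algebra'' phenomenon, applied fiberwise over $X$.

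Next I would track what happens to the central quotient by $\U(t\LLieh[t])$. The character $\lt = \lc + \lnc/t - \rho$ of $t\Lieh[t]$ has the property that $t\cdot\lt = t\lc + \lnc - t\rho$, which at $t=0$ reads off $\lnc$ on $\LLieh \subset S(\LLieg/\LLien)$; equivalently, in the $t=0$ limit the relation coming from $\U(t\LLieh[t]) \otimes (\C[t])_{\lt}$ becomes precisely ``$\LLieh$ acts by $\lnc$.'' Thus $\Sl = \Dr/t\Dr \cong S(\LLieg/\LLien) \otimes_{S\Lieh} \C_{\lnc}$, where $\LLieh$ maps to $\Lieh$ via $\Gamma(X,\LLieh)=\Lieh$ and then to $\C$ by $\lnc$. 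It remains to identify this with $\pi_*\OO_{\TX}$, which the excerpt has already computed to be $S(\LLieg/\LLien) \otimes_{S\Lieh} \C_{\lnc}$: indeed $\TX = \nu\pb(\lnc)$ inside $\tg = \{(v,x) : v \in \Lien_x^\bot\}$, and $\tilde\pi_*\OO_{\tg} = S(\LLieg/\LLien)$ (the fiber over $x$ being the affine space $\Lien_x^\bot$ with coordinate ring $\Sym(\Lieg/\Lien_x)$), with the constraint $\nu = \lnc$ imposing exactly the ideal generated by $\LLieh - \lnc$. One then checks the two resulting isomorphisms with $S(\LLieg/\LLien)\otimes_{S\Lieh}\C_{\lnc}$ are $\OO_X$-algebra isomorphisms and composes them.

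The routine parts are the symmetric-algebra degeneration and the bookkeeping of which copy of $\Lieh$ is being evaluated; these are direct from the definitions in \S\ref{subsec:deform} and the description of $\tg$ and $\nu$ just given. The main obstacle, and the only point needing care, is verifying that the specialization $\Dr/t\Dr$ really is the symmetric algebra and not some larger associated-graded object with PBW corrections surviving — i.e. that the filtration by powers of $t$ on $\Dh(\Lier)$ is such that $\graded$ is generated in degree $\le 1$ with commutative product in the limit. This follows from the PBW theorem for universal enveloping algebras of Lie algebroids together with flatness of $\Dh(\Lier)$ over $\C[t]$ (torsion-freeness over the PID $\C[t]$, as used repeatedly in \S\ref{subsec:deform}), so that reduction mod $t$ commutes with the relevant $\OO_X$-module computations; once flatness is in hand the identification is forced. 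I would present the argument by first stating the flatness, then the PBW degeneration, then the central-character bookkeeping, and finally the comparison with the explicit coordinate description of $\TX$.
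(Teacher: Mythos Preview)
Your proposal is correct and follows exactly the approach implicit in the paper, which treats the lemma as immediate from Definition~\ref{defn:drt} (the paper simply writes ``From Definition \ref{defn:drt} of $\Dr$ we have'' and states the lemma). Your detailed unpacking---identifying $\U(t(\LLieg/\LLien)[t])/t$ with $S(\LLieg/\LLien)$ via the Rees/PBW degeneration and tracking that the character $\lt$ on $t\Lieh[t]$ specializes to $\lnc$ at $t=0$---is precisely the computation the paper is suppressing, and your caution about flatness and PBW is the right place to focus if one wants to spell it out.
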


Let $\Mod^K(\TX)$ be the abelian category of sheaves of $K$-equivariant quasi-coherent $\OO_{\TX}$-modules and let $\Mod^K_{coh}(\TX)$ be its subcategory consisting of coherent $\OO_{\TX}$-modules. Similarly, let $\Mod(\Sl,K)$ be he abelian category of sheaves of $K$-equivariant $\Sl$-modules and let $\Mod_{coh}(\Sl,K)$ be its subcategory consisting of coherent $\Sl$-modules. Then Lemma \ref{lemma:TXSl} implies

\begin{corollary}\label{cor:TXSl}
  The direct image functor $\pi_*$ induces an equivalence between $\Mod^K(\TX)$ and $\Mod(\Sl,K)$. Moreover, it restricts to an equivalence between $\Mod^K_{coh}(\TX)$ and $\Mod_{coh}(\Sl,K)$.
\end{corollary}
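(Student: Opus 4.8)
The plan is to deduce the corollary from the standard equivalence attached to an affine morphism $f\colon Z\to X$ between quasi-coherent $\OO_Z$-modules and quasi-coherent $f_*\OO_Z$-modules on $X$, applied to $f=\pi$ and combined with Lemma \ref{lemma:TXSl}; the remaining work is to transport the $K$-equivariance and the coherence conditions through this equivalence. First I would record that $\pi\colon\TX\to X$ is an affine morphism: by construction $\TX$ is a twisted cotangent bundle over $X$ (\cite{ChrissGinzburg}), hence a torsor under the cotangent bundle $T^*X$, which Zariski-locally on $X$ looks like $U\times\mathbb{A}^{\dim X}$ with affine-linear transition maps, and $\TX$ is canonically the relative spectrum $\operatorname{Spec}_X(\pi_*\OO_{\TX})$ of the quasi-coherent $\OO_X$-algebra $\pi_*\OO_{\TX}$. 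Consequently $\pi_*$ is exact on quasi-coherent sheaves, and the relative-Spec formalism yields an equivalence of abelian categories between the category $\Mod(\TX)$ of quasi-coherent $\OO_{\TX}$-modules and the category of quasi-coherent $\pi_*\OO_{\TX}$-modules on $X$, with quasi-inverse $\M\mapsto\pi^*\M\otimes_{\pi^*\pi_*\OO_{\TX}}\OO_{\TX}$. Feeding in the isomorphism of sheaves of $\OO_X$-algebras $\Sl\cong\pi_*\OO_{\TX}$ of Lemma \ref{lemma:TXSl} turns this into an equivalence $\Mod(\TX)\xrightarrow{\sim}\Mod(\Sl)$.

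Next I would upgrade this to the $K$-equivariant level. The morphism $\pi$ is $K$-equivariant, and since both the $K$-action on $\TX$ and the isomorphism of Lemma \ref{lemma:TXSl} descend from the $G$-equivariant geometry of $\tg$, that isomorphism is one of $K$-equivariant sheaves of algebras. A $K$-equivariant quasi-coherent sheaf on $\TX$ (resp.\ a $K$-equivariant $\Sl$-module) is the same datum as a quasi-coherent $\OO_{\TX}$-module (resp.\ $\Sl$-module) equipped with descent data for the action groupoid $K\times\TX\rightrightarrows\TX$ (resp.\ $K\times X\rightrightarrows X$). Since $\pi$, the two action maps and the two projections are all affine, $\pi_*$ commutes with the base changes occurring in these descent data, and therefore carries such data to such data compatibly with the equivalence of the first paragraph; hence $\pi_*$ restricts to an equivalence $\Mod^K(\TX)\xrightarrow{\sim}\Mod(\Sl,K)$.

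For the coherent subcategories, note that $X$ is a smooth projective variety and $\TX$ is of finite type over $\C$, so an object of $\Mod(\TX)$ is coherent precisely when it is locally of finite type over $\OO_{\TX}$; under the relative-Spec equivalence this condition matches exactly the $\Sl$-modules that are of finite type over $\Sl$ on the preimage of every affine open of $X$, i.e.\ the coherent $\Sl$-modules. Combining this with the previous paragraph, $\pi_*$ restricts to an equivalence $\Mod^K_{coh}(\TX)\xrightarrow{\sim}\Mod_{coh}(\Sl,K)$.

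I expect the only genuinely delicate point to be the equivariant bookkeeping of the second paragraph: verifying that the quasi-inverse functor produces a canonical $K$-equivariant structure and that the unit and counit of the adjunction are $K$-equivariant. Given the $K$-equivariance of $\pi$ and of the isomorphism in Lemma \ref{lemma:TXSl}, together with the exactness and base-change properties of pushforward along affine morphisms, this amounts to a routine descent-theoretic verification.
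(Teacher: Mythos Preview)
Your proposal is correct and follows the same approach as the paper: the paper simply states that the corollary follows from Lemma~\ref{lemma:TXSl}, and what you have written is precisely the standard relative-Spec argument that unpacks this implication. The paper gives no further details, so your discussion of $K$-equivariance and coherence is a faithful elaboration of what the author leaves implicit.
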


We now study the geometry of $(\Dr,\KK)$-modules as in Definition \ref{defn:DrK}. Let $\W$ be any $(\Dr,\KK)$-module and $\W_0 = \W|_{t=0}$. Then $\W_0$ is a $(\Sl,K)$-module and therefore $\W_0 = \pi_* \widetilde{\W}_0$ for some $K$-equivariant $\OO_{\TX}$-module $\widetilde{\W}_0$ by Corollary \ref{cor:TXSl}. We now exhibit a constraint on possible supports of such $\widetilde{\W}_0$. First define the sheaf of commutative algebras over $\OO_X$,
   \[  \Spl:= \pi_* \OO_{\TXK}. \]
Note that the closed embedding $\TXK \hookrightarrow \TX$ induces a surjective homomorphism of sheaf of commutative algebras over $\OO_X$
  \[  \Sl \twoheadrightarrow \Spl.  \]

\begin{lemma}\label{lemma:spl}
  With the notations above, the $\Sl$-module structure of $\W_0$ factors through the morphism $\Sl \to \Spl$. In particular, the support of $\widetilde{\W}_0$ is a $K$-invariant closed subvariety of $\TXK$.
\end{lemma}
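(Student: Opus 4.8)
The plan is: for each $X\in\Liek$ I will exhibit a natural lift to $\Dr$ of the degree-one element of $\Sl$ determined by $X$, show that the axioms of a $(\Dr,\KK)$-module force this lift to act on $\W$ by an operator divisible by $t$, conclude by reduction modulo $t$ that the corresponding element annihilates $\W_0$, and finally observe that these elements are exactly the equations defining $\TXK$ inside $\TX$.

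First, I would make the reduction of $\Dr$ modulo $t$ explicit. Since $\Lier = t\Lieg[t]$ satisfies $[t\xi,t\eta]=t^2[\xi,\eta]$, the algebra $\Dh(\Lier)=\U(\LLier/t\LLien[t])=\U(t(\LLieg/\LLien)[t])$ becomes commutative modulo $t$, and the projection $\Dh(\Lier)\twoheadrightarrow\Dh(\Lier)/t\,\Dh(\Lier)$ is precisely the isomorphism onto the symmetric algebra $S(\LLieg/\LLien)$ sending the class of $t\xi$ to $\xi$ for a section $\xi$ of $\LLieg/\LLien$. Tensoring down over $\U(t\Lieh[t])$ along $\lt$ — whose value on $th$ ($h\in\Lieh$) is the polynomial $\lnc(h)+t(\lc-\rho)(h)$, hence $\lnc(h)$ at $t=0$ — recovers the isomorphism $\Sl=\Dr/t\,\Dr\cong S(\LLieg/\LLien)\otimes_{S\Lieh}\C_{\lnc}\cong\pi_*\OO_{\TX}$ of Lemma \ref{lemma:TXSl}. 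In particular, for $X\in\Liek$ the element $tX$ of $\LLiek_t\subset\LLier$ maps into $\Dr$, and its image $\bar X$ in $\Sl$ is the degree-one element of $S(\LLieg/\LLien)$ corresponding to $X\bmod\LLien$. On the geometric side, under $\tilde\pi_*\OO_{\tg}=S(\LLieg/\LLien)$ this element is the regular function $(v,x)\mapsto\langle v,X\rangle$ on $\tg$, where $\langle\,,\,\rangle$ is the canonical pairing of $\Lieg^*$ with $\Lieg$; since $\Lies^*\subset\Lieg^*$ is the annihilator of $\Liek$, the preimage $\tilde\mu\pb(\Lies^*)=\tg\times_{\Lieg^*}\Lies^*$, and hence $\TXK=\TX\cap\tilde\mu\pb(\Lies^*)$, is cut out inside $\TX$ by the ideal generated by the image of $\LLiek$, so that $\Spl=\pi_*\OO_{\TXK}$ is the quotient of $\Sl$ by that ideal.

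The key step then invokes the compatibility in Definition \ref{defn:DrK}: for $X\in\Liek$ and a local section $s$ of $\W$ one has $(tX)\cdot s=t\,\frac{d}{du}\big|_{u=0}\exp(uX)\cdot s$, so $(tX)\cdot\W\subseteq t\W$. Reducing modulo $t$, the element $\bar X\in\Sl$ — which we lifted to $tX$ — annihilates $\W_0=\W/t\W$. Hence the entire image of $\LLiek$ acts by zero on $\W_0$, so the $\Sl$-module structure on $\W_0$ factors through $\Sl\twoheadrightarrow\Spl$. Under the equivalence of Corollary \ref{cor:TXSl} this says that $\widetilde{\W}_0$ is pushed forward from $\TXK$; in particular $\supp\widetilde{\W}_0\subseteq\TXK$. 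Finally, since $\W$ is $\KK$-equivariant its fibre $\W_0$ at $t=0$ is $K$-equivariant, so $\widetilde{\W}_0$ is a $K$-equivariant $\OO_{\TX}$-module and $\supp\widetilde{\W}_0$ is a $K$-invariant closed subvariety of $\TXK$.

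The only step that is not purely formal is the explicit description of $\Dh(\Lier)/t\,\Dh(\Lier)$ — concretely, the fact that $tX$ is not already divisible by $t$ in $\Dr$, so that $\bar X$ is the genuine degree-one generator and not zero — but this follows directly from the PBW/order filtration on $\U(\LLier/t\LLien[t])$, whose degree-one part reduces modulo $t$ to $\LLieg/\LLien$. I do not anticipate any real obstacle.
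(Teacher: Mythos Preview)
Your proposal is correct and follows exactly the same idea as the paper's proof: use the compatibility condition in Definition~\ref{defn:DrK}, namely $(tX)\cdot s = t\,\frac{d}{du}\big|_{u=0}\exp(uX)\cdot s$, to conclude that the image of $\LLiek_t$ in $\Sl$ annihilates $\W_0$. The paper's proof is a two-sentence sketch (``It amounts to showing that the action of $\LLiek_t$ on $\W$ induced from the action of $\Dr$ vanishes after descending to $\W_0$. This is immediate due to the $K$-equivariant structure and its compatibility with the $\Dr$-module structure.''), whereas you have carefully unpacked the identification of $\Sl$ with $\pi_*\OO_{\TX}$, tracked where $tX$ lands, and verified that the resulting elements generate precisely the ideal of $\TXK$; this is all correct and just makes the paper's argument explicit.
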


\begin{proof}
  It amounts to showing that the action of $\LLiek_t$ on $\W$ induced from the action of $\Dr$ vanishes after descending to $\W_0$. This is immediate due to the $K$-equivariant structure and its compatibility with the $\Dr$-module structure.
\end{proof}

By Corollary \ref{cor:TXSl}, we can regard $i_\sharp \phi |_{t=0}$ as a $K$-equivariant quasi-coherent sheaf $\wt{\EEE}=\wt{\EEE}(\lambda,Q,\phi)$ of $\OO_{\TX}$-modules. In other words, $\wt{\EEE}$ is the unique $\OO_{\TX}$-module (up to isomorphism) such that 
  \begin{equation}\label{eq:EIt}
    \pi_* \wt{\EEE} \simeq i_\sharp \phi \big|_{t=0} 
  \end{equation} 
as $(\Sl,K)$-modules. Denote the restriction of $\pi: \TX \to X$ to $\TXQ$ by $\pl: \TXQ \to Q$. Similarly the restriction of $\pi$ to $\TXQcl$ is denoted by $\bpl: \TXQcl \to \cl{Q}$. Set $\varphi := \omega_{Q/X} \otimes_{\OO_Q} \phi$. The following lemmas describe $\wt{\EEE}$.

\begin{lemma}\label{lemma:iSK}
  There is a canonical isomorphism of sheaves of $\OO_Q$-algebras
    \[  i^* \Spl \cong p_* \OO_{\TXQ}.  \]
\end{lemma}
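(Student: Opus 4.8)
The plan is to unwind both sides as sheaves of symmetric algebras over $\OO_Q$ and match them directly. First I would recall from Lemma \ref{lemma:TXSl} (applied with $\lnc$ replaced by $0$, i.e. for the untwisted cotangent bundle, then for $\TX$ itself) that $\Sl \cong \pi_* \OO_{\TX} \cong S(\LLieg/\LLien) \otimes_{S\Lieh} \C_{\lnc}$ as sheaves of $\OO_X$-algebras, and that $\Spl = \pi_* \OO_{\TXK}$ is the quotient of $\Sl$ cutting out the condition $v \in \Lies^*$ inside each fiber $\Lieb^\bot_{x,\lnc} \subset \Lien^\bot_x$. Concretely, $\Lien^\bot_x \cong \Lieb_x$ via the Killing form, and $\TXK$ is cut out by the ideal generated by the image of $\Liek$ under the composite $\LLiek \hookrightarrow \LLieg \twoheadrightarrow \LLieg/\LLien$ (shifted to the affine fiber determined by $\lnc$); equivalently $\Spl \cong S\big((\LLieg/\LLien)/(\LLiek + \LLien)/\LLien\big)$-type quotient, suitably twisted. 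So the statement to prove is the identification of $i^* $ of this quotient sheaf with $p_* \OO_{\TXQ}$.

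Next I would identify the right-hand side geometrically. By definition $\TXQ = \pi\pb(Q) \cap \TXK$, which is the subvariety of $\TX$ consisting of pairs $(v,x)$ with $x \in Q$, $v \in \Lieb^\bot_{x,\lnc}$, and $v \in \Lies^*$. Restricting the fibration $\pi$ to $Q$ and using the $K$-orbit structure (Lemma \ref{lemma:KCartan}), for $x \in Q$ the fiber $\pl\pb(x)$ is the affine subspace of $\Lieb^\bot_x \cap \Lies^*$ lying over $\lnc \in \Lieh^*$; since $\lnc$ lands in $\Liea_Q^* \subset \Lieh^*$ and $\theta_Q$ is precisely the Cartan involution attached to $Q$ at $x$, this fiber is nonempty and is a torsor over $\Lieb^\bot_x \cap \Lies^*$. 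Therefore $\pl_* \OO_{\TXQ} = S(\mathcal{V}) \otimes_{?} \C_{\lnc|_{\Liea_Q}}$ for the $\OO_Q$-module $\mathcal{V}$ whose fiber at $x$ is $(\Lieb_x \cap \Lies)^* / $ (the appropriate linear part), and the claim reduces to checking that $i^*$ applied to the algebra $\Spl$ gives the same symmetric algebra. This is where I would invoke the alternative local description of $\Dl^i$ and $\Sl$ in terms of $\LLiek/(\LLiek \cap \LLien)$ from \S\ref{subsec:HC}: restricting $\LLieg/\LLien$ to $Q$ and intersecting with the $(-\theta)$-eigenspace bundle gives exactly $i^*$ of the relevant quotient, because the anchor map identifications and the splitting $\Liek_x = \Liet_Q \ltimes \Lieu_x$ make the two constructions agree fiberwise, hence globally by $K$-equivariance and the fact that both are quasi-coherent sheaves of $\OO_Q$-algebras generated in degree $\leq 1$.

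The main obstacle I expect is \emph{bookkeeping of the twist and the $\theta$-stable structure}: one must check that restricting to $Q$ commutes with intersecting with $\tilde\mu\pb(\Lies^*)$ at the level of sheaves of algebras (not just set-theoretically on points), i.e. that no nilpotents are created and the natural surjection $i^*\Spl \twoheadrightarrow \pl_*\OO_{\TXQ}$ is in fact an isomorphism. The cleanest way is to argue fiberwise over each $x \in Q$: the fiber of $i^*\Spl$ is $S(\Lieb_x/\Lien_x)^* \otimes S(\text{transverse directions in }\Lies)$ modulo the $\lnc$-twist, which is a polynomial ring of the correct dimension, and then note that $\pl$ is an affine-space bundle of that same dimension over $Q$ (using Lemma \ref{lemma:flagfiber} and the dimension count for $\TXQ$), so the surjection of polynomial rings of equal Krull dimension with $\TXQ$ reduced and irreducible over each component forces it to be an isomorphism. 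A secondary point to be careful about is the canonical nature of the isomorphism — it should be the one induced by the closed embedding $\ti: \TXQ \hookrightarrow \TX$ together with the algebra map $\Sl \twoheadrightarrow \Spl$, so I would phrase the final identification as "the composite $i^*\Sl \twoheadrightarrow i^*\Spl \to \pl_*\OO_{\TXQ}$ factors through and is identified with pullback along $\ti$," which makes compatibility with later lemmas automatic.
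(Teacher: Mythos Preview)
Your approach is workable but far more elaborate than the paper's. The paper observes that $\TXQ = \pi\pb(Q)\cap\TXK$ by definition, so the square
\begin{diagram}
   \TXQ  & \rInto  & \TXK  \\
    \dTo^{p} & & \dTo \\
  Q & \rInto^{i}  & X  \\
\end{diagram}
is Cartesian; since the right vertical map is affine, base change gives $i^*\Spl = i^*\pi_*\OO_{\TXK} \cong p_*\OO_{\TXQ}$ in one line, with no symmetric-algebra computation, no fiber analysis, and no dimension count. Your route through explicit presentations and fiberwise Krull-dimension matching can be made rigorous, but it carries the very overhead you flagged yourself (checking that $i^*\Spl$ has the expected polynomial fibers with no nilpotents, then promoting the fiberwise isomorphism to a global one via Nakayama), all of which is subsumed by the single word ``affine.'' The base-change formulation also delivers canonicity for free---the isomorphism \emph{is} the base-change map---whereas in your approach you must argue separately that the fiberwise identifications glue to the natural morphism needed in Lemmas~\ref{lemma:spl_phi} and~\ref{lemma:wtE}.
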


\begin{proof}
  Consider the Cartesian diagram
  \begin{equation}\label{diag:TXQbase}
     \begin{diagram}
     \TXQ  & \rInto^{\ti}  & \TX  \\ 
      \dTo^{p} & & \dTo^{\pi} \\    
    Q & \rInto^{i}  & X.  \\
  \end{diagram}
  \end{equation}
  
  Since $\pi$ is affine, the lemma follows from base change.
\end{proof}

\begin{lemma}\label{lemma:spl_phi}
  There is a canonical isomorphism of $\OO_{i^* \Spl}$-modules,
    \[ \DQt \otimes_{\U \LLiek_t} \phi[t] \big|_{t=0} \cong (i^*\Spl) \otimes_{\OO_Q} \varphi.  \]
\end{lemma}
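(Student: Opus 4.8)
The statement to establish is Lemma~\ref{lemma:spl_phi}: a canonical isomorphism of $\OO_{i^*\Spl}$-modules
\[
\DQt \otimes_{\U\LLiek_t} \phi[t]\big|_{t=0} \cong (i^*\Spl)\otimes_{\OO_Q} \varphi,
\]
where $\varphi = \omega_{Q/X}\otimes_{\OO_Q}\phi$. The strategy is to unwind both sides and reduce the statement to a computation over $Q$ involving the associated graded of the transfer bimodule. First I would observe that, by definition, $\DQt = i\pb(\Dr)\otimes_{i\pb\OO_X}\omega_{Q/X}$, so that
\[
\DQt\otimes_{\U\LLiek_t}\phi[t] \;\cong\; \bigl(i\pb\Dr \otimes_{\U\LLiek_t}\phi[t]\bigr)\otimes_{\OO_Q}\omega_{Q/X},
\]
using that $\omega_{Q/X}$ is a line bundle (so tensoring with it commutes with everything and can be moved to the right). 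Hence it suffices to prove $i\pb\Dr\otimes_{\U\LLiek_t}\phi[t]\big|_{t=0}\cong (i^*\Spl)\otimes_{\OO_Q}\phi$; then tensoring with $\omega_{Q/X}$ over $\OO_Q$ yields the claim with $\varphi$ on the right.

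Next I would set $t=0$ on the left-hand side. Specializing $\Dr$ at $t=0$ gives $\Sl = \Dr/t\Dr$ by Lemma~\ref{lemma:TXSl}, and specializing $\LLiek_t = t\LLiek[t]$ at $t=0$ gives the zero Lie algebroid: the action of $\LLiek_t$ on any $(\Dr,\KK)$-module becomes trivial modulo $t$, exactly as in the proof of Lemma~\ref{lemma:spl}. Concretely, $\phi[t]\big|_{t=0}\cong\phi$ as $\OO_Q$-modules with the $\U\LLiek_t$-action descending to the trivial one, and $i\pb\Dr\big|_{t=0}\cong i^*\Sl$. So the base-changed left side becomes $i^*\Sl\otimes_{\U\LLiek_t|_{t=0}}\phi$. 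The key point is then that the right $\U\LLiek_t$-action on $i\pb\Dr$, being the restriction of the $\LLiek_t\subset\LLier$ multiplication inside $\Dr$, also kills everything mod $t$ after we impose compatibility with the fiberwise structure; more precisely, by Lemma~\ref{lemma:spl} (applied to $\W=i_\sharp\phi$ or directly to $\Dr$ viewed as a module over itself via the $\KK$-structure), the $\Sl$-module structure factors through $\Sl\twoheadrightarrow\Spl$, so $i^*\Sl$ gets replaced by $i^*\Spl$ and the $\U\LLiek_t$-tensor becomes an honest $\OO_Q$-tensor. This gives $i^*\Spl\otimes_{\OO_Q}\phi$, which by Lemma~\ref{lemma:iSK} is $p_*\OO_{\TXQ}\otimes_{\OO_Q}\phi$, i.e.\ exactly $(i^*\Spl)\otimes_{\OO_Q}\phi$ as $\OO_{i^*\Spl}$-modules. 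Naturality of all the identifications (base change, the $\KK$-equivariance argument, the line-bundle shuffle) gives canonicity.

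The main obstacle I anticipate is the careful bookkeeping of the two different $\LLiek_t$-actions on $i\pb\Dr$ near $t=0$: the left action coming through $\LLiek_t\hookrightarrow\LLier$ and the multiplication in $\Dr$, versus the right action used to form the tensor product $\otimes_{\U\LLiek_t}\phi[t]$, and the fact that on $\phi[t]$ the $\U\LLiek_t$-module structure is $t$ times the differentiated $\KK$-action (which is itself determined by the compatibility condition in Definition~\ref{defn:DrK}). One has to check that modulo $t$ these two actions genuinely force the tensor to collapse to an $\OO_Q$-tensor, rather than leaving some residual nilpotent action; this is where the $K$-equivariant compatibility of $i_\sharp\phi$ (and hence of $\Dr$ itself as established in the paragraph preceding the lemma) does the work, and it is essentially the same mechanism as in the proof of Lemma~\ref{lemma:spl}. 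Once that collapse is justified, the rest is a routine base-change computation using that $\pi$ is affine and $\omega_{Q/X}$ is invertible.
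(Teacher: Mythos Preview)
The paper states Lemma~\ref{lemma:spl_phi} without proof, so there is no argument to compare yours against; your strategy is the natural one and presumably what the author intended. The reduction you outline---base-change the tensor to $t=0$, identify $\DQt|_{t=0}\cong i^*\Sl\otimes_{\OO_Q}\omega_{Q/X}$ and $(\U\LLiek_t)|_{t=0}\cong S\LLiek$ (the symmetric algebra, since the bracket and anchor of $\LLiek_t=t\LLiek[t]$ vanish mod $t$), note that the induced $S\LLiek$-action on $\phi$ is through the augmentation, and conclude that the tensor collapses to $i^*\Spl\otimes_{\OO_Q}\varphi$---is correct.

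There is, however, a conceptual slip in your middle paragraph. You write that ``the right $\U\LLiek_t$-action on $i\pb\Dr$ \ldots\ also kills everything mod $t$.'' This is not so: at $t=0$ the right action of the class of $tY$ (for $Y\in\LLiek$) on $i^*\Sl$ is multiplication by the symbol $\bar Y\in S(\LLieg/\LLien)$, which is generically nonzero. What vanishes is the $S\LLiek$-action on $\phi$, because the $\U\LLiek_t$-action on $\phi[t]$ is $t$ times the differentiated $K$-action, as you yourself correctly note later. It is precisely this asymmetry---a nontrivial multiplication action on one tensor factor against the augmentation action on the other---that forces
\[
(i^*\Sl\otimes\omega_{Q/X})\otimes_{S\LLiek}\phi \;\cong\; \bigl(i^*\Sl/\langle\LLiek\rangle\bigr)\otimes_{\OO_Q}\varphi \;=\; i^*\Spl\otimes_{\OO_Q}\varphi,
\]
the last equality holding because the ideal generated by $\LLiek$ in $\Sl\cong\pi_*\OO_{\TX}$ is exactly the ideal of $\TXK\subset\TX$. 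Your appeal to Lemma~\ref{lemma:spl} is therefore off-target: that lemma concerns the \emph{left} $\Sl$-module structure on $\W_0$ for a $(\Dr,\KK)$-module $\W$, and applying it to $\W=\Dr$ itself (as you suggest) would give a false statement, since the $\Sl$-action on $\Sl$ certainly does not factor through $\Spl$. The relevant mechanism is the same ``action is $t$ times something'' observation, but it is applied directly to $\phi[t]$, not channeled through Lemma~\ref{lemma:spl}.
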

 
\begin{lemma}\label{lemma:wtE}
 $\wt{\EEE} = \ti_* \pl^* \varphi$.
\end{lemma}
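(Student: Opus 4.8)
The plan is to obtain Lemma~\ref{lemma:wtE} by stringing together Lemmas~\ref{lemma:iSK} and~\ref{lemma:spl_phi}, applying the projection formula along the affine morphism $\pl$, using the Cartesian square~\eqref{diag:TXQbase} to rewrite $i_*\pl_*$ as $\pi_*\ti_*$, and then invoking the equivalence of Corollary~\ref{cor:TXSl} together with the uniqueness built into~\eqref{eq:EIt}.

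First I would unwind the defining property of $\wt{\EEE}$. Since $i\colon Q\hookrightarrow X$ is a closed affine embedding and $i_\sharp\phi=i_*\big(\DQt\otimes_{\U \LLiek_t}\phi[t]\big)$ with the inner sheaf flat over $\C[t]$, the functor $i_*$ commutes with restriction to $t=0$, so $i_\sharp\phi\big|_{t=0}\cong i_*\big((\DQt\otimes_{\U \LLiek_t}\phi[t])\big|_{t=0}\big)$. By Lemma~\ref{lemma:spl_phi} the inner sheaf at $t=0$ is $(i^*\Spl)\otimes_{\OO_Q}\varphi$, and by Lemma~\ref{lemma:iSK} there is a canonical isomorphism $i^*\Spl\cong\pl_*\OO_{\TXQ}$ of sheaves of $\OO_Q$-algebras. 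Hence $i_\sharp\phi\big|_{t=0}\cong i_*\big((\pl_*\OO_{\TXQ})\otimes_{\OO_Q}\varphi\big)$.

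The next step is the projection formula. The morphism $\pl\colon\TXQ\to Q$ is affine, and $\varphi=\omega_{Q/X}\otimes_{\OO_Q}\phi$ is locally free of finite rank (in fact a line bundle in the setting of \S~\ref{subsec:Dmod}), so $(\pl_*\OO_{\TXQ})\otimes_{\OO_Q}\varphi\cong\pl_*\big(\OO_{\TXQ}\otimes_{\pl\pb\OO_Q}\pl\pb\varphi\big)=\pl_*\pl^*\varphi$, and this identification is $K$-equivariant. Thus $i_\sharp\phi\big|_{t=0}\cong i_*\pl_*\pl^*\varphi$. Since $\pi\circ\ti=i\circ\pl$ by~\eqref{diag:TXQbase}, we get $i_*\pl_*\pl^*\varphi=\pi_*\big(\ti_*\pl^*\varphi\big)$. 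Comparing with~\eqref{eq:EIt} yields $\pi_*\wt{\EEE}\cong\pi_*(\ti_*\pl^*\varphi)$ as $(\Sl,K)$-modules, and since $\pi$ is affine, $\pi_*$ is an equivalence of categories (Corollary~\ref{cor:TXSl}), which forces $\wt{\EEE}\cong\ti_*\pl^*\varphi$ as $K$-equivariant $\OO_{\TX}$-modules.

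I do not expect a serious obstacle; the argument is essentially bookkeeping. The points that need genuine care are: (i) that restriction to $t=0$ commutes with $i_*$, which uses the flatness of $\DQt\otimes_{\U \LLiek_t}\phi[t]$ over $\C[t]$ (equivalently $t$-torsion-freeness); and (ii) that each intermediate identification --- Lemma~\ref{lemma:iSK}, the projection formula, and the base-change identity $i_*\pl_*=\pi_*\ti_*$ --- is compatible with the $\OO_X$-algebra structure of $\Sl$, so that everything takes place in $\Mod(\Sl,K)$, and with the $K$-equivariant structures, so that the concluding appeal to Corollary~\ref{cor:TXSl} is legitimate.
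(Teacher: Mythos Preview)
Your proof is correct and follows essentially the same route as the paper: identify the inner sheaf at $t=0$ via Lemmas~\ref{lemma:spl_phi} and~\ref{lemma:iSK}, apply the projection formula along $\pl$, rewrite $i_*\pl_*$ as $\pi_*\ti_*$ using diagram~\eqref{diag:TXQbase}, and conclude by Corollary~\ref{cor:TXSl}. One small slip: the embedding $i\colon Q\hookrightarrow X$ is generally only \emph{locally} closed (a $K$-orbit need not be closed), but it is an affine morphism---which is exactly what you need for $i_*$ to be exact on quasi-coherent sheaves and hence to commute with restriction to $t=0$.
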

\begin{proof}
  First of all, by Lemma \ref{lemma:spl_phi} and Lemma \ref{lemma:iSK} we have
    \[  \DQt \otimes_{\U \LLiek_t} \phi[t] |_{t=0} = (i^*\Spl) \otimes_{\OO_Q} \varphi = (p_* p^* {\OO_Q}) \otimes_{\OO_Q} \varphi = (p_* \OO_{\TXQ}) \otimes_{\OO_Q} \varphi = p_* p^* \varphi, \]
  where the last equality is by the projection formula.
  The commutative diagram \ref{diag:TXQbase} gives
    \[   i_\sharp \phi |_{t=0} = i_* p_* p^* \varphi = \pi_* \ti_* p^* \varphi. \]
  Therefore $\wt{\EEE} = \ti_* \pl^* \varphi$ by Corollary \ref{cor:TXSl}. 
\end{proof}

Let $\wt{\I}_0(\lambda,Q,\phi) = i_\flat \phi |_{t=0}$, which is a coherent $(\Sl,K)$-module. Therefore there is an unique $K$-equivariant coherent $\OO_{\TX}$-module $\EEE= \EEE(\lambda,Q,\phi)$ (up to isomorphism) such that 
  \begin{equation}\label{eq:EI}
     \pi_* \EEE \simeq  \wt{\I}_0(\lambda,Q,\phi). 
  \end{equation}  
The inclusion $i_\flat \phi \hookrightarrow i_\sharp \phi$ induces a natural morphism of sheaves of $\OO_{\TX}$-modules by restriction to $t=0$,
  \[ \eta: \EEE \to \wt{\EEE}.     \]
To describe the image of subsheaf of $\eta$, consider the following commutative diagram
\begin{equation}
  \begin{diagram}
    \TXQ  & \rInto^{\ti'} & \TXQcl  & \rInto^{\tl} &\TX  \\
    \dTo^{\pl}   &  & \dTo^{\bpl} & & \dTo^{\pi}  \\
    Q  & \rInto^{i'} & \cl{Q} & \rInto^{l} & X
  \end{diagram}
\end{equation}
where the horizontal maps satisfy $\ti = \tl \circ \ti'$, $i = l \circ i'$. Note that $\M(Q,\phi)$ is a subsheaf of $i_* \varphi$ by Theorem \ref{thm:minimal-K}. Since $\M(Q,\phi)$ is supported over $\cl{Q}$, we may regard $\M(Q,\phi)$ as a sheaf over $\cl{Q}$ by abuse of notation. Then $\M(Q,\phi)$ is a subsheaf of $i'_*  \varphi$ and hence also a subsheaf of $i'_* \pl_*\pl^*  \varphi = \bpl_* \ti'_* \pl^*  \varphi$ via the injective natural morphism 
  \[i'_* \varphi \hookrightarrow i'_* (\pl_* \OO_{\TXQ} \otimes_{\OO_{Q}} \varphi) =  i'_* \pl_*\pl^*  \varphi, \] 
where the equality is by the projection formula. Applying the adjunction of the pair $(\bpl^*,\bpl_*)$ to the resulting injective morphism
  \[  \MQ \hookrightarrow  \bpl_* \ti'_* \pl^*  \varphi \]
  gives us a morphism of $\OO_{\TXQcl}$-modules
  \begin{equation}\label{eq:tM}
     \bpl^* \MQ \to \ti'_*  \pl^*  \varphi, 
   \end{equation}
which however might not be an injective morphism since $\bpl$ is in general not flat. Denote the image subsheaf of this morphism by $\tMQ \subset  \ti'_*  \pl^*  \varphi$. Apply the functor $l_* \bpl_*$ to the morphism \eqref{eq:tM} to get a morphism of $\Sl$-modules,
  \begin{equation}\label{eq:multimap}
     l_* \bpl_* \bpl^* \MQ \to l_* \bpl_* \ti'_*  \pl^*  \varphi = \pi_* \tl_* \ti'_*  \pl^*  \varphi = \pi_* \ti_*  \pl^*  \varphi = \pi_* \wt{\EEE} = i_\sharp \phi |_{t=0}, 
   \end{equation}
where the second last equality is by Lemma \ref{lemma:wtE}. Since $l$ is a closed embedding and $\bpl$ is affine, both $l_*$ and $\bpl_*$ are exact and hence the image subsheaf of the morphism above is 
  \[ l_* \bpl_* \tMQ = \pi_* \tl_* \tMQ. \] 

\begin{proposition}\label{prop:E}
  The image subsheaf of $\eta$ is $\tl_* \tMQ$.
\end{proposition}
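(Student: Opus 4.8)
The plan is to identify the image subsheaf of $\eta$ by tracking the generators of $i_\flat\phi$ through the specialization to $t=0$ and comparing with the explicit description of $i_\sharp\phi|_{t=0}$ obtained in Lemma~\ref{lemma:wtE}. Recall that by definition $i_\flat\phi = (i_*\DQt)\otimes_{\U\LLiek_t}\MQ[t]$, which is the $\Dr$-submodule of $i_\sharp\phi$ generated by $\MQ$, where $\MQ\subset i_*\varphi$ is the $K$-equivariant coherent $\OO_X$-submodule generated by the minimal $K$-types. Since $\C[t]$ is a PID and $i_\sharp\phi$ is flat (torsion-free) over $\C[t]$, the inclusion $i_\flat\phi\hookrightarrow i_\sharp\phi$ remains an inclusion after $\otimes_{\C[t]}\C_{t=0}$ only up to the subtlety that $\EEE\to\wt\EEE$ need not be injective; what is true on the nose is that the image of $\eta$ equals the image of $\Sl\cdot(\MQ|_{t=0})$ inside $\wt\EEE$, i.e.\ the $\Sl$-submodule of $\wt\EEE\cong\pi_*\wt\EEE = i_\sharp\phi|_{t=0}$ generated by the image of $\MQ$. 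So the real content is: compute the $\Sl$-submodule of $i_\sharp\phi|_{t=0}$ generated by $\MQ$ and show it is $\pi_*\tl_*\tMQ = \pi_*\tl_*(\operatorname{Im}(\bpl^*\MQ\to\ti'_*\pl^*\varphi))$.

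The key steps, in order: (1) Observe that under the identification $i_\sharp\phi|_{t=0}\cong\pi_*\wt\EEE$ with $\wt\EEE=\ti_*\pl^*\varphi$ (Lemma~\ref{lemma:wtE}), the image of $\MQ\hookrightarrow i_*\varphi\hookrightarrow i_\sharp\phi$ at $t=0$ is precisely the composite $\MQ\hookrightarrow\bpl_*\ti'_*\pl^*\varphi\hookrightarrow\pi_*\ti_*\pl^*\varphi$ appearing in the construction of the multiplication map \eqref{eq:multimap}; this follows because the $\Sl$-module structure at $t=0$ restricts $\varphi\mapsto\pl^*\varphi$ compatibly with Lemma~\ref{lemma:spl_phi}. (2) The $\Sl$-submodule of $\pi_*\wt\EEE$ generated by this image is, via the equivalence $\pi_*$ of Corollary~\ref{cor:TXSl} and the exactness of $l_*$ and $\bpl_*$ (since $l$ is a closed embedding and $\bpl$ is affine), the $\OO_{\TXQcl}$-submodule of $\ti'_*\pl^*\varphi$ generated by the image of $\MQ$ under adjunction; but $\OO_{\TXQcl}=\bpl^*\OO_{\cl Q}$, so this generated submodule is exactly the image of $\bpl^*\MQ\to\ti'_*\pl^*\varphi$, which is $\tMQ$ by definition. (3) Finally, pushing forward by $\tl_*$ (exact, closed embedding) and then $\pi_*$, the generated submodule of $i_\sharp\phi|_{t=0}$ is $\pi_*\tl_*\tMQ$, i.e.\ $\tl_*\tMQ$ as a subsheaf of $\wt\EEE$; since $\EEE=i_\flat\phi|_{t=0}$ and $\eta$ is induced by $i_\flat\phi\hookrightarrow i_\sharp\phi$, the image of $\eta$ is $\tl_*\tMQ$.

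The main obstacle I expect is step (2): carefully justifying that "the $\Sl$-submodule generated by the image of $\MQ$" corresponds under the chain of adjunctions and base-change identifications to "the image of $\bpl^*\MQ$", given that $\bpl$ is not flat so that $\bpl^*$ is not left exact and $\tMQ$ is only defined as an image, not as a pullback. The point to get right is that generating an $\OO_{\TXQcl}$-submodule of $\ti'_*\pl^*\varphi$ by a subsheaf $\FF\subset\bpl_*(\ti'_*\pl^*\varphi)$ of the pushforward is the same as taking the image of the adjunction morphism $\bpl^*\FF\to\ti'_*\pl^*\varphi$ — this is a general and harmless fact about the counit of $(\bpl^*,\bpl_*)$ for an affine (hence any) morphism, but it must be invoked explicitly, together with the commutativity of the diagram relating $\ti,\tl,\ti',\pl,\bpl,\pi$, to conclude. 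The remaining verifications — that the $t=0$ image of $\MQ$ is the one used in \eqref{eq:tM}, and the exactness bookkeeping for $l_*,\bpl_*,\tl_*$ — are routine given Lemmas~\ref{lemma:spl_phi}, \ref{lemma:iSK}, \ref{lemma:wtE} and Corollary~\ref{cor:TXSl}.
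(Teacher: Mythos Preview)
Your proposal is correct and follows essentially the same approach as the paper: both arguments identify $\operatorname{Im}(\eta)$ with the $\Sl$-submodule of $i_\sharp\phi|_{t=0}$ generated by $\MQ$, then identify this generated submodule with $\pi_*\tl_*\tMQ$ via the adjunction/projection-formula identification $l_*\bpl_*\bpl^*\MQ \cong \Spl\otimes_{\OO_X}\MQ$ together with Lemma~\ref{lemma:spl} (which reduces the $\Sl$-action to an $\Spl$-action). The paper's proof is organized from the other end---it first computes the image of the morphism~\eqref{eq:multimap} and then identifies that morphism with the multiplication map---but the content and the key lemmas invoked are the same as yours.
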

\begin{proof}
  We have
    \[    l_* \bpl_* \bpl^* \MQ \cong \Spl \otimes_{\OO_X} \MQ \]
  by the projection formula $\bpl_* \bpl^* \MQ \cong \bpl_* \OO_{\TXQcl} \otimes_{\OO_{\cl{Q}}} \MQ$, hence the morphism\eqref{eq:multimap} reads as
  \[ \Spl \otimes_{\OO_X} \MQ \to i_\sharp\phi |_{t=0}. \] 
  By Proposition \ref{lemma:spl}, it is induced by the multiplication map
    \[ \Sl \otimes_{\OO_X} \MQ \to i_\sharp\phi |_{t=0} \]
  and therefore its image subsheaf $\pi_* \tl_* \tMQ$ is equal to the $\Sl$-submodules of $i_\sharp\phi|_{t=0}$ generated by $\MQ$, which is exactly $\pi_* \Image(\eta)$.
\end{proof}

From the discussion above, we have
\begin{lemma}
 The supports of $\wt{\EEE}$ and $\EEE$ are in $\TXQcl \subset \TX$.
\end{lemma}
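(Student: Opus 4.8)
The plan is to prove the two containments separately, combining in each case the constraint on the ``$K$-direction'' supplied by Lemma~\ref{lemma:spl} with a constraint on the base direction reflecting that both sheaves are assembled from data living over the orbit $Q$. For $\wt{\EEE}$ the quickest route is Lemma~\ref{lemma:wtE}, which identifies $\wt{\EEE}=\ti_*\,\pl^*\varphi$; since in the commutative diagram preceding Proposition~\ref{prop:E} the embedding factors as $\ti=\tl\circ\ti'$ with $\tl\colon\TXQcl\hookrightarrow\TX$ a \emph{closed} immersion (note $\TXQcl=\pi\pb(\cl Q)\cap\TXK$ is closed in $\TX$, being the intersection of the closed subset $\pi\pb(\cl Q)$ with the closed subset $\TXK$), we get $\wt{\EEE}=\tl_*\bigl(\ti'_*\,\pl^*\varphi\bigr)$ and hence $\supp\wt{\EEE}\subset\tl(\TXQcl)=\TXQcl$.

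For $\EEE$ I would use two inputs. First, $i_\flat\phi$ is a $(\Dr,\KK)$-submodule of $i_\sharp\phi$, so Lemma~\ref{lemma:spl} applies and yields $\supp\EEE\subset\TXK$. Second, I claim $\supp\EEE\subset\pi\pb(\cl Q)$. To see this, recall $i_\flat\phi=(i_*\DQt)\otimes_{\U\LLiek_t}\MQ[t]$: since $i\colon Q\hookrightarrow X$ is a locally closed immersion, $i_*\DQt$ is a sheaf of $\OO_X[t]$-modules supported over $\cl Q$, and tensoring over $\U\LLiek_t$ with $\MQ[t]$ cannot enlarge this support, so $i_\flat\phi$, and therefore $\pi_*\EEE\simeq i_\flat\phi|_{t=0}$, is supported over $\cl Q$ as a sheaf of $\OO_X$-modules. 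Now $\pi\colon\TX\to X$ is an affine morphism (it is a twisted cotangent bundle over $X$), so for any point $x\notin\cl Q$ one may choose an affine open $U\ni x$ with $U\cap\cl Q=\emptyset$; then $\Gamma(\pi\pb U,\EEE)=(\pi_*\EEE)(U)=0$ with $\pi\pb U$ affine, forcing $\EEE|_{\pi\pb U}=0$, hence $\pi\pb(x)\cap\supp\EEE=\emptyset$. This proves $\supp\EEE\subset\pi\pb(\cl Q)$, and combining with the first input gives $\supp\EEE\subset\pi\pb(\cl Q)\cap\TXK=\TXQcl$. The same affine-pushforward argument applied to $\pi_*\wt{\EEE}\simeq i_\sharp\phi|_{t=0}$ gives an alternative proof for $\wt{\EEE}$ as well.

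No essential difficulty arises; the one point to be careful about is that one should \emph{not} try to extract $\supp\EEE$ from the morphism $\eta$ of Proposition~\ref{prop:E}. Although $\Image(\eta)=\tl_*\tMQ$ is visibly supported on $\TXQcl$, the functor $-\otimes_{\C[t]}\C$ is only right exact, so the inclusion $i_\flat\phi\hookrightarrow i_\sharp\phi$ need not remain injective after setting $t=0$ and $\eta$ may well have a kernel; this is precisely why the argument is routed through $\pi_*\EEE$ and the support of the underlying $\OO_X$-module rather than through $\eta$ itself.
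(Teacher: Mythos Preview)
Your argument is correct and is essentially a careful unpacking of the paper's terse ``From the discussion above, we have''. The paper gives no explicit proof, leaving the reader to extract the claim from Lemma~\ref{lemma:spl}, Lemma~\ref{lemma:wtE}, and the commutative diagram preceding Proposition~\ref{prop:E}; you have supplied exactly those ingredients in the right order, and your use of the affineness of $\pi$ to pass from $\supp(\pi_*\EEE)\subset\cl Q$ to $\supp\EEE\subset\pi\pb(\cl Q)$ is the standard and correct move. Your final paragraph, warning against reading off $\supp\EEE$ from $\Image(\eta)$ because the restriction $i_\flat\phi\hookrightarrow i_\sharp\phi$ to $t=0$ need not stay injective, is a genuine clarification that the paper leaves implicit.
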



It is therefore natural to give the following definition.

\begin{definition}
  The \emph{($\lnc$-)twisted characteristic variety} of the $\Dl$-module $\ilq$ is the subvariety
    \[ Ch_{\lnc}(\ilq) := \supp(\EEE) \subset \TXQcl \subset \TX. \]
\end{definition}

When $\lnc$ is generic, $Ch_{\lnc}(\ilq)$ is smooth, but in general it is singular. In the most singular case when $\lnc=0$, we have $\lt = \lc$. Thus $\Dr$ is the Rees algebra of $\Dl$ with respect to its natural filtration, whereas $i_\flat \phi$ is the Rees module of $i_+\phi$ with respect to the filtration generated by $\M(Q,\phi)$. Thus we have

\begin{lemma}
  When $\lnc=0$, $Ch_{0}(\ilq)$ is the usual characteristic variety of the $\Dl$-module $\ilq$ in the usual cotangent bundle $T^*X$.
\end{lemma}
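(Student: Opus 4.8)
The plan is to unwind the two identifications recorded just above the statement---$\Dr$ as the Rees algebra of $\Dl$, and $i_\flat\phi$ as a Rees module of $\ilq=i_+\phi$---and then to invoke the standard fact that the associated graded of a coherent $\D$-module for a good filtration is supported on its characteristic variety.

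First I would make the degeneration at $\lnc=0$ precise. Since then $\lt=\lc+\lnc/t-\rho=\lc-\rho$ has no $t$-dependence, the ideal $I_{\lt}\subset S(t\Lieh[t])$ is exactly the Rees ideal attached to $I_\lambda\subset S\Lieh$ for the filtration placing $\Lieh$ in degree one; and since $\Lier=t\Lieg[t]$ is tautologically the Rees algebra of $\Lieg$ (with $\Lieg$ in degree one), one checks stepwise that $\Dh(\Lier)=\U(t(\LLieg/\LLien)[t])\cong\Rees_F\bigl(\U(\LLieg/\LLien)\bigr)$ and, after the central reduction, $\Dr\cong\Rees_F(\Dl)$, where $F$ denotes the order (PBW) filtration of $\Dl$. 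In particular $\Sl=\Dr/t\Dr\cong\graded_F(\Dl)$; and because $\lnc=0$ identifies $\TX=\nu\pb(0)$ with the honest cotangent bundle $T^*X\subset\tg$ under the identifications of the previous subsection, this is consistent with Lemma \ref{lemma:TXSl} and gives $\Sl\cong\pi_*\OO_{T^*X}$ with its usual symbol grading.

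Second, under the same mechanism I would identify $i_\flat\phi$ with the Rees module of $\ilq$ for the filtration $F'$ generated by $\MQ$, that is $F'_k:=F_k(\Dl)\cdot\MQ$: the $t$-grading built into the transfer bimodule $\DQt$ is precisely the order filtration on $\DQ$ transported to the family, $\MQ[t]$ sits in $t$-degree zero, and (by the point asserted just above) $F'$ exhausts $\ilq$. This $F'$ is moreover a \emph{good} filtration, since $\MQ$ is a $K$-equivariant $\OO_X$-coherent subsheaf, so that $\graded_{F'}(\ilq)$ is generated over $\graded_F(\Dl)=\pi_*\OO_{T^*X}$ in degree zero by the coherent sheaf $\MQ$ and is therefore a coherent $\OO_{T^*X}$-module. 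Setting $t=0$ in the Rees module then gives, via Corollary \ref{cor:TXSl}, an isomorphism $\EEE\cong i_\flat\phi|_{t=0}\cong\graded_{F'}(\ilq)$ of $K$-equivariant coherent sheaves on $T^*X$.

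Finally, by the standard theory of $\D$-modules the support in $T^*X$ of the associated graded of a coherent $\D$-module for a good filtration is its characteristic variety and is independent of the chosen good filtration; applying this to $F'$ yields $Ch_0(\ilq):=\supp(\EEE)=Ch(\ilq)$, completing the proof. I expect the bulk of the work---and the main obstacle---to be the bookkeeping behind the two Rees identifications: both $\Dl$ and $\ilq$ are constructed through the non-flat operations of central reduction ($\otimes_{S\Lieh}\C_{\lambda-\rho}$) and the tensor product $\otimes_{\U\LLiek}$ over the orbit, so one must verify that the Rees construction is compatible with each such step---which works precisely because at $\lnc=0$ every grading that enters the construction is the one induced by the order filtration, and these gradings remain compatible all the way through. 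A second point to pin down carefully is the assertion that $\MQ$ generates $\ilq$ as a $\Dl$-module (equivalently that $F'$ is an \emph{exhaustive} good filtration of $\ilq$ rather than of a proper submodule), since this is what makes $\supp(\EEE)$ the characteristic variety of $\ilq$ on the nose.
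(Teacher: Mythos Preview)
Your proposal is correct and follows exactly the approach the paper takes: the paper's entire argument is the one-sentence observation immediately preceding the lemma, namely that at $\lnc=0$ the sheaf $\Dr$ is the Rees algebra of $\Dl$ for the order filtration and $i_\flat\phi$ is the Rees module of $i_+\phi$ for the filtration generated by $\MQ$, whence the conclusion follows. Your write-up unwinds these Rees identifications in considerably more detail than the paper does, and you correctly flag the one nontrivial point the paper passes over in silence---that $\MQ$ must generate $\ilq$ as a $\Dl$-module for $F'$ to be an exhaustive good filtration of $\ilq$ (as opposed to only of the submodule $\llq$); the paper simply asserts this.
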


\begin{remark}
  Similar to the usual cotangent bundle, the twisted cotangent bundle $\TX$ is equipped with a natural $G$-invariant algebraic symplectic form. The smooth locus of the twisted characteristic varieties are in fact Lagrangian subvarieties in $\TX$. This will be proven elsewhere.
\end{remark}

Pick a point $x \in Q$ and a a $\theta$-stable Cartan subalgebra $\Lieh_x \in \Lieb_x$. So $\Lien_x^\bot = \Lieh^*_x \oplus \Lieb_x^\bot$ and we can regard $\lnc$  with a semisimple element $\lncp$ in $\Lieh_x^* \cap \Lies^*$. Let $O_\lnc$ be the closed $K$-orbit of $\lncp$ in $\Lies^*$. It only depends on $\lnc$ and is independent of the choice of $x$ or $\Lieh_x$ (hence of $\lncp$). Set
  \[  \Ql := \mu\pb(O_\lnc) \cap \TXQ, \]
which is a $K$-invariant subvariety of $\TXQ \subset \TXK$ and only depends on the choice of $Q$. On the other hand, we have a point $\tlnc = (\lncp, x)$ in $\mu\pb(\lncp) \cap \TXQ$.

\begin{proposition}\label{prop:Ql}
  We have $\Ql = K \cdot \tlnc$. In other words, $\Ql$ is a single $K$-orbit in $\TXQ$.
\end{proposition}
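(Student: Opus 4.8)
The plan is to prove the two inclusions separately. The inclusion $K\cdot\tlnc\subseteq\Ql$ is immediate: $\lncp$ lies in $\Lieh_x\cap\Lies\subset\Lieb_x$ and (by construction) $\nu(\lncp,x)=\lnc$, so $\tlnc=(\lncp,x)\in\TXQ$; since moreover $\tilde{\mu}(\tlnc)=\lncp\in O_\lnc$ we get $\tlnc\in\Ql$, and $\Ql$ is $K$-stable. So the content is the reverse inclusion.

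For that, I would first pass to a single fibre of the Springer moment map. The restriction $\tilde{\mu}\colon\Ql\to O_\lnc$ is $K$-equivariant with image the single $K$-orbit $O_\lnc\cong K/Z_K(\lncp)$; hence $\Ql=K\cdot F$, where $F:=\tilde{\mu}\pb(\lncp)\cap\Ql$, and $\Ql$ is a single $K$-orbit (necessarily $K\cdot\tlnc$, since $\tlnc\in F$) if and only if $F$ is a single $Z_K(\lncp)$-orbit. Next I would identify $F$. A point $(\lncp,y)$ lies in $F$ exactly when $y\in Q$, $\lncp\in\Lieb_y$, and $\nu(\lncp,y)=\lnc$. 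The condition $\lncp\in\Lieb_y$ says $y$ lies in the smooth projective locus $X^{\lncp}\subset X$ of Borel subalgebras containing $\lncp$; let $L=Z_G(\lncp)$ be the centralizer of $\lncp$ in $G$, a Levi subgroup, which is $\theta$-stable because $\theta\lncp=-\lncp$. The connected components of $X^{\lncp}$ are copies of the flag variety $X_L$ of $L$, indexed by $W/W_L$; since $\nu(\lncp,-)$ is regular on $X^{\lncp}$, hence constant on each component, and the components are distinguished by this value (a $W$-translate of $\lnc$), the locus cut out by $\nu(\lncp,y)=\lnc$ is the single component $C$ through $x$, and $C=L\cdot x$ is the flag variety of $L$ embedded in $X$. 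Thus $F$ is identified, $Z_K(\lncp)$-equivariantly, with $C\cap Q$, and it remains to prove the key claim $C\cap Q=Z_K(\lncp)\cdot x$.

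The inclusion $Z_K(\lncp)\cdot x\subseteq C\cap Q$ is clear. For the converse, take $y\in C\cap Q$ and write $y=kx$ with $k\in K$; the point of the argument is to adjust the choice of $k$. Since $\lncp$ is semisimple and lies in $\Lieb_y$, the subalgebra $\Lieb_y\cap\Liel$ is a Borel subalgebra of $\Liel$ containing $\lncp$, so by Lemma~\ref{lemma:KCartan} applied to the $\theta$-stable reductive group $L$ it contains a $\theta$-stable Cartan subalgebra $\Lieh''$ of $\Liel$; this $\Lieh''$ is a Cartan subalgebra of $\Lieg$ containing $\lncp$, and $\Lieh''\subset\Lieb_y$. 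Now $\Ad(k)\Lieh_x$ is also a $\theta$-stable Cartan subalgebra of $\Lieb_y$, so by Lemma~\ref{lemma:KCartan} for $\Lieg$ there is $m\in K\cap N_y$ (with $N_y$ the unipotent radical of the Borel subgroup at $y$) such that $\Ad(mk)\Lieh_x=\Lieh''$; since $m$ fixes $y$, I may replace $k$ by $mk$ and assume $\Ad(k)\Lieh_x=\Lieh''$, whence $\Ad(k\pb)\lncp\in\Lieh_x$. Finally, $K$-invariance of $\nu$ gives
\[
  \nu\bigl(\Ad(k\pb)\lncp,\,x\bigr)=\nu(\lncp,kx)=\nu(\lncp,y)=\lnc=\nu(\lncp,x),
\]
and since the restriction of $\nu(-,x)$ to $\Lieh_x$ is the specialization isomorphism $\Lieh_x\xrightarrow{\ \sim\ }\Lieh$, in particular injective, this forces $\Ad(k\pb)\lncp=\lncp$; thus $k\in Z_K(\lncp)$ and $y=kx\in Z_K(\lncp)\cdot x$. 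Combining the three steps, $F=Z_K(\lncp)\cdot\tlnc$ and therefore $\Ql=K\cdot\tlnc$.

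I expect the main obstacle to be the key claim $C\cap Q=Z_K(\lncp)\cdot x$, and within it the right choice of representative $k$: one must invoke Lemma~\ref{lemma:KCartan} twice — once for $L$, to produce a $\theta$-stable Cartan subalgebra of $\Lieb_y$ passing through $\lncp$, and once for $\Lieg$, to move $\Ad(k)\Lieh_x$ onto it — after which the moment-map equation collapses $\Ad(k\pb)\lncp$ back onto $\lncp$. The ancillary structural facts (that $\Lieb_y\cap\Liel$ is a Borel of $\Liel$, that $L$ is $\theta$-stable, and the description of the components of $X^{\lncp}$ and of $\nu(\lncp,-)$ on them) are standard and I would only sketch them.
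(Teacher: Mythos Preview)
Your proof is correct and rests on the same two ingredients as the paper's (Lemma~\ref{lemma:KCartan} and, in essence, Lemma~\ref{lemma:cartan_lnc}, whose content you reproduce inline when you produce the $\theta$-stable Cartan $\Lieh''\subset\Lieb_y\cap\Liel$ through $\lncp$). The difference is the order of reduction: the paper first uses transitivity of $K$ on $Q$ to move the base point to $x$, and then compares the variable fiber element $\gamma$ with $\lncp$ inside $\Lieb_x$; you instead first pass to the fiber $\tilde\mu\pb(\lncp)$ and then vary the base point $y$. The paper's order is a bit shorter because it sidesteps your analysis of the connected components of $X^{\lncp}$ and the (true, but nontrivial) Weyl-group fact that point stabilizers are reflection subgroups, which you need for the claim that $\nu(\lncp,-)$ distinguishes the components. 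On the other hand, your key identity $C\cap Q=Z_K(\lncp)\cdot x$ is exactly the content of the paper's later Lemma~\ref{lemma:fiber} (the isomorphism $Z_\lnc\cong Q_L$), so your extra structural work pays off downstream.
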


For the proof, we need the following

\begin{lemma}\label{lemma:cartan_lnc}
  Suppose $\Lieb$ is a Borel subalgebra and $\gamma$ is a semisimple element in $\Lieb$ with $\theta \gamma = -\gamma$. Then there exists a $\theta$-stable Cartan subalgebra $\Liec$ in $\Lieb$ such that $\gamma \in \Liec$.
\end{lemma}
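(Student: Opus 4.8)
The plan is to reduce to the structure theory of the Borel subalgebra $\Lieb$ relative to the Cartan involution $\theta$, using Lemma \ref{lemma:KCartan} as the starting point. First I would invoke Lemma \ref{lemma:KCartan} to fix \emph{some} $\theta$-stable Cartan subalgebra $\Liec_0 \subset \Lieb$, giving a $\theta$-stable triangular decomposition $\Lieb = \Liec_0 \oplus \Lien$, where $\Lien = [\Lieb, \Lieb]$ is $\theta$-stable since $\Lieb$ is. The element $\gamma$ then decomposes as $\gamma = \gamma_{\Liec_0} + \gamma_{\Lien}$ with $\gamma_{\Liec_0} \in \Liec_0$ and $\gamma_{\Lien} \in \Lien$; since $\theta$ preserves this decomposition and $\theta\gamma = -\gamma$, both components are $(-\theta)$-fixed. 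The first key step is to observe that the semisimple part of $\gamma$ (in the Jordan decomposition inside $\Lieb$) has the same image $\gamma_{\Liec_0}$ in $\Liec_0 \cong \Lieb/\Lien$, so after possibly replacing $\gamma$ by its semisimple part (which lies in $\Lies$ as well, because $\theta$ commutes with Jordan decomposition) we may assume $\gamma$ is semisimple with a prescribed, $(-\theta)$-fixed image in the abstract Cartan.

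The second, and main, step is to conjugate $\gamma$ into $\Liec_0$ by an element of the unipotent radical $N$ of $B$ in a way that is compatible with $\theta$. The centralizer $\Liez_\Lieb(\gamma_{\Liec_0})$ is a $\theta$-stable subalgebra of $\Lieb$ (it is the centralizer in $\Lieb$ of a $(-\theta)$-fixed semisimple element of $\Liec_0$), and it contains $\Liec_0$. I would argue that inside this $\theta$-stable Levi-type subalgebra the element $\gamma$ is a semisimple element whose image in the Cartan agrees with $\gamma_{\Liec_0}$, so by a standard argument (all maximal tori of a solvable algebraic group are conjugate under its unipotent radical, together with the fact that $\gamma$ lies in such a torus) there is $u \in N \cap Z_G(\gamma_{\Liec_0})$ with $\Ad(u)\gamma \in \Liec_0$. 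The point of working inside the $\theta$-stable centralizer is that one can then run the averaging/Matsuki-type argument of Lemma \ref{lemma:KCartan}: the set of such conjugating elements $u$ forms a homogeneous space under $(N \cap Z_G(\gamma_{\Liec_0})) / (\text{stabilizer})$, which is an affine space on which the involution $g \mapsto \theta(g)$ acts, and a fixed point exists because the group is unipotent (so the fixed-point set of the involution on this affine space is nonempty — e.g. take a fixed point of the induced affine involution). Taking $\Liec := \Ad(u)^{-1}\Liec_0$ for such a $\theta$-fixed $u$ gives a $\theta$-stable Cartan subalgebra of $\Lieb$ containing $\gamma$.

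The step I expect to be the main obstacle is the last one: producing the conjugating element $u \in N$ that is \emph{simultaneously} $\theta$-fixed and carries $\gamma$ into $\Liec_0$. The existence of \emph{some} $u \in N$ with $\Ad(u)\gamma \in \Liec_0$ is routine from solvable group theory, but $\theta$-equivariance is exactly the kind of subtlety that Lemma \ref{lemma:KCartan} (i.e. Matsuki's theorem) is designed to handle. I would either cite Matsuki's results directly — the statement I want is essentially that $\theta$-stable Cartan subalgebras of $\Lieb$ containing a given $(-\theta)$-fixed semisimple element are conjugate under $K \cap N$, which follows by the same proof as the cited Lemma \ref{lemma:KCartan} applied to the $\theta$-stable reductive centralizer of $\gamma$ in $\Lieg$ — or reprove it by the averaging argument sketched above, noting that the relevant unipotent group is abelian or at least admits a $\theta$-fixed point in the relevant torsor by a filtration/induction on the lower central series. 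Either way, the content is the same as the (known) existence statement behind Lemma \ref{lemma:KCartan}, now applied to the reductive subalgebra $\Liez_\Lieg(\gamma)$, which is $\theta$-stable because $\gamma$ is $(-\theta)$-fixed.
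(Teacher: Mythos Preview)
Your final paragraph is exactly the paper's proof, and the paper goes there directly without any detour: let $\Liel^\gamma = \Liez_\Lieg(\gamma)$, a $\theta$-stable reductive subalgebra since $\theta\gamma = -\gamma$; then $\Lieb^\gamma := \Liel^\gamma \cap \Lieb$ is a Borel subalgebra of $\Liel^\gamma$, so Lemma~\ref{lemma:KCartan} (applied to the pair $(\Liel^\gamma, \theta|_{\Liel^\gamma})$) produces a $\theta$-stable Cartan $\Liec \subset \Lieb^\gamma$. Since $\gamma$ is central in $\Liel^\gamma$ it lies in $\Liec$, and $\Liec$ is automatically a Cartan of $\Lieg$ contained in $\Lieb$. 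That is the entire argument. Your first two steps are a detour that runs into the very obstacle you identify, and your proposed resolution of that obstacle is to pass to $\Liez_\Lieg(\gamma)$ --- which is where you should have started.

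There is also a genuine slip in your first step: you claim $\Lien$ is $\theta$-stable ``since $\Lieb$ is'', but $\Lieb$ is \emph{not} $\theta$-stable in general (if it were, the lemma would be trivial). The conclusion that $\gamma_{\Liec_0}$ and $\gamma_\Lien$ are each $(-\theta)$-fixed happens to survive, for a different reason: from $\theta\gamma=-\gamma\in\Lieb$ and $\theta\gamma_{\Liec_0}\in\Liec_0$ one gets $\theta\gamma_\Lien\in\Lieb$, and a nilpotent element of $\Lieb$ lies in $\Lien$, whence uniqueness of the decomposition $\Lieb=\Liec_0\oplus\Lien$ forces $\theta\gamma_{\Liec_0}=-\gamma_{\Liec_0}$. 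But since this whole decomposition is unnecessary for the argument, simply drop the first two steps and write the centralizer argument directly.
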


\begin{proof}
  Let $\Liel^\gamma$ be the centralizer of $\gamma$ in $\Lieg$, then $\Liel^\gamma$ is a $\theta$-stable reductive subalgebra of $\Lieg$, which contains $\gamma$. Moreover, $\Lieb^\gamma = \Liel^\gamma \cap \Lieb$ is a Borel subalgebra of $\Liel^\gamma$. By Lemma \ref{lemma:KCartan}, $\Liel^\gamma$ contains a $\theta$-stable Cartan subalgebra $\Liec$. Then $\Liec$ must contain $\gamma$ and it is also a Cartan subalgebra of $\Lieb$.
\end{proof}

\begin{proof}[Proof of Proposition \ref{prop:Ql}]
  Suppose $\wt{\gamma}=(\gamma,y) \in \Ql$. We need to prove that $\wt{\gamma}$ and $\tlnc$ are $K$-conjugate. Since both $x$ and $y$ are in $Q$, we can conjugate $\wt{\gamma}$ by some element in $K$ if necessary and assume $x=y$ and $\gamma \in \Lieb_x$. By Lemma \ref{lemma:cartan_lnc}, there is a $\theta$-stable Cartan subalgebra $\Liec$ in $\Lieb_x$ which contains $\gamma$. Similarly there is another $\theta$-stable Cartan subalgebra $\Liec'$ in $\Lieb_x$ containing $\lncp$. By Lemma \ref{lemma:KCartan}, there exists $k \in K \cap N_x$ such that $\Liec = \Ad(k)\Liec'$. Now both $\Ad(k) \lncp$ and $\gamma$ belong to $\Liec$ and their images under the specialization $\Liec \cong \Liec^* \cong (\Lieb_x/\Lien_x)^* \cong \Lieh^*$ are $\lnc$. Hence $\gamma= \Ad(k) \lncp$ and $\wt{\gamma} = \Ad(k) \tlnc$.
\end{proof}


Denote by $\Qcl$ the closure of $\Ql$ in $\TX$. We have the following commutative diagram,

\begin{equation}\label{diag:GS}
  \begin{diagram}
    Q                     & \rTo^{\Id}  & Q & \rInto^{i}  & X  \\
    \uTo^{p}   & &  \uTo^{p} & & \uTo^{\pi} \\
    \Ql & \rInto  & \TXQ  & \rInto^{\ti}  & \TX  \\
    \dTo & &\dTo  & & \dTo^{\mu}  \\
    O_\lnc & \rInto & \Lies^* & \rInto & \Lieg^*   
  \end{diagram}
\end{equation}

Now assume $\lnc$ is purely imaginary and set
  \[  M(\lambda,Q,\phi) := \Gamma(X,\ilq)  \] 
is an irreducible tempered $(\Lieg,K)$-module. Let $P^\lnc=L^\lnc N^\lnc$ be the parabolic subgroup of $G$ with Lie algebra $\Liep^\lnc$ containing $\Lieb_x$, whose Levi factor $L^\lnc$ is the centralizer of $\lnc$. Then $P^\lnc = \overline{P^\lnc}$, $L^\lnc = \overline{L^\lnc}$, and $L^\lnc$ is $\theta$-stable and all imaginary roots are contained in roots of $\Liel^\lnc$, which implies that $K \cap P^\lnc = K \cap L^\lnc$. Let $\Llr = L^\lnc \cap \GR$ be the centralizer of $\lnc$ as an element in $i \Lieg^*_\R$ under the coadjoint $\GR$-action, then $L^\lnc$ is the complexification of $\Llr$ and $K \cap L^\lnc = K^\lnc$ is the complexification of the centralizer $\KR \cap \Llr = K^\lnc_\R$ of $\lnc$ in $\KR$. Moreover, $(\Llr, K^\lnc_\R)$ is a real reductive pair. Let $S$ be the set of all simple roots vanishing on $\lnc$. Consider the associated fibration $\pi_S: X \to X_S \cong G/P^\lnc$ of the full flag variety over the partial flag variety of type $S$. Set $y = \pi_S(x)$, then $X_y=\pi\pb_S(y) \cong P^\lnc/B \cong L^\lnc / L^\lnc \cap B$ is the flag variety for the Harish-Chandra pair $(\Liel^\lnc,K^\lnc)$, where $\Liel^\lnc$ is the Lie algebra of $L^\lnc$. Consider the data $( \lambda, Q_L=K^\lnc \cdot x, \phi_L)$ on $X_y$ for the pair $(\Liel^\lnc,K^\lnc)$, where $\phi_L$ is the $K^\lnc$-homogeneous connection on the $K^\lnc$-orbit $Q_L$ induced by the geometric fiber $\tau$ of $\phi$ at $x$ regarded as a $K^\lnc_x$-representation ($K^\lnc_x = K^\lnc \cap K_x$). Hence $\Gamma(X_y, \I( \lambda, Q_L, \phi_L))$ is a $(\Liel^\lnc,K^\lnc)$-module. Denote by $j$ the embedding of $Q_S$ into $X_S$ and by $k$ the embedding of $X_y$ into $X$. We have the following the diagram \ref{diag:Q_L},

\begin{equation}\label{diag:Q_L}
\begin{diagram}
       Q_L    & \rInto  & X_y   \\
       \dInto  &           &\dInto^{k} \\
       Q        & \rInto^{i}        & X   \\
       \dTo       &  & \dTo_{\pi_S}  \\
       Q_S        &\rInto^{j}         & X_S
\end{diagram}
\end{equation}

Recall in \S 4, \cite{Chang}, a $K$-equivariant sheaf $\tD_\lambda$ of algebras was defined over $X_S$, which is an analogue of $\D_\lambda$ over $X$. Moreover, $(\pi_S)_*$ induces a functor
  \[  (\pi_S)_* \Mod(\D_\lambda,K) \to \Mod(\tD_\lambda,K).  \]
Another sheaf $\tD^j_\lambda$ of algebras was defined over $Q_S$, which is an analogue of $\D^i_\lambda$ over $Q$. One can define a functor
  \[  j_+: \M(\tD^j_\lambda, K) \to \M(\tD_\lambda,K)   \] 
in a similar way as how $i_+$ is defined. There is an exact induction functor (Lemma 5.2., \cite{Chang})
\[
 \ind:  \Mod(\Liel^\lnc,K^\lnc) \to \Mod(\widetilde{\D}^j_\lambda,K). 
\]
The following proposition essentially says that the $(\Lieg,K)$-module $\Gamma(X,i_+\phi)$ is cohomologically induced from the $(\Liel^\lnc,K^\lnc)$-module $\Gamma(X_y, \I( \lambda, Q_L, \phi_L))$ (cf., \cite{KnappVogan}, \cite{Oshima1}, \cite{Oshima2}).

\begin{proposition}\label{prop:ind}
  There is a natural isomorphism in $\Mod(\D_\lambda,K)$,
    \[ (\pi_S)_* i_+ \phi  \cong j_+(\ind(\Gamma(X_y, \I( \lambda, Q_L, \phi_L)))). \]
\end{proposition}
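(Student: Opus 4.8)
The plan is to deduce the proposition from the compatibility of twisted $\D$-module direct images with the fibration $\pi_S : X \to X_S$, together with Beilinson--Bernstein localization along its fibers. First I would factor the closed embedding $i : Q \hookrightarrow X$ through the preimage $Z := \pi_S^{-1}(Q_S) \subset X$ of the orbit $Q_S$, writing $i = n \circ m$ with $m : Q \hookrightarrow Z$ and $n : Z \hookrightarrow X$ closed embeddings. Since $\D$-module direct images compose, $i_+\phi \cong n_+(m_+\phi)$. The square with horizontal arrows $n$, $j$ and vertical arrows $\pi_Z := \pi_S|_Z : Z \to Q_S$ and $\pi_S$ is Cartesian, $\pi_S$ (hence $\pi_Z$) is a smooth projective fibration, and the fiber of $\pi_Z$ over $y = \pi_S(x)$ is the flag variety $X_y$ of $(\Liel^\lnc, K^\lnc)$. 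By the construction of $\tD_\lambda$ on $X_S$, of $\tD^j_\lambda$ on $Q_S$, and of the functors $(\pi_S)_*$ and $j_+$ (\S 5 of \cite{Chang}), base change along this square identifies $(\pi_S)_* n_+(-)$ with $j_+$ applied to the relative Beilinson--Bernstein pushforward of $(-)$ along $\pi_Z$; the cohomology vanishing on the fibers (the fiberwise analogue of Proposition \ref{prop:Uchi}) is what makes this relative pushforward well behaved. It therefore remains to identify the relative pushforward of $m_+\phi$ along $\pi_Z$ with $\ind\bigl(\Gamma(X_y, \I(\lambda, Q_L, \phi_L))\bigr)$ as a $(\tD^j_\lambda, K)$-module on $Q_S$.

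I would verify this last identification fiberwise. Both sides are $K$-equivariant sheaves on the single $K$-orbit $Q_S = K \cdot y$, hence each is determined by its fiber over $y$ as a module over the isotropy group $K^\lnc = K \cap P^\lnc = K \cap L^\lnc$ together with the action governing $\tD^j_\lambda$ at $y$, namely that of the corresponding quotient of $\U(\Liel^\lnc)$. By diagram \ref{diag:Q_L} the fiber of $Q \to Q_S$ over $y$ is the $K^\lnc$-orbit $Q_L \subset X_y$, and by the very definition of $\phi_L$ the restriction of $\phi$ to this fiber is $\phi_L$; one only has to match the specialization isomorphisms used to set up the Cartan decomposition on $X$ with those on $X_y$. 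The genuinely delicate point is the $\rho$-shift: under $\pi_S$ the restriction of $\omega_{Q/X}$ to the fiber factors as $\omega_{Q_L/X_y}$ tensored with the pullback of $\omega_{Q_S/X_S}|_y$, and the latter one-dimensional twist is exactly the difference between the half-sum of positive roots of $\Lieg$ and that of $\Liel^\lnc$; combined with the $\omega_{Q_S/X_S}$ contained in the transfer bimodule $\D^{\lambda}_{X_S \gets Q_S}$ for $j$, these contributions match on the two sides so that the parameter attached to the Levi-side standard module is again $\lambda$, not a shift of it. Granting this bookkeeping, the fiber of the relative pushforward of $m_+\phi$ over $y$ is $\Gamma(X_y, \I(\lambda, Q_L, \phi_L))$, which by Lemma 5.2 of \cite{Chang} is also the fiber over $y$ of $\ind\bigl(\Gamma(X_y, \I(\lambda, Q_L, \phi_L))\bigr)$; since the $K$-equivariant structures on $Q_S$ then agree over the base point, the two sheaves are isomorphic, and applying $j_+$ gives the asserted isomorphism.

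I expect the main obstacle to be making the base-change step rigorous in the twisted, $K$-equivariant setting: one must check that $(\pi_S)_*$ genuinely intertwines the $X$-side direct image $n_+$ with $j_+$ of the relative pushforward along $\pi_Z$, with every sheaf of twisted differential operators and every relative canonical bundle contribution correctly matched --- essentially the content of \S 5 of \cite{Chang} applied to the pair $(Q, Q_S)$. This is bookkeeping, but delicate bookkeeping, because the whole point of the proposition is precisely that the $\rho$-shifts and the $\omega_{Q/X}$, $\omega_{Q_L/X_y}$, $\omega_{Q_S/X_S}$ contributions conspire so that the Levi-side parameter stays equal to $\lambda$. A secondary point requiring care is having the functor $\ind$ from Lemma 5.2 of \cite{Chang} in sufficiently explicit form --- in particular the statement that restricting $\ind(V)$ to the fiber over the base point of $Q_S$ recovers $V$ up to the $\rho$-twist --- so that the fiberwise comparison above can be made and the isomorphism propagated over $Q_S$ by $K$-equivariance.
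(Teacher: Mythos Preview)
Your proposal is correct in outline and is essentially an unpacking of what the paper's proof actually says. The paper does not give an argument at all: it simply cites Theorem~5.4 and Theorem~4.14 of \cite{Chang}, noting that the present statement generalizes Corollary~5.8 of \emph{loc.\ cit.} from a single simple root to the set $S$. Your factorization $i = n \circ m$ through $Z = \pi_S^{-1}(Q_S)$, the base-change step identifying $(\pi_S)_* n_+$ with $j_+$ of the relative pushforward, and the fiberwise identification at $y$ using the exactness of $\ind$ are precisely the ingredients encoded in those two theorems of Chang (Theorem~4.14 is the base-change/pushforward compatibility, Theorem~5.4 is the fiberwise induction identification). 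So there is no substantive difference in approach; you are sketching the proof that the paper delegates to the reference, and your caveats about the $\rho$-shift bookkeeping and the twisted equivariant base change are exactly the points handled in \S\S4--5 of \cite{Chang}.
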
 
\begin{proof}
  This follows immediately from Theorem 5.4 and Theorem 4.14 of \cite{Chang} and it generalizes Corollary 5.8 of \emph{loc. cit.} in which $S$ is consists of a single simple root.
\end{proof}

On the other hand, suppose $L^\lnc_\R = M^\lnc_\R A^\lnc_\R$ is the Langlands decomposition with its complexification $L^\lnc=M^\lnc A^\lnc$, then $V_L := \Gamma(X_y, \I( \lambda, Q_L, \phi_L))$ is also an $(\Liem^\lnc, K^\lnc)$-module. 

\begin{lemma}\label{lemma:repn_M}
  $V_L$ is an irreducible tempered $(\Liem^\lnc, K^\lnc)$-module with real infinitesimal character.
\end{lemma}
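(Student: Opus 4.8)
The plan is to reduce the statement to the classification of tempered Harish-Chandra sheaves (Theorem~\ref{thm:Dtempered}) applied to the smaller Harish-Chandra pair $(\Liel^\lnc,K^\lnc)$, together with the standard splitting of a tempered irreducible of $L=MA$ as a tempered irreducible with real infinitesimal character of $M$ tensored with a unitary character of $A$. First I would set up the dictionary between the flag geometries. Since $L^\lnc$ is $\theta$-stable and $\Lieb_x\subset\Liep^\lnc$, the subalgebra $\Lieb_x\cap\Liel^\lnc$ is a Borel subalgebra of $\Liel^\lnc$ containing the $\theta$-stable Cartan $\Lieh_x$; hence $X_y=\pi_S\pb(y)$ is canonically the flag variety of $\Liel^\lnc$ --- and of $\Liem^\lnc$, since the split center $\Liea^\lnc$ of $\Liel^\lnc$ does not affect Borels --- the abstract Cartan $\Lieh$ is identified with that of $\Liel^\lnc$ by specialization at $x$, and the Cartan involution $\theta_{Q_L}$ attached to $Q_L=K^\lnc\cdot x$ coincides with $\theta_Q$ on $\Lieh$. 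Writing $\Lieh=\Liet_Q\oplus\Liea_Q$ and $\Liea_Q=\Liea_Q^M\oplus\Liea^\lnc$ with $\Liea_Q^M=\Liea_Q\cap[\Liel^\lnc,\Liel^\lnc]$ the split part of the abstract Cartan of $\Liem^\lnc$, the element $\lnc$, being centralized by $L^\lnc$, is orthogonal to every root of $\Liel^\lnc$ and therefore vanishes on $\Liet_Q\oplus\Liea_Q^M$; thus the restriction of $\lambda$ to the abstract Cartan of $\Liem^\lnc$ equals $\lc$, the non-compact part of $\lambda$ for $(\Liem^\lnc,K^\lnc)$ is $0$, in particular $\theta_{Q_L}\lambda=\lambda$ there, and the induced infinitesimal character of $\Liem^\lnc$ is real. (For $(\Liel^\lnc,K^\lnc)$ itself the non-compact part of $\lambda$ is $\lnc$, which is purely imaginary by hypothesis.)

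Next I would check that $(\lambda,Q_L,\phi_L)$ is a \emph{final basic} datum for $(\Liel^\lnc,K^\lnc)$. Dominance of $\lambda$ for $\Liel^\lnc$ is inherited from its dominance for $\Lieg$, the positive roots of $\Liel^\lnc$ forming a subsystem; compatibility of $\phi_L$ with $\lambda-\rho$ for $\Liel^\lnc$ follows from that of $\phi$ with $\lambda-\rho$ for $\Lieg$, because the two $\rho$-shifts agree on $\Liet_Q$, the only part that enters the condition on the connection. Non-triviality, $\Gamma(X_y,\I(\lambda,Q_L,\phi_L))\neq0$, and the regularity condition, $\Gamma(X_y,\LL(\lambda,Q_L,\phi_L))\neq0$, I would extract from Proposition~\ref{prop:ind}: taking global sections there gives $M(\lambda,Q,\phi)=\Gamma(X,i_+\phi)=\Gamma(X_S,(\pi_S)_*i_+\phi)\cong\Gamma(X_S,j_+(\ind(V_L)))$, so the irreducible tempered module $M(\lambda,Q,\phi)$ is the image of $V_L=\Gamma(X_y,\I(\lambda,Q_L,\phi_L))$ under the exact functor $\ind$ followed by $j_+$ and $\Gamma(X_S,-)$. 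Hence $V_L\neq0$; and since $(\lambda,Q,\phi)$ is final basic for $\Lieg$ by hypothesis, $M(\lambda,Q,\phi)$ is also realized as $\Gamma(X,\LL(\lambda,Q,\phi))$, and tracing this through the same functors identifies it with the image of $\Gamma(X_y,\LL(\lambda,Q_L,\phi_L))$, which therefore cannot vanish; so $(\lambda,Q_L,\phi_L)$ is regular.

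Granting this, Theorem~\ref{thm:Dtempered} applied to $(\Liel^\lnc,K^\lnc)$ shows that $V_L$ is an irreducible tempered $(\Liel^\lnc,K^\lnc)$-module. Writing $L^\lnc=M^\lnc A^\lnc$, the split center $\Liea^\lnc$ acts on $V_L$ through the unitary character determined by $\lnc$ (its infinitesimal character is $\chi_\lambda$, and $\rho$ vanishes on $\Liea^\lnc$ while $\lambda|_{\Liea^\lnc}=\lnc$); hence $V_L$, viewed as an $(\Liem^\lnc,K^\lnc)$-module, is irreducible and tempered, and by the first paragraph it has real infinitesimal character --- which is the assertion. The step I expect to be the main obstacle is the second paragraph, specifically transporting \emph{regularity} (``finalness'') of the datum down to $\Liel^\lnc$: this really uses that $\Gamma(X_S,j_+(\ind(-)))$ is, up to the usual twist, genuine real parabolic induction $\Ind_{P^\lnc_\R}^{\GR}$ and that this induction detects nonvanishing of the irreducible constituent, so in effect the Lemma unwinds the Knapp--Zuckerman and \cite{Chang} classifications rather than supplying a new ingredient; the remaining bookkeeping (matching specializations, $\rho$-shifts and Cartan involutions between $\Lieg$ and $\Liel^\lnc$) is routine but must be carried out with care.
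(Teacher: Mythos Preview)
Your argument is correct and in fact more complete than the paper's, but the route differs in one respect worth noting. The paper's proof is two lines: irreducibility of $V_L$ is read off directly from Proposition~\ref{prop:ind} by the contrapositive (a proper nonzero submodule of $V_L$ would, through the exact functors $\ind$, $j_+$ and global sections, produce a proper nonzero submodule of the irreducible $\Gamma(X,\ilq)$), and the real infinitesimal character is obtained by citing Lemma~3.4(4) of \cite{VoganLanglands} for the vanishing of $\lnc$ on $\Lieh\cap\Liem^\lnc$. Temperedness is not argued explicitly at all. By contrast you deduce both irreducibility and temperedness simultaneously from Theorem~\ref{thm:Dtempered}, which forces you to verify that $(\lambda,Q_L,\phi_L)$ is \emph{final} basic for $(\Liel^\lnc,K^\lnc)$; this is exactly the ``regularity'' step you flag as the obstacle, and your justification there (tracing $\LL$ through the same functors as $\I$) is the only genuinely soft point, since Proposition~\ref{prop:ind} is stated for $i_+\phi$, not for its irreducible subsheaf. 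The paper simply sidesteps the issue by not invoking the classification theorem.

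Your computation of the real infinitesimal character is the same in content as the paper's citation: the observation that $\lnc$, being orthogonal to all roots of $\Liel^\lnc$, restricts to zero on $\Liet_Q\oplus\Liea_Q^M$ is precisely what Vogan's lemma encodes. Your check that $\rho_\Lieg-\rho_{\Liel^\lnc}$ vanishes on $\Liet_Q$ (because $\theta_Q$ interchanges the roots of $\Lieu^\lnc$ with those of its opposite) is correct and is the sort of bookkeeping the paper suppresses entirely. In short: both proofs rest on Proposition~\ref{prop:ind}; the paper uses it once for irreducibility and stops, while you use it to feed the classification machine, gaining an explicit proof of temperedness at the cost of the finalness verification.
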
  
  
\begin{proof}
  Since $\Gamma(X,\ilq)$ is irreducible,  $\Gamma(X_y, \I( \lambda, Q_L, \phi_L))$ must be irreducible by Proposition \ref{prop:ind}. To show it has real infinitesimal character, fix a $\theta$-stable Cartan subalgebra $\Lieh_\R$ of $L^\lnc_\R$ and its complexification $\Lieh$. Then there is decomposition
  \[  \Lieh = (\Lieh \cap \Liem^\lnc) \oplus \Liea^\lnc = \Lieh^\lnc \oplus \Liea^\lnc, \] 
where $\Lieh^\lnc = \Lieh \cap \Liem^\lnc$. By Lemma 3.4 (4), \cite{VoganLanglands}, the restriction of $\lnc$ to $\Lieh^\lnc$ is zero and so the infinitesimal character of $\Gamma(X_y, \I( \lambda, Q_L, \phi_L))$ as an $\Liem^\lnc$-module is $\lc$.
\end{proof}

\begin{corollary}\label{cor:repn_M}
  The $(\Lieg,K)$-module $\ilq$ is (the Harish-Chandra module of) the real parabolic induction of the $L^\lnc_\R$-representation (or $(\Liel^\lnc,K^\lnc)$-module) $V_L$.
\end{corollary}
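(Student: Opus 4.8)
The plan is to globalize the isomorphism of Proposition~\ref{prop:ind} and then recognize the composite functor $\Gamma(X_S,-)\circ j_+\circ\ind$ as classical real parabolic induction along $P^\lnc_\R$. First I would take global sections. The morphism $\pi_S\colon X\to X_S$ is proper, with fibers isomorphic to the flag variety of $L^\lnc$, and since $\lambda$ is dominant the Beilinson--Bernstein theorem applied fibrewise forces the higher direct images of $\ilq$ under $\pi_S$ to vanish; likewise $H^q(X_S,\tD_\lambda)=0$ for $q>0$ by the analogue on $X_S$ of Proposition~\ref{prop:Uchi}. Hence $\Gamma(X,-)=\Gamma(X_S,(\pi_S)_*-)$, and combining this with Proposition~\ref{prop:ind} yields a natural isomorphism of $(\Lieg,K)$-modules
\[ \Gamma(X,\ilq)\;\cong\;\Gamma\bigl(X_S,\,j_+(\ind(V_L))\bigr). \]

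Next I would identify the right-hand side with real parabolic induction. The key geometric point is that $P^\lnc$ is $\theta$-stable, so $Q_S=\pi_S(Q)=K\cdot y$ is the \emph{closed} $K$-orbit in $X_S\cong G/P^\lnc$ and $Q_S\cong K/K^\lnc$; in particular $j\colon Q_S\hookrightarrow X_S$ is a closed embedding, $j_+$ is exact, and the transfer bimodule attached to $j$ is the sheaf of fibrewise distributions supported on $Q_S$. For such a closed orbit the composite $\Gamma(X_S,-)\circ j_+\circ\ind$, applied to a $(\Liel^\lnc,K^\lnc)$-module $W$, is naturally isomorphic to the Harish-Chandra module of $\Ind_{P^\lnc_\R}^{G_\R}\bigl(W\otimes\mathbf{1}_{N^\lnc_\R}\bigr)$, once the $\rho$-shift built into the normalization of $\D_\lambda$ and of $\ind$, together with the relative canonical bundle $\omega_{Q_S/X_S}$ occurring in $j_+$, are matched against the half-density normalization of $\Ind$. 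This is exactly the geometric description of standard modules attached to closed $K$-orbits; it can be extracted from \cite{Chang} (Theorem~5.4 and the surrounding discussion) combined with the identification of standard modules in \cite{Localization}, which for a closed orbit produces a fully induced — rather than merely cohomologically induced — representation; compare also \cite{KnappVogan}.

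I would then insert $W=V_L=\Gamma(X_y,\I(\lambda,Q_L,\phi_L))$ and match the parameters. By Lemma~\ref{lemma:repn_M}, $V_L$ is an irreducible tempered $(\Liem^\lnc,K^\lnc)$-module with real infinitesimal character $\lc$; with respect to the complexified Langlands decomposition $\Liel^\lnc=\Liem^\lnc\oplus\Liea^\lnc$, the factor $\Liea^\lnc$ acts on $V_L$ through $\lnc|_{\Liea^\lnc}$, which is purely imaginary by hypothesis, say $\lnc=i\chi$ with $\chi$ real. Thus, as an $L^\lnc_\R=M^\lnc_\R A^\lnc_\R$-representation, $V_L$ is $V_{M^\lnc_\R}\otimes e^{i\chi}$, and the two preceding steps identify $\ilq$ with the Harish-Chandra module of $\Ind_{P^\lnc_\R}^{G_\R}[V_L\otimes\mathbf{1}]$, which is the claim. (Irreducibility and temperedness of this induced module — already built into the hypothesis on $\Gamma(X,\ilq)$ via Theorem~\ref{thm:Dtempered} — is then consistent with the Knapp--Zuckerman classification.)

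The step I expect to be the main obstacle is the second one: converting the abstract $\mathcal{D}$-module induction $\Gamma(X_S,-)\circ j_+\circ\ind$ into honest real parabolic induction requires careful bookkeeping of all the normalizations — the $\rho$-shift hidden in $\D_\lambda$, the relative canonical bundle entering $j_+$, the internal twist in the induction functor $\ind$ of \cite{Chang}, and the $|\det|^{1/2}$-twist in the analytic definition of $\Ind_{P^\lnc_\R}^{G_\R}$ — so that they cancel exactly. Once the closed-orbit geometry has reduced $j_+$ to a literal induction sheaf this becomes bookkeeping of the kind already carried out in \cite{Localization} and \cite{Chang}, but it is the part that has to be done with precision.
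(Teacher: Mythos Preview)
Your central geometric claim in the second step is incorrect: $P^\lnc$ is \emph{not} $\theta$-stable, and $Q_S$ is the \emph{open} $K$-orbit in $X_S$, not the closed one. The paper asserts only that the Levi $L^\lnc$ is $\theta$-stable. Since $\lnc$ lies in the $(-1)$-eigenspace of $\theta_Q$, the involution $\theta$ sends positive $\Liea^\lnc$-weights to negative ones, so $\theta(\Lien^\lnc)=\overline{\Lien}^\lnc$ is the opposite nilradical and $\theta(P^\lnc)$ is the opposite parabolic. The identity $K\cap P^\lnc=K\cap L^\lnc=K^\lnc$ recorded in the paper is in fact a consequence of this (any $v\in\Liek\cap\Lien^\lnc$ satisfies $v=\theta v\in\overline{\Lien}^\lnc$, hence $v=0$), and the resulting dimension count $\dim K-\dim K^\lnc=\dim\Lien^\lnc=\dim X_S$ shows that $Q_S\cong K/K^\lnc$ is open. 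Already for $\GR=\SLR$ with $Q$ the open $K$-orbit and $\lnc$ regular one has $P^\lnc=B$, $X_S=X$, and $Q_S=Q$ open. Your description of the transfer bimodule as ``fibrewise distributions supported on $Q_S$'' and your appeal to the closed-orbit standard-module picture therefore do not apply.

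The paper's proof takes a different route that sidesteps this issue: it invokes Oshima's identification of the $\D$-module construction with cohomological induction (\cite{Oshima1}, \cite{Oshima2}), and then Proposition~11.47 of \cite{KnappVogan}, which for a \emph{real} parabolic identifies cohomological induction from $(\Liel^\lnc,K^\lnc)$ with ordinary parabolic induction from $P^\lnc_\R$. Your direct strategy can be salvaged once the geometry is corrected: with $Q_S$ open one has $j_+=j_*$, so $\Gamma(X_S,j_+\ind(V_L))=\Gamma(Q_S,\ind(V_L))$, which as a $K$-module is $\Ind_{K^\lnc}^{K}V_L$ with the $\Lieg$-action supplied by $\ind$, i.e.\ the Harish-Chandra module of $\Ind_{P^\lnc_\R}^{G_\R}(V_L\otimes\mathbf{1})$. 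But that is an open-orbit computation, not the closed-orbit one you sketched, and the normalization bookkeeping you correctly flag as delicate has to be redone in that setting.
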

\begin{proof}
  The conclusion can be deduced from Proposition \ref{prop:ind} above, the identification of parabolic induction and cohomological induction (Proposition 11.47 of \cite{KnappVogan}) and the identification of $\D$-modules with cohomologically induced modules (\cite{Oshima1}, \cite{Oshima2}).
\end{proof}

Denote by $\M_{L}(Q_L,\phi_L)$ the $\OO_{X_y}$-module generated by minimal $K^\lnc$-types of the $(\Liel^\lnc,K^\lnc)$-module $V_L$. By Lemma \ref{lemma:repn_M} and Theorem \ref{thm:Vogan}, $V_L$ has a unique minimal $K^\lnc$-type, denoted by $\sigma$. Note that $\delta=(\lnc,\sigma)$ defines a Mackey datum and recall we have the $\GR$-representation $M(\delta)$ and the $M_\R^\lnc$-representation $V_{M_\R^\lnc}(\delta)$. Then Theorem \ref{thm:Vogan} and Corollary \ref{cor:repn_M} immediately implies
\begin{lemma}\label{lemma:MHA_D}
 $V_L$ is the Harish-Chandra module of $V_{M_\R^\lnc}(\delta)$ and $M(\lambda,Q,\phi) = \Gamma(X,\ilq)$ is the Harish-Chandra module of $M(\delta)$. 
\end{lemma}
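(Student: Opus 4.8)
The plan is to read off both isomorphisms by matching the data produced above against Afgoustidis' construction in \S\ref{subsec:Afg} under the identification of the character $\chi$ there with $\lnc$, so that the Mackey datum is $\delta = (\lnc, \sigma)$. First I would dispose of the statement about $V_L$. By Lemma \ref{lemma:repn_M}, $V_L = \Gamma(X_y, \I(\lambda, Q_L, \phi_L))$ is an irreducible tempered $(\Liem^\lnc, K^\lnc)$-module with real infinitesimal character, so Theorem \ref{thm:Vogan} applies: as a Harish-Chandra module of $M^\lnc_\R$ it is determined by its unique minimal $K^\lnc_\R$-type, which is $\sigma$ by construction. Since $V_{M^\lnc_\R}(\delta) = V_{M^\lnc_\R}(\sigma)$ is, again by definition, the irreducible tempered representation of $M^\lnc_\R$ with real infinitesimal character that Theorem \ref{thm:Vogan} attaches to $\sigma$, it follows that $V_L$ is the Harish-Chandra module of $V_{M^\lnc_\R}(\delta)$.

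For the second isomorphism I would proceed in three steps. First, I would check that the Langlands-type data constructed here agree with Afgoustidis' for $\chi = \lnc$: the parabolic $P^\lnc = L^\lnc N^\lnc$ containing $\Lieb_x$ is the cuspidal parabolic $P^\chi$ (its nilradical $\Lien^\lnc$ is the sum of the root spaces of the positive roots of $R^+_x$ not vanishing on $\lnc$, equivalently not lying in $\Liel^\lnc$), and $\Llr$, $M^\lnc_\R$, $A^\lnc_\R$, $K^\lnc_\R$ are respectively $L^\chi_\R$, $M^\chi_\R$, $A^\chi_\R$, $K^\chi_\R$. Second, I would pin down the $(\Liel^\lnc, K^\lnc)$-module structure of $V_L$ with respect to $\Liel^\lnc = \Liem^\lnc \oplus \Liea^\lnc$: its restriction to $(\Liem^\lnc, K^\lnc)$ is $V_{M^\lnc_\R}(\delta)$ by the first part, while $\Liea^\lnc$, being central in $\Liel^\lnc$, acts on the irreducible module $V_L$ by a character, which by the computation in the proof of Lemma \ref{lemma:repn_M} (Lemma 3.4(4) of \cite{VoganLanglands}) is the restriction of $\lnc$ to $\Liea^\lnc$ — namely $\lnc$ itself, as $\lnc$ lies in the center of $\Liel^\lnc$. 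Hence, inflated from $L^\lnc_\R$ to $\Plr = M^\lnc_\R A^\lnc_\R N^\lnc_\R$, $V_L$ is exactly the module $V_{M^\lnc_\R}(\delta) \otimes e^{i\lnc} \otimes 1$ from which $M(\delta)$ is induced. Third, I would invoke Corollary \ref{cor:repn_M}, which exhibits $M(\lambda, Q, \phi) = \Gamma(X, \ilq)$ as the Harish-Chandra module of the real parabolic induction $\Ind_{\Plr}^{\GR}[V_L]$; together these give $\Gamma(X, \ilq) = \Ind_{\Plr}^{\GR}[V_{M^\lnc_\R}(\delta) \otimes e^{i\lnc} \otimes 1] = M(\delta)$.

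The step I expect to be the main obstacle is the precise bookkeeping in the second step: one must verify that the character through which $A^\lnc_\R$ genuinely acts on $V_L$ is exactly the unitary character entering $M(\delta)$, keeping track of the $\rho$-shift built into $\lambda - \rho$ (and the twist by $\omega_{Q/X}$ in the transfer bimodule) and of the normalization (normalized versus unnormalized) of parabolic induction. This is precisely what the identification of $\D$-module parabolic induction with real parabolic induction underlying Corollary \ref{cor:repn_M} — via Proposition 11.47 of \cite{KnappVogan} and \cite{Oshima1}, \cite{Oshima2} — is designed to absorb, so once those identifications are in place the proof closes without further computation.
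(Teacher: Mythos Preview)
Your proposal is correct and follows essentially the same approach as the paper: the paper's proof is the single sentence ``Theorem \ref{thm:Vogan} and Corollary \ref{cor:repn_M} immediately implies'' the lemma, and your three steps simply unpack this citation --- Vogan's bijection identifies $V_L$ with $V_{M^\lnc_\R}(\sigma)$ via the unique minimal $K^\lnc$-type, and Corollary \ref{cor:repn_M} identifies $\Gamma(X,\ilq)$ as the real parabolic induction of $V_L$, matching the definition of $M(\delta)$. The additional bookkeeping you supply (matching $P^\lnc$ with Afgoustidis' $P^\chi$, tracking the $A^\lnc_\R$-character and the $\rho$-shift) is exactly what the paper leaves implicit in invoking Corollary \ref{cor:repn_M} and its supporting references.
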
 

\begin{proposition}\label{prop:resM}
  The restriction of $\M(Q,\phi)$ to $X_y$ is isomorphic to $\M_{L}(Q_L,\phi_L)$ as $\OO_{X_y}$-module.
\end{proposition}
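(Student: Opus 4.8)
The plan is to recognise both $\M(Q,\phi)|_{X_y}$ and $\M_L(Q_L,\phi_L)$ as the $\OO_{X_y}$-modules generated by explicit spaces of global sections, and then to match those sections by restricting along $X_y\hookrightarrow X$. First I would use Proposition \ref{prop:secM} for the pair $(\Lieg,K)$ to present $\M(Q,\phi)$ as the coherent $\OO_X$-submodule of $i_*\varphi$, $\varphi=\omega_{Q/X}\otimes_{\OO_Q}\phi$, generated by $\Gamma(X,\M(Q,\phi))=\bigoplus_\mu V_\mu$, the sum of the minimal $K$-types $\mu$ of $\Gamma(X,\ilq)$, each occurring in $\Gamma(Q,\varphi)$ with multiplicity one by Theorem \ref{thm:minimal-K}. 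Likewise, Lemma \ref{lemma:repn_M} and Theorem \ref{thm:Vogan} give $V_L=\Gamma(X_y,\I(\lambda,Q_L,\phi_L))$ a unique minimal $K^\lnc$-type $\sigma$, so Proposition \ref{prop:secM} applied to $(\Liel^\lnc,K^\lnc)$ presents $\M_L(Q_L,\phi_L)$ as the $\OO_{X_y}$-submodule of $(i_L)_*\varphi_L$, with $\varphi_L=\omega_{Q_L/X_y}\otimes_{\OO_{Q_L}}\phi_L$ and $i_L\colon Q_L\hookrightarrow X_y$, generated by the copy of $\sigma$ inside $\Gamma(Q_L,\varphi_L)$. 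This reduces the statement to comparing, inside $\Gamma(Q_L,\varphi_L)$, the image of $\bigoplus_\mu V_\mu$ under restriction of sections to $Q_L$ with this copy of $\sigma$.

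Next I would set up the geometry along $X_y$. Since $\pi_S$ is $G$-equivariant, a point of $Q=K\cdot x$ lies in $X_y=\pi_S^{-1}(y)$ exactly when it is moved from $x$ by an element of $K$ fixing $y=\pi_S(x)$, i.e. by an element of $K\cap P^\lnc=K^\lnc$; hence $Q\cap X_y=K^\lnc\cdot x=Q_L$, and scheme-theoretically $Q\cap X_y$ is the (smooth, reduced) fibre over $y$ of the $K$-equivariant morphism $Q\to Q_S=\pi_S(Q)$. Using that $\I(\lambda,Q,\phi)$ is a basic standard module realising the real parabolic induction from $\Plr$ (Corollary \ref{cor:repn_M}), the structure theory of \S~\ref{subsec:Korbit} shows that $Q_S$ is the open $K$-orbit of $X_S$ and $\cl{Q}\cap X_y=\cl{Q_L}$, so $Q$ meets $X_y$ transversally and the square \eqref{diag:Q_L} is Tor-independent; base change then gives $k^*(i_*\varphi)\cong(i_L)_*(\varphi|_{Q_L})$. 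The normal bundles satisfy $N_{Q_L/X_y}\cong N_{Q/X}|_{Q_L}$, and $\phi|_{Q_L}\cong\phi_L$, the two being the $K^\lnc$-homogeneous connection on $Q_L$ with geometric fibre $\tau$ at $x$ once the shift $\rho-\rho_{\Liel^\lnc}=\rho(\Lien^\lnc)$ implicit in $\I(\lambda,Q_L,\phi_L)$, which is also the shift built into the functor $\ind$ of Proposition \ref{prop:ind}, is taken into account; whence $\varphi|_{Q_L}\cong\varphi_L$. So $k^*\M(Q,\phi)$ is the $\OO_{X_y}$-submodule of $(i_L)_*\varphi_L$ generated by $r\big(\bigoplus_\mu V_\mu\big)$, where $r\colon\Gamma(Q,\varphi)\to\Gamma(Q_L,\varphi_L)$ is the $K^\lnc$-equivariant restriction of sections. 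The inclusion $k^*\M(Q,\phi)\subseteq\M_L(Q_L,\phi_L)$ is then immediate: a section in a minimal $K$-type of $\Gamma(X,\ilq)$ is special in the sense of Definition \ref{defn:special_revised}, and the $\PP$-fibres of $\pi_\alpha\colon X\to X_\alpha$ attached to the simple roots $\alpha\in S$ of $\Liel^\lnc$ coincide with the $\PP$-fibres of the corresponding fibrations of $X_y$, so its restriction to $Q_L$ satisfies the conditions of Definition \ref{defn:special_revised} for $(\Liel^\lnc,K^\lnc)$ and hence lies in $\Gamma(X_y,\M_L(Q_L,\phi_L))$ by Proposition \ref{prop:secM}.

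Finally, for the reverse inclusion I would show that $r\big(\bigoplus_\mu V_\mu\big)$ meets the $\sigma$-isotypic subspace of $\Gamma(Q_L,\varphi_L)$ nontrivially; since $\sigma$ is irreducible and occurs there with multiplicity one, this forces $r$ to carry one of the $V_\mu$ onto that copy of $\sigma$, giving $k^*\M(Q,\phi)=\M_L(Q_L,\phi_L)$. By Corollary \ref{cor:repn_M} and Lemma \ref{lemma:MHA_D}, $\Gamma(X,\ilq)$ is the Harish-Chandra module of the real parabolic induction from $\Plr$ of $V_L$, and under the identifications of the previous paragraph $r$ is (a twist of) the ``bottom-layer'' map expressing $\Gamma(X,\ilq)$ restricted to $X_y$ in terms of $V_L$, compatibly with Proposition \ref{prop:ind}. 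Vogan's description of the minimal $K$-types of a module cuspidally induced from $\Plr$ (\cite{Vogan}) then shows that every minimal $K$-type $\mu$ of $\Gamma(X,\ilq)$ restricts on $K^\lnc$ so as to contain $\sigma$ and that the bottom-layer map is nonzero on this component, so $r\big(\bigoplus_\mu V_\mu\big)\supseteq\sigma$ and the proof concludes. I expect this last step to be the main obstacle: the base-change identifications and the ``special section'' inclusion are essentially formal, but identifying $r$ with the bottom-layer map of the parabolic induction and showing it does not annihilate the minimal $K^\lnc$-type $\sigma$ requires combining Proposition \ref{prop:ind} and Corollary \ref{cor:repn_M} with Chang's analysis of special $K$-types in \S 8 of \cite{Chang} and with Vogan's classification.
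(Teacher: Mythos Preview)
Your setup and the inclusion $k^*\M(Q,\phi)\subseteq \M_L(Q_L,\phi_L)$ via Definition \ref{defn:special_revised} match the paper exactly: this is precisely how the paper obtains the injective morphism $r\colon k^*\M(Q,\phi)\to \M_L(Q_L,\phi_L)$.

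Where you diverge is in the surjectivity step, and there the paper's argument is much shorter than the one you propose. You plan to invoke Vogan's bottom-layer theory to exhibit $\sigma$ inside $r\big(\bigoplus_\mu V_\mu\big)$, and you correctly identify this as the hard part of your approach. The paper sidesteps this entirely by using only the \emph{uniqueness} of the minimal $K^\lnc$-type. Since $V_L$ has the single minimal $K^\lnc$-type $\sigma$ (Lemma \ref{lemma:repn_M} and Theorem \ref{thm:Vogan}), the sheaf $\M_L(Q_L,\phi_L)$ is by construction generated by $V_\sigma$ alone, and by Proposition \ref{prop:secM} one has $\Gamma(X_y,\M_L(Q_L,\phi_L))=V_\sigma$. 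The image of $r$ is a $K^\lnc$-equivariant coherent subsheaf whose global sections therefore sit inside the irreducible $V_\sigma$; they are nonzero because any minimal $K$-type section of $\M(Q,\phi)$ is nonvanishing on the $K$-orbit $Q$ and hence on $Q_L\subset Q$. Irreducibility of $\sigma$ then forces these global sections to be all of $V_\sigma$, and since $\M_L$ is generated by $V_\sigma$, the map $r$ is surjective. No appeal to the bottom-layer map or to Chang's \S 8 beyond Definition \ref{defn:special_revised} is needed.

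Your route is not wrong, but it imports heavier machinery (and some auxiliary geometric claims, such as $Q_S$ being open in $X_S$ and the Tor-independence of \eqref{diag:Q_L}) that the paper neither states nor requires. The trade-off is that your argument, if carried out, would also prove the representation-theoretic fact that each minimal $K$-type of $\Gamma(X,\ilq)$ contains $\sigma$ upon restriction to $K^\lnc$; the paper's proof gets the sheaf statement directly without establishing that.
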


\begin{proof}
  It is immediate from our Definition \ref{defn:special_revised} of special $K$-types that there is a natural injective morphism $r: k^*\M(Q,\phi) \to \M_{L}(Q_L,\phi_L)$, where $k: X_y \to X$ is the embedding. Since $\Gamma(X_y, \I( \lambda, Q_L, \phi_L))$ is an irreducible tempered $(\Liem^\lnc, K^\lnc)$-module has the unique minimal $K^\lnc$-type $\sigma$. Thus the morphism $r$ must be surjective and hence an isomorphism.
\end{proof}

Since the stabilizer of $\lnc \in \Lieg^*$ in $G$ is $L^\lnc$, $L^\lnc$ acts on $\mu\pb(\lnc)$. The map $\mu|_{\Ql}: \Ql \to O_\lnc$ is a $K$-equivariant fibration. Denote its fiber over $\lnc \in O_\lnc$ by $Z_\lnc : =  \mu\pb(\lnc) \cap \Ql$. The group $K^\lnc$ acts on $Z_\lnc$ and its closure $\cl{Z}_\lnc = \mu\pb(\lnc) \cap \Qcl$. Denote by $K^\tlnc$ the $K$-stabilizer of $\tlnc \in \TX$. Then $K^\tlnc = K^\lnc_x=K^\lnc \cap K_x$.

\begin{lemma}\label{lemma:fiber}
   The restriction of $\pi: \TX \to X$ induces a $K^\lnc$-equivariant isomorphism between $Z_\lnc$ and $Q_L$. Similarly, it also induces a $K^\lnc$-equivariant isomorphism between $\cl{Z}_\lnc$ and $\cl{Q}_L$.
\end{lemma}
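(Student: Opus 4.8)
The plan is to exhibit a canonical section of $\TX$ over the fibre $X_y = \pi_S\pb(y)$ and to read off both isomorphisms from it. Throughout, write $\mu$ for the Grothendieck--Springer projection $\tg \to \Lieg^*$ (denoted $\mu$ in diagram \ref{diag:GS}), so that $Z_\lnc = \mu\pb(\lncp) \cap \Ql$ and $\cl{Z}_\lnc = \mu\pb(\lncp) \cap \Qcl$. First I would define $s \colon X_y \to \TX$ by $x' \mapsto (\lncp, x')$ and check it lands in $\TX = \nu\pb(\lnc)$: every $x' \in X_y$ corresponds to a Borel $\Lieb_{x'} \subseteq \Liep^\lnc$, hence $\Lieb_{x'} = \Ad(l)\Lieb_x$ for some $l \in L^\lnc$ by transitivity of $L^\lnc$ on its Borels; since $\lncp$ is central in its own centralizer $\Liel^\lnc$ it is fixed by $\Ad(L^\lnc)$ and lies in $\Lieb_{x'}$, so $(\lncp, x') = (\Ad(l)\lncp, lx) = l\cdot(\lncp, x)$, and because $G$ acts trivially on the abstract Cartan $\Lieh$ the map $\nu$ is $G$-equivariant, whence $\nu\bigl((\lncp, x')\bigr) = \nu\bigl((\lncp, x)\bigr) = \lnc$. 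Thus $s$ is well defined and $K^\lnc$-equivariant with $\pi \circ s = \Id_{X_y}$; being a section of the affine (hence separated) morphism $\pi$ over the closed subvariety $X_y \subseteq X$, it is a closed immersion, with image $s(X_y) = \mu\pb(\lncp) \cap \pi\pb(X_y)$ (intersection inside $\TX$), and $\pi$ restricts to its inverse $s(X_y) \xrightarrow{\sim} X_y$.

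Next I would identify the open parts. By Proposition \ref{prop:Ql} every point of $\Ql$ has the form $(\Ad(k)\lncp, kx)$ with $k \in K$, and it lies in $\mu\pb(\lncp)$ exactly when $\Ad(k)\lncp = \lncp$, i.e.\ when $k$ lies in the centralizer $K^\lnc = K \cap L^\lnc$ of $\lncp$ in $K$. Hence $Z_\lnc = \{(\lncp, kx) : k \in K^\lnc\} = s(Q_L)$, so $\pi|_{Z_\lnc}$ is the restriction of the isomorphism just constructed and is a $K^\lnc$-equivariant isomorphism onto $Q_L$. (Equivalently: the stabilizer of $\tlnc$ in $K^\lnc$ is $K^\lnc \cap K_x = K^\tlnc$, which also equals the stabilizer of $x$, so $K^\lnc/K^\tlnc \to K^\lnc/K^\lnc_x$ is an isomorphism.)

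For the closures I would first check that $\cl{Z}_\lnc$ is the closure of $Z_\lnc$ in $\TX$. Since $O_\lnc$ is a closed $K$-orbit in $\Lies^*$, we get $\mu(\Qcl) \subseteq \cl{\mu(\Ql)} = O_\lnc \cong K/K^\lnc$, so $\mu|_{\Qcl} \colon \Qcl \to O_\lnc$ is a $K$-equivariant fibration whose fibre over $\lncp$ is $\cl{Z}_\lnc$; as $\Ql$ is a dense $K$-stable open subset of $\Qcl$, its fibre $Z_\lnc$ is dense in the closed set $\cl{Z}_\lnc$. A closed immersion preserves closures, so $\cl{Z}_\lnc = \overline{s(Q_L)} = s(\cl{Q}_L)$ (the closure of $Q_L$ in $X_y$, which agrees with its closure in $X$ since $X_y$ is closed), and $\pi$ restricts to the inverse, $K^\lnc$-equivariant isomorphism $\cl{Z}_\lnc = s(\cl{Q}_L) \xrightarrow{\sim} \cl{Q}_L$.

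The step I expect to require the most care is the well-definedness of $s$ --- verifying that $(\lncp, x')$ lies in the \emph{twisted} cotangent bundle $\nu\pb(\lnc)$, not merely in $\tg$. This is precisely where the hypotheses enter: $\lncp$ is central in $\Liel^\lnc$, $L^\lnc$ acts transitively on the Borels contained in $\Liep^\lnc$, and the abstract Cartan carries the trivial $G$-action. Everything else --- the identifications of $Z_\lnc$ and $\cl{Z}_\lnc$ and the passage to closures via the closed immersion $s$ and the fibration $\mu|_{\Qcl}$ --- is formal.
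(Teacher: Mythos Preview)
Your proof is correct and follows essentially the same route as the paper's. The paper packages the argument in terms of the $L^\lnc$-orbit $Y_\lnc := G\cdot\tlnc \cap \mu^{-1}(\lncp) = L^\lnc\cdot\tlnc$ and computes its stabilizer to obtain $Y_\lnc \cong L^\lnc/(L^\lnc\cap B_x) \cong X_y$ via $\pi$; your section $s\colon X_y \to \TX$, $x'\mapsto(\lncp,x')$, is precisely the inverse of that isomorphism, so $s(X_y)=Y_\lnc$ and the two descriptions coincide. Your treatment of the closure step---checking $\cl{Z}_\lnc=\overline{Z_\lnc}$ via the $K$-equivariant fiber bundle $\mu|_{\Qcl}\colon\Qcl\to O_\lnc$ over the closed orbit $O_\lnc$---is in fact more explicit than the paper's, which simply asserts that restricting the isomorphism $Y_\lnc\cong X_y$ carries $\cl{Z}_\lnc$ to $\cl{Q}_L$.
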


\begin{proof}
  Let $Y_\lnc := G \cdot \tlnc \cap \mu\pb(\lnc) \subset \TX $. The stabilizer of $\lnc$ in $G$ is $L^\lnc$ and it acts transitively on $Y_\lnc$. The stabilizer of $\tlnc=(\lnc, x)$ in $G$ is $L^\lnc \cap B_x$, where $B_x$ is the Borel subgroup associated to $x \in X$. Hence $Y_\lnc \cong L^\lnc / L^\lnc \cap B_x$ and therefore the restriction of $\pi: \TX \to X$ induces a $L^\lnc$-equivariant isomorphism between $Y_\lnc$ and $X_y$. This isomorphism restricted to $K^\lnc$-equivariant isomorphisms between $Z_\lnc$ and $Q_L$ and between $\cl{Z}_\lnc$ and $\cl{Q}_L$.  
\end{proof}

We have the following diagrams whose top lines are fibrations,
\begin{diagram}
   Z_\lnc  & \rInto & \Ql  & \rOnto^{\mu}  & O_\lnc   &  &  &  &  \cl{Z}_\lnc  & \rInto & \Qcl  & \rOnto^{\mu}  & O_\lnc.   \\
   \dTo^\cong &  & \dOnto &  &  &  &  &  & \dTo^\cong &  & \dTo  \\
   Q_L  & \rInto  & Q & &  &  &  &  &  \overline{Q}_L  & \rInto  & \overline{Q}    
\end{diagram}

\begin{corollary}\label{cor:fiber}
  The map $\mu|_{\Qcl}: \Qcl \to O_\lnc$ is a K-equivariant fibration whose fiber over $\lnc \in O_\lnc$ is isomorphic to $\overline{Q}_L$ via the projection $\pi: \TX \to X$.
\end{corollary}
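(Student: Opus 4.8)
The plan is to read the statement off from Proposition~\ref{prop:Ql} and Lemma~\ref{lemma:fiber}, together with the general principle that a $K$-equivariant morphism onto a homogeneous space is automatically a locally trivial fibration. First I would check that $\mu$ carries $\Qcl$ onto $O_\lnc$. Since $\mu$ is $K$-equivariant with $\mu(\tlnc)=\lncp$, and $\Ql=K\cdot\tlnc$ by Proposition~\ref{prop:Ql}, we get $\mu(\Ql)=K\cdot\lncp=O_\lnc$. As $\mu$ is a morphism of varieties, $\mu(\Qcl)=\mu(\overline{\Ql})\subseteq\overline{\mu(\Ql)}=\overline{O_\lnc}=O_\lnc$, the last equality because $O_\lnc$ is by definition the \emph{closed} $K$-orbit through $\lncp$. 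Hence $O_\lnc=\mu(\Ql)\subseteq\mu(\Qcl)\subseteq O_\lnc$, so $\mu|_{\Qcl}\colon\Qcl\to O_\lnc$ is a surjective $K$-equivariant morphism.

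Next I would identify $O_\lnc$ with $K/K^\lnc$, where $K^\lnc=K\cap L^\lnc$ is the stabilizer of $\lncp$ recorded earlier in this section. A $K$-equivariant morphism from any $K$-variety onto $K/K^\lnc$ is canonically isomorphic to the associated bundle $K\times_{K^\lnc}F$, with $F$ the scheme-theoretic fiber over the base point; in particular it is a Zariski-locally trivial fibration with typical fiber $F$ (this is the same mechanism already used implicitly for the fibrations in Lemma~\ref{lemma:fiber_pi} and Theorem~\ref{thm:distinguished}). Applying it to $\mu|_{\Qcl}$ with base point $\lncp$ yields $\Qcl\cong K\times_{K^\lnc}\cl{Z}_\lnc$, where $\cl{Z}_\lnc=\mu\pb(\lncp)\cap\Qcl$ is precisely the fiber appearing in the diagrams preceding the corollary, so $\mu|_{\Qcl}$ is a $K$-equivariant fibration.

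Finally, the second half of Lemma~\ref{lemma:fiber} asserts that the restriction of $\pi\colon\TX\to X$ maps $\cl{Z}_\lnc$ isomorphically and $K^\lnc$-equivariantly onto $\cl{Q}_L$; this identifies the fiber of $\mu|_{\Qcl}$ over $\lnc$ with $\cl{Q}_L$ via $\pi$, completing the argument. I do not anticipate a real obstacle: the only points needing a little care are (i) preventing $\mu(\Qcl)$ from meeting a smaller $K$-orbit in $\Lies^*$, which is exactly what closedness of $O_\lnc$ gives, and (ii) invoking the associated-bundle description of equivariant maps onto a homogeneous space, which is standard and requires no hypotheses beyond $K^\lnc$ being a closed subgroup of $K$.
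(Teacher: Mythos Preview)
Your argument is correct and is exactly what the paper intends: the corollary is stated without proof, immediately after Lemma~\ref{lemma:fiber} and the two displayed diagrams, as a direct consequence of those diagrams together with the associated-bundle description of a $K$-equivariant map onto the homogeneous space $O_\lnc\cong K/K^\lnc$. Your careful verification that $\mu(\Qcl)\subseteq O_\lnc$ via closedness of the semisimple $K$-orbit $O_\lnc$, and your explicit identification $\Qcl\cong K\times_{K^\lnc}\cl{Z}_\lnc$, simply spell out what the paper leaves implicit.
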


\begin{proposition}
  The $\OO_{\cl{Z}_\lnc}$-module $\cl{\kappa}^*\tMQ$ can be identified with $\M_{L}(Q_L,\phi_L)$ via the isomorphism $\cl{Z}_\lnc \cong \cl{Q}_L$ in Lemma \ref{lemma:fiber}.
\end{proposition}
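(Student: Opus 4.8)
The plan is to establish the claimed identification by restricting the presentation \eqref{eq:tM} of $\tMQ$ along the fiber inclusion $\cl\kappa\colon\cl{Z}_\lnc\hookrightarrow\TXQcl$ and then bringing in Proposition \ref{prop:resM}, Lemma \ref{lemma:fiber} and Corollary \ref{cor:fiber}. First I would record the geometric picture: by Corollary \ref{cor:fiber} the fibration $\mu|_{\Qcl}\colon\Qcl\to O_\lnc$ has fiber $\cl{Z}_\lnc$ over $\lncp$, and by Lemma \ref{lemma:fiber} the projection $\pi\colon\TX\to X$ restricts to a $K^\lnc$-equivariant isomorphism $\cl{Z}_\lnc\xrightarrow{\sim}\cl{Q}_L$ which matches $Z_\lnc\xrightarrow{\sim}Q_L$ over the open parts; moreover $\cl{Z}_\lnc\cap\TXQ=Z_\lnc$ since $\TXQ=\bpl\pb(Q)$ is open in $\TXQcl$. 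Because $\pi|_{\TXQcl}=\bpl$ and $\pl=\bpl|_{\TXQ}$, under this isomorphism the map $\bpl\circ\cl\kappa$ becomes the closed embedding $\cl{Q}_L\hookrightarrow\overline Q$ --- which, followed by $\overline Q\hookrightarrow X$, factors through $k\colon X_y\hookrightarrow X$ --- while $\pl|_{Z_\lnc}$ becomes the inclusion $Q_L\hookrightarrow Q$.

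Next I would produce a surjection and reduce the problem to its injectivity. Since $\tMQ$ is by definition the image of the morphism \eqref{eq:tM}, it is a quotient of $\bpl^*\MQ$; applying the right exact functor $\cl\kappa^*$ therefore gives a surjection $(\bpl\circ\cl\kappa)^*\MQ\twoheadrightarrow\cl\kappa^*\tMQ$. By the previous paragraph together with Proposition \ref{prop:resM}, the source is the restriction to $\cl{Q}_L$ of $k^*\MQ\cong\M_L(Q_L,\phi_L)$; and $\M_L(Q_L,\phi_L)$, being by the analogue of Theorem \ref{thm:minimal-K} for the pair $(\Liel^\lnc,K^\lnc)$ a coherent submodule of $(i_L)_*\varphi_L$ with $\varphi_L:=\omega_{Q_L/X_y}\otimes_{\OO_{Q_L}}\phi_L$, is annihilated by the ideal sheaf of $\cl{Q}_L$, hence coincides with its own restriction to $\cl{Q}_L$. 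Thus, under the identification $\cl{Z}_\lnc\cong\cl{Q}_L$, I obtain a surjection $\psi\colon\M_L(Q_L,\phi_L)\twoheadrightarrow\cl\kappa^*\tMQ$ of coherent $\OO_{\cl{Q}_L}$-modules, and it remains to see that $\psi$ is injective.

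To kill $\ker\psi$ I would restrict everything to the dense open orbit $Q_L\subset\cl{Q}_L$. Over $Q=K\cdot x$ the minimal $K$-types occurring in $\Gamma(X,\ilq)$ span the line bundle $\varphi$ and, being $K$-stable, have empty common zero locus on the single orbit $Q$, so $\MQ|_Q=\varphi$; consequently the morphism \eqref{eq:tM} over $\TXQ$ presents $\tMQ|_{\TXQ}$ as $\pl^*\varphi$, and, by the first paragraph, $\cl\kappa^*\tMQ|_{Q_L}\cong\varphi|_{Q_L}$. The identical reasoning for $(\Liel^\lnc,K^\lnc)$ gives $\M_L(Q_L,\phi_L)|_{Q_L}\cong\varphi_L$, so $\psi|_{Q_L}$ is a surjection of line bundles over the integral scheme $Q_L$ and hence an isomorphism. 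Therefore $\ker\psi$ is a subsheaf of $\M_L(Q_L,\phi_L)$ supported on $\cl{Q}_L\setminus Q_L$; but $\M_L(Q_L,\phi_L)$ embeds into the pushforward to $\cl{Q}_L$ of $\varphi_L$ from the dense open $Q_L$, which has no nonzero subsheaf supported off $Q_L$. Hence $\ker\psi=0$ and $\psi$ is the desired isomorphism.

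The step I expect to be the main obstacle is precisely this injectivity. The fiber inclusion $\cl\kappa$ is not flat, so there is no formal reason for $\cl\kappa^*$ to carry the inclusion $\tMQ\hookrightarrow\ti'_*\pl^*\varphi$ to an injection, and one cannot conclude directly. The way around it, as above, is to use only right exactness of $\cl\kappa^*$ to construct $\psi$ and then to remove $\ker\psi$ via the restriction to the dense orbit together with the torsion-freeness of $\M_L(Q_L,\phi_L)$ along $\cl{Q}_L$; this argument relies essentially on the sharp statement $\cl{Z}_\lnc\cap\TXQ=Z_\lnc\cong Q_L$ provided by Lemma \ref{lemma:fiber}. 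Alternatively, one could work throughout in the local normal form of the standard module transverse to $\overline Q$, where the comparison with $\M_L(Q_L,\phi_L)$ reduces to the computation already performed in the proof of Proposition \ref{prop:resM}.
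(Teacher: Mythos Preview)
Your argument is correct and shares its skeleton with the paper's proof: both begin by pulling back the defining presentation \eqref{eq:tM} along $\cl\kappa$, identify $(\bpl\circ\cl\kappa)^*\MQ$ with $\M_L(Q_L,\phi_L)$ via Proposition \ref{prop:resM} and Lemma \ref{lemma:fiber}, and thereby obtain the surjection $\psi\colon\M_L(Q_L,\phi_L)\twoheadrightarrow\cl\kappa^*\tMQ$.

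The difference lies in how injectivity of $\psi$ is obtained. The paper also computes $\cl\kappa^*$ of the \emph{target} of \eqref{eq:tM}: using the Cartesian square with horizontal open embeddings $Z_\lnc\hookrightarrow\cl Z_\lnc$, $\TXQ\hookrightarrow\TXQcl$ (hence flat base change), it identifies $\cl\kappa^*\ti'_*\pl^*\varphi$ with $\iota_*\kappa^*\pl^*\varphi\cong(\iota_L)_*(\varphi|_{Q_L})$, so that the pulled-back morphism is precisely the known inclusion $\M_L(Q_L,\phi_L)\hookrightarrow(\iota_L)_*(\varphi|_{Q_L})$. Since $\cl\kappa^*(\bpl^*\MQ)\twoheadrightarrow\cl\kappa^*\tMQ\to\cl\kappa^*(\ti'_*\pl^*\varphi)$ composes to an injection, the surjection is forced to be an isomorphism. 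Your route instead avoids computing $\cl\kappa^*$ of the target: you restrict $\psi$ to the dense open $Q_L$, where it is a surjection of line bundles and hence an isomorphism, and then kill $\ker\psi$ using that $\M_L(Q_L,\phi_L)\subset(\iota_L)_*\varphi_L$ has no subsheaf supported on the boundary. Both arguments ultimately rest on the same embedding of $\M_L$ into the pushforward from $Q_L$; the paper's version is slightly slicker once the base-change isomorphism is in hand, while yours is more transparent about why non-flatness of $\cl\kappa$ is not an obstacle.
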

\begin{proof}
  First consider the Cartesian diagram
  \begin{diagram}
    Z_\lnc   & \rInto^{\iota}  & \cl{Z}_\lnc   \\
    \dInto^{\kappa}    &     & \dInto^{\cl{\kappa}}   \\
    \TXQ    & \rInto^{\wt{i}'}   & \TXQcl
  \end{diagram}
  Note that horizontal maps are open embeddings and vertical maps are closed embeddings. Therefore we have isomorphism of $\OO_{Z_\lnc}$-modules
    \begin{equation}\label{isom:Zlnc}
      \cl{\kappa}^* \wt{i}'_* p^* \varphi \cong \iota_* \kappa^* p^* \varphi. 
    \end{equation}
  By Lemma \ref{lemma:fiber}, we have the following diagrams
  \begin{diagram}
   Z_\lnc  & \rInto^{\kappa} & \TXQ  & & & & \cl{Z}_\lnc  & \rInto^{\cl{\kappa}} & \TXQcl    \\
   \dTo^\cong &  & \dTo^{\pl} & & & & \dTo^\cong &  & \dTo^{\bpl}  \\
   Q_L  & \rInto  & Q  & & & &  \overline{Q}_L  & \rInto  & \cl{Q}
\end{diagram}
 which imply that $\iota_* \kappa^* p^* \varphi$ can be identified with $(\iota_L)_* (\varphi|_{Q_L})$ via $\cl{Z}_\lnc \cong \cl{Q}_L$, where $\iota_L : Q_L \hookrightarrow \cl{Q}_L$ denotes the open embedding. Similarly $\cl{\kappa}^* \bpl^* \MQ$ can be identified with the restriction of $\M(Q,\phi)$ to $\cl{Q}_L$ via $\cl{Z}_\lnc \cong \cl{Q}_L$, which is isomorphic to $\M_L(Q_L,\phi_L)$ by Proposition \ref{prop:resM}. The natural morphism of $\OO_{\cl{Z}_\lnc}$-modules
   \[  \cl{\kappa}^* \bpl^* \MQ \to \iota_* \kappa^* p^* \varphi  \]
is induced by the morphism \eqref{eq:tM} and \eqref{isom:Zlnc}, hence can be identified with the natural injective morphism of $\OO_{\cl{Q}_L}$-modules (here we regard $\M_L(Q_L,\phi_L)$ as its restriction over $\cl{Q}_L \subset X_y$),
  \[ \M_L(Q_L,\phi_L)  \hookrightarrow (\iota_L)_* (\varphi|_{Q_L}). \]
  Since $\cl{\kappa}$ is a closed embedding, $\cl{\kappa}^*$ is exact and preserves image subsheaves. Hence we have proven that $\cl{\kappa}^* \tMQ$ can be identified with $\M_L(Q_L,\phi_L)$ via $\cl{Z}_\lnc \cong \cl{Q}_L$. Finally the statement follows from Proposition \ref{prop:E}.
\end{proof}

Define the $K$-equivariant vector bundle $\EEE_\sigma := K \times_{K^\lnc} V_\sigma$ over $O_\lnc \cong K / K_\lnc$. Denote by $q: \Qcl \hookrightarrow \TX$ the embedding.  Let $\V = q_*q^* \tl_* \tMQ$, then it is an irreducible $K$-equivariant coherent sheaf over $\TX$ and hence $\pi_* \V$ is a coherent $(\Sl,K)$-module. Moreover, $\V$ is the unique maximal quotient of $\EEE$ in the category $\Mod^K_{coh}(\TX)$ via the composition
  \[  \EEE \xrightarrow{\eta} \tl_* \tMQ \to \V.  \] 

\begin{proposition}
  The sheaf $\mu_*\V$ is isomorphic to $\EEE_\sigma$ over $O_\lnc$.
\end{proposition}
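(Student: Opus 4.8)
The plan is to exploit the homogeneous-space structure of $O_\lnc \cong K/K^\lnc$: a $K$-equivariant quasi-coherent sheaf on $O_\lnc$ is completely determined by its geometric fibre at $\lnc$ together with the residual $K^\lnc$-action, and under this equivalence $\EEE_\sigma = K\times_{K^\lnc} V_\sigma$ corresponds to $V_\sigma$. Since $\V$ is supported on $\Qcl$ and $\mu(\Qcl)=O_\lnc$, the sheaf $\mu_*\V$ lives over $O_\lnc$ and is $K$-equivariant there, so it suffices to show that the fibre of $\mu_*\V$ at $\lnc$ is isomorphic to $V_\sigma$ as a $K^\lnc$-representation.

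First I would carry out a base-change step. By Corollary \ref{cor:fiber} the morphism $\mu|_{\Qcl}\colon \Qcl \to O_\lnc$ is a $K$-equivariant fibration with fibre $\cl{Z}_\lnc \cong \cl{Q}_L$ over $\lnc$; being $K$-equivariant over the homogeneous space $K/K^\lnc$, it is necessarily of the form $\Qcl \cong K\times_{K^\lnc}\cl{Z}_\lnc$. Pulling back along the faithfully flat $K^\lnc$-torsor $K \to O_\lnc$ trivialises this fibration, so flat base change for the (quasi-compact, quasi-separated) morphism $\mu|_{\Qcl}$ together with faithfully flat descent identifies $\mu_*\V$ with $K\times_{K^\lnc}\Gamma(\cl{Z}_\lnc,\V|_{\cl{Z}_\lnc})$, where $\V|_{\cl{Z}_\lnc}$ carries its natural $K^\lnc$-action. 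It remains to compute this sheaf. Writing $q = \tl\circ q'$ with $q'\colon \Qcl\hookrightarrow \TXQcl$ a closed immersion, and using $\V = q_*q^*\tl_*\tMQ$, successive pullbacks along the closed immersions $\cl{Z}_\lnc \subset \Qcl \subset \TXQcl$ collapse to yield $\V|_{\cl{Z}_\lnc}\cong \cl{\kappa}^*\tMQ$, which by the previous proposition is identified $K^\lnc$-equivariantly with $\M_L(Q_L,\phi_L)$ under $\cl{Z}_\lnc\cong\cl{Q}_L$. As $\M_L(Q_L,\phi_L)$ is supported on $\cl{Q}_L$, passing to global sections gives $\Gamma(\cl{Z}_\lnc,\V|_{\cl{Z}_\lnc})\cong\Gamma(X_y,\M_L(Q_L,\phi_L))$ as $K^\lnc$-modules.

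Next I would invoke the representation theory of the Levi. Proposition \ref{prop:secM}, applied to the Harish-Chandra pair $(\Liel^\lnc,K^\lnc)$ and the data $(\lambda,Q_L,\phi_L)$ — its hypotheses holding because $V_L=\Gamma(X_y,\I(\lambda,Q_L,\phi_L))$ is nonzero, being tempered by Lemma \ref{lemma:repn_M}, and $\lambda$ stays dominant for $\Liel^\lnc$ since the positive roots of $\Liel^\lnc$ form a subsystem of those of $\Lieg$ — shows that $\Gamma(X_y,\M_L(Q_L,\phi_L))$ is, as a $K^\lnc$-module, the sum of the minimal $K^\lnc$-types of $V_L$. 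But $V_L$ is irreducible tempered with real infinitesimal character by Lemma \ref{lemma:repn_M}, so Theorem \ref{thm:Vogan}, together with the multiplicity-one statement of Theorem \ref{thm:minimal-K}, forces it to have a single minimal $K^\lnc$-type, namely $\sigma$, occurring once. Hence $\Gamma(X_y,\M_L(Q_L,\phi_L))\cong V_\sigma$, and combining this with the previous paragraph gives $\mu_*\V \cong K\times_{K^\lnc}V_\sigma = \EEE_\sigma$ as $K$-equivariant sheaves on $O_\lnc$.

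The step I expect to need the most care is the base-change and descent identification of $\mu_*\V$: one must be sure that the \emph{underived} direct image (not a derived functor) is genuinely recovered from a single fibre, and routing the argument through the faithfully flat torsor $K\to K/K^\lnc$ rather than through the closed point $\{\lnc\}$ is precisely what lets one bypass the vanishing hypotheses in cohomology-and-base-change; one also has to check that the descent datum on the trivialised sheaf is exactly the $K^\lnc$-action, so that the $K$-equivariant structures match on the nose. A secondary point to verify is that the identification $\cl{\kappa}^*\tMQ\cong\M_L(Q_L,\phi_L)$ from the previous proposition respects the $K^\lnc$-equivariant structures — which it should, since every sheaf and morphism there is built $K^\lnc$-equivariantly — and that the dominance hypothesis of Proposition \ref{prop:secM} on the Levi side genuinely follows from dominance of $\lambda$ for $\Lieg$.
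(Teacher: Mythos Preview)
Your proposal is correct and follows essentially the same route as the paper's (very terse) proof: reduce to the fibre over $\lnc$ via the $K$-equivariant fibration $\Qcl\to O_\lnc$, identify that fibre with $\M_L(Q_L,\phi_L)$ on $\cl{Q}_L$ (the paper cites Lemma~\ref{lemma:fiber} and Proposition~\ref{prop:resM} directly, while you invoke the intervening proposition that packages these together as $\cl{\kappa}^*\tMQ\cong\M_L(Q_L,\phi_L)$), and then apply Proposition~\ref{prop:secM} on the Levi side to conclude $\Gamma(X_y,\M_L(Q_L,\phi_L))=V_\sigma$. Your extra care with the base-change/descent step and the verification of hypotheses for Proposition~\ref{prop:secM} simply makes explicit what the paper leaves implicit.
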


\begin{proof}
  By Lemma \ref{lemma:fiber} and Proposition \ref{prop:resM}, we only need to show that $\Gamma(X_y, \M_{L}(Q_L,\phi_L)) = V_\sigma$. This follows from Proposition \ref{prop:secM} applied to the group $L$ and its flag variety $X_y$.
\end{proof}

Set 
  \[ \I_0(\lambda,Q,\phi) := \pi_* \V, \]
then it is an irreducible quotient of $\pi_* \EEE = \wt{\I}_0 (\lambda,Q,\phi)$ as an $(\Sl,K)$-module. Set 
  \[ \wt{M}_0(\lambda,Q,\phi) := \Gamma(X, \wt{\I}_0 (\lambda,Q,\phi)), \quad  M_0(\lambda,Q,\phi) := \Gamma(X, \I_0 (\lambda,Q,\phi)).  \]
Then they are $(\Lieg_0,K)$-modules and $M_0(\lambda,Q,\phi)$ is a quotient of $\wt{M}_0(\lambda,Q,\phi)$. Moreover, $M_0(\lambda,Q,\phi) = \Gamma(O_\lnc, \EEE_\sigma)$ is exactly the irreducible $(\Lieg_0,K)$-module of the $G_{\R,0}$-representation $M_0(\delta)$ defined in \eqref{eq:M0}. The $(\Lieg,K)$-module $M(\lambda,Q,\phi)$ and the $(\Lieg_0,K)$-module $\wt{M}_0(\lambda,Q,\phi)$ are connected by the $\gkt$-module
  \[ M(\lt, Q, \phi) := \Gamma(X, \iqt).  \]
Together with Corollary \ref{lemma:MHA_D}, we have proved the following main result claimed in the Introduction.

\begin{theorem}\label{thm:main}  
  Any irreducible tempered $(\Lieg,K)$-module of the form $M(\lambda,Q,\phi)$ can be extended to a $\gkt$-module $M(\lt, Q, \phi)$, such that its restriction $\wt{M}_0(\lambda,Q,\phi)$ at $t=0$ admits an irreducible quotient $M_0(\lambda,Q,\phi)$ as $(\Lieg_0,K)$-module. Moreover, the correspondence
    \[  M(\lambda,Q,\phi) \longleftrightarrow M_0(\lambda,Q,\phi) \]
coincides with the MHA bijection. In particular, the correspondence is independent of the realization of $M(\lambda,Q,\phi)$ as global sections of standard Harish-Chandra sheaf (i.e., choice of the data $(\lambda,Q,\phi)$).
\end{theorem}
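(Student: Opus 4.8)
The plan is to derive the theorem from the constructions of \S\ref{subsec:deform} together with the propositions immediately preceding the statement; it is essentially a synthesis, organised around the single Mackey datum $\delta = (\lnc, \sigma)$ attached to $(\lambda, Q, \phi)$, where $\sigma$ is the unique minimal $K^\lnc$-type of $V_L$ (unique by Lemma~\ref{lemma:repn_M} and Theorem~\ref{thm:Vogan}). For the first assertion, set $M(\lt, Q, \phi) := \Gamma(X, \iqt)$. By Proposition~\ref{prop:Dhr} and the ensuing lemma and corollary, $\iqt = i_\flat\phi$ is a flat $(\Dr, \KK)$-module with vanishing higher cohomology on $X$, so $M(\lt, Q, \phi)$ is a flat $\gkt$-module in $\Mod(\U_{\lt}(\Lieg), \KK)$ and, $\lambda$ being dominant, taking global sections commutes with restriction to each fibre $t = s$. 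By construction $i_\flat\phi$ is the $\Dr$-submodule of $i_\sharp\phi$ generated by $\MQ$, while $i_\sharp\phi|_{t=1} \cong i_+\phi$ by the specialisation lemma of \S\ref{subsec:deform}; hence $M(\lt, Q, \phi)|_{t=1}$ is the $(\Lieg, K)$-submodule of $M(\lambda, Q, \phi)$ spanned by $\Gamma(X, \MQ)$, which by Proposition~\ref{prop:secM} and Theorem~\ref{thm:minimal-K} is the nonzero sum of the minimal $K$-types of $M(\lambda, Q, \phi)$. As $M(\lambda, Q, \phi)$ is irreducible, this submodule is all of it, so $M(\lt, Q, \phi)$ indeed extends $M(\lambda, Q, \phi)$.

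For the second and third assertions, restrict to $t = 0$. The $(\Lieg_0, K)$-module $\wt{M}_0(\lambda, Q, \phi) = \Gamma(X, \wt{\I}_0(\lambda, Q, \phi))$ surjects onto $M_0(\lambda, Q, \phi) = \Gamma(X, \I_0(\lambda, Q, \phi))$, since $\I_0(\lambda, Q, \phi) = \pi_*\V$ is a quotient of $\wt{\I}_0(\lambda, Q, \phi) = \pi_*\EEE$ via the maximal quotient $\EEE \twoheadrightarrow \V$ of Proposition~\ref{prop:E} and $\pi_*$ is exact. As established just above the theorem, $\mu_*\V \cong \EEE_\sigma$ over $O_\lnc$, so $M_0(\lambda, Q, \phi) = \Gamma(O_\lnc, \EEE_\sigma)$ is irreducible and coincides with the $(\Lieg_0, K)$-module of the Mackey representation $M_0(\delta)$ of \eqref{eq:M0}; meanwhile Lemma~\ref{lemma:MHA_D} identifies $M(\lambda, Q, \phi)$ with the Harish-Chandra module of $M(\delta)$ for the same $\delta$. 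Since the MHA bijection of Theorem~\ref{thm:afg} sends $M(\delta)$ to $M_0(\delta)$, the correspondence $M(\lambda, Q, \phi) \leftrightarrow M_0(\lambda, Q, \phi)$ is precisely the MHA bijection. Independence of the chosen realisation then follows at once: if $(\lambda, Q, \phi)$ and $(\lambda', Q', \phi')$ realise isomorphic irreducible tempered $(\Lieg, K)$-modules, then by Theorem~\ref{thm:Dtempered} and Lemma~\ref{lemma:MHA_D} the associated Mackey representations $M(\delta)$ and $M(\delta')$ are unitarily equivalent, so $\delta$ and $\delta'$ are equivalent Mackey data, whence $M_0(\delta) \cong M_0(\delta')$ and therefore $M_0(\lambda, Q, \phi) \cong M_0(\lambda', Q', \phi')$.

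The real content lies upstream: the principal obstacle is the identification $\mu_*\V \cong \EEE_\sigma$ — which rests on the special/minimal $K$-type analysis of \S\ref{subsec:minKtypes}, on Proposition~\ref{prop:resM}, and on the $K$-orbit geometry of \S\ref{subsec:Korbit} — together with Lemma~\ref{lemma:MHA_D}, which rests on Corollary~\ref{cor:repn_M} realising $\ilq$ as a real parabolic induction. Granting these, the only delicate bookkeeping point in the present assembly is to verify that the single Mackey datum $\delta$ extracted from $(\lambda, Q, \phi)$ simultaneously governs $M(\delta)$ (via Lemma~\ref{lemma:MHA_D}) and $M_0(\delta)$ (via the $t = 0$ limit), so that composition with Theorem~\ref{thm:afg} legitimately closes the square.
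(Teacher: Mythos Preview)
Your proposal follows essentially the same route as the paper: the theorem is assembled from the preceding constructions, with the key inputs being Lemma~\ref{lemma:MHA_D} (identifying $M(\lambda,Q,\phi)$ with $M(\delta)$) and the proposition $\mu_*\V\cong\EEE_\sigma$ (identifying $M_0(\lambda,Q,\phi)$ with $M_0(\delta)$), both organised around the single Mackey datum $\delta=(\lnc,\sigma)$. Your elaboration of the $t=1$ fibre via irreducibility of $M(\lambda,Q,\phi)$ is a reasonable expansion of what the paper leaves implicit. One caution: Proposition~\ref{prop:Dhr} gives vanishing of higher cohomology for $\Dr$, not for $\iqt$, so your appeal to it for the commutation of $\Gamma$ with restriction to fibres is a misattribution; the paper is equally informal on this point and does not prove it either, so this is an overclaim in justification rather than a genuine gap in the overall argument.
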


\begin{remark}
  Lemma 4.1.4 of \cite{Mirkovic} says that a tempered representation can be realized as global sections of standard Harish-Chandra sheaves associated to different data $(\lambda,Q,\phi)$ with the same $K$-conjugacy class of Cartan subalgebra. These Harish-Chandra sheaves are related by intertwining functors. Moreover, it can even happen that the same representation is attached to different triple $(\lambda,Q,\phi)$ with different $K$-conjugacy class of Cartan subalgebras. In this case they are related by Schmid identities. See \cite{Milicic} for the example of a spherical principal series representation of $\mathrm{SU}(2,1)$ realized on the open $K$-orbit, which coincides with a limit of discrete series representation realized on a closed $K$-orbit. On the other hand, Theorem \ref{thm:main} and the MHA bijection is independent of these choices. A natural question is that whether $M(\lt,Q,\phi)$ and hence $\wt{M}_0(\lambda,Q,\phi)$ are independent of the choice of $(\lambda,Q,\phi)$. We expect that the sheaves $\iqt$ for different data $(\lambda,Q,\phi)$ are related by family version of intertwining functors or Schmid identities, hence the definition of the $\gkt$-module $M(\lt, Q, \phi)$ is intrinsic. 
\end{remark}

\begin{remark}
  The tempered assumption is not necessary for our approach, so the MHA bijection can be easily generalized to all admissible representations in our framework. This has already been pointed out by Afgoustidis to the author via private communication. The main difference in the admissible case is that the standard sheaf $\ilq$ might be reducible and the image of the canonical morphism
  \begin{equation}\label{eq:mor_iqt}
     \iqt|_{t=1} \to \tiqt|_{t=1} = \ilq 
  \end{equation}
is the unique irreducible Harish-Chandra subsheaf $\llq$ of $\ilq$, which is generated by the minimal $K$-types (we assume $\lambda$ is dominant). This phenomenon has already showed up in the tempered case: there are countably many complex values $s$ of $t$ such that $\I(\lambda_s,Q,\phi)$ is reducible, even when $\lnc$ is imaginary. 

 On the other hand, besides the `maximal extension' $\ilq=i_+ \phi$, there is also a `minimal extension' $i_! \phi$, which is also a $\Dl$-module, and there is a canonical morphism $i_! \phi \to i_+ \phi$ whose image is the `intermediate extension' $\llq$. This leads to the following conjecture.
\end{remark}

\begin{conjecture}
  There is a canonical isomorphism of $(\Dl,K)$-modules $i_! \phi \cong \iqt|_{t=1}$, which identifies the canonical morphism \ref{eq:mor_iqt} with the canonical morphism $i_! \phi \to i_+ \phi$.
\end{conjecture}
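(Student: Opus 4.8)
The plan is to identify $i_!\phi$ with $\iqt|_{t=1}$ via the universal property of the exceptional inverse image $i_!$, and then to confirm the identification by a characteristic cycle count. Since $K$-orbits on $X$ are affinely embedded, $i_!$ is $t$-exact (up to the usual shift) and left adjoint to restriction. The first step is to check that the canonical morphism \eqref{eq:mor_iqt}, $\iqt|_{t=1}\to\tiqt|_{t=1}=\ilq$, restricts to an isomorphism over $Q$. This is immediate from the description $\iqt=(i_*\DQt)\otimes_{\U\LLiek_t}\MQ[t]$: over $Q$ the family coincides with $\varphi[t]$, because $\MQ|_Q$ generates the rank-one connection $\varphi=\omega_{Q/X}\otimes_{\OO_Q}\phi$ over the restriction of $\Dr$ to $Q$ (the connection $\phi$ being a line bundle, hence irreducible as a twisted $\D$-module on $Q$); thus $\iqt|_{t=1}$ restricts to $\varphi$ on $Q$ and \eqref{eq:mor_iqt} is the identity there. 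Granting this, the adjunction between $i_!$ and restriction produces a canonical morphism $\Psi\colon i_!\phi\to\iqt|_{t=1}$ with $\Psi|_Q=\mathrm{id}$, and, since both correspond to $\mathrm{id}_\phi$, the composition of $\Psi$ with \eqref{eq:mor_iqt} is the canonical morphism $i_!\phi\to i_+\phi$. Hence the second assertion of the conjecture follows once $\Psi$ is shown to be an isomorphism.

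For surjectivity, $\operatorname{coker}\Psi$ is a $(\Dl,K)$-quotient of $\iqt|_{t=1}$ supported on $\overline{Q}\setminus Q$, so it suffices to prove that $\iqt|_{t=1}$ has no nonzero quotient supported off $Q$. Since $\iqt|_{t=1}$ is generated over $\Dl$ by $\MQ$, I would argue this from the fibrewise structure of $\MQ$ along the $\mathbb{P}^1$-fibres $X_y$ of the maps $\pi_\alpha$ established in the proof of Proposition \ref{prop:secM} (there $\MQ|_{X_y}$ is $\OO_{X_y}$ or $\OO_{X_y}(1)$), combined with the flatness of the family $\iqt$ over $\mathbb{C}_t$ and the irreducibility of $\iqt|_{t=s}\cong\I(\lambda_s,Q,\phi)$ for all but countably many $s\in\mathbb{C}$, using a specialization argument to exclude the appearance of a boundary quotient at $t=1$.

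For injectivity, $\ker\Psi$ is supported on $\overline{Q}\setminus Q$, and I would rule it out on characteristic cycles. On one hand $\operatorname{CC}(i_!\phi)=\operatorname{CC}(i_+\phi)=\operatorname{CC}(\ilq)$: indeed $i_!\phi=\mathbb{D}(i_+\mathbb{D}\phi)$, holonomic duality preserves characteristic cycles, and $\mathbb{D}\phi$ is again a connection on $Q$, so $i_+\mathbb{D}\phi$ has the same characteristic variety as $i_+\phi$. On the other hand $\operatorname{CC}(\iqt|_{t=1})=\operatorname{CC}(\ilq)$: the family $\iqt$ is $\mathbb{C}[t]$-flat, so the class of $\iqt|_{t=s}$ in the Grothendieck group of holonomic $(\Dl,K)$-modules, and hence its characteristic cycle, is constant in $s\in\mathbb{C}^\times$; for generic $s$ one has $\iqt|_{t=s}\cong\I(\lambda_s,Q,\phi)$, whose characteristic cycle does not depend on the twist, and the twisted characteristic variety $Ch_{\lnc}(\ilq)=\operatorname{supp}(\EEE)$ gives a geometric picture of the degeneration at $t=0$. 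Once $\Psi$ is surjective with $\operatorname{CC}(i_!\phi)=\operatorname{CC}(\iqt|_{t=1})$, additivity of characteristic cycles gives $\operatorname{CC}(\ker\Psi)=0$, hence $\ker\Psi=0$ and $\Psi$ is an isomorphism.

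The hard part will be the boundary bookkeeping behind the surjectivity step: showing that $\iqt|_{t=1}$ acquires no quotient supported on $\overline{Q}\setminus Q$. Here the fibrewise analysis of $\MQ$ is the main tool, but one must control how the family $\iqt$ degenerates at the special value $t=1$, which is not governed by generic flatness since $\Dr$ has a pole at $t=0$ in its defining character $\lt=\lc+\lnc/t-\rho$; identifying $\iqt|_{t=0}$ via Corollary \ref{cor:TXSl} with the Rees module of $i_+\phi$ for the good filtration generated by $\MQ$ should provide the needed control and let one transfer information between the generic and the special fibres. An alternative, more structural route would be to set up a family version of holonomic duality, showing that $\mathbb{D}_{\Dr}$ carries $i_\sharp\phi$ to the analogous family attached to $\mathbb{D}\phi$ and that passage to the subsheaf generated by minimal $K$-types is dual to the minimal-extension operation; this would give $\iqt|_{t=1}\cong i_!\phi$ directly, at the cost of making sense of $\mathbb{D}_{\Dr}$ on the non-coherent $\Dr$-module $i_\sharp\phi$.
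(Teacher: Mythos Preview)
The statement you are addressing is labelled a \emph{Conjecture} in the paper; the paper offers no proof, only the preceding remark explaining why the identification is expected. So there is no ``paper's own proof'' against which to compare your attempt. What follows is an assessment of your argument on its own terms.

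Your overall architecture is reasonable: produce a map $\Psi\colon i_!\phi\to\iqt|_{t=1}$ by adjunction, then prove it is an isomorphism by combining a boundary analysis with a characteristic-cycle count. Two points deserve scrutiny. First, the adjunction you invoke is not quite the one you name: $i_!$ is left adjoint to $i^!$, not to ordinary restriction $i^*$. Since $Q$ is open in its closure and the module is supported on $\overline{Q}$, one has $i^!=i^*$ on such modules after passing through $\overline{Q}$, so the map $\Psi$ can indeed be constructed; but this should be said correctly. Second, and more seriously, your surjectivity step is precisely the content of the conjecture and you do not actually carry it out. You need to show that $\iqt|_{t=1}$ has no nonzero quotient supported on $\overline{Q}\setminus Q$. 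The ingredients you list---the fibrewise description of $\MQ$ along $\pi_\alpha$-fibres, flatness of $\iqt$ over $\C[t]$, and irreducibility of the generic fibre---do not combine into a proof: flatness controls the \emph{length} or characteristic cycle of the special fibre, not the absence of boundary quotients, and a flat family whose generic member is irreducible can perfectly well acquire a boundary quotient at a special value of $t$ (this is exactly how intertwining operators degenerate). Your own remark that $\Dr$ has a pole at $t=0$ undercuts the ``specialization'' heuristic further. The alternative route you sketch via a family version of holonomic duality is closer to what would be needed, but as you note it requires making sense of duality on the non-coherent $i_\sharp\phi$, which is a genuine obstacle.

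In short: the map $\Psi$ exists and the injectivity-by-characteristic-cycles step is sound \emph{conditional on} surjectivity, but the surjectivity step is the heart of the matter and remains open in your write-up, just as it does in the paper.
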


\subsection{Examples}\label{subsec:examples}
In the special case when the $K$-orbit $Q$ is closed and $\lambda$ is dominant, the minimal $K$-type of the standard module $\ilq$ is exactly $\Gamma(Q,\phi \otimes_{\OO_Q} \omega_{Q/X})$. Thus $M(Q,\phi) = i_* ( \phi \otimes_{\OO_Q} \omega_{Q/X})$. In this case, $i_\flat \phi = i_\sharp \phi$ and the corresponding $\OO_{\TX}$-module is $\EEE = \wt{\EEE} = \ti_* \pl^* (\phi \otimes_{\OO_Q} \omega_{Q/X})$ (Lemma \ref{lemma:wtE}). The first example is when the Cartan algebra associated to $Q$ is compact. Then the standard module $\ilq$ is a (limit of) discrete series representation. Since $\lnc = 0$, $\Ql$ is identified with $Q$ and $\V = \phi \otimes_{\OO_Q} \omega_{Q/X}$. Then $M_0(\lambda,Q,\phi)$ is the minimal $K$-type $\Gamma(Q,\phi \otimes_{\OO_Q} \omega_{Q/X})$.

The another important case is when $\GR$ is a complex reductive group regarded as a real group. Fix a maxima compact subgroup $\KR \subset \GR$ with Lie algebra $\Liek_\R$, which is also a compact form of $\GR$ as a complex group. The complexified Lie algebra $\Lieg = \Lieg_\R \otimes_\R \C$ can be identified with the complex Lie algebra $\Lieg_\R \oplus \Lieg_\R$ under the complex Lie algebra isomorphism $\beta: \Lieg \to \Lieg_\R \oplus \Lieg_\R$ defined by $\beta(Y) = (Y,\overline{Y})$ for any $Y \in \Lieg_\R$, where the bar denotes the complex conjugate linear automorphism of $\Lieg_\R$ determined by the compact form $\Liek_\R$. The image of $\Liek=\Liek_\R \otimes_\R \C$ under $\beta$ is the diagonal subalgebra $\Delta(\Lieg_\R)$ of $\Lieg_\R \oplus \Lieg_\R$. Thus we can replace the pair $(\Lieg,\Liek)$ by the isomorphic $(\Lieg_\R \oplus \Lieg_\R, \Delta(\Lieg_\R))$. 

In this case, the associated flag variety is the product $\wt{X} = X \times X$ where $X$ is the flag variety of $\GR$ as a complex group and the $K$-action coincides with the diagonal action of $\Delta(\GR) \subset \GR \times \GR$ whose Lie algebra is $\Delta(\Lieg_\R)$. All $\Delta(\GR)$-orbits of $\wt{X}$ are related by Weyl group elements and there is only one $\Delta(\GR)$-conjugacy class of $\theta$-stable Cartan subalgebras of $\GR \oplus \GR$. This conjugacy class is represented by $\Lieh \oplus \Lieh$ with $\Lieh$ being the Cartan subalgebra of $\Lieg_\R$. The toroidal part $\Liet$ of $\Lieh \oplus \Lieh$ is the diagonal $\Lieh_\Delta$ of $\Lieh \oplus \Lieh$, while the split part 
  \[  \Liea = \Lieh_{-\Delta} := \{ (Y, -Y) \in \Lieh \oplus \Lieh | Y \in \Lieh  \}, \] 
which is also identified with $\Lieh$ by $(Y,-Y) \mapsto Y$. Now Lemma 4.1.4 of \cite{Mirkovic} applied in this case implies that any tempered representation of $\GR$ can be realized as global sections of a standard Harish-Chandra sheaf $\I(\lambda,\Delta(X),\phi)$ associated to the unique closed $\Delta(\GR)$-orbit $\Delta(X)$, the diagonal of $\wt{X}=X \times X$. Here $\lambda$ is $\Sigma^+$-positive, i.e., $\langle \lambda, \alpha^\vee \rangle \geq 0$ for any $\alpha \in \Sigma^+$, where $\Sigma^+$ is the positive root system associated to $\Lieg_\R \oplus \Lieg_\R$ and $\alpha^\vee$ is the dual root of $\alpha$. By the discussion at the beginning of this section, the corresponding $(\Dr,\KK)$-module is $\iqt = i_\flat \phi = i_\sharp \phi$. For $Q=\Delta(X)$, we regard $\lnc \in \Lieh_{-\Delta}$ as an element in $\Lieh$, $T^*_{Q}(\wt{X})_{\lnc}$ is canonically identified with $\TX$ and the natural projection $p: T^*_{Q}(\wt{X})_{\lnc} \to Q$ is identified with the projection $\pi: \TX \to X$. The corresponding sheaf $\EEE$ is the vector bundle $\pi^*(\phi \otimes_{\OO_Q} \omega_{Q/X}) = \pi^*(\phi \otimes_{\OO_X} \omega^{-1}_X)$. 

\vskip 4em

\bibliographystyle{alpha}
\bibliography{MackeySL2.bib}

\end{document}